\definecolor{bleu}{RGB}{27,88,145}
\definecolor{mauve}{RGB}{138,20,79}
\newcommand{\mc}[1]{\mathcal{#1}}
\renewcommand{\Im}{\operatorname{Im}}
\renewcommand{\Re}{\operatorname{Re}}
\newcommand{\dist}{\operatorname{dist}}
\newcommand{\supp}{\operatorname{supp}}
\newcommand{\C}{\mathbb C}
\newcommand{\R}{\mathbb R}
\newcommand{\Hess}{\operatorname{Hess}}
\newcommand{\Ran}{\operatorname{Ran}}
\newcommand{\argmin}{\operatorname{argmin}}
\newcommand{\tr}{\operatorname{tr}}
\renewcommand{\div}{\operatorname{div}}
\renewcommand{\u}{{\bf u}}
\def\<{\langle}
\def\>{\rangle}
\def \jac {{\rm \, Jac }}
\newcommand{\be}{\begin{equation}}
\newcommand{\ee}{\end{equation}}
\newcommand{\bes}{\begin{equation*}}
\newcommand{\ees}{\end{equation*}}
\newcommand{\ve}{\varepsilon}
\newcommand {\pa}{\partial}
\def\Im {{\rm \, Im\;}}
\def\Re {{\rm \,Re\;}}
\def \tr{{\rm \, Tr\;}}
\def\dim {{\rm \; dim  \;}}
\def \jac {{\rm \, Jac }}
\numberwithin{equation}{section}
\numberwithin{figure}{section}
\newtheorem{theorem}{Theorem}
\newtheorem*{theorem*}{Theorem}
\newtheorem{proposition}{Proposition}
\newtheorem{definition}[proposition]{Definition}
\newtheorem*{definition*}{Definition}
\newtheorem{lemma}[proposition]{Lemma}
\newtheorem{corollary}[proposition]{Corollary}
\newtheorem{remark}[proposition]{Remark}
\newtheorem*{assumption*}{Assumption}
 \title[Exit time and principal eigenvalue of some non-reversible processes]{Exit time and  principal eigenvalue of  non-reversible elliptic diffusions}
\author[D. Le Peutrec]{Dorian Le Peutrec}
\address{D. Le Peutrec, Institut Denis Poisson, Universit\'e d'Orl\'eans}
\email{dorian.le-peutrec@univ-orleans.fr}
\author[L. Michel]{Laurent Michel}
\address{L. Michel, Institut Math\'ematique de Bordeaux, Universit\'e de Bordeaux}
\email{laurent.michel@math.u-bordeaux.fr}
\author[B. Nectoux]{Boris Nectoux}
\address{B. Nectoux, Laboratoire de Mathématiques Blaise Pascal, UCA}
\email{boris.nectoux@uca.fr}
\begin{document} 
 \maketitle
    \begin{abstract}
    In this work, we  analyse the metastability of non-reversible diffusion processes  
   $$dX_t=\boldsymbol{b}(X_t)dt+\sqrt h \, dB_t$$ on a bounded domain $\Omega$ when $\boldsymbol{b}$ admits the decomposition $\boldsymbol{b}=-(\nabla f+\boldsymbol{\ell})$ and $\nabla f \cdot \boldsymbol{\ell}=0$. 
  In this setting, we first show that, when $h\to 0$, the principal eigenvalue of the generator of $(X_t)_{t\ge 0}$  with Dirichlet boundary conditions on the boundary $\pa\Omega$ of $\Omega$ is exponentially close to the inverse of the mean exit time from $\Omega$, uniformly in the initial conditions $X_0=x$ within the compacts of~$\Omega$.
 The asymptotic behavior of the law of the exit time  in this limit  is also obtained. 
 The main novelty of these first results follows from 
 the consideration of non-reversible elliptic diffusions whose associated 
 dynamical systems $\dot X=\boldsymbol{b}(X)$ admit  equilibrium points on $\pa \Omega$.
 In a second time, when in addition $\div \boldsymbol{\ell} =0$, we derive a new sharp asymptotic equivalent  in the limit~$h\to 0$ of the principal eigenvalue of the generator of the process and  of its mean exit time from $\Omega$.   
Our proofs combine tools from large deviations theory  and from semiclassical analysis, and truly relies on the notion of quasi-stationary distribution. 
    \medskip

\noindent \textbf{Keywords.} Metastability, Eyring-Kramers type formulas, mean exit time, principal eigenvalue, non-reversible processes. \\
 \textbf{AMS classification.}  60J60, 35P15,   35Q82, 47F05, 60F10. 
   
 \end{abstract} 

\section{Introduction}\label{sec1}

 \subsection{Purpose of this work} 
Let  $\mathrm L>0$ and   $M= (\mathrm L\mathbb T)^d$, where $\mathbb T=\mathbb R/\mathbb Z$ is  the  one dimensional torus. 
Let $(X_t)_{t\ge 0}$ be the solution on $M$ of the  stochastic differential equation  
\begin{equation}\label{eq.langevin}
dX_t= \boldsymbol{b}(X_t)\, dt +\sqrt h \, dB_t,
\end{equation} 
where $h>0$, $(B_{t})_{t\ge 0}$ denotes the Brownian motion on $M$,  and $\boldsymbol{b}:M\to \mathbb R^d$  is a vector field. Such an equation is one of the most important models in statistical physics.  
In all this work,   $\Omega\subset M$ is a  $\mathcal C^\infty$ domain  
and  we denote by  
$$\tau_{\Omega^c}=\inf\{t\ge 0, X_t\notin \Omega\}$$
 the first exit time from $\Omega$ for the process \eqref{eq.langevin}.

When $h$ is small, due to the existence of stable equilibrium points  of the system $\dot X=\boldsymbol{b}(X)$, the process \eqref{eq.langevin} remains  trapped during a very long time 
in a neighborhood of such a point in~$M$, called a metastable region, before going to another metastable region. For this reason,  the process \eqref{eq.langevin} is said to be  metastable.  
 This phenomenon of   metastability  has been widely studied    through the  asymptotic behavior in the zero white noise limit  $h \to 0$  of the law of $\tau_{\Omega^c}$ and of  the principal eigenvalue $-\lambda_{1,h}^L$ of the infinitesimal generator of the diffusion \eqref{eq.langevin} with Dirichlet boundary conditions on $\pa \Omega$.  When  the $\omega$-limit set of each trajectory of the dynamical system $\dot X=\boldsymbol{b}(X)$   
lying entirely in $\overline \Omega$
 is contained in $\Omega$, the limit of $h\ln \mathbb E[\tau_{\Omega^c}]$ when  $h\to 0$   has been studied in~\cite{FrWe}  (see also~\cite{friedman-75-2,martinelli1989small}). When in addition $\boldsymbol{b}\cdot n_\Omega<0$ on $\pa \Omega$ (where $n_\Omega$ is the unit outward normal vector to $\pa\Omega$),  it is proved in~\cite{day-83}  that $\lambda_{1,h}^L \mathbb E[\tau_{\Omega^c}]\to 1$ when $h\to 0$ (see also~\cite{ishii-souganidis-1,ishii-souganidis-2}). We also mention \cite{MaSc,Schuss83}  where  formulas were obtained through formal computations.

 When the  process~\eqref{eq.langevin} is reversible, i.e. when there exists a function $f$ such that $\boldsymbol{b}=-\nabla f$, we refer to~\cite{sugiura2001,HeNi1,DLLN-saddle1,nectoux2017sharp} for sharp asymptotics formulas on $\lambda_{1,h}^L$ or on $\mathbb E[\tau_{\Omega^c}]$ when the  system  does not have  equilibrium points  on $\pa \Omega$, and to \cite{mathieu1995spectra,DoNe2,NectouxCPDE} when it does (see also~\cite{lelievre2022eyring}). 
 When  $\boldsymbol{b}\cdot n_\Omega=0$, the cycling effect  of a two-dimensional randomly perturbed system
 has been studied in \cite{Day92}.
 We refer
 to~\cite{Ber,di-gesu-le-peutrec-lelievre-nectoux-16,di-gesu-lelievre-le-peutrec-nectoux-17} for a  comprehensive review of the literature on this topic.

{\bf Remark.}
{\it For asymptotic estimates of eigenvalues and transition times  in the boundaryless case, we refer to~\cite{holley-kusuoka-stroock-89,miclo-95,BGK,eckhoff-05,BEGK,Berglund_Gentz_MPRF,galves-olivieri-vares-87,HKN,michel2017small}  when   elliptic reversible processes are considered, 
 and to~\cite{bouchet2016generalisation,seoARMA,LePMi20,seoPTRF} when the considered  process is elliptic, non-reversible, and admits the Gibbs measure \eqref{eq.Gibbs} as invariant measure.}\medskip

The purpose of this work is to investigate  the   asymptotic behaviors  when $h\to 0$
 of $\lambda_{1,h}^L$ and 
 of the law and the expected time of $\tau_{\Omega^c}$ 
for non-reversible processes of the form \eqref{eq.langevin}   when
the smooth vector field $\boldsymbol{b}:M\to  \mathbb R^d$ 
decomposes into the pointwise orthogonal sum of a 
smooth gradient field with a vector field (see \eqref{ortho}).

First, we prove in this case the following: when $\Omega$ is roughly a single well (see \eqref{well}) of the potential energy function~$f$   (see  Theorem~\ref{th:main}, which is the first main result of this work):
\begin{itemize}
\item[R1.]  In the limit $h\to 0$,  $\lambda_{1,h}^L \mathbb E[\tau_{\Omega^c}]$ converges to  $1$ and the  law of   $\lambda_{1,h}^L \tau_{\Omega^c}$ converges   to an exponential law of mean $1$,  both exponentially fast and   uniformly  w.r.t.  the   initial conditions~$x$ living in the (relevant)   compacts of $\Omega$. 
The asymptotic behavior of the spectral gap is also investigated.
\end{itemize}

When in addition
  the  Gibbs measure 
\begin{equation}\label{eq.Gibbs}
\text{$\mu_G(dx)=\frac{e^{-\frac 2h f} }{\int_{M} e^{-\frac 2h f}}\, dx$  }
\end{equation}
is  invariant (see \eqref{div}) and
 under an additional assumption on the shape of~$\pa\Omega$  near its lowest energy points (see \eqref{normal}),
we prove that (see  Theorem~\ref{th:main2}, which is the second main result of this work):
\begin{itemize}
\item[R2.] In the limit $h\to 0$, $\lambda_{1,h}^L$, and thus $\mathbb E[\tau_{\Omega^c}]$, satisfy an Eyring-Kramers type formula. 
\end{itemize}

Concerning item R1 above, 
the main novelty compared to the existing literature arises from  the fact that these results are derived  when, simultaneously,   the process \eqref{eq.langevin} is non-reversible and   the dynamical system $\dot X=\boldsymbol{b}(X)$ is allowed to admit   equilibrium points on   $\pa \Omega$\footnote{We mention that in our setting (more precisely under \eqref{ortho}), every $\omega$-limit set is composed of a single equilibrium point, see Section \ref{sec.DS}.}.  The latter situation,
which is known to introduce several technical difficulties \cite{Day87},
is natural for applications  \cite{Schuss83}. 
For instance, this situation  occurs when one is interested in the so-called state-to-state dynamics associated with \eqref{eq.langevin}. In this case, the set $\Omega$, which is associated with a macroscopic state,  is indeed typically  defined as the basin of attraction of some asymptotically stable equilibrium  point $x_0\in M$ for   the dynamical system $\dot X=\boldsymbol{b}(X)$, so that $\pa \Omega$ contains  equilibrium points of   $\dot X=\boldsymbol{b}(X)$.  
We refer for instance to \cite{perez2009accelerated,le2012mathematical,lelievre2016partial,di-gesu-lelievre-le-peutrec-nectoux-17} for more material and references  on state-to-state dynamics.
Let us also mention that the condition \eqref{normal} is automatically satisfied when $\Omega$ is a basin of attraction, see the discussion after \eqref{normal} on this subject.

Finally, concerning  item R2 above, the Eyring-Kramers type  formula we derive for $\lambda_{1,h}^L$ in Theorem~\ref{th:main2}, which leads to  the inverse formula  for     $\mathbb E[\tau_{\Omega^c}]$ according to item R1, is new when considering such  non-reversible processes, whether or not there are   equilibrium points of $\dot X=\boldsymbol{b}(X)$ on $\pa \Omega$. 
It exhibits the  precise   effect of the boundary $\pa \Omega$ on the sharp equivalent as $h\to 0$ of both $\lambda_{1,h}^L$ and  $\mathbb E[\tau_{\Omega^c}]$.

\subsection{Assumptions} 
For $\mu\in \mathbb R$,
we use the notation 
$$\text{$\{f\le \mu\}:=\{x\in M, \ f(x)\le \mu\}$, $ \{f< \mu\}:=\{x\in M,\  f(x)< \mu\}$, and  $\{f=\mu\}:=\{x\in M, \ f(x)=\mu\}$.} $$
Moreover,  for $r>0$ and $y\in M$, $  B(y,r)$ denotes the open ball of radius $r$ centered at $y$ in $M$:
$$ B(y,r):=\{z\in M, \, \vert y-z\vert <r\}.$$
Throughout this work,  we assume that  
there exist a smooth vector field $\boldsymbol{\ell}:M\to \mathbb R^d$ and a smooth Morse function $f:M\to \mathbb R$ such that
 the vector field $\boldsymbol{b}:M\to  \mathbb R^d$ satisfies the following orthogonal decomposition:
\begin{equation}\label{ortho}
\tag{{\bf Ortho}} 
 \boldsymbol{b}(x)=-(\nabla f(x)  +  \boldsymbol{\ell}(x) )
 \quad\text{and}\quad \boldsymbol{\ell}(x) \cdot \nabla f(x) =0  \qquad \text{for every } x\in M.
\end{equation} 
We recall that   a smooth function is  a Morse function if  all its critical points are non degenerate.

 Let us now define 
\begin{equation}
\label{eq.cmin}
\mathbf{C}_{{\rm min}}:=    
\Omega \cap \{f<\min_{\partial \Omega}f  \}.
\end{equation} 
Notice that $\mathbf{C}_{{\rm min}}=\overline\Omega \cap \{f<\min_{\partial \Omega}f  \}$ and that, when $\mathbf{C}_{{\rm min}}$ is nonempty and connected,  it is a connected component of $\{ f<\min_{\partial \Omega}f \}$.

Our  second main assumption  roughly says that $\Omega$ looks like a single well of the potential~$f$:
\begin{equation}\label{well}
\tag{{\bf One-Well}} 
\text{$f:M\to\R$ admits precisely one critical point $x_0$  in $\Omega$ and $\pa \mathbf{C}_{{\rm min}}\cap \partial \Omega \neq \emptyset$.}
\end{equation}

\noindent  
Note that when \eqref{well} holds, $\mathbf{C}_{{\rm min}}$ is nonempty and connected, $x_{0}$ belongs to $\mathbf{C}_{{\rm min}}$, and
\begin{equation}\label{eq.global}
f(x_0)=\min_{x\in \overline \Omega} f(x).
\end{equation}
We refer to Figure \ref{fig:1} for a schematic representation  of   $\mathbf{C}_{{\rm min}}$    when  \eqref{well} holds. 
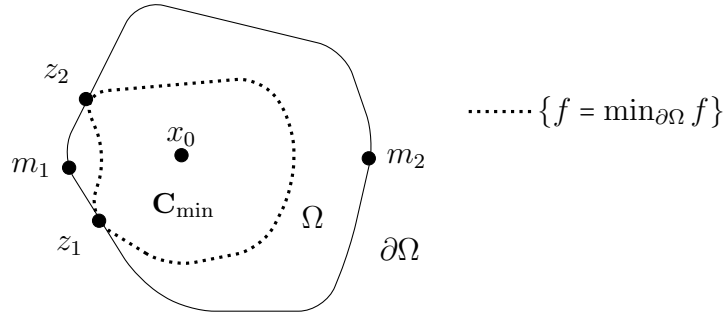
\begin{figure}[h!]
\begin{center}
\begin{tikzpicture}[scale=0.83]
\tikzstyle{vertex}=[draw,circle,fill=black,minimum size=5pt,inner sep=0pt]
\tikzstyle{ball}=[circle, dashed, minimum size=1.3cm, draw]
\tikzstyle{point}=[circle, fill, minimum size=.01cm, draw]
\draw [rounded corners=10pt] (1,0.5) -- (-0.25,2.5) -- (1,5) -- (4.3,4.2) -- (4.8,2.75) -- (4.4,1) -- (4,0) -- (1.8,0) --cycle;
\draw [dotted, very thick,rounded corners=10pt] (1.5,0.7) -- (.2,1.5) -- (0.5,2.5) -- (0.09,3.5) -- (3,3.75) -- (3.5,3) -- (3.5,2) -- (3,1) --cycle; 
\draw [dotted, very thick]   (6.3 ,3.2) -- (7.3 ,3.2);
\draw  (8.9 ,3.2) node[]{$\{f= \min_{\pa \Omega}f\}$}; 
 \draw (1.73,1.7) node[]{{\small $\mathbf{C}_{{\rm min}}$}};
     \draw  (3.8,1.5) node[]{$\Omega$};
    \draw  (5.2,1) node[]{$\pa \Omega$}; 
  \draw (1.7 ,2.8)node[]{$x_0$};
\draw (1.7 ,2.5) node[vertex,label=north west: { }](v){};
\draw (0.38,1.45) node[vertex,label=south west: {$z_1$}](v){};
\draw (0.17,3.4) node[vertex,label=north west: {$z_2$}](v){};
\draw (4.7,2.45) node[vertex,label=  east: {$m_2$}](v){};
\draw (-0.1,2.3) node[vertex,label=  west: {$m_1$}](v){};
\end{tikzpicture}
 \caption{Schematic representation  of   $\mathbf{C}_{{\rm min}}$    when  \eqref{well} holds. On  this figure, $\pa \mathbf{C}_{{\rm min}}\cap \pa \Omega=\{z_1,z_2\}$  and $m_1,m_{2}\in \pa \Omega$ are the local maxima of $f$ in $M$.  } 
 \label{fig:1}
 \end{center}
\end{figure}

The first main result of this work, namely Theorem~\ref{th:main},
only requires the assumptions \eqref{ortho} and \eqref{well}.
Our second main result, namely Theorem~\ref{th:main2}, requires two additional assumptions which are the topic of the rest of this section.
The first one implies the invariance of the Gibbs measure $\mu_G(dx)=\frac{e^{-\frac 2h f} }{\int_{M} e^{-\frac 2h f}}\, dx$
defined in \eqref{eq.Gibbs}:
\begin{equation}
\label{div}
\tag{{\bf Div-free}}
\text{For every $x\in M$,\ \ \   $\div \boldsymbol{\ell} (x)=0$.}
\end{equation}

\noindent
It is well-known that a process solution to an elliptic stochastic differential equation on $M$   with sufficiently smooth coefficients admits a unique invariant probability measure. 
Furthermore, using the standard characterization\footnote{See for instance~\cite[page 259]{varad1980}.} of an  invariant probability  measure with the adjoint   of the operator $-\frac h2 \Delta+\boldsymbol{b}\cdot \nabla$,   the conditions   \eqref{ortho} and \eqref{div} are  necessary and sufficient to ensure that the measure  $\mu_G$ 
is  an (and thus the)  
invariant probability measure of the process~\eqref{eq.langevin} for all $h>0$.

Throughout this work, we say that $z\in M$ is a saddle point of $f$  when $z$ is a critical point of~$f$ of index~$1$, i.e. when the matrix $\Hess f(z)$, which is invertible according to \eqref{ortho}, admits  precisely one negative eigenvalue. 
Our last assumption \eqref{normal} below deals with
the points $z\in \pa \mathbf{C}_{{\rm min}}  \cap \pa \Omega$. These points, which are global minima of~$f|_{\pa \Omega}$,   play a crucial role in the asymptotic equivalents of the mean exit time from $\Omega$
resulting from Theorems~\ref{th:main} and~\ref{th:main2}.
 Let us mention that, according to~\cite[Item (b) in Proposition 12]{DoNe2}, when 
such a $z$ is a critical point of~$f$, it is a saddle point.

For  $x\in M$, we define the  Jacobian matrix
 $$ \mathsf L(x):= \jac   \,  \boldsymbol{\ell} (x).$$
In order to state our last assumption, we need some elements of the following proposition resulting
from \cite[Lemma 1.8]{LePMi20} and \cite[Lemma 1.4]{BoLePMi22} (see also~\cite{landim2018metastability} for a similar result)
on 
the Jacobian matrix of the vector field  $\boldsymbol{b}$ at a saddle point of $f$.

 \begin{lemma} \label{le.lep-michel}
Assume  \eqref{ortho} and let $z\in  M$ be a critical point of $f$ with index $p\in\{0,\dots,d\}$. 
  Then, the matrix $\Hess f(z) + {}^t \mathsf L(z)$ admits  precisely $p$  eigenvalues 
   in $ \{\mathsf z\in\mathbb C, \Re \mathsf z<0 \}$ and $d-p$ eigenvalues
    in $ \{\mathsf z\in\mathbb C, \Re \mathsf z>0 \}$.
  
  When $z$ is a saddle point,   we denote by 
  $\mu(z)$  the  eigenvalue of $\Hess f(z) + {}^t \mathsf L(z)$ in
   $ \{\mathsf z\in\mathbb C, \Re \mathsf z<0 \}$
   and by $\lambda(z)$ the negative eigenvalue of $\Hess f(z) $.
   We have moreover in this case:
  \begin{enumerate}
   \item  
    The eigenvalue  $\mu(z)$   is real, and thus negative.
     \vspace{0.2cm} 
   \item   Let $\xi(z)$ be a real
   unit eigenvector of $\Hess f(z) + {}^t\mathsf L(z)$ associated with $\mu(z)$. Then, the 
   matrix 
  $\Hess f(z) +2\vert \mu(z)\vert\, \xi(z)\xi(z)^t$ is positive definite and
  of determinant $-\det\Hess f(z)$.\vspace{0.2cm}
  \item  It holds $\vert \mu(z)\vert\ge |\lambda(z)|$, with equality if, and only if, $ {}^t\mathsf L(z)\xi(z)=0$. 
    \end{enumerate}
\end{lemma}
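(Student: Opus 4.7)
My plan is to extract a single structural identity from the assumption \eqref{ortho} and to run every item off it. Differentiating $\boldsymbol{\ell}\cdot\nabla f\equiv 0$ once at $z$, and using $\nabla f(z)=0$ together with the non-degeneracy of $\Hess f(z)$ (Morse assumption), forces $\boldsymbol{\ell}(z)=0$; differentiating a second time and then using both $\nabla f(z)=0$ and $\boldsymbol{\ell}(z)=0$ yields
\[
{}^t\mathsf L(z)\,\Hess f(z)+\Hess f(z)\,\mathsf L(z)=0.
\]
Abbreviating $H:=\Hess f(z)$ and $L:=\mathsf L(z)$, this identity says exactly that the real matrix $J:=LH^{-1}$ is antisymmetric; equivalently, $H+{}^tL=H(I-J)$.

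From here, the eigenvalue count is a short computation. Suppose $(H+{}^tL)v=\mu v$ for some $v\in\mathbb C^d\setminus\{0\}$; pairing $(I-J)v=\mu H^{-1}v$ with $v$ in the Hermitian inner product gives $|v|^2-v^*Jv=\mu\,v^*H^{-1}v$. Since $J$ is real antisymmetric, $v^*Jv$ is purely imaginary while $v^*H^{-1}v$ is real, so equating real parts yields $(\Re\mu)\,v^*H^{-1}v=|v|^2>0$. In particular $\mu$ cannot be purely imaginary and $\Re\mu$ shares its sign with $v^*H^{-1}v$. The same reasoning applies along the homotopy $B_t:=H+t\,{}^tL$, which preserves the structural identity $HL+{}^tLH=0$, so the spectrum of $B_t$ stays off the imaginary axis for every $t\in[0,1]$. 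By continuity of the spectrum, the number of eigenvalues of $B_1=H+{}^tL$ with negative real part equals that of $B_0=H$, namely the index $p$ of $z$.

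When $z$ is a saddle, $p=1$ and the unique eigenvalue $\mu(z)$ in $\{\Re<0\}$ must be real, since otherwise its complex conjugate would provide a second one; this proves Item~(1). For Item~(2), from $H\xi=\mu(z)\xi-{}^tL\xi$ I pair with $H^{-1}\xi$: using $H^{-1}\,{}^tL=-LH^{-1}=-J$ together with $\xi^tJ\xi=0$ (reality of $\xi$ and antisymmetry of $J$) yields $\xi^tH^{-1}\xi=1/\mu(z)=-1/|\mu(z)|$. Sherman--Morrison then gives
\[
\det\!\bigl(H+2|\mu(z)|\,\xi\xi^t\bigr)=\det H\cdot\bigl(1+2|\mu(z)|\,\xi^tH^{-1}\xi\bigr)=-\det H,
\]
which is positive since $\det H<0$ at a saddle. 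Positive definiteness itself follows from monotonicity of eigenvalues under the rank-one positive semi-definite perturbation $c\,\xi\xi^t$ as $c$ grows from $0$ to $2|\mu(z)|$: the $d-1$ originally positive eigenvalues remain positive, and positivity of the determinant then forces the remaining eigenvalue to be positive too.

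For Item~(3), I work in an orthonormal eigenbasis of $H$ with $He_1=\lambda(z)\,e_1$ and $He_i=\lambda_ie_i$, $\lambda_i>0$ for $i\geq 2$, and write $\xi=\sum_i\xi_ie_i$. The identity $\xi^tH^{-1}\xi=-1/|\mu(z)|$ reads
\[
\frac{\xi_1^2}{|\lambda(z)|}=\frac{1}{|\mu(z)|}+\sum_{i\geq 2}\frac{\xi_i^2}{\lambda_i}\,\geq\,\frac{1}{|\mu(z)|},
\]
whence $|\mu(z)|\,\xi_1^2\geq|\lambda(z)|$; as $\xi_1^2\leq 1$, this gives $|\mu(z)|\geq|\lambda(z)|$, with equality forcing $\xi_i=0$ for every $i\geq 2$ and $\xi_1^2=1$, i.e.\ $\xi$ eigenvector of $H$ for $\lambda(z)$, which is in turn equivalent to ${}^tL\,\xi=(\mu(z)-\lambda(z))\xi=0$. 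The most delicate step is the eigenvalue count, but the identity $H+{}^tL=H(I-J)$ with $J$ antisymmetric reduces it to the short Hermitian computation above.
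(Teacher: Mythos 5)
The paper does not prove this lemma; it cites \cite[Lemma 1.8]{LePMi20} and \cite[Lemma 1.4]{BoLePMi22}. Your proof is a self-contained alternative, and it is correct. The structural heart of it --- the identity $\Hess f(z)\,\mathsf L(z)+{}^t\mathsf L(z)\,\Hess f(z)=0$ obtained by a second-order Taylor expansion of $\boldsymbol\ell\cdot\nabla f\equiv 0$ at the critical point --- is precisely the antisymmetry of $J:=\mathsf L(z)\Hess f(z)^{-1}$, which is also the content of \cite[Lemma 3.1]{LePMi20} (there phrased as $\boldsymbol\ell=J\nabla f$ near a critical point with $J(z)$ antisymmetric, which at $z$ gives $\mathsf L(z)=J(z)\Hess f(z)$). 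So you recover the key algebraic input of the reference by a direct local computation rather than by first constructing the smooth matrix field $J$ globally near $z$. Everything downstream is then elementary linear algebra: the Hermitian computation $|v|^2-v^*Jv=\mu\,v^*H^{-1}v$ with real part argument rules out purely imaginary eigenvalues of $H(I-tJ)$ for all $t$, the homotopy $t\mapsto H+t\,{}^t L$ transports the signature of $H$, and the identity $\xi^tH^{-1}\xi=1/\mu(z)$ drives both the Sherman--Morrison determinant computation in item (2) and the coordinate inequality in item (3). One small remark on item (2): the monotonicity argument works, and can be stated even more directly --- the $d-1$ ordered positive eigenvalues of $H$ can only increase under the psd perturbation $c\,\xi\xi^t$, so they stay positive, while the determinant formula $\det H\,(1+c\,\xi^tH^{-1}\xi)$ is strictly negative for $c<|\mu(z)|$, zero at $c=|\mu(z)|$, and strictly positive for $c>|\mu(z)|$, pinning the sign of the smallest eigenvalue at $c=2|\mu(z)|$. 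This is an efficient and clean route and would serve well as an appendix proof of the lemma.
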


Let us now   formulate our last assumption, on the local shape of $f$ near the points of $\pa \mathbf{C}_{{\rm min}}  \cap \pa \Omega$
when \eqref{ortho} holds. In the following,
for any $z\in \ \pa \Omega$,
 $ n _{\Omega}(z)$ denotes the unit outward normal  vector to $\pa\Omega$ at $z$.
\begin{equation}
\label{normal}
\tag{{\bf Normal}}
 \forall z\in \pa \mathbf{C}_{{\rm min}}  \cap \pa \Omega, \ \text{it holds:}
\left\{\begin{array}{l}
\text{when $ \nabla f(z)=0$, $ \xi(z) \in {\rm Span} \, (n _{\Omega}(z))$,}\\[0.2cm]
 \text{when $\nabla f(z)\neq 0$,  $\det\Hess(f|_{\pa \Omega})(z)\neq 0$  and $\boldsymbol{\ell}(z)=0$,}
\end{array}\right.
\end{equation}
where $\xi(z)$ is
an eigenvector
of $\Hess f(z) + {}^t\mathsf L(z)$ associated with its unique
negative eigenvalue, see Lemma~\ref{le.lep-michel}.

We end this section by discussing the geometric consequences of \eqref{normal}.

Let  $z\in \pa \mathbf{C}_{{\rm min}} \cap \pa \Omega$ be such that   $ \nabla f(z)=0$.
When \eqref{normal} holds,  the tangent space $T_z\pa \Omega$ to $\pa \Omega$ at $z$ satisfies $T_z\pa \Omega = z+{\{\xi(z)\}}^{\perp}$.
Since  $\xi(z)$ is
an eigenvector
of $\Hess f(z) + {}^t\mathsf L(z)$ associated with its unique
eigenvalue in $ \{\mathsf z\in\mathbb C, \Re \mathsf z<0 \}$ and,
according to  Lemma~\ref{le.lep-michel},
the $d-1$ remaining eigenvalues of $\Hess f(z) + {}^t\mathsf L(z)$
belong to $ \{\mathsf z\in\mathbb C, \Re \mathsf z>0 \}$,
 it follows 
that the (complexification of the) hyperplane ${\{\xi(z)\}}^{\perp}$ is the sum of the 
generalized eigenspaces of $-\jac\, \boldsymbol{b}(z)= \Hess f(z) + \mathsf L(z)$
corresponding to its eigenvalues in $ \{\mathsf z\in\mathbb C, \Re \mathsf z>0 \}$.
Moreover, it follows from  \cite[Lemma~4.1]{LePMi20} that,   in a neighborhood $\mathcal O_z$ of $z$ in $M$, 
\begin{equation}
\label{eq.OpaOmega}
(\partial \Omega\cap \mathcal O_z)\setminus \{z\}\subset \{f>f(z)\}.   
\end{equation}
In particular, $z$ is a strict global minimum of $f|_{\pa \Omega}$. 
We refer to Figure \ref{fig:f-nearz} for a schematic representation of $\xi(z)$ and $\mathbf{C}_{{\rm min}}$ near  such a point $z$ when \eqref{normal} holds.

Let us also mention here that,
as explained in Section~\ref{sec.DS} below, $ \nabla f(z)=0$ implies that~$z$ is an equilibrium point for   the dynamical system $\dot X=\boldsymbol{b}(X)$, i.e. that $\boldsymbol{b}(z)=0$.
Hence,
from a dynamical point of view, the above discussion simply says that, when 
\eqref{normal} holds: at every $z\in \pa \mathbf{C}_{{\rm min}} \cap \pa \Omega$ such that   $ \nabla f(z)=0$, the boundary $\pa\Omega$
of $\Omega$ is tangent to the stable manifold of   $z$ for   the dynamical system $\dot X=\boldsymbol{b}(X)$, which has dimension $d-1$. We recall that the stable (resp. unstable) manifold of an equilibrium point $z$ is defined as the set of the elements of $ M$ whose trajectories (for the dynamics $\dot X=\boldsymbol{b}(X)$) converge to $z$ in the future (resp. in the past),
and that (the complexification of) its tangent space at $z$
is 
the sum of the 
generalized eigenspaces of $\jac\, \boldsymbol{b}(z) $
corresponding to its eigenvalues in $ \{\mathsf z\in\mathbb C, \Re \mathsf z<0 \}$ (resp. in $ \{\mathsf z\in\mathbb C, \Re \mathsf z>0 \}$).

Let us now consider $z\in \pa \mathbf{C}_{{\rm min}} \cap \pa \Omega$ such that  $ \nabla f(z)\neq 0$. 
Since $z$ is a global minimum of~$f|_{\pa \Omega}$, 
the tangent space $T_z\pa \Omega$  satisfies $T_z\pa \Omega = z+{\{\nabla f(z)\}}^{\perp}$, $\pa_{n}f(z) >0$, and
$\boldsymbol{b}(z)=-\nabla f(z)-\boldsymbol{\ell}(z)$
is inward-pointing.
Thus, according to \eqref{ortho}, the condition $\boldsymbol{\ell}(z)=0$
 in the second part of \eqref{normal} is equivalent to $\boldsymbol{b}(z)\in  {\rm Span} \, (n _{\Omega}(z))$. 
 It is thus in a way the counterpart of the first assumption of \eqref{normal} when $z$ is not an equilibrium point for the dynamics $\dot X=\boldsymbol{b}(X)$,
since it gives the condition for $\boldsymbol{b}(z)$ to be orthogonal to $T_z\pa \Omega$.

In particular, when \eqref{normal} holds, any  $z\in \pa \mathbf{C}_{{\rm min}}\cap \pa \Omega$ is a strict   global minimum of $f|_{\pa \Omega}$,
whether $ \nabla f(z)\neq 0$  or $ \nabla f(z)= 0$. Thus, since $\pa \Omega$ is compact:
\begin{equation}\label{eq.Ccons}
\eqref{normal}  \Rightarrow \text{ Card }(\pa \mathbf{C}_{{\rm min}}\cap \pa \Omega)<+\infty.
\end{equation}

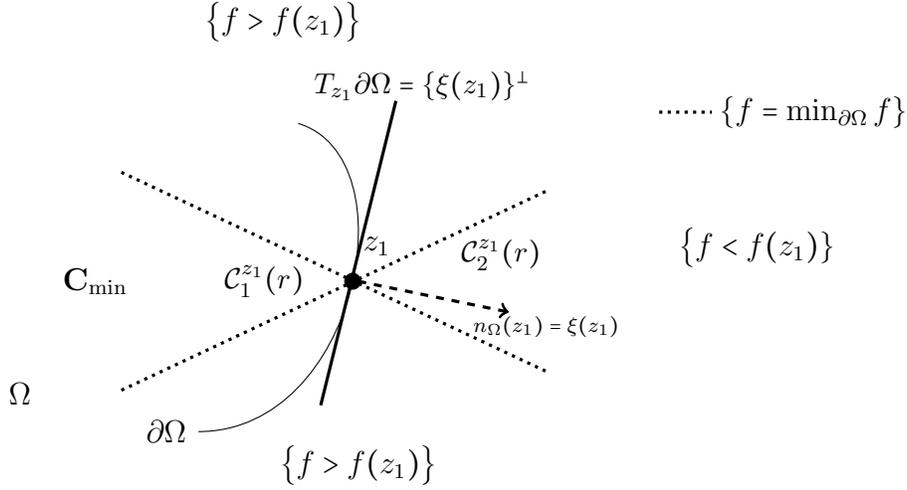
\begin{figure}[h!]
\begin{center}
\begin{tikzpicture}[scale=0.5]
\tikzstyle{vertex}=[draw,circle,fill=black,minimum size=6pt,inner sep=0pt]
\draw[very thick] (-1,-3.3)--(1,4.8); 
   \draw   (-1.6,4.2) ..controls (1.4,3.3)  and   (-0,-4)    .. (-4.25,-4) ;
 \draw (1.7,5.2) node[]{\small {$T_{z_1}\pa \Omega = \{\xi(z_1)\}^{\perp}$}};
\draw (-0.15,0)  node[vertex,label=north east: {}](v){};
  \draw (0.49,0.94)  node[] {$z_1$}; 
   \draw (-7,0) node[]{$\mathbf{C}_{{\rm min}}$};
   \draw (-5.1,-4) node[]{$\pa \Omega$};
  \draw (10.6,0.9) node[]{$ \big\{f<f(z_1)\}$};
      \draw (-2.5,0) node[]{{\small $\mathcal C_1^{z_1}(r)$}};
    \draw (3.8,0.7) node[]{{\small $\mathcal C_2^{z_1}(r)$}};
    \draw (-2,6.9) node[]{$ \big\{f> f(z_1)\big\}$};
     \draw (0,-4.9) node[]{$\big \{f> f(z_1)\big\}$};
   \draw [dotted, very thick]  (-6.3,2.9) to (5,-2.4) ;
   \draw [dotted,  very thick] (-6.3,-2.9) to (5,2.4) ;
      \draw (-9,-3)node[]{$\Omega$};
       \draw [very thick, dashed, ->] (0,0)--(4,-0.82); 
        \draw (5,-1.2)  node[] {\tiny{$n_\Omega(z_1)=\xi(z_1)$}}; 
         \draw (12.1,4.5)node[]{$\{f=\min_{\pa \Omega}f\}$};
          \draw[dotted, very thick] (8,4.5)--(9.4,4.5);
\end{tikzpicture}
\caption{Schematic representation of $\pa \Omega$ near $z_1\in \pa \mathbf{C}_{{\rm min}}\cap \pa \Omega$   
 when \eqref{normal} holds and  $\nabla f(z_1)=0$   (recall that $z_1$ is then a saddle point of $f$).  }  
 \label{fig:f-nearz}
 \end{center}
\end{figure}

\subsection{The deterministic dynamical system}
\label{sec.DS}

We give here  basic properties 
on the $\omega$-limit  sets of the deterministic dynamical system $\dot X=\boldsymbol{b}(X)$ associated with the stochastic differential equation \eqref{eq.langevin}
when \eqref{ortho} holds.

For every $x\in M$, we  denote by~$\varphi_{t}(x)$ the solution on $M$ to the ordinary differential equation   
\begin{equation}\label{eq.flow}
\frac{d}{dt}\varphi_{t}(x)  = \boldsymbol{b}(\varphi_{t}(x)) \text{ with initial condition } \varphi_{0}(x)=x.
   \end{equation}
 Notice that, since $ \boldsymbol{b}$ is (globally) Lipschitz continuous over $M$, such curves are defined globally. 
 
 Let us now describe the $\omega$-limit set of some $x\in M$ for the dynamical system \eqref{eq.flow}. 
This set, denoted by $\omega(x)$, is defined by (see e.g.~\cite[Definition 8.1.1]{wiggins2003introduction})
 $$\omega(x):=\{y\in M, \, \exists (s_n)_{n\in \mathbb N} \in (\mathbb R_+)^{\mathbb N}, \,\lim_{n\to \infty}s_n=+ \infty, \,\lim_{n\to \infty}\varphi_{s_n}(x)=y \}.$$
Let us recall that, for all $x\in M$,  $\omega(x)$ is nonempty, connected,   closed, and invariant under the flow of \eqref{eq.flow} (see e.g. \cite[Proposition 8.1.3]{wiggins2003introduction}). 
Moreover, since $\boldsymbol{\ell}\cdot \nabla f=0$ according to \eqref{ortho}:  for every $x\in M$  and  $t\in\R$,
 \begin{equation}\label{eq.flow-e}
 \frac{d}{dt}f(\varphi_{t}(x))=- \vert \nabla f\vert^2(\varphi_{t}(x))  .
 \end{equation}
 Hence, following the proof of~\cite[Theorem 15.0.3]{wiggins2003introduction}, we have, as for gradient vector fields:  for all $x\in M$,   $
 \omega(x)\subset \{y\in M,\, \nabla f(y) =0\}$. 
 Since the Morse function   $f:M\to \mathbb R$ has a finite number of critical points in $M$   and  $\omega(x)$ is  nonempty and connected:  for all $x\in M$,  there exists a critical point  $y\in M$ of $f$ such that   
$
\omega(x)=\{y\} 
$, so in particular $\lim_{t\to +\infty} \varphi_t(x)=y$.

 Now, recall that an equilibrium point for the dynamical system \eqref{eq.flow} is by definition a point $z\in M$ such that $ \boldsymbol{b}(z)=0$,
that is such that $\omega(z)=\{z\}$.  
It follows that
$$
\{z\in  M, \boldsymbol{b}(z) =0\}\subset \{z\in  M, \nabla f(z)  =0\}.
$$
 Moreover, since $\Hess f$ is invertible at any critical point of $f$, a Taylor expansion of
 $\boldsymbol{\ell} \cdot \nabla f=0$ around such a point shows that $\boldsymbol{\ell}(z)=0$
 whenever $\nabla f(z)  =0$.
 Thus, when \eqref{ortho} holds, we have the equality
$\{z\in  M, \nabla f(z)  =0\}=\{z\in  M, \boldsymbol{b}(z) =0\}$
 and, for all $x\in M$,  there exists  $y\in M$ such that
\begin{equation}\label{eq.incluw}
\omega(x)=\{y\} \subset \{z\in  M, \nabla f(z)  =0\}= \{z\in  M, \boldsymbol{b}(z) =0\}.
 \end{equation}  
With the same reasoning when $t\to-\infty$: for all $x\in M$,  there exist two critical points   $y_{\pm}$ of $f$ such that
\begin{equation}\label{eq.convphi}
\lim_{t\to +\infty} \varphi_t(x)=y_{+}\ \ \text{and}\ \ \lim_{t\to -\infty} \varphi_t(x)=y_{-}.
\end{equation} 

  \begin{definition}
  For every $x\in \Omega$, we set $t_x:=\inf  \{t\ge 0, \ \varphi_{t}(x)\notin \Omega\}>0$. 
  The domain of attraction of $F\subset \Omega$  is  defined by 
\begin{equation}\label{eq.A}
  \mathcal A(F):=\big \{ x\in \Omega, \,t_x=+\infty \text{ and } \omega(x)\subset F \big \}.
\end{equation}
  \end{definition}
  \noindent
Notice that when   \eqref{ortho} and  \eqref{well} hold,~\eqref{eq.flow-e} and \eqref{eq.incluw} imply that 
\begin{equation}\label{eq.Ax0}
\mathbf{C}_{{\rm min}}\subset   \mathcal A( \{x_0\}).
\end{equation}

\subsection{Main results}

We denote by $L^2(\Omega)$ the space of functions which are square integrable on $\Omega$ for the Lebesgue measure on $\Omega$. The associated Sobolev spaces of regularity $k\ge 1$ are denoted by  $H^k(\Omega)$. The  space $H^1_0(\Omega)$ denotes the spaces of functions $w\in H^1(\Omega)$ such that $w=0$ on $\pa \Omega$. 
  We also denote by $L^2_w(\Omega)$ the space of functions which are square integrable on $\Omega$ for the    measure $e^{-\frac 2h f}dx$ on $\Omega$. The notation $w$ indicates that the weight $e^{-\frac 2h f}dx$ appears in the inner product. The associated weighted Sobolev spaces of regularity $k\ge 1$ are denoted by  $H^k_w(\Omega)$.

According to  \eqref{ortho}, it is natural to work in $L^2_w(\Omega)$ to study the spectral properties of (minus) the infinitesimal generator $L_h$ of the process~\eqref{eq.langevin}
with Dirichlet conditions on $\pa\Omega$:
$$ L_h=-\frac h2 \Delta + \nabla f \cdot \nabla +  \boldsymbol{\ell}  \cdot \nabla
\ \ \ \text{ with domain $D(L_h)=H^2_w(\Omega)\cap \{w\in H^1_w(\Omega), w=0 \text{ on }\pa\Omega\}$.}
 $$ 
Its adjoint $L^{*}_h$ on $L^2_w(\Omega)$, whose domain is still $D(L_h)$,   has indeed the rather nice form 
$$
L^{*}_h=-\frac h2 \Delta + \nabla f \cdot \nabla -  \boldsymbol{\ell}  \cdot \nabla-  \div \boldsymbol{\ell}\,.
$$
In particular, when \eqref{div} holds,  $L^{*}_h$ is  $L_{h}$ with $\boldsymbol{\ell}$ replaced by~$-\boldsymbol{\ell}$, and the process \eqref{eq.langevin} is reversible when
$\boldsymbol{\ell}=0$.

 To study the spectral properties of $L_{h}$, we actually use a unitary transformation to work in the flat space $L^2(\Omega)$, where computations such as integrations by parts are easier to perform.  
We  denote by $ \nabla_{f,h}:= h\, e^{-\frac f h}\nabla e^{\frac f h}=h\nabla + \nabla f$ the distorted gradient \textit{\`a  la} Witten and  
\begin{equation}\label{eq.Witten}
 \Delta_{f,h}:= \nabla_{f,h}^*\nabla_{f,h}=-h^2\Delta+ \vert \nabla f\vert^2-h\Delta f
 \end{equation}
 the Witten Laplacian associated with $f$, where adjoints are now taken on $L^2(\Omega)$. 
Let us then define 
\begin{equation}\label{eq.unitary}
P_h:= 2h \, e^{-\frac f h} \,  L_h \, e^{\frac f h} = \Delta_{f,h}+  2\boldsymbol{\ell}\cdot \nabla_{f,h}= 
\Delta_{f,h}+2h\,\boldsymbol{\ell}\cdot \nabla
\end{equation}
with domain  $D(P_h)=H^{2}(\Omega)\cap H^1_0(\Omega)$
 on $L^2(\Omega)$.
According to   \eqref{eq.unitary}, the operators $2h\,L_{h}$  and~$P_{h}$ are unitarily equivalent, 
and thus have the same spectral properties.
In particular, for all $h>0$,  $\lambda \in \sigma(L_h)$ if and only if $ 2h\,\lambda  \in \sigma(P_h)  $, and the algebraic and  geometric multiplicities of $\lambda$ are the same for both $L_h$ and $(2h)^{-1}P_h$.\\

 The following result describes general spectral properties of $(P_h,D(P_h))$, and thus of $(L_h,D(L_h))$,   for every fixed $h>0$.

 \begin{proposition}\label{pr.spectre}
Assume that \eqref{ortho} holds. Then, for every $h>0$:
\begin{itemize}
\item
The operator $P_{h}:D(P_{h})\to L^{2}(\Omega)$ is  maximal quasi-accretive. More precisely, the operator
$P_{h}+h\|\div\boldsymbol{\ell}\|_{\infty}:D(P_{h})\to L^{2}(\Omega)$ is  maximal accretive. 
Furthermore, $P_{h}$  has a compact resolvent and is sectorial. 
\item The adjoint of $P_{h}:D(P_{h})\to L^{2}(\Omega)$
is the operator  
$$P_{h}^*= \Delta_{f,h}- 2\boldsymbol{\ell}\cdot \nabla_{f,h} - 2h\div\boldsymbol{\ell}\  \text{ with domain $D(P_h)$}.$$
It is also maximal quasi-accretive, with a compact resolvent, and sectorial. 
\item
There exists $\Sigma\subset \mathbb C$ such that the  spectra of $P_{h}$ and of $P_{h}^*$ satisfy
$$\sigma(P_{h})=\{\lambda_{1,h}^P\}\, \cup\,\Sigma \ \ \ \text{and}\ \ \   \sigma(P_{h}^*) = \{ \lambda_{1,h}^P \}\, \cup\,\overline  \Sigma,$$
where $\lambda_{1,h}^P\in \mathbb R^*_+$  is simple  (i.e. has algebraic multiplicity $1$) for both $P_{h}$ and $P_{h}^*$ and, 
for every $\lambda \in\Sigma$, $\Re \lambda > \lambda_{1,h}^P$.\\
 Moreover,
 $P_{h}$ (resp. $P_{h}^*$) admits
 an eigenfunction $u_{1,h}^P$  (resp.~$u_{1,h}^{P^*}$) associated with 
   $\lambda_{1,h}^P$  which is positive  within $\Omega$. 
\end{itemize}
\end{proposition}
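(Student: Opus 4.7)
The plan is to establish the three bullet points successively, using standard tools of operator theory adapted to our elliptic second-order setting. For the quasi-accretivity, I would integrate by parts using $u|_{\partial\Omega}=0$ and the identity $\Delta_{f,h}=\nabla_{f,h}^*\nabla_{f,h}$ to write, for $u\in D(P_h)$,
\[
\Re\langle P_h u, u\rangle \,=\, \|\nabla_{f,h}u\|^2 + 2h\,\Re\langle\boldsymbol{\ell}\cdot\nabla u, u\rangle \,=\, \|\nabla_{f,h}u\|^2 - h\langle(\div\boldsymbol{\ell})u,u\rangle,
\]
where the last equality follows from $2\Re\langle\boldsymbol{\ell}\cdot\nabla u,u\rangle=\int\boldsymbol{\ell}\cdot\nabla|u|^2=-\int(\div\boldsymbol{\ell})|u|^2$. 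This yields the accretivity of $P_h+h\|\div\boldsymbol{\ell}\|_\infty$. Maximality (surjectivity of $P_h+\mu$ for some $\mu>0$) then follows from the Lax-Milgram theorem applied to the coercive shifted sesquilinear form on $H^1_0(\Omega)$, combined with standard $H^2$ elliptic regularity up to the boundary. Compactness of the resolvent is a consequence of Rellich's theorem via the embedding $H^2\cap H^1_0(\Omega)\hookrightarrow L^2(\Omega)$, and sectoriality is obtained from the bound $|\Im\langle P_h u,u\rangle|\leq 2h\|\boldsymbol{\ell}\|_\infty\|\nabla u\|\,\|u\|$ together with $\|\nabla u\|\leq C(\|\nabla_{f,h}u\|+\|u\|)$, which places the numerical range of $P_h$ inside a parabolic region, hence in a sector.

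The formula for $P_h^*$ stems from integration by parts on $\Omega$: using $\boldsymbol{\ell}\cdot\nabla f=0$ (so that $\boldsymbol{\ell}\cdot\nabla_{f,h}=h\,\boldsymbol{\ell}\cdot\nabla$) we get, for $u,v\in D(P_h)$, the identity $\langle\boldsymbol{\ell}\cdot\nabla u,v\rangle = -\langle u,\boldsymbol{\ell}\cdot\nabla v\rangle - \langle u,(\div\boldsymbol{\ell})v\rangle$. Identifying the domain of $P_h^*$ with $D(P_h)$ relies on ellipticity and standard regularity results for the Dirichlet realization of second-order elliptic operators, after which the quasi-accretivity, compact resolvent, and sectoriality of $P_h^*$ follow by repeating the previous arguments with $\boldsymbol{\ell}$ replaced by $-\boldsymbol{\ell}$ and absorbing the bounded zeroth-order term $-2h\div\boldsymbol{\ell}$ into the accretivity shift.

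The spectral structure is where I expect the main difficulty. The unitary identity~\eqref{eq.unitary} combined with the Feynman-Kac representation $e^{-tL_h}\varphi(x)=\mathbb{E}_x[\varphi(X_t)\indic_{t<\tau_{\Omega^c}}]$ shows that the semigroup $e^{-tP_h/(2h)}$ is positivity-preserving. The crucial step is to upgrade this to \emph{positivity-improving} in $\Omega$ for every $t>0$, which I would achieve by invoking the parabolic strong maximum principle on $(0,t)\times\Omega$ together with Hopf's boundary lemma, ruling out vanishing of $e^{-tP_h}u$ at any interior point whenever $u\geq 0$ is nonzero. Given that the resolvent is compact, $e^{-tP_h}$ is compact, and the Krein-Rutman theorem applied to it produces a simple dominant eigenvalue $e^{-t\lambda_{1,h}^P}$ with $\lambda_{1,h}^P\in\mathbb{R}$, an associated eigenfunction $u_{1,h}^P$ that is positive in $\Omega$, and the strict spectral gap $\Re\lambda>\lambda_{1,h}^P$ for every other eigenvalue $\lambda$ of $P_h$. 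The strict positivity $\lambda_{1,h}^P>0$ follows from the fact that the killed semigroup $e^{-tL_h}$ decays to $0$ in $L^\infty(\Omega)$, forcing $\lambda_{1,h}^L>0$. Applying the same reasoning to $P_h^*$ (whose semigroup inherits the same properties) yields a positive eigenfunction $u_{1,h}^{P^*}$ associated with a dominant eigenvalue which, by the general relation $\sigma(P_h^*)=\overline{\sigma(P_h)}$ and the simplicity of $\lambda_{1,h}^P$, must coincide with $\lambda_{1,h}^P$, completing the description of both spectra.
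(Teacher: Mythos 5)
Your treatment of the first two bullet points coincides with the paper's proof: the same integration by parts (exploiting $\boldsymbol{\ell}\cdot\nabla_{f,h}=h\,\boldsymbol{\ell}\cdot\nabla$) to compute $\Re\langle P_h u,u\rangle$, Lax--Milgram plus $H^2$ elliptic regularity for maximality, Rellich for compactness of the resolvent, the same Young-inequality control of $\Im\langle P_h u,u\rangle$ to bound the numerical range in a sector, and the same argument that the formal adjoint with domain $D(P_h)$ must equal $P_h^*$ because both are maximal quasi-accretive.

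For the third bullet, however, you take a genuinely different route. The paper simply invokes the Krein--Rutman-type theorems of Du (\cite[Theorems~1.3, 1.4, 2.7]{du2006order}) for the Dirichlet realization of a second-order elliptic operator, which directly deliver a real, algebraically simple principal eigenvalue with a positive eigenfunction and the strict spectral gap $\Re\lambda>\lambda_{1,h}^P$, and then establishes $\lambda_{1,h}^P>0$ by the weak maximum principle applied to $L_h(e^{f/h}u_{1,h}^P)=\frac{\lambda_{1,h}^P}{2h}e^{f/h}u_{1,h}^P$. You instead rebuild this from the probabilistic side: Feynman--Kac represents $e^{-tL_h}$ as the killed semigroup, the parabolic strong maximum principle with Hopf's lemma upgrades positivity-preserving to positivity-improving, Krein--Rutman applied to the compact, positivity-improving operator $e^{-tP_h}$ produces the simple dominant eigenvalue with positive eigenvector, and the decay of $\mathbb{P}_x(\tau_{\Omega^c}>t)$ forces $\lambda_{1,h}^P>0$. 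Both are correct. Your route is more self-contained and explains the mechanism rather than citing it, but it requires two pieces of extra care that you should make explicit: (i) transferring the Krein--Rutman conclusion on $e^{-tP_h}$ to the generator $P_h$ uses the spectral mapping theorem for eigenvalues of the (analytic, compact) semigroup, which indeed holds here because $P_h$ is sectorial with compact resolvent; and (ii) $\tau_{\Omega^c}<\infty$ a.s.~(so that $\mathbb{P}_x(\tau_{\Omega^c}>t)\to0$) follows from non-degeneracy of the diffusion on a bounded domain. The paper's citation route is shorter but relies on the reader accepting the external reference; your route keeps everything internal at the cost of more machinery.
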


The proof of Proposition~\ref{pr.spectre} uses standard arguments on elliptic operators with Dirichlet boundary conditions on a smooth  bounded domain. It is proved in the appendix
for the sake of completeness.

The eigenvalue $\lambda_{1,h}^P$ is the so-called principal eigenvalue of $P_h$. According to \eqref{eq.unitary}, the principal eigenvalue $\lambda^L_{1,h}$ of $L_h$ acting on $L^2_w(\Omega)$  thus
satisfies $2h\,\lambda^L_{1,h}= \lambda^P_{1,h}$. 
Moreover, by compacity of the resolvent of $L_{h}$, its spectrum is discrete and can only accumulate at infinity. 
Hence, the sectoriality of $L_{h}$
and the last item of
Proposition~\ref{pr.spectre} imply the existence of a spectral gap for every $h>0$, that is:
$$
\forall h >0\,,\ \exists c_{h}>0\,,\ \ \sigma(L_{h})\cap \left\{\mathsf  z\in\C\,,\ \Re \mathsf  z \in(\lambda_{1,h}^L,\lambda_{1,h}^L+c_{h})\right\}=\emptyset.
$$
Furthermore, the analysis led in Section~\ref{se-Re-Ph} (see Theorem~\ref{th:2}) permits
to specify the behaviour of $\lambda_{1,h}^{L}$ and of this spectral gap with respect to $h$: when $f$
admits  $\mathsf m_0$ local minima in $\Omega$,
there exist $c_{1},c_{2}>0$ and $h_{0}>0$ such that, for every $h\in(0,h_{0}]$,
$L_{h}$ admits $\mathsf m_0$ eigenvalues (counted with multiplicity) in $\{\mathsf  z\in\C,\ |\mathsf  z|\leq e^{-\frac {c_{1}}h}\}$
and its remaining eigenvalues live in $\{\mathsf  z\in\C,\ \Re \mathsf  z\geq  c_{2}\}$.
In particular, when \eqref{well} is also satisfied:
$$
\exists c,h_{0}>0\,,\  \forall h \in(0,h_{0}]\,,\ \ \lambda_{1,h}^L\leq e^{-\frac ch}\ \ \text{and}\ \ 
\sigma(L_{h})\cap \left\{\mathsf  z\in\C\,,\ \Re \mathsf  z \in(\lambda_{1,h}^L,\lambda_{1,h}^L+c)\right\}=\emptyset.
$$

We can now state the two main results of this work.

\begin{theorem}\label{th:main} 
Assume  \eqref{ortho} and   \eqref{well}. 
Let $K$ be a compact subset of   $\mathcal A( \{x_0\})$ (see \eqref{eq.A}). 
Then,  there exist $c>0$ and $h_0>0$ such that  for all $h\in (0,h_0]$:
\begin{enumerate}
\item[a.]  The principal eigenvalue $\lambda_{1,h}^L$ of $L_{h}$ satisfies, 
\begin{equation}\label{eq.info}
\sigma(L_{h})\cap \big\{ \mathsf z\in \mathbb C, \Re \mathsf z\le c\big\}=\{\lambda_{1,h}^L\big\}\  \text{ and }\ 
 \lim_{h\to 0}h\ln \lambda_{1,h}^L= -2\,\big(\min_{\pa \Omega}f -f(x_0)\big).
 \end{equation} 
 
\item[b.] The mean exit time   $\tau_{\Omega^c}$ satisfies, uniformly in $x\in K$,
\begin{equation}\label{eq.ex} 
  \mathbb  E_x[\tau_{\Omega^c}]=\frac{ (1+ O( e^{-\frac{c}{h}}))} {\lambda_{1,h}^L}.
  \end{equation}
\item[c.]  The law of the exit time  $\tau_{\Omega^c}$  satisfies, 
   \begin{equation}
\label{eq.loiP} 
\sup_{t\geq 0\,,\,x\in  K}\, \Big\vert\, \mathbb P_{x}\Big[\tau_{\Omega^c}> \frac{t}{\lambda_{1,h}^L}\Big] - e^{-t}\,\Big \vert \le e^{-\frac ch}. 
\end{equation}
 \end{enumerate}
\end{theorem}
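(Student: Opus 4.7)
The overall strategy, in the spirit of the approach based on quasi-stationary distributions (QSD), is to combine the spectral description of $L_h$ from Proposition~\ref{pr.spectre} and the spectral gap announced in Theorem~\ref{th:2} with sharp large-deviation estimates in Freidlin--Wentzell form for the flow $\dot X=\boldsymbol{b}(X)$. Since \eqref{well} ensures that $f|_\Omega$ has a single local minimum, Theorem~\ref{th:2} directly gives the existence of a single exponentially small eigenvalue of $L_h$ isolated in $\{\Re\mathsf z\le c\}$, which is the spectral part of \eqref{eq.info}. The logarithmic asymptotics $h\ln\lambda_{1,h}^L\to -2(\min_{\pa\Omega}f-f(x_0))$ then follows from Freidlin--Wentzell: under \eqref{ortho}, the identity $\boldsymbol\ell\cdot\nabla f=0$ and \eqref{eq.flow-e} make the quasi-potential from $x_0$ coincide with $2(f(\cdot)-f(x_0))$, so its minimum over $\pa\Omega$ equals $2(\min_{\pa\Omega}f-f(x_0))$, attained on $\pa\mathbf{C}_{{\rm min}}\cap\pa\Omega$; this yields the Donsker--Varadhan-type identification of the principal Dirichlet eigenvalue.

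For items~b and~c we build a spectral decomposition of the survival probability. Let $u_h$ and $v_h$ denote the positive right and left eigenfunctions of $L_h$ associated with $\lambda_{1,h}^L$ given by Proposition~\ref{pr.spectre}, normalized so that $\<u_h,v_h\>_w=1$. The QSD of the killed process is then $\nu_h(dx)=v_h(x)e^{-2f(x)/h}dx/Z_h$, under which $\tau_{\Omega^c}$ is exactly exponentially distributed with parameter $\lambda_{1,h}^L$. Setting $\alpha_h(x):=u_h(x)\int_\Omega v_h\,e^{-2f/h}dx$, the spectral gap provides
\begin{equation*}
\mathbb P_x[\tau_{\Omega^c}>s]=\alpha_h(x)\,e^{-s\lambda_{1,h}^L}+r_h(x,s),\qquad \|r_h(\cdot,s)\|_{L^2_w(\Omega)}\le C\,e^{-cs},
\end{equation*}
and propagating this by the killed semigroup over an additional unit of time (parabolic smoothing combined with standard heat-kernel bounds for the non-reversible generator $L_h$) upgrades the $L^2_w$ estimate to a uniform pointwise bound $|r_h(x,s)|\le C\,e^{-cs}$ on $K$.

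To identify $\alpha_h(x)$, fix $c_0\in(0,2(\min_{\pa\Omega}f-f(x_0)))$ and set $t_h:=e^{c_0/h}$. Since $K\subset\mathcal A(\{x_0\})$, the deterministic flow starting in $K$ stays in a compact sub-level set of $\Omega$ at all times, so a direct Freidlin--Wentzell estimate yields $\mathbb P_x[\tau_{\Omega^c}>t_h]=1-O(e^{-c/h})$ uniformly in $x\in K$. Combining this with the decomposition above at $s=t_h$, and using that $t_h\lambda_{1,h}^L\to 0$ while $e^{-ct_h}$ is doubly exponentially small, one obtains $\alpha_h(x)=1+O(e^{-c/h})$ uniformly on $K$. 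For item~b, integrating gives
\begin{equation*}
\mathbb E_x[\tau_{\Omega^c}]=\int_0^\infty\mathbb P_x[\tau_{\Omega^c}>s]\,ds=\frac{\alpha_h(x)}{\lambda_{1,h}^L}+O(1)=\frac{1+O(e^{-c/h})}{\lambda_{1,h}^L},
\end{equation*}
using that $1/\lambda_{1,h}^L$ is exponentially large. For item~c, if $t/\lambda_{1,h}^L\ge t_h$ the decomposition directly gives $|\mathbb P_x[\tau_{\Omega^c}>t/\lambda_{1,h}^L]-e^{-t}|\le|\alpha_h(x)-1|\,e^{-t}+|r_h|\le e^{-c/h}$; and for $t/\lambda_{1,h}^L<t_h$ the Freidlin--Wentzell estimate yields $\mathbb P_x[\tau_{\Omega^c}>t/\lambda_{1,h}^L]=1-O(e^{-c/h})$, while $e^{-t}=1-O(t_h\lambda_{1,h}^L)=1-O(e^{-c/h})$.

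The most delicate step is the uniform pointwise relaxation estimate: the spectral gap only yields an $L^2_w$ bound on $r_h$, and upgrading it to a pointwise bound on $K$ with constants independent of $h$ requires careful heat-kernel or sub-elliptic estimates for the non-reversible Dirichlet generator $L_h$, all the more so because $\pa\Omega$ may carry equilibria of $\boldsymbol b$. Here the assumption \eqref{ortho} is crucial: it aligns the Freidlin--Wentzell quasi-potential with the Witten-type structure of $P_h$ from \eqref{eq.unitary}, so that the probabilistic and semiclassical pictures remain compatible even in the presence of such boundary equilibria.
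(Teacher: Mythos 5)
Your overall architecture has the right ingredients (spectral isolation from Theorem~\ref{th:2}, exactness of the exponential law under the QSD, uniform entry estimates on $K\subset\mathcal A(\{x_0\})$), but two pivotal steps are asserted rather than established, and neither is easily fixable.

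First, the pointwise relaxation bound $|r_h(x,s)|\le Ce^{-cs}$ uniformly in $x\in K$ and $h$ is the linchpin of your argument for items b and c, and you acknowledge this. But passing from the $L^2_w(\Omega)$ spectral-gap estimate to an $L^\infty(K)$ bound with $h$-uniform constants is not a routine parabolic-smoothing step here: $L^2_w$ carries the weight $e^{-2f/h}$, which is comparable to $e^{-2f(x_0)/h}$ near $x_0$ and exponentially larger on the rest of $K$, while the relevant elliptic regularity scale for $P_h=2h\,e^{-f/h}L_h\,e^{f/h}$ is $h$, not $1$. Controlling $\sup_{x\in K}\int p^h_1(x,y)^2e^{2f(y)/h}\,dy$ (the operator norm of the killed semigroup from $L^2_w$ to $L^\infty(K)$) uniformly in $h$ is a nontrivial quantitative heat-kernel estimate; in fact, as $h\to 0$ the kernel concentrates on deterministic trajectories and the weight $e^{2f(y)/h}$ grows, so this requires real work to rule out $h^{-N}$ or worse blow-up. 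The paper sidesteps this entirely: it never needs a pointwise spectral decomposition of $\mathbb P_x[\tau_{\Omega^c}>s]$. Instead, item~c is obtained by a purely probabilistic comparison of $\mathbb P_x[\tau_{\Omega^c}>u]$ across starting points in the spirit of Galves--Olivieri--Vares, using Day's total-variation estimate on hitting distributions of $\pa\mathbf{C}_{\rm min}(\eta_0)$ (display \eqref{eq.L3-galves} and its proof via \eqref{eq.L3-galves-2}); the eigenfunction $u_{1,h}^{P^*}$ enters only through $L^2$ concentration estimates (Proposition~\ref{pr.approximation0}), which are integrated against, never evaluated pointwise.

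Second, you derive the Arrhenius asymptotics in \eqref{eq.info} from a ``Donsker--Varadhan-type identification of the principal Dirichlet eigenvalue'' with the quasi-potential minimum. That identification is known in settings where $\boldsymbol b\cdot n_\Omega<0$ on $\pa\Omega$ (e.g.\ Day, Ishii--Souganidis), but it is not an off-the-shelf result here precisely because $\pa\Omega$ may contain equilibria of $\boldsymbol b$---which is one of the main technical obstacles the theorem is overcoming. The paper proves it from scratch: the upper bound for $\lambda_{1,h}^L$ comes from the identity $1/\lambda_{1,h}^L=\mathbb E_{\nu_h}[\tau_{\Omega^c}]$ (\eqref{eq.Starting-Point}), the leveling result of Theorem~\ref{th:leveling}, and the eigenmode concentration \eqref{eq.exLL}; the lower bound comes from the elementary maximum-principle inequality $\lambda_{1,h}^L\sup_{\overline\Omega}\mathbb E_x[\tau_{\Omega^c}]\ge 1$ together with the Freidlin--Wentzell graph estimate of Proposition~\ref{pr.upper-bound}, which itself needs the enlarged domains $D_\alpha$ and Lemmas~\ref{le.WF1}--\ref{le.stable-compact} to circumvent boundary equilibria. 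Your appeal to the quasi-potential is morally correct, but the justification in this degenerate-boundary setting is exactly what requires the machinery of Section~\ref{sec.level}. If you want to salvage your route, you would need to (i) prove an $h$-uniform $L^2_w\to L^\infty(K)$ bound for $e^{-L_h}$ restricted to $\Ran(1-\Pi_1)$, and (ii) supply an independent proof of the Arrhenius law compatible with equilibria on $\pa\Omega$; both are significant additions.
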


\noindent
Let us make some comments with regard to Theorem~\ref{th:main}:

\begin{itemize}

\item The second statement in~\eqref{eq.info}  is the so-called  Arrhenius law for $\lambda_{1,h}^L$. Together with~\eqref{eq.ex}, it implies the following   Arrhenius law for the mean exit time~$\tau_{\Omega^c}$: 
$$\lim_{h\to 0}h\ln  \mathbb  E_x[\tau_{\Omega^c}]= 2\,\big(\min_{\pa \Omega}f -f(x_0)\big), \ \text{ uniformly in $x\in K$}.$$

\item 
Equation~\eqref{eq.ex}   provides the following    \textit{leveling result} on the mean exit time from $\Omega$: $\mathbb  E_x[\tau_{\Omega^c}]= \mathbb  E_{y}[\tau_{\Omega^c}](1+O(e^{-\frac ch}))$,  uniformly in $x,y$ in the compacts of  $\mathcal A(\mathbf{C}_{{\rm min}})$ (see \eqref{eq.Ax0}). 
As long as   \eqref{ortho} is satisfied,  this leveling result  extends  the one obtained in  \cite[Corollary~1]{day-83} when \eqref{eq.flow} admits equilibrium points  on $\pa \Omega$.  It also extends \cite[Theorem 2]{NectouxCPDE}  when  the underlying process is non-reversible. 
  \vspace{0.2cm}

\item Equation  \eqref{eq.loiP} implies that when $h\to 0$,
 the law of  $\lambda_{1,h}^L\tau_{\Omega^c}$ converges exponentially fast to the exponential law of mean $1$, uniformly  in the compacts of $\mathcal A( \{x_0\})$. Notice that~\eqref{eq.ex}  is not a consequence of \eqref{eq.loiP}.\vspace{0.2cm}

   \item Deriving 
 Theorem~\ref{th:main}
 for all $x\in \mathcal A(\mathbf{C}_{{\rm min}})$ and not only for $x=x_0$ is of real interest  for applications relying on the process \eqref{eq.langevin}. Indeed, ones wants in practice  an estimate on the time this process  remains trapped in the metastable domain $\Omega$. Since  it admits a density with respect  to the   Lebesgue measure  $dx$  on $M$,  the probability that its trajectories   pass through     $x_0$ is zero.     
\end{itemize}

Our second main result states that, under the additional assumptions \eqref{div} and \eqref{normal},
the eigenvalue $\lambda_{1,h}^L$ satisfies an Eyring-Kramers type formula.

\begin{theorem}\label{th:main2} 
Assume \eqref{ortho}, \eqref{well},  \eqref{div}, and \eqref{normal}.
Then, when $h\to 0$, the eigenvalue $\lambda_{1,h}^L$
satisfies the following Eyring-Kramers type formula:
\begin{equation}\label{eq.As-lambda}
\lambda_{1,h}^L=\big (\kappa^L_1\,h^{-\frac12}  +  \kappa_2^L + O(h^{\frac14}))   \, e^{-\frac 2h (\min_{\pa \Omega}f -f(x_0))},
\end{equation} 
 where  \begin{equation}
 \label{eq.Pre1}
 \left\{
    \begin{array}{ll}
         \kappa_1^L &=  \displaystyle{\frac {\sqrt{ \det\Hess f(x_0)} }{\sqrt \pi}  \sum_{\substack{z\in\pa \mathbf{C}_{{\rm min}} \cap \pa \Omega \\ \nabla f(z)\neq 0}}  \,   \frac{\partial_{ n_\Omega}f(z) }{\sqrt{  \det\Hess f_{\vert\partial\Omega}(z)    } }}\\
        \kappa_2^L &=  \displaystyle{ \frac {\sqrt{ \det\Hess f(x_0)} }{2\pi}  \sum_{\substack{z\in\pa \mathbf{C}_{{\rm min}} \cap \pa \Omega \\  \nabla f(z)=0}}  \frac{2\vert \mu(z)\vert }{\sqrt{\vert   \det\Hess f(z)  \vert  }}  }
    \end{array}
\right.,
 \end{equation}
and $\mu(z)$ denotes the negative eigenvalue of $\Hess f(z) + {}^t\mathsf L(z)$ at a saddle point  $z$ of $f$ (see Lemma~\ref{le.lep-michel}).
\end{theorem}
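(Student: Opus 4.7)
The plan is to exploit the unitary equivalence between $(L_h,D(L_h))$ and $((2h)^{-1}P_h,D(P_h))$ and compute $\lambda_{1,h}^P=2h\lambda_{1,h}^L$ via a boundary-flux formula. Let $u:=u_{1,h}^P$ denote the positive principal eigenfunction given by Proposition~\ref{pr.spectre}, normalized so $\|u\|_{L^2(\Omega)}=1$. For every $\phi\in H^2(\Omega)$, one Green integration by parts in $\langle P_hu,\phi\rangle_{L^2(\Omega)}$ combined with $u\vert_{\pa\Omega}=0$ and \eqref{div} (so that the first-order term $2h\boldsymbol{\ell}\cdot\nabla$ contributes no boundary integral) yields
\begin{equation*}
\lambda_{1,h}^P\,\langle u,\phi\rangle_{L^2(\Omega)}\;=\;\langle u,\,P_h^*\phi\rangle_{L^2(\Omega)}\;-\;h^2\int_{\pa\Omega}(\pa_{n_\Omega}u)\,\phi\,d\sigma\,.
\end{equation*}
I choose $\phi(x):=\chi(x)\,e^{-(f(x)-f(x_0))/h}$, where $\chi\in\CC^\infty(\overline\Omega;[0,1])$ equals $1$ on a neighborhood of $\overline{\mathbf{C}_{{\rm min}}}$ in $\overline\Omega$ and is supported in $\{f\leq\min_{\pa\Omega}f+\delta\}\cap\overline\Omega$. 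Assumption \eqref{ortho} together with \eqref{div} gives $P_h^*(e^{-(f-f(x_0))/h})=0$, hence $P_h^*\phi$ is supported on $\{\nabla\chi\neq 0\}\subset\{f\geq\min_{\pa\Omega}f+\delta'\}$. Combined with Agmon estimates $|u(x)|\leq C_h e^{-(f(x)-f(x_0))/h}$, this makes $\langle u,P_h^*\phi\rangle$ exponentially smaller than the flux. Laplace's method at $x_0$ yields $\langle u,\phi\rangle=(\pi h)^{d/4}\det\Hess f(x_0)^{-1/4}(1+O(h))$.

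On $\pa\Omega$, the factor $\phi$ is exponentially smaller than $e^{-(\min_{\pa\Omega}f-f(x_0))/h}$ outside any neighborhood of the finite set $\pa\mathbf{C}_{{\rm min}}\cap\pa\Omega$ (cf.~\eqref{eq.Ccons}), so the boundary integral decomposes as $\sum_{z\in\pa\mathbf{C}_{{\rm min}}\cap\pa\Omega}I_z(h)$ up to exponentially small remainders, where each $I_z(h)$ is evaluated by a local WKB analysis near $z$. When $\nabla f(z)\neq 0$, \eqref{normal} asserts $\boldsymbol{\ell}(z)=0$, so the local operator is reversible to leading order. Writing $u=g\cdot w$ with $g:=e^{-(f-f(x_0))/h}$ turns the eigenvalue equation into $h\Delta w+2\boldsymbol{b}\cdot\nabla w\approx 0$; in boundary-adapted coordinates $(\tilde y,\nu)$ with $\nu>0$ inward and $\boldsymbol{b}(z)=\pa_{n_\Omega}f(z)\,e_\nu$, the boundary-layer ODE $hw''+2\pa_{n_\Omega}f(z)\,w'=0$ with $w(0)=0,\,w(+\infty)=1$ gives $w(\nu)=1-e^{-2\pa_{n_\Omega}f(z)\nu/h}$, so $\pa_{n_\Omega}u\vert_{\pa\Omega}\sim -(2\pa_{n_\Omega}f(z)/h)\,g\vert_{\pa\Omega}$. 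A $(d-1)$-dimensional Gaussian integration against $\frac{1}{2}\tilde y^T\Hess(f_{\vert\pa\Omega})(z)\tilde y$ then produces the $\kappa_1^L h^{-1/2}$ contribution after division by $\langle u,\phi\rangle$ and $2h$.

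When $\nabla f(z)=0$, $z$ is a saddle point of $f$ and \eqref{normal} imposes $T_z\pa\Omega=\{\xi(z)\}^{\perp}$, so $\pa\Omega$ is tangent at $z$ to the $(d{-}1)$-dimensional stable manifold of $\dot X=\boldsymbol{b}(X)$. Building on Lemma~\ref{le.lep-michel}(2), I construct a local WKB quasi-mode anchored on the positive-definite quadratic form $Q(y):=y^T\bigl(\Hess f(z)+2|\mu(z)|\,\xi(z)\xi(z)^T\bigr)y$, whose determinant equals $-\det\Hess f(z)$. Matched to the interior Agmon profile $e^{-(f-f(x_0))/h}$ and vanishing on $\pa\Omega$, this quasi-mode produces a boundary-layer behavior along the direction $\xi(z)$ of the form $\mathrm{erf}\bigl(\sqrt{|\mu(z)|/h}\,\xi(z)\cdot(y-z)\bigr)$; after Gaussian integration in the $(d-1)$ tangent directions with weight inherited from $Q$, this generates the Kramers-type prefactor $\frac{2|\mu(z)|}{\sqrt{|\det\Hess f(z)|}}$, hence the $\kappa_2^L$ contribution. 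Summing the two types of contributions, dividing the total flux by $\langle u,\phi\rangle$, multiplying by $-h^2$ and finally by $(2h)^{-1}$ to pass from $\lambda_{1,h}^P$ to $\lambda_{1,h}^L$ yields \eqref{eq.As-lambda}--\eqref{eq.Pre1}.

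The main obstacle is the WKB construction at the saddle-point boundary points: one must build, for the \emph{non-symmetric} operator $P_h$, a quasi-mode near $z$ that simultaneously (i) matches the interior Agmon profile $e^{-(f-f(x_0))/h}$, (ii) satisfies the Dirichlet condition on a hypersurface only first-order tangent to $\{\xi(z)\}^{\perp}$, and (iii) correctly encodes the non-reversible eigenvalue $\mu(z)$ of $\Hess f(z)+{}^t\mathsf L(z)$ along with its real eigenvector $\xi(z)$---the latter being the mechanism by which the rotational field $\boldsymbol{\ell}$ accelerates the exit ($|\mu(z)|\geq|\lambda(z)|$ by Lemma~\ref{le.lep-michel}(3)). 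The $O(h^{1/4})$ remainder in \eqref{eq.As-lambda} reflects half-integer powers of $h$ typical of boundary Laplace expansions at critical points and of the interaction between the curvature of $\pa\Omega$ and the form $Q$; controlling it uniformly in a potentially mixed configuration of case-A and case-B points requires combining Agmon decay estimates for $u$ away from $\overline{\mathbf{C}_{{\rm min}}}$ with refined remainder bounds in the local WKB/Laplace expansions near each $z\in\pa\mathbf{C}_{{\rm min}}\cap\pa\Omega$.
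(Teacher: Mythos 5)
Your strategy is genuinely different from the paper's: you aim to extract $\lambda_{1,h}^P$ from a boundary-flux identity $\lambda_{1,h}^P\langle u_{1,h}^P,\phi\rangle=\langle u_{1,h}^P,P_h^*\phi\rangle-h^2\int_{\pa\Omega}(\pa_{n_\Omega}u_{1,h}^P)\,\phi\,d\sigma$ with a carefully chosen test function $\phi$, which requires pointwise control of the \emph{true} eigenfunction's normal derivative on $\pa\Omega$. The paper instead constructs an explicit quasi-mode $\mathsf f_{1,h}=\phi_{1,h}e^{-f/h}/Z_{1,h}$ (Definition~\ref{de.QM}), verifies only $L^{2}$-type estimates \eqref{E1}--\eqref{E3}, and leverages the a priori resolvent bound of Theorem~\ref{th:2} through Proposition~\ref{pr.strategy} to conclude $\lambda_{1,h}^P=\langle P_h\mathsf f_{1,h},\mathsf f_{1,h}\rangle(1+O(h^{\ell}))$; the true eigenfunction is never approximated pointwise. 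The two routes are complementary — boundary-flux formulas are classical in the reversible literature (potential theory, Helffer--Nier) — but the paper's route is tailor-made to avoid exactly the boundary-layer analysis you flag.

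That said, there is a genuine gap: you correctly identify the missing step (building a WKB/matched-asymptotic approximation for $u_{1,h}^P$ near each $z\in\pa\mathbf{C}_{\rm min}\cap\pa\Omega$, including the $\nabla f(z)=0$ case where $\pa\Omega$ is only tangent to $\{\xi(z)\}^\perp$ and the operator is non-self-adjoint), but you do not supply it. Constructing a quasi-mode is not the same as proving that $u_{1,h}^P$ agrees with it to the order required to evaluate $\pa_{n_\Omega}u_{1,h}^P$ to relative error $O(h^{1/4})$; for the non-symmetric $P_h$ near a boundary saddle, the required agreement would need Agmon-type estimates up to the boundary and a rigorous matching argument, neither of which is sketched. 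Your profile $\mathrm{erf}(\sqrt{|\mu(z)|/h}\,\xi(z)\cdot(y-z))$ is morally the paper's $\varphi_z$ from \eqref{eq.qm-local12}, but the paper never needs to know $u_{1,h}^P$ pointwise: it bounds $\Vert P_h\mathsf f_{1,h}\Vert_{L^2}$ and $\Vert P_h^*\mathsf f_{1,h}\Vert_{L^2}$ and applies \eqref{eq.1-Pi'}, whereas your argument would also need to control the cross term $\langle u_{1,h}^P,P_h^*\phi\rangle$ against a boundary integral whose size you have not actually computed from a verified approximation of $u_{1,h}^P$. As written, this is a plausible program, not a proof.
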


\noindent
Let us now comment the results of Theorem~\ref{th:main2}. 

\begin{itemize}
\item Our analysis actually  shows that the error term $O(h^{\frac14})$ in \eqref{eq.As-lambda}
 is of order $O(h^{\frac12})$  when~$\kappa^L_1=0$ or $\kappa^L_2=0$, see Theorem~\ref{th:VP}.
 It is moreover always of order $O(h^{\frac12})$
when the process is reversible, i.e. when 
$\boldsymbol{\ell}=0$ (see  \cite{DoNe2} or Proposition~\ref{pr.strategy} below).
In addition, whether or not the process is reversible, when the error term in \eqref{eq.As-lambda} is $O(h^{\frac12})$, it is in general optimal 
(see for instance \cite[Remark 25]{DoNe2} for a discussion). 
\vspace{0.2cm} 

\item Let $\lambda_{1,h}^{\Delta}$ be the principal eigenvalue of $-\frac h2\Delta+ \nabla f\cdot\nabla$.
When $\kappa_1^L=0$ (that is when $\nabla f(z)=0$ for every $z\in\pa \mathbf{C}_{{\rm min}}\cap \pa \Omega$), we have:
$$
\frac{\lambda_{1,h}^{\Delta}}{\lambda_{1,h}^L}\sim\frac{\sum\limits_{\substack{z\in\pa \mathbf{C}_{{\rm min}} \cap \pa \Omega }}  \vert \lambda(z)\vert\,\vert   \det\Hess f(z)  \vert^{-\frac12}}{\sum\limits_{\substack{z\in\pa \mathbf{C}_{{\rm min}} \cap \pa \Omega }} \vert \mu(z)\vert \,\vert   \det\Hess f(z)  \vert^{-\frac12}},
$$  
where, for $z\in \pa \mathbf{C}_{{\rm min}} \cap \pa \Omega$, $\lambda(z)$ is the negative eigenvalue of
$\Hess f(z)$.
According to Lemma~\ref{le.lep-michel}, we have $\vert \mu(z)\vert\geq |\lambda(z)|$, with equality if and only if 
${}^t \mathsf L(z)\xi(z)= 0$.
Then, in view of \eqref{eq.ex} and 
of \cite[Theorem~1]{NectouxCPDE}, 
  we accelerate the exit from $\Omega$ by adding, locally around $\pa \mathbf{C}_{{\rm min}} \cap \pa \Omega$,
   a  generic drift term $\boldsymbol{\ell}(X_{t})$ to the reversible process $dX_t=-\nabla f(X_t)dt+\sqrt h \,dB_t$. In the mathematical literature, this acceleration phenomenon has   been   studied  for elliptic non-reversible diffusions  on $\mathbb R^d$ through the analysis  of different quantities:  the rate of convergence to equilibrium at fixed $h>0$ or as $h\to0$, and the  asymptotic equivalents of the  transition times as $h\to 0$, see~\cite{lelievre2013optimal,bouchet2016generalisation,landim2018metastability,LePMi20,seoPTRF} and references therein.
   \vspace{0.2cm}

\item Let us finally mention that combining    the analyses developed  in this work and in \cite{LePMi20,DoNe2,BoLePMi22}, it is clearly possible to extend the results of Theorem \ref{th:main2} to the cases when $f$ has several local minima in $\Omega$  and    $\boldsymbol{\ell}$
admits a classical expansion $\sum_{k\geq 0} h^{k}\boldsymbol{\ell}_{k}$, where  $\boldsymbol{\ell}_{k}$ are smooth vector fields over $M$
such
that  the Gibbs measure~\eqref{eq.Gibbs} remains invariant  for the process~\eqref{eq.langevin} for all $h>0$. 
\end{itemize}

\subsection{Strategy of the proof and organization of the paper}
\label{sec.strategy}
The proof of Theorem~\ref{th:main} relies crucially on the formula   
 \begin{equation}\label{eq.Starting-Point2-intro}
 \frac{1}{\lambda_{1,h}^L}=\mathbb E_{\nu_h}[\tau_{\Omega^c}] =\frac{\int_\Omega  \mathbb E_{x}[\tau_{\Omega^c}] u_{1,h}^{P^*}  \, e^{-\frac f h}}{\int_{\Omega} u_{1,h}^{P^*} e^{-\frac f h}}, \  \text{ where $\nu_h(dx)=\frac{u_{1,h}^{P^*}e^{-\frac f h}}{\int_{\Omega}u_{1,h}^{P^*}e^{-\frac f h}}\ dx$}
 \end{equation} 
 is a  quasi-stationary distribution  for the process \eqref{eq.langevin}  in $\Omega$ (actually it is the  quasi-stationary distribution, see  Section~\ref{eq.secEqx} for more details on $\nu_h$). 
 
 To extract 
$\mathbb E_x[\tau_{\Omega^c}]$ from the integral in \eqref{eq.Starting-Point2-intro}, in order to prove \eqref{eq.ex} for instance, we use a  leveling result on $x\mapsto \mathbb E_x[\tau_{\Omega^c}]$. This is the purpose of Theorem~\ref{th:leveling}, proved in Section~\ref{sec.level} using large deviations techniques. 
Besides, 
 we also need a priori estimates on the principal 
eigenvalue $\lambda_{1,h}^L= {\lambda^P_{1,h}}/{2h}$ of $L_{h}$,
which is the purpose of Theorem~\ref{th:2} in Section~\ref{se-Re-Ph}, relying on the sole assumption  \eqref{ortho}
and proved by semiclassical methods.

We derive  in Section~\ref{sec.SE-P} from these a priori estimates  information on the concentration of the principal 
eigenfunction $u_{1,h}^{P^*}$ of $P_h^*$, see Proposition~\ref{pr.approximation0}.
Afterwards, combining this information with the leveling results on $x\mapsto \mathbb E_x[\tau_{\Omega^c}]$ and the a priori estimates on $\lambda_{1,h}^L$,
we prove Theorem~\ref{th:main} in Section~\ref{eq.secEqx}.

  Finally, when assuming in addition  \eqref{div} and \eqref{normal}, we   prove the sharp asymptotic equivalents on $\lambda_{1,h}^L$ given in Theorem~\ref{th:main2} by constructing a very precise quasi-mode for $P_{h}$. This is done in Section~\ref{sec.VP}, see Theorem~\ref{th:VP}.

\section{Leveling results on the mean exit time from $\Omega$}
\label{sec.level}

The  goal of this section is to prove Theorem~\ref{th:leveling}  below which aims at giving,  when  \eqref{ortho} and \eqref{well}  hold, sharp leveling results on $x\mapsto \mathbb E_x[\tau_{\Omega^c}]$ as well as the limit of $h \ln \mathbb E_x[\tau_{\Omega^c}]$ when $h\to 0$.  To do so, we use techniques from the  large deviations theory. 
This requires 
some care, since these techniques  cannot be used directly on $\Omega$  due to the possible existence of equilibrium points of  $ \boldsymbol{b}$  on $\partial \Omega$ (recall indeed that $\boldsymbol{b}(z)=0$ if and only if $\nabla f(z) =0$, see   \eqref{eq.incluw}).

\subsection{Large deviations and  mean exit time}

In this section we only assume   \eqref{ortho}.

\subsubsection{The quasi-potential on a subset of  $M$}

We now introduce the \textit{quasi-potential} associated with the vector field $ \boldsymbol{b}$ on $\overline D$,
 where $D$ denotes a  smooth bounded subdomain  of $M$ (which is possibly $M$),
 and recall some of its basic properties. 
For $x,y\in \overline D$ and $t_1< t_2 \in \mathbb R$, let us denote by $\mathcal C^{x,y}([t_1,t_2],\overline D)$ the set of continuous curves $\phi: [t_1,t_2]\to \overline D$ such that $\phi(t_1)=x$ and $\phi(t_2)=y$. For $\phi\in \mathcal C^{x,y}([t_1,t_2],\overline D)$, define,
if $\phi$ is absolutely continuous, 
$$S_{t_1,t_2}(\phi)=\frac 12 \int_{t_1}^{t_2}|\dot \phi_s-\boldsymbol{b}(\phi_s)|^2ds\in \mathbb R^{+},$$
where $\dot \phi_s= \frac{d}{ds}\phi_s$,
and, otherwise, $S_{t_1,t_2}(\phi)=+\infty$.   The function  
$$V_D:(x,y)\in \overline D \times \overline D\mapsto  \inf\big\{ S_{0,T}(\phi), \ \, \phi\in \mathcal C^{x,y}([0,T],\overline D) \text{ and } T>0\big\}\in \mathbb R^{+}$$
is the so-called (Freidlin-Wentzell) quasi-potential of the process \eqref{eq.langevin} on $D$. 
  Notice that 
\begin{equation}\label{eq.Vd}
V_D(x,x)=0 \text{ for all } x\in \overline D.
\end{equation}
For every $x,y\in \overline D$ and $S,S'\subset  \overline D$, we  also define
$$
V_D(x,S'):=\inf_{y\in S'} V_D(x,y)\,,\ V_D(S,y):=\inf_{x\in S} V_D(x,y)\,,\ 
\text{and}\ V_D(S,S'):= \inf_{(x,y)\in S\times S'}V_D(x,y) .
$$
In the next lemma, we recall some basic and useful properties of the functional $V_D$.
\begin{lemma}\label{le.Vcontinuous}
One has the following:\\
$\bullet$ $V_D:\overline D \times \overline D\to \mathbb R^{+}$ is  continuous.\\
$\bullet$ Assume that there exists   a subset $S$  of $\overline D$ such that, for any $T\ge 0$ and $\phi \in \mathcal C^{x,y}([0,T],\overline D)$, there exists $t\in [0,T]$ such that $\phi_t\in S$. Then, it holds
$$V_D(x,y)=\inf_{z\in S}[V_D(x,z)+V_D(z,y)].$$
$\bullet$ For every $x\in \overline D$ and every $-\infty \leq t_{-}\leq t_{+}\leq +\infty$
such that the solution $\varphi_{t}(x)$  of \eqref{eq.flow} satisfies $\{\varphi_{t}(x),t\in[t_{-},t_{+}]\}\subset \overline D$,
where $\varphi_{t_{\pm}}(x):=\lim_{t\to \pm\infty}\varphi_{t}(x)$ when $t_{\pm}=\pm\infty$ (see \eqref{eq.convphi}), it holds
$$
V_{D}(\varphi_{t_{-}}(x),\varphi_{t_{+}}(x))=0.
$$
$\bullet$ Let $T>0$ and $G$ be a closed 
nonempty  subset  of $ \mathcal C ([0,T],M)$ (endowed with the uniform convergence topology). Then, the infimum
$$
\inf\big\{ S_{0,T}(\phi), \ \, \phi\in G\big\}
$$
is a minimum. In particular, 
this infimum is strictly positive as soon as 
$G$ does not contain any trajectory of the dynamical system \eqref{eq.flow} defined on $[0,T]$.
 \end{lemma}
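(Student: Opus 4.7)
My plan is to treat the four items in order, with continuity used to handle the infinite-time cases of item 3. The strategy for continuity is to show that any two nearby points in $\overline D$ can be linked by a curve in $\overline D$ of small action. Since $\overline D$ is compact, $M_0 := \sup_{\overline D}|\boldsymbol{b}| < +\infty$. For $x, x' \in \overline D$ such that the segment $[x, x']$ lies in $\overline D$, parametrizing it at constant speed on $[0, \tau]$ yields
$$
S_{0,\tau}(\phi) \leq \int_0^\tau \bigl(|\dot\phi_s|^2 + M_0^2\bigr)\,ds = \frac{|x-x'|^2}{\tau} + \tau M_0^2,
$$
and optimizing in $\tau$ gives $V_D(x, x') \leq 2 M_0\,|x - x'|$. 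When $x$ or $x'$ lies close to $\partial D$, the $\mathcal C^\infty$ smoothness of $\partial D$ allows one to locally flatten the boundary and construct a slightly bent path staying in $\overline D$ with a bound of the same order. Concatenating such a short bridge from $x$ to $x'$, a near-optimal curve from $x'$ to $y'$, and a bridge from $y'$ to $y$ produces $|V_D(x,y) - V_D(x',y')| \leq C_D(|x-x'| + |y-y'|)$, and hence continuity. The main technical subtlety is the careful construction of the interpolating paths near $\partial D$.

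\textbf{Splitting through $S$.} The bound $V_D(x,y) \leq \inf_{z \in S}[V_D(x,z) + V_D(z,y)]$ follows by concatenating near-optimal paths through any $z \in S$. For the reverse, fix $\epsilon > 0$ and pick $\phi \in \mathcal C^{x,y}([0,T], \overline D)$ with $S_{0,T}(\phi) \leq V_D(x,y) + \epsilon$. By the hypothesis on $S$, there is $t^* \in [0,T]$ with $z := \phi_{t^*} \in S$, and additivity of the action yields
$$
V_D(x,z) + V_D(z,y) \leq S_{0,t^*}(\phi) + S_{t^*,T}(\phi) = S_{0,T}(\phi) \leq V_D(x,y) + \epsilon.
$$
Letting $\epsilon \to 0$ concludes.

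\textbf{Zero action along flow trajectories.} When $t_\pm$ are finite, the curve $\phi_s := \varphi_{t_- + s}(x)$ for $s \in [0, t_+ - t_-]$ is admissible and satisfies $\dot\phi_s - \boldsymbol{b}(\phi_s) = 0$, so its action vanishes. The infinite-time cases reduce to this one via continuity: for $t_- = -\infty$, writing $y_- := \lim_{t \to -\infty}\varphi_t(x)$ (which exists by \eqref{eq.convphi}), the continuity of $V_D$ gives $V_D(y_-, \varphi_{-T}(x)) \to 0$ as $T \to +\infty$; concatenating with the zero-action trajectory from $\varphi_{-T}(x)$ to $\varphi_{t_+}(x)$ produces $V_D(y_-, \varphi_{t_+}(x)) \leq V_D(y_-, \varphi_{-T}(x)) \to 0$. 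The case $t_+ = +\infty$ is symmetric.

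\textbf{Attainment of the infimum.} I would establish the good rate function property of the action in the form (i) $S_{0,T}$ is lower semicontinuous on $\mathcal C([0,T], M)$ for the uniform topology, and (ii) its sublevel sets are relatively compact there. For (i), if $\phi_n \to \phi$ uniformly with bounded action, then $(\dot\phi_n)$ is bounded in $L^2$ and admits a weakly convergent subsequence whose limit must equal $\dot\phi$; the convexity of $|\cdot|^2$ combined with the uniform convergence $\boldsymbol{b}(\phi_n) \to \boldsymbol{b}(\phi)$ then yields $S_{0,T}(\phi) \leq \liminf S_{0,T}(\phi_n)$. For (ii), Cauchy-Schwarz applied to $\dot\phi \in L^2$ gives equicontinuity on sublevel sets, and boundedness is automatic since $M$ is compact, so Arzela-Ascoli applies. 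A minimizing sequence in the closed set $G$ then admits a subsequence converging to some $\phi^* \in G$ with $S_{0,T}(\phi^*) = \inf_G S_{0,T}$. Finally, if this minimum were zero then $\dot\phi^* = \boldsymbol{b}(\phi^*)$ almost everywhere and $\phi^*$ would be a trajectory of \eqref{eq.flow} on $[0,T]$, contradicting the hypothesis on $G$.
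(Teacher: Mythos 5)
Your proof is correct, and it follows essentially the same approach as the paper (which itself defers items 1 and 4 to Freidlin–Wentzell and characterizes item 2 as ``straightforward''): continuity via the local path estimate $V_D(x,x')\le C|x-x'|$ plus the quasi-triangle inequality, concatenation for the splitting identity, item 3 as a combination of the zero-action-along-trajectories observation and item 1, and item 4 via lower semicontinuity of $S_{0,T}$ together with Arzelà–Ascoli compactness of sublevel sets. The only minor caveat is that your two-sided Lipschitz bound on $V_D$ is really a local estimate (valid for $|x-x'|,|y-y'|$ small, since the path construction requires the points to be close enough for a short bridge in $\overline D$), but local Lipschitz is of course enough for continuity.
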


 \noindent
 The first item is  a consequence of
  \cite[Lemma~1.1 in Section~1 of Chapter 6]{FrWe} and implies
 the third one, while
 the second item can be proved by straightforward arguments.
For the 
 last one, we refer 
  to  the comments following  the proof of \cite[Theorem 1.1 in Chapter 4]{FrWe}.

  \begin{lemma}\label{le.ff}
  Assume   \eqref{ortho}. 
Then, for all $\phi\in \mathcal C^{x,y}([t_1,t_2],M)$,   $S_{t_1,t_2}(\phi)\ge 2(f(y)-f(x))$.   
 \end{lemma}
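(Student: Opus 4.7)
The plan is to reduce everything to a pointwise expansion of the integrand $|\dot\phi_s - \boldsymbol{b}(\phi_s)|^2$ that exploits the orthogonal decomposition \eqref{ortho}. The case where $\phi$ is not absolutely continuous is trivial since $S_{t_1,t_2}(\phi)=+\infty$, so I only need to treat absolutely continuous curves, for which $s\mapsto f(\phi_s)$ is absolutely continuous with derivative $\dot\phi_s\cdot\nabla f(\phi_s)$ a.e.

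The central observation is the identity, valid at every point where $\phi$ is differentiable,
\begin{equation*}
|\dot\phi_s - \boldsymbol{b}(\phi_s)|^2 \;=\; |\dot\phi_s + \nabla f(\phi_s) + \boldsymbol{\ell}(\phi_s)|^2 \;=\; |\dot\phi_s - \nabla f(\phi_s) + \boldsymbol{\ell}(\phi_s)|^2 \;+\; 4\,\dot\phi_s\cdot\nabla f(\phi_s) \;+\; 4\,\boldsymbol{\ell}(\phi_s)\cdot\nabla f(\phi_s).
\end{equation*}
The third term on the right vanishes identically thanks to the orthogonality condition $\boldsymbol{\ell}\cdot\nabla f\equiv 0$ in \eqref{ortho}, and the first term is non-negative, so one is left with the pointwise inequality $|\dot\phi_s - \boldsymbol{b}(\phi_s)|^2 \geq 4\,\dot\phi_s\cdot\nabla f(\phi_s)$.

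Integrating this inequality over $[t_1,t_2]$ and using the chain rule $\dot\phi_s\cdot\nabla f(\phi_s) = \frac{d}{ds}f(\phi_s)$ together with the boundary values $\phi_{t_1}=x$ and $\phi_{t_2}=y$, I get
\begin{equation*}
\int_{t_1}^{t_2} |\dot\phi_s - \boldsymbol{b}(\phi_s)|^2\,ds \;\geq\; 4\bigl(f(y)-f(x)\bigr),
\end{equation*}
which after multiplying by $\tfrac12$ gives exactly $S_{t_1,t_2}(\phi)\geq 2(f(y)-f(x))$. No obstacle is expected here: the entire argument is a one-line completion of the square, the only mildly subtle point being the verification that the chain rule for $f\circ\phi$ applies under absolute continuity of $\phi$ (which is standard since $\nabla f$ is Lipschitz on the compact $M$).
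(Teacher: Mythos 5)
Your proof is correct and takes essentially the same approach as the paper: both complete the square by comparing $\dot\phi_s - \boldsymbol b(\phi_s) = \dot\phi_s + \nabla f(\phi_s) + \boldsymbol{\ell}(\phi_s)$ against the reversed field $\nabla f - \boldsymbol{\ell}$, use the orthogonality $\boldsymbol{\ell}\cdot\nabla f=0$ to kill the cross term, and integrate the remaining $\dot\phi_s\cdot\nabla f(\phi_s)$ exactly. The only cosmetic difference is that you display the $\boldsymbol{\ell}\cdot\nabla f$ term before noting it vanishes, whereas the paper absorbs that cancellation directly into the identity.
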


 \begin{proof}
Using   \eqref{ortho}, we have, for all   $\phi \in \mathcal C^{x,y}([t_1,t_2],M)$, 
$$
S_{t_1,t_2}(\phi)=\frac 12 \int_{t_1}^{t_2}|\dot \phi_s-(\nabla f(\phi_s)-\boldsymbol{\ell}(\phi_s))|^2ds + 2\int_{t_1}^{t_2}\dot \phi_s\cdot \nabla f(\phi_s) ds 
\ge 2(f(\phi(t_2))-f(\phi(t_1))),
$$
which implies the result.
 \end{proof}
 
\begin{remark}
 \label{re.Vd=}
The proof of Lemma~\ref{le.ff} also leads to the following:
for every $x\in \overline D$ and every $-\infty \leq t_{-}\leq t_{+}\leq +\infty$
such that the solution $\psi_{t}(x)$  of $ \dot X= \nabla f(X)-\boldsymbol{\ell}(X)$ 
with initial condition $\psi_{0}(x)=x$
satisfies $\{\psi_{t}(x),t\in[t_{-},t_{+}]\}\subset \overline D$,
where $\psi_{t_{\pm}}(x):=\lim_{t\to \pm\infty}\psi_{t}(x)$ when $t_{\pm}=\pm\infty$, it holds
$$
V_{D}(\psi_{t_{-}}(x),\psi_{t_{+}}(x))=2(\psi_{t_{+}}(x)- \psi_{t_{-}}(x)).
$$
 \end{remark}

\subsubsection{On the structure of the dynamical system}
To prove Theorem~\ref{th:leveling}  we want to use  \cite[Theorem 5.3 in Chapter 6]{FrWe} 
with a suitable domain $D$
such that
\begin{equation}\label{eq.cond-nabla}
\nabla f\neq 0 \text{ on } \partial D.
\end{equation} 
The construction of $D$ is the purpose of the next section. Before,  we have to check that the conditions stated at the beginning of   \cite[Section 2 in Chapter 6]{FrWe} are satisfied. 
More precisely, we have to check that the exists a finite number of compact subsets $K_1,\ldots,K_l$ of $D$ such that:
\begin{enumerate}
\item[\textbf{(a)}] For any $x\in \overline D$ such that $\varphi_t(x)\in \overline D$ for all $t\ge 0$, it holds $\omega (x)\subset K_q$ for some $q\in \{1,\ldots,l\}$.  
\item[\textbf{(b)}] For all $i\in \{1,\ldots,l\}$ and all  $x,y\in K_i$, $V_D(x,y)=0$.
\item[\textbf{(c)}] If $x\in K_i$ and $y\notin K_i$ ($y\in \overline D$), either $V_D(x,y)>0$ or $V_D(y,x)>0$. 
\end{enumerate}
In the following, we write $\{y\in D, \nabla f(y)=0\}=\{y_1,\ldots, y_l\}$ and we define
\begin{equation}\label{eq.Ki}
K_i=\{y_i\}, \  \forall i\in \{1,\ldots,l\}.
\end{equation}

\begin{lemma}\label{le.WF1} Assume    \eqref{ortho} and \eqref{eq.cond-nabla}.
When the compact sets $K_i$, $i=1,\dots,l$, are defined by  \eqref{eq.Ki}, Conditions {\rm \textbf{(a)}}  and {\rm \textbf{(b)}} above are satisfied.  
\end{lemma}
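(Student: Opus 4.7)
My plan is to observe that both conditions \textbf{(a)} and \textbf{(b)} follow quickly from the structural information on the deterministic flow established in Section~\ref{sec.DS} together with \eqref{eq.cond-nabla} and the fact that each $K_i$ is reduced to a single point. The main work in this subsection has actually been done already: the assumption \eqref{ortho} severely constrains the $\omega$-limit sets, via \eqref{eq.incluw}, to be singletons consisting of critical points of $f$, and \eqref{eq.cond-nabla} will force these critical points to sit in $D$ rather than on $\partial D$.

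I would start with \textbf{(b)}, which is essentially a tautology here: by \eqref{eq.Ki} every $K_i$ is the singleton $\{y_i\}$, so the only pair $(x,y)\in K_i\times K_i$ to consider is $(y_i,y_i)$, and \eqref{eq.Vd} gives directly $V_D(y_i,y_i)=0$.

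For \textbf{(a)}, fix $x\in\overline D$ such that $\varphi_t(x)\in \overline D$ for every $t\ge 0$. Invoking \eqref{eq.incluw}, there exists a critical point $y\in M$ of $f$ with $\omega(x)=\{y\}$, i.e.\ $\lim_{t\to+\infty}\varphi_t(x)=y$. By closedness of $\overline D$, the limit $y$ lies in $\overline D$; by \eqref{eq.cond-nabla}, $\nabla f$ does not vanish on $\partial D$, so $y\in D$. Therefore $y$ belongs to the finite list $\{y_1,\ldots,y_l\}$ of critical points of $f$ in $D$, and $\omega(x)=\{y_i\}=K_i$ for some $i\in\{1,\ldots,l\}$, which is the desired conclusion.

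I do not expect any genuine obstacle here: the real content is hidden in \eqref{eq.incluw}, which has already been proved, and in the choice of $D$ satisfying \eqref{eq.cond-nabla}, which excludes the pathological possibility that a trajectory could limit to an equilibrium sitting exactly on $\partial D$. The only point that requires a bit of care is to remember to explicitly use the closedness of $\overline D$ when passing to the limit of $\varphi_t(x)$, together with \eqref{eq.cond-nabla}, in order to conclude that the limiting critical point lies in the open set $D$.
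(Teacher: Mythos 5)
Your proof is correct and follows the same route as the paper: apply \eqref{eq.incluw} to identify the $\omega$-limit set with a singleton critical point, use \eqref{eq.cond-nabla} to push it into the open set $D$, and invoke \eqref{eq.Vd} for Condition \textbf{(b)}. The only difference is that you spell out the closedness of $\overline D$ explicitly, which the paper leaves implicit.
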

\begin{proof}
By  \eqref{eq.incluw} and \eqref{eq.cond-nabla},  if $\{\varphi_t(z), t\ge 0\}\subset \overline D$, 
$\omega(x)=\{y\}$ for some critical point $y$ of $f$ in $ D$. 
Thus, Condition \textbf{(a)} holds. In addition, according to \eqref{eq.Vd},   Condition \textbf{(b)} holds.
\end{proof}

Condition {\rm \textbf{(c)}} is the purpose of the next proposition. 
 \begin{proposition}\label{pr.Vxx}
 Assume    \eqref{ortho}
 and \eqref{eq.cond-nabla}.
 When the compact sets $K_i$, $i=1,\dots,l$, are defined by  \eqref{eq.Ki},
Condition {\rm \textbf{(c)}} holds. 
  \end{proposition}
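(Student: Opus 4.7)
The plan is to argue by contradiction: assume there exists $y\in\overline D\setminus\{y_i\}$ with $V_D(y_i,y)=V_D(y,y_i)=0$. First, Lemma~\ref{le.ff} applied to each of these two quasi-potentials gives $f(y)=f(y_i)$. Next, expanding $|\dot\phi-\boldsymbol{b}(\phi)|^2=|\dot\phi+\nabla f(\phi)+\boldsymbol{\ell}(\phi)|^2$ and using $\nabla f\cdot\boldsymbol{\ell}=0$ (assumption \eqref{ortho}) yields the identity
\begin{equation*}
S_{0,T}(\phi)\,=\,\tfrac12\int_0^T|\dot\phi+\boldsymbol{\ell}(\phi)|^2\,ds+\tfrac12\int_0^T|\nabla f(\phi)|^2\,ds+\bigl(f(\phi(T))-f(\phi(0))\bigr).
\end{equation*}
Taking a minimizing sequence $\phi_n\in\mathcal{C}^{y_i,y}([0,T_n],\overline D)$ with $S_{0,T_n}(\phi_n)\to 0$, the boundary term vanishes and we deduce
\begin{equation*}
\int_0^{T_n}|\nabla f(\phi_n)|^2\,ds\,\to\,0\quad\text{and}\quad\|\eta_n\|_{L^2}^2:=\int_0^{T_n}|\dot\phi_n+\boldsymbol{\ell}(\phi_n)|^2\,ds\,\to\,0.
\end{equation*}

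Second, I would localize near $y_i$. Pick $\rho>0$ small enough so that $\overline B(y_i,2\rho)\subset D$ contains no critical point of $f$ other than $y_i$ (possible by \eqref{eq.cond-nabla} and the finiteness of $\{y_1,\dots,y_l\}$) and so that $|y-y_i|>\rho$ (possible since $y\neq y_i$). On the closed annulus $A_\rho:=\{x:\rho/2\leq|x-y_i|\leq\rho\}$, $f$ has no critical point, so by compactness there is $c_\rho>0$ with $|\nabla f|\geq c_\rho$ on $A_\rho$. Since $\phi_n(0)=y_i$ lies inside $\overline B(y_i,\rho/2)$ while $\phi_n(T_n)=y$ lies outside $\overline B(y_i,\rho)$, continuity allows one to extract an interval $[\sigma_n,\tau_n]\subset[0,T_n]$ with $\phi_n(\sigma_n)\in\partial B(y_i,\rho/2)$, $\phi_n(\tau_n)\in\partial B(y_i,\rho)$, and $\phi_n(t)\in A_\rho$ for all $t\in[\sigma_n,\tau_n]$. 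A direct estimate then gives
\begin{equation*}
\tau_n-\sigma_n\,\leq\,c_\rho^{-2}\int_0^{T_n}|\nabla f(\phi_n)|^2\,ds\,\longrightarrow\,0.
\end{equation*}

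Third, I would combine this with a speed bound to reach a contradiction. Since $\nabla f(y_i)=0$ forces $\boldsymbol{\ell}(y_i)=0$ (see the discussion in Section~\ref{sec.DS}) and $\boldsymbol{\ell}$ is smooth, there exists $L>0$ such that $|\boldsymbol{\ell}(x)|\leq L|x-y_i|\leq L\rho$ for all $x\in\overline B(y_i,\rho)$. On $[\sigma_n,\tau_n]$ we thus have $|\dot\phi_n|\leq L\rho+|\eta_n|$, and Cauchy--Schwarz yields
\begin{equation*}
\tfrac\rho2\,\leq\,|\phi_n(\tau_n)-\phi_n(\sigma_n)|\,\leq\,\int_{\sigma_n}^{\tau_n}|\dot\phi_n|\,ds\,\leq\,L\rho\,(\tau_n-\sigma_n)+\sqrt{\tau_n-\sigma_n}\,\|\eta_n\|_{L^2},
\end{equation*}
whose right-hand side tends to $0$ as $n\to\infty$: a contradiction. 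Hence, for $n$ large $\phi_n$ never leaves $\overline B(y_i,\rho)$, so $y\in\overline B(y_i,\rho)$, and letting $\rho\to 0$ forces $y=y_i$, contradicting $y\neq y_i$. The main obstacle is this last speed estimate: it crucially exploits $\boldsymbol{\ell}(y_i)=0$ to produce the prefactor $L\rho$, which compensates for the possibly unbounded duration $T_n$ of the approximating paths and for the fact that $\eta_n$ is only controlled in $L^2$, not pointwise.
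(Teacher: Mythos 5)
Your proof is correct, and it takes a genuinely different route from the paper's. The paper first reduces via continuity and subadditivity of $V_D$ (Lemma~\ref{le.Vcontinuous}) to a point $z$ on a small sphere around the critical point where $\nabla f(z)\neq 0$; it then does a case analysis on $f(z)$, using Lemma~\ref{le.ff} when $f(z)\neq f(x)$ and, in the delicate case $f(z)=f(x)$, invoking Lemma~\ref{le.Sug}, which exploits the strict decrease of $f$ along the deterministic flow $\varphi_t(z)$. You instead work directly with a minimizing sequence and sharpen Lemma~\ref{le.ff} into the exact decomposition $S_{0,T}(\phi)=\frac12\|\dot\phi+\boldsymbol{\ell}(\phi)\|_{L^2}^2+\frac12\|\nabla f(\phi)\|_{L^2}^2+2(f(\phi(T))-f(\phi(0)))$. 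Since the endpoints have equal $f$-value, both $L^2$ terms must vanish in the limit; the $\|\nabla f\|_{L^2}^2$ term forces the crossing time of the annulus $A_\rho$ to shrink, while $\boldsymbol{\ell}(y_i)=0$ (established in Section~\ref{sec.DS}) caps the drift speed inside $\overline B(y_i,\rho)$ at $O(\rho)$, so the path cannot traverse a distance $\rho/2$ in that shrinking time. This is self-contained (it bypasses Lemma~\ref{le.Sug} and the subadditivity machinery entirely), quantitative, and gives a transparent physical picture of why escape from a critical point costs positive action. One stylistic nit: once you reach $\rho/2\leq L\rho(\tau_n-\sigma_n)+\sqrt{\tau_n-\sigma_n}\,\|\eta_n\|_{L^2}\to 0$ you already have your contradiction for the fixed $\rho$, so the closing sentence ("$\phi_n$ never leaves $\overline B(y_i,\rho)$, \dots letting $\rho\to 0$\dots") is redundant and can be dropped.
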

   The following lemma will be useful to prove Proposition~\ref{pr.Vxx}. 
  
    \begin{lemma}\label{le.Sug}
    Assume    \eqref{ortho}.
Let  $z\in \overline D$ be such that $\nabla f(z)\neq0$ and, for some  $T>0$, $\{\varphi_t(z), t\in [0,T]\}\subset \overline D$. Then, for all   $y\in     \overline D\setminus \{\varphi_t(z), t\in [0,T]\}$ satisfying $f(y)>f(\varphi_T(z))$, it holds  $V_D(z,y)>0$.  
 \end{lemma}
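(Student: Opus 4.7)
My plan is to argue by contradiction: assume $V_D(z,y)=0$ and take a minimizing sequence $(\phi_n)$, where each $\phi_n\in\mathcal C^{z,y}([0,T_n],\overline D)$ is absolutely continuous with $S_{0,T_n}(\phi_n)\to 0$. The intuition is that any small-action path issued from $z$ must closely follow the deterministic trajectory $\varphi_\cdot(z)$, so it cannot reach a point $y$ lying off this trajectory with a strictly larger value of $f$ than $f(\varphi_T(z))$: reaching such a $y$ forces the path to climb ``against the drift'', at a cost quantified by Lemma~\ref{le.ff}.

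The key technical input is a rigidity property of the Freidlin--Wentzell action: if $\psi_n:[0,S]\to\overline D$ satisfies $\psi_n(0)=z$, $S_{0,S}(\psi_n)\to 0$, and $\{\varphi_t(z),\,t\in[0,S]\}\subset\overline D$, then $\psi_n\to\varphi_\cdot(z)$ uniformly on $[0,S]$. This follows from the $L^2$ bound $\int_0^S|\dot\psi_n|^2\le 4S_{0,S}(\psi_n)+2S\,\|\boldsymbol{b}\|_{L^\infty(\overline D)}^2$, Arzel\`a--Ascoli compactness, the lower semicontinuity of the action (contained in the last bullet of Lemma~\ref{le.Vcontinuous}), and uniqueness of the solution to \eqref{eq.flow} starting at $z$. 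Setting $\eta:=f(y)-f(\varphi_T(z))>0$, I split the proof according to the size of $T_n$. In the main case $T_n\ge T$ eventually, the rigidity applied to $\phi_n|_{[0,T]}$ gives $f(\phi_n(T))\to f(\varphi_T(z))=f(y)-\eta$, so for $n$ large
\[
S_{0,T_n}(\phi_n)\ \ge\ S_{T,T_n}(\phi_n)\ \ge\ 2\bigl(f(y)-f(\phi_n(T))\bigr)\ \ge\ \eta
\]
by Lemma~\ref{le.ff} applied on $[T,T_n]$, contradicting $S_{0,T_n}(\phi_n)\to 0$.

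The remaining case $T_n<T$ along a subsequence $T_{n_k}\to T_\infty\in[0,T]$ is easier. If $T_\infty=0$, the same $L^2$ bound yields $|y-z|\to 0$, contradicting $y\notin\{\varphi_t(z),\,t\in[0,T]\}$ at $t=0$. If $T_\infty>0$, rigidity applied on $[0,T_\infty-\varepsilon]$ combined with an $O(\sqrt{\varepsilon})$ control on $|\phi_{n_k}(T_{n_k})-\phi_{n_k}(T_\infty-\varepsilon)|$ and a diagonal $\varepsilon\to 0$ force $y=\varphi_{T_\infty}(z)$, again contradicting the hypothesis on $y$. The principal obstacle is exactly this variable time horizon $T_n$, which blocks any direct appeal to the fixed-interval bullet of Lemma~\ref{le.Vcontinuous}; it is circumvented by anchoring the analysis at the fixed time $t_0=T$, where rigidity is available (since $\varphi_\cdot(z)$ is known to stay in $\overline D$ up to this time), and by combining rigidity on $[0,T]$ with the thermodynamic lower bound of Lemma~\ref{le.ff} on the variable tail $[T,T_n]$.
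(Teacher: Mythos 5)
Your argument is correct, but it follows a genuinely different route from the paper's. Both hinge on applying Lemma~\ref{le.ff} to a terminal segment of the path, but they diverge in how the initial segment is controlled. The paper argues directly, with no minimizing sequence: for horizons $T'\le T$ it gets a uniform lower bound $d_T>0$ by applying the last bullet of Lemma~\ref{le.Vcontinuous} to the closed, trajectory-free set of paths issued from $z$ that deviate from $\varphi_\cdot(z)$ by at least $\rho_0/2$ --- which is exactly your rigidity statement in a quantified, fixed-horizon form, and is in fact the cleanest way to justify your rigidity claim, since lower semicontinuity of the action is only implicit in Lemma~\ref{le.Vcontinuous}. For $T'>T$ the paper fixes $0<T_1<T_2<T$ with $f(z)>f(\varphi_{T_1}(z))>f(\varphi_{T_2}(z))$ (this strict decrease is precisely where the hypothesis $\nabla f(z)\ne 0$ enters) and splits on whether the path stays above the level $f(\varphi_{T_1}(z))$ on $[0,T_2]$, handling the first case via the set $G^z_{T_2}$ and the last bullet of Lemma~\ref{le.Vcontinuous}, and the second via Lemma~\ref{le.ff}. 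You replace this level-set dichotomy by a stronger localization: rigidity pins $\phi_n(T)$ near $\varphi_T(z)$, so a single application of Lemma~\ref{le.ff} on $[T,T_n]$ finishes. This unifies the two sub-cases and, as a by-product, your proof never uses $\nabla f(z)\ne 0$: if $\nabla f(z)=0$ then $\varphi_\cdot(z)\equiv z$ and your argument still yields $f(\phi_n(T))\to f(z)<f(y)$, whereas the paper's construction genuinely needs the strict inequality $f(\varphi_{T_1}(z))>f(\varphi_{T_2}(z))$. The price of your unification is the contradiction/compactness framework and the extra bookkeeping for the variable horizon (your $T_\infty=0$ and $T_\infty>0$ subcases), which the paper's fixed-horizon constant $d_T$ handles at once.
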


 \begin{proof}
 Set $\rho_0=\inf\{|y-\varphi_t(z)|, 0\le t\le T\}>0$. 
 Let $T'\in (0,T]$. 
From the last item of Lemma~\ref{le.Vcontinuous}:
 $$d_{T'}:=\inf\Big\{ S_{0,T'}(\phi), \phi  \in \mathcal C([0,T'],M) \text{ s.t. } \phi_0=z \text{ and }\max_{t\in [0,T']}|\phi_t-\varphi_t(z)|\ge \rho_0/2\Big\}>0.$$
 We then have, for $T'\in (0,T]$ and $\phi\in \mathcal C^{z,y}([0,T'],\overline D)$, $S_{0,T'}(\phi)\ge d_{T'}\ge d_T>0$. Consequently, 
\begin{equation}\label{eq.inf1}
\inf\big\{ S_{0,T'}(\phi), \ \, \phi\in \mathcal C^{z,y}([0,T'],\overline D) \text{ and } T'\in (0,T]\big\}>0.
\end{equation}
 Let us now consider the infimum above when $T'\ge T$. Let $0<T_1<T_2<T$ be such that $f(y)>f(\varphi_{T_1}(z))$. Notice that \eqref{eq.flow-e} and $\nabla f(z)\neq0$ imply
\begin{equation*}
\label{eq.fT1T2}
f(z)>f(\varphi_{T_1}(z))>f(\varphi_{T_2}(z)).
\end{equation*}   
It follows that
$$\varphi(z)|_{[0,T_2]}\notin G_{T_2}^z:=\big  \{ \phi\in \mathcal C ([0,T_2],\overline D), \ \phi_0=z \text{ and, for all } t\in [0,T_2], f(\phi_t)\ge f(\varphi_{T_1}(z))\big  \}$$
and the last item of Lemma~\ref{le.Vcontinuous} then implies that
 $$A:=\inf\big \{ S_{0,T_2}(\phi), \phi\in G_{T_2}^z\big \}>0.$$
 Consider  $T'\ge T$ and $\phi\in \mathcal C^{z,y}([0,T'],\overline D)$. Assume that  $\phi\in G_{T'}^z$. Then $\phi|_{[0,T_2]}\in G_{T_2}^z$, and thus $S_{0,T'}(\phi)\ge S_{0,T_2}(\phi) \ge A$. Assume now that $\phi\notin G_{T'}^z$, i.e. that $f(\phi_t)< f(\varphi_{T_1}(z))$ for some $t\in [0,T']$.  Let $t_1\in (0,T')$ be such that $f(\phi_{t_1})= f(\varphi_{T_1}(z))$. Using Lemma~\ref{le.ff}, it  holds 
  $$S_{0,T'}(\phi) \ge S_{t_1,T'}(\phi) \ge 2(f(\phi_{T'})-f(\phi_{t_1}))=2(f(y)-f(\varphi_{T_1}(z)))>0.$$
   In conclusion, for all $T'\ge T$ and $\phi\in \mathcal C^{z,y}([0,T'],\overline D)$, $S_{0,T'}(\phi)\ge \min(f(y)-f(\varphi_{T_1}(z)),  A  )>0$. 
 Together with 
 \eqref{eq.inf1}, this ends the proof of the lemma.
 \end{proof}
  We are now in position to prove Proposition~\ref{pr.Vxx}.
  \begin{proof}[Proof of Proposition~\ref{pr.Vxx}]
 Let $x\in \overline D$ be  such that $\nabla f(x)=0$,
 so that $x\in D$ according to
  \eqref{eq.cond-nabla}. Let us also consider $y\in \overline D$ such that   $y\neq x$.
According to Lemma~\ref{le.ff}, it suffices to consider the case when $f(x)=f(y)$.
Since $x\in D$ and $f$ admits a   finite number of critical points in $M$, there exists  a  sphere  $C(x,r)=\{w\in M,  |w-x|= r\}\subset D$  of radius $0<r<|x-y|$ 
such that  $\vert \nabla f\vert>0$ on $C(x,r)$. Then, using the two first items of Lemma~\ref{le.Vcontinuous}, there exists $z\in C(x,r)$ such that
   $$V_D(x,y)=\inf_{\xi\in C(x,r)}(V_D(x,\xi)+V_D(\xi,y))=V_D(x,z)+V_D(z,y).$$
If $f(z)<f(x)=f(y)$, then  Lemma~\ref{le.ff} implies $V_D(x,y)\geq V_D(  z,y)\ge 2(f(y)-f(z))>0$. 
Similarly,    
   if $f(z)>f(x)$, then $V_D(x,y)\geq V_D( x,z)\ge 2(f(z)-f(x))>0$.
   Let us lastly consider the case when $f(z)=f(x)$.  
Since $z\in D$ and $\nabla f(z) \neq 0$, there exists $T>0$ such that   
   $$\{\varphi_t(z), t\in [0,T]\}\subset \overline D \ \text{ and, according to  \eqref{eq.flow-e},\ }\  f(z)>f(\varphi_t(z)) \text{ for all } t\in (0,T].$$
 Using $f(z)=f(y)$ and $z\neq y$,  it follows that 
   $y\notin \{\varphi_t(z), t\in [0,T]\}$ and $f(y)>f(\varphi_T(z))$. 
Therefore, according to  Lemma~\ref{le.Sug}, $V_D(z,y)>0$ and thus $V_D(x,y)>0$,
which completes the proof of Proposition~\ref{pr.Vxx}. 
  \end{proof}
  
Following the terminology of \cite{FrWe},  we  say  that  a subset $N\subset   M$ is   \textit{stable}     if, for any $x\in N$ and $y\in M\setminus N$, $V_M(x,y)>0$ (see the lines preceding \cite[Lemma 4.2 in Chapter 6]{FrWe}). We then have:

\begin{lemma}\label{le.stable-compact} Assume    \eqref{ortho}.
For any 
 critical point $x$ of $f$ in $M$, the set $\{x\}$ is  stable (in the sense defined above)  if and only if $x$ is a local minimum of $f$ in $M$. 
\end{lemma}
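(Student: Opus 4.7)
The plan is to prove the two directions separately, using Lemmas~\ref{le.ff} and~\ref{le.Vcontinuous} for the "local minimum $\Rightarrow$ stable" direction, and using the linearization of the dynamical system~\eqref{eq.flow} combined with Lemma~\ref{le.lep-michel} for the converse direction.

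First, assume that $x$ is a local minimum of $f$. Since $f$ is a Morse function, $x$ is automatically a strict local minimum: there exists $r_0>0$ such that $f(w)>f(x)$ for every $w\in B(x,r_0)\setminus\{x\}$. Fix $y\in M\setminus\{x\}$; I may assume $r\in(0,r_0)$ with $r<|x-y|$ so that every continuous curve joining $x$ and $y$ in $M$ crosses the sphere $C(x,r)$. Setting $\delta:=\min_{C(x,r)}f-f(x)>0$ and applying the second item of Lemma~\ref{le.Vcontinuous} with $S=C(x,r)$, I obtain
$$V_M(x,y)=\inf_{z\in C(x,r)}\big(V_M(x,z)+V_M(z,y)\big)\ge \inf_{z\in C(x,r)} V_M(x,z).$$
Lemma~\ref{le.ff} then yields $V_M(x,z)\ge 2(f(z)-f(x))\ge 2\delta>0$ for every $z\in C(x,r)$, hence $V_M(x,y)>0$. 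This proves that $\{x\}$ is stable.

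For the converse direction, assume that $x$ is a critical point of $f$ which is not a local minimum; since $f$ is Morse, its Morse index at $x$ is some $p\ge 1$. Recall from~\eqref{eq.incluw} and the discussion in Section~\ref{sec.DS} that $\boldsymbol{\ell}(x)=0$, so that $\jac \boldsymbol{b}(x)=-\Hess f(x)-\mathsf L(x)$. Since $\mathsf L(x)$ and ${}^t\mathsf L(x)$ share the same spectrum, Lemma~\ref{le.lep-michel} (adapted to the matrix $\Hess f(x)+{}^t\mathsf L(x)$, which has $p$ eigenvalues with negative real part) implies that $\jac \boldsymbol{b}(x)=-(\Hess f(x)+\mathsf L(x))$ admits $p\ge 1$ eigenvalues with positive real part. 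By the stable/unstable manifold theorem, the unstable manifold $W^u(x)$ of $x$ for the flow~\eqref{eq.flow} is thus a nontrivial smooth manifold of dimension~$p$.

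Pick any $y\in W^u(x)\setminus\{x\}$. By definition, $\lim_{t\to-\infty}\varphi_t(y)=x$, and by~\eqref{eq.convphi} there exists a critical point $y_+$ of $f$ with $\lim_{t\to+\infty}\varphi_t(y)=y_+$. Applying the third item of Lemma~\ref{le.Vcontinuous} with $D=M$ and $t_\pm=\pm\infty$, I get $V_M(x,y_+)=0$. It only remains to check that $y_+\neq x$: since $y\notin\{z\in M,\nabla f(z)=0\}$ (it belongs to a smooth $p$-dimensional unstable manifold on which $x$ is the only equilibrium, and for $t$ close to $-\infty$ the trajectory is near $x$ but distinct from it), the energy identity~\eqref{eq.flow-e} gives $\frac{d}{dt}f(\varphi_t(y))<0$ on a nonempty open set of times, so $f(y_+)=\lim_{t\to+\infty}f(\varphi_t(y))<f(x)$, hence $y_+\neq x$. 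This exhibits $y_+\in M\setminus\{x\}$ with $V_M(x,y_+)=0$, so $\{x\}$ is not stable, which concludes the proof.

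The only delicate point is the last one, namely ensuring $y_+\neq x$. This is where the orthogonality condition \eqref{ortho} is crucial, through the Lyapunov-type identity~\eqref{eq.flow-e} which guarantees that $f$ is strictly decreasing along non-stationary orbits. Without \eqref{ortho}, one could a priori have a homoclinic orbit from $x$ to itself, which would not provide a distinct target point~$y_+$.
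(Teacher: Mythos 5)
Your proposal is correct and follows the same two key lemmas as the paper. The forward direction (local minimum $\Rightarrow$ stable) is essentially identical to the paper's argument: cross the sphere $C(x,r)$, invoke the second item of Lemma~\ref{le.Vcontinuous}, and conclude with Lemma~\ref{le.ff}. In the converse direction your route coincides with the paper's up to the point where you obtain a nontrivial unstable manifold and a point $y\in W^u(x)\setminus\{x\}$. At that stage, however, you take a small unnecessary detour: the paper applies the third item of Lemma~\ref{le.Vcontinuous} with $t_-=-\infty$ and $t_+=0$ to get $V_M(x,y)=0$ directly, and since $y\neq x$ by choice that already shows $\{x\}$ is not stable. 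You instead flow forward to $t_+=+\infty$, obtain $V_M(x,y_+)=0$, and then have to prove the extra fact $y_+\neq x$ via the Lyapunov identity~\eqref{eq.flow-e}. That extra step is correct (and the worry about homoclinic orbits you raise is a real one in general — $\eqref{ortho}$ rules it out), but it costs an argument that the paper avoids entirely. One minor wording point in the transpose step: the fact that $\mathsf L(x)$ and ${}^t\mathsf L(x)$ share the same spectrum does not by itself imply that $\Hess f(x)+\mathsf L(x)$ and $\Hess f(x)+{}^t\mathsf L(x)$ share theirs; the correct justification is that these two matrices are transposes of each other because $\Hess f(x)$ is symmetric, and a matrix and its transpose have the same spectrum.
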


\begin{proof}
Assume that $x$ is a local minimum of the Morse function $f$ in $M$, and take $y\in M\setminus\{x\}$.
Since  $x$ is a strict minimum, there exists $0<r<|x-y|$
  such that $f>f(x)$ on $ C(x,r)=\{w\in M,  |w-x|= r\}$. Thus, according to  Lemma~\ref{le.Vcontinuous}, there exists $z^{*}\in C(x,r)$ such that
  $$V_M(x,y)=\inf_{z\in C(x,r)}(V_M(x,z)+V_M(z,y))=V_M(x,z^*)+V_M(z^*,y).$$
 Using in addition Lemma~\ref{le.ff}, $V_M(x,z^*)\ge f(z^*)-f(x)>0$ and thus $V_M(x,y)>0$, which implies that $\{x\}$ is  stable. 
 
  Let us now assume that $x$ is   not a local minimum of $f$ in $M$.
Then, according to Lemma~\ref{le.lep-michel}, the dimension of the unstable manifold of   $x$ for   the dynamical system $\dot X=\boldsymbol{b}(X)$
is at least one, and thus
 there exists $z^*\in M\setminus\{x\}$  such that
  $\varphi_t(z^{*})\to x$ when $t\to -\infty$.
It thus follows from the third item of Lemma~\ref{le.Vcontinuous} that $V_{M}(x,z^{*})=0$,
showing that $x$ is  not stable.
\end{proof}

\subsubsection{ Freidlin-Wentzell graphs and mean exit time.} 
Let us first introduce some notation. Let $\mathbf L$ be a finite set and $\mathbf W\subset \mathbf L$. A graph consisting of arrows $m\to n$ (for $m\in \mathbf L\setminus \mathbf W$, $n\in \mathbf L$, and $m\neq n$) is called a $\mathbf W$-graph over $\mathbf L$ (see the beginning of  \cite[Section~3 in Chapter 6]{FrWe})   if:
\begin{enumerate}
\item[$\bullet$] every point $m\in \mathbf L\setminus \mathbf W$ is the  initial point of exactly one arrow, 
  \vspace{0.2cm}
\item[$\bullet$] there are no closed cycles in the graph.
\end{enumerate}
The last condition can be replaced by the following one: for every point $m\in \mathbf L\setminus \mathbf  W$, there exists a sequence of arrows leading from $m$ to some $n\in \mathbf W$. 
The set of $\mathbf W$-graphs over $\mathbf L$ is denoted by $G^{\mathbf L}(\mathbf W)$.  

When  Conditions \textbf{(a)}, \textbf{(b)}, and \textbf{(c)} hold, and when at least one
of the compact subsets $K_1,\ldots , K_l$ of $D$
is stable, we label  these sets
so that 
$K_1,\ldots , K_{p_s}$  are the stable compact sets among $K_1,\ldots , K_l$, where $1\leq {p_s}\le l$. 
In this case,  \cite[Theorem 5.3 in Chapter 6]{FrWe} applies, and implies that,
for every $x\in D$ and uniformly in $x$ in the compact subsets of $D$,
\begin{equation}
\label{pr.WF-E}
\lim_{h\to 0} h\, \ln\,  \mathbb E_x[\tau_{D^c}]\le  W_D, \ \text{ where } \ W_D:=\min_{g\in G^{\{K_1,\ldots , K_{p_s}, \partial D\}}(\{ \partial D\}) }      \sum_{(m\to n)\in g} V_D(m,n).
\end{equation}

\begin{corollary}\label{co.WF}
Assume    \eqref{ortho}, \eqref{eq.cond-nabla}, and  that
$f$ admits $n+1$ local minima $x_{0},x_{1},\dots, x_{n}$ in~$D$, with $n\geq 0$. Then,
for all $x\in D$, and uniformly in $x$ in the compact subsets of $D$,
$$\lim_{h\to 0} h\, \ln\,  \mathbb E_x[\tau_{D^{c}}]\le \sum_{k=0}^{n}V_D(x_{k},\partial D).$$
\end{corollary}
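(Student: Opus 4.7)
The plan is to apply the general upper bound \eqref{pr.WF-E} from Freidlin--Wentzell after identifying the stable compact sets explicitly and then exhibiting a cheap $\mathbf{W}$-graph. Since \eqref{ortho} and \eqref{eq.cond-nabla} are assumed, Lemma~\ref{le.WF1} and Proposition~\ref{pr.Vxx} ensure that Conditions \textbf{(a)}--\textbf{(c)} hold for the family $\{K_i\}_{i=1}^l = \{\{y\} : y\in D, \nabla f(y)=0\}$, which contains in particular the $n+1$ local minima $x_0,\ldots,x_n$ of $f$ in $D$. The strategy therefore boils down to evaluating the right-hand side $W_D$ in \eqref{pr.WF-E}.

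First, I would determine the stable sets among $K_1,\ldots,K_l$. By Lemma~\ref{le.stable-compact}, a critical point $\{y_i\}$ of $f$ in $M$ is stable if and only if $y_i$ is a local minimum of $f$; the argument goes through verbatim with $M$ replaced by $D$, since the sphere $C(x,r)$ used there can be chosen inside $D$ thanks to $x\in D$ and \eqref{eq.cond-nabla}. Consequently the stable compacts among $K_1,\ldots,K_l$ are exactly the singletons $\{x_0\},\ldots,\{x_n\}$, so that $p_s = n+1 \ge 1$ and \eqref{pr.WF-E} is applicable, yielding
$$
\limsup_{h\to 0}\, h\ln\mathbb{E}_x[\tau_{D^c}] \;\le\; W_D \;=\; \min_{g\in G^{\{\{x_0\},\ldots,\{x_n\},\partial D\}}(\{\partial D\})} \sum_{(m\to n)\in g} V_D(m,n),
$$
uniformly on compact subsets of $D$.

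Next, I would bound $W_D$ from above by exhibiting a single distinguished graph $g_0$: the graph in which every local minimum is directly connected to $\partial D$, i.e.\ the arrows are $\{x_k\}\to \partial D$ for $k=0,\ldots,n$. This is manifestly an element of $G^{\{\{x_0\},\ldots,\{x_n\},\partial D\}}(\{\partial D\})$, since each non-target vertex $\{x_k\}$ is the starting point of exactly one arrow and every path ends at $\partial D$ in a single step (so no cycles can appear). Its weight is
$$
\sum_{k=0}^{n} V_D(\{x_k\},\partial D)\;=\;\sum_{k=0}^{n} V_D(x_k,\partial D),
$$
so $W_D\le \sum_{k=0}^{n} V_D(x_k,\partial D)$ and the announced inequality follows.

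There is essentially no analytic obstacle here: the work has been carried out in Lemma~\ref{le.WF1}, Proposition~\ref{pr.Vxx}, and Lemma~\ref{le.stable-compact}, and the corollary is just a minimization over graphs that is trivially bounded by a specific admissible choice. The only point requiring a touch of care is to verify that Lemma~\ref{le.stable-compact}, stated for $M$, transfers to $D$ with the quasi-potential $V_D$; but this is immediate because for a local minimum $x_k\in D$ one may select the auxiliary sphere $C(x_k,r)$ inside $D$, and the invocation of Lemma~\ref{le.ff} (whose proof is global on $M$ but whose conclusion depends only on the curve $\phi$) remains valid for curves $\phi$ staying in $\overline D$.
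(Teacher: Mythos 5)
Your proof is correct and follows exactly the paper's own argument: verify conditions \textbf{(a)}--\textbf{(c)} via Lemma~\ref{le.WF1} and Proposition~\ref{pr.Vxx}, identify the stable compacts as $\{x_0\},\dots,\{x_n\}$ via Lemma~\ref{le.stable-compact}, and then bound $W_D$ in \eqref{pr.WF-E} by the single graph $(\{x_k\}\to\partial D)_{k=0}^n$. The closing aside about transferring Lemma~\ref{le.stable-compact} from $V_M$ to $V_D$ is not actually needed, since the paper's notion of ``stable'' used in \eqref{pr.WF-E} is defined directly through $V_M$ (see the lines preceding Lemma~\ref{le.stable-compact}); and if it were needed, you would also have to check the converse direction (non-minima are not $V_D$-stable) to be sure no extra vertices appear in the graphs, which your sphere argument alone does not address.
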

\begin{proof}
Let us define the compact  sets $K_i$, $i=1,\dots,l$, by  \eqref{eq.Ki}.
According to Lemma~\ref{le.WF1} and Proposition~\ref{pr.Vxx}, Conditions {\rm \textbf{(a)}}, {\rm \textbf{(b)}}, and {\rm \textbf{(c)}} are satisfied. Moreover, according to Lemma~\ref{le.stable-compact}, the $\{x_k\}$, $0\leq k\leq n$, are the stable compact sets 
among $K_1,\ldots , K_l$, and thus ${p_s}=n+1$ and $\{K_1,\dots,K_{p_{s}}\}=\{\{x_k\},0\leq k\leq n\}$. 
We conclude by applying \eqref{pr.WF-E} with the graph $(\{x_0\}\to\pa\Omega),\dots,(\{x_n\}\to\pa\Omega)$.
\end{proof}

\subsection{Upper bound on the mean exit time when \eqref{ortho} and  \eqref{well} hold}

\begin{proposition}\label{pr.upper-bound}
Assume that  \eqref{ortho} and  \eqref{well} hold. Then, for every $\beta>0$, there exists $h_{0}>0$
such that, for all $h\in (0,h_0]$,
$$\sup_{x\in \overline \Omega} \mathbb E_x[\tau_{\Omega^c}]\le e^{\frac 2h(\min_{\partial \Omega}f-f(x_0))}e^{\frac \beta h}.$$ 
\end{proposition}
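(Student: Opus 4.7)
The plan is to deduce the bound from Corollary~\ref{co.WF} after replacing $\Omega$ by a slightly enlarged domain $D$ to which the Freidlin--Wentzell framework applies. Given $\beta>0$, the first step is to construct a smooth bounded open set $D\subset M$ with $\overline\Omega\subset D\subset\{x\in M:\dist(x,\overline\Omega)<\eta\}$ for some $\eta=\eta(\beta)>0$ satisfying $\nabla f\neq 0$ on $\partial D$. Such a $D$ exists because $f$ is Morse, so $\Hess f$ is invertible at the finitely many critical points of $f$ lying on $\partial\Omega$: Taylor-expanding $\nabla f$ along the outward normal of $\partial\Omega$ shows that a tubular $\eta$-enlargement (smoothed if necessary) has no zero of $\nabla f$ on its boundary for $\eta$ small. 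By construction, $\min_{\partial D}f\leq\min_{\partial\Omega}f+O(\eta)$, and the local minima of $f$ in $D$ are $x_0$ together with at most finitely many points $x_1,\ldots,x_n\in\partial\Omega$, each lying within distance $\eta$ of $\partial D$.

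Since $\overline\Omega\subset D$, $\tau_{\Omega^c}\leq\tau_{D^c}$ pathwise, so $\mathbb E_x[\tau_{\Omega^c}]\leq\mathbb E_x[\tau_{D^c}]$ for every $x\in\overline\Omega$. Corollary~\ref{co.WF} then yields, uniformly in $x\in\overline\Omega$,
\begin{equation*}
\limsup_{h\to 0}h\ln\mathbb E_x[\tau_{\Omega^c}]\leq V_D(x_0,\partial D)+\sum_{k=1}^n V_D(x_k,\partial D).
\end{equation*}
For each $k\geq 1$, a straight segment from $x_k$ to the nearest point of $\partial D$ has length $\leq\eta$ and traversed at a well-chosen constant speed gives a path of action $O(\eta)$; hence $\sum_{k\geq 1}V_D(x_k,\partial D)=O(n\eta)\leq\beta/4$ for $\eta$ small enough.

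The crux is to show $V_D(x_0,\partial D)\leq 2(\min_{\partial\Omega}f-f(x_0))+\beta/2$, which is done by constructing an explicit admissible path. Pick $z^*\in\partial\mathbf{C}_{{\rm min}}\cap\partial\Omega$ (nonempty by \eqref{well}), so $f(z^*)=\min_{\partial\Omega}f$. Introduce the reverse-orthogonal flow $\psi$ solving $\dot X=\nabla f(X)-\boldsymbol\ell(X)$, whose trajectories realize equality in the Freidlin--Wentzell lower bound of Lemma~\ref{le.ff} (see Remark~\ref{re.Vd=}). Applying Lemma~\ref{le.lep-michel} with $-\boldsymbol\ell$ in place of $\boldsymbol\ell$ (still orthogonal to $\nabla f$), the Jacobian of $\psi$ at $x_0$ has all eigenvalues with positive real part, so $x_0$ is a source of $\psi$; equivalently, $x_0$ is asymptotically stable for $\tilde\varphi_t:=\psi_{-t}$ solving $\dot X=-\nabla f(X)+\boldsymbol\ell(X)$. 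Since $f$ decreases strictly along $\tilde\varphi$, the same reasoning producing \eqref{eq.Ax0} yields $\mathbf{C}_{{\rm min}}\subset\tilde{\mathcal A}(\{x_0\})$, i.e., generic $y\in\mathbf{C}_{{\rm min}}$ satisfy $\tilde\varphi_t(y)\to x_0$. I then build the path from $x_0$ to $\partial D$ in three concatenated pieces: (i) from $x_0$ to some $x_\delta\in B(x_0,\delta)$ of action $\varepsilon_1(\delta)\to 0$, using the continuity $V_D(x_0,x_0)=0$ from the first item of Lemma~\ref{le.Vcontinuous}; (ii) pick $y\in\mathbf{C}_{{\rm min}}$ close to $z^*$ with $\tilde\varphi_t(y)\to x_0$, set $x_\delta:=\tilde\varphi_{T}(y)$ for $T$ large, and follow the $\psi$-trajectory $s\mapsto\tilde\varphi_{T-s}(y)$ from $x_\delta$ to $y$, which stays inside $\mathbf{C}_{{\rm min}}\subset D$ and has action $2(f(y)-f(x_\delta))\leq 2(f(z^*)-f(x_0))+\varepsilon_2$ by Remark~\ref{re.Vd=}; (iii) a short straight arc from $y$ to $\zeta=z^*+\eta' n_\Omega(z^*)\in\partial D$ for suitable $\eta'\leq\eta$, of action $O(\eta+|y-z^*|)$. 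Tuning $\delta,\eta,|y-z^*|$ in terms of $\beta$ yields a path from $x_0$ to $\partial D$ of total action $\leq 2(\min_{\partial\Omega}f-f(x_0))+\beta/2$, as claimed.

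The main difficulty lies in step~(ii): when $\nabla f(z^*)=0$, so $z^*$ is a saddle of $f$ (by the remark preceding Lemma~\ref{le.lep-michel}), one must pick $y\in\mathbf{C}_{{\rm min}}$ arbitrarily close to $z^*$ while avoiding the positive-codimension stable manifold of $z^*$ for $\tilde\varphi$, so that $\tilde\varphi_t(y)$ actually converges to $x_0$. This is achievable by a perturbation inside the open set $\mathbf{C}_{{\rm min}}$, whose closure contains $z^*$; the case $\nabla f(z^*)\neq 0$ is immediate since $-\nabla f(z^*)$ points into $\mathbf{C}_{{\rm min}}$. Combining all the estimates gives $\sup_{x\in\overline\Omega}h\ln\mathbb E_x[\tau_{\Omega^c}]\leq 2(\min_{\partial\Omega}f-f(x_0))+\beta$ for $h$ small, which is the proposition.
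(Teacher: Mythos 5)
Your proof is correct and follows essentially the same route as the paper's: enlarge $\Omega$ to a smooth domain $D$ on which $\nabla f$ does not vanish, invoke Corollary~\ref{co.WF}, and bound $V_D(x_0,\partial D)$ using the reverse-orthogonal flow $\dot X=\nabla f(X)-\boldsymbol\ell(X)$ together with short straight arcs near the boundary. The paper packages your explicit three-piece path more compactly by writing $V_{D_\alpha}(x_0,\partial D_\alpha)\leq V_{D_\alpha}(x_0,z)+V_{D_\alpha}(z,\partial D_\alpha)$ (via the second item of Lemma~\ref{le.Vcontinuous}) and identifying $V_{D_\alpha}(x_0,z)=2(\min_{\partial\Omega}f-f(x_0))$ from Lemma~\ref{le.ff} and Remark~\ref{re.Vd=}; your construction in effect unfolds that identity, and your extra care about choosing $y\in\mathbf{C}_{\rm min}$ off the relevant stable manifold when $\nabla f(z^*)=0$ is a detail the paper's citation of Remark~\ref{re.Vd=} leaves implicit.
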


\begin{proof} 
Let us assume that    \eqref{ortho} and  \eqref{well} hold. 
We set
 $$D_\alpha:=\{x\in M, \text{dist}(x,\overline{\Omega})<\alpha\}, \ \alpha >0.$$
For every $\alpha>0$, we have $\overline \Omega\subset D_\alpha$ and $\partial D_\alpha=\{x\in M, \text{dist}(x,\overline{\Omega}) =\alpha\}$. 
In addition, there exists~$\alpha_{0}>0$ such that, for every  $\alpha\in(0,\alpha_{0}]$, $D_\alpha$ is a $\mathcal C^\infty$ subdomain of $M$
 and, since the critical points of $f$  are isolated  in $M$, $\{x\in \overline{D}_\alpha,\nabla f(x)=0\}\subset \overline \Omega$.  In particular, 
$\vert \nabla f\vert>0$ on $\pa D_\alpha$ and
the local minima of $f$ in $D_\alpha$ are its local minimum $x_{0}$ in $\Omega$ and its local minima  $x_{1},\dots,x_{n}$  on $\pa\Omega$. 
Because $\overline \Omega$ is a compact subset of $D_\alpha$, it follows from
Corollary~\ref{co.WF} that for every $\alpha\in(0,\alpha_{0}]$ and $ \epsilon>0$, we have for all $h$ small enough:
 $$\sup_{x\in \overline \Omega} \mathbb E_x[\tau_{\Omega^c}]\le \sup_{x\in \overline \Omega} \mathbb E_x[\tau_{D_\alpha^c}]\le e^{\frac 2h \sum_{k=0}^{n}V_{D_\alpha}(x_0,\partial D_\alpha)}e^{\frac \epsilon h}.$$
In order to prove Proposition~\ref{pr.upper-bound}, it then enough to show that
\begin{equation}\label{eq.Da}
V_{D_\alpha}(x_0,\partial D_\alpha)+\sum_{k=1}^{n}V_{D_\alpha}(x_k,\partial D_\alpha)\le  2(\min_{\partial \Omega}f-f(x_0))+ o_\alpha(1).
\end{equation}
Using the second item of Lemma~\ref{le.Vcontinuous}, we have, for every  $y\in  \partial D_\alpha$ and $z\in \partial \Omega$,
$$V_{D_\alpha}(x_0,\partial D_\alpha)\le V_{D_\alpha}(x_0,y) \le V_{D_\alpha}(x_0,z)+V_{D_\alpha}(z,y) .$$ 
Moreover, according to Lemma~\ref{le.ff}
and to
Remark~\ref{re.Vd=}, for every $z\in \partial \mathbf{C}_{{\rm min}} \cap \pa \Omega$,
$$V_{D_\alpha}(x_0,z)= 2(f(z)-f(x_0)) = 2(\min_{\partial \Omega}f-f(x_0)).$$
Consequently,  for every  $z\in \partial \mathbf{C}_{{\rm min}} \cap \pa \Omega$ and $\alpha>0$ small enough,
$$V_{D_\alpha}(x_0,\partial D_\alpha)\le 2(\min_{\partial \Omega}f-f(x_0))+V_{D_\alpha}(z,\partial D_\alpha)\le 2(\min_{\partial \Omega}f-f(x_0))+\frac12(1 + \|\boldsymbol{b}\|_{\infty})^{2}\alpha \,,$$
where we used the fact that
for every $x\neq y\in M$, $\phi:t\in [0,|y-x|]\mapsto x + \frac{y-x}{|y-x|}t$ satisfies
$
S_{0,|y-x|}(\phi)\leq \frac12(1 + \|\boldsymbol{b}\|_{\infty})^{2}|x-y|$.
The same argument shows that
$V_{D_\alpha}(x_k,\partial D_\alpha)\leq \frac 12(1 + \|\boldsymbol{b}\|_{\infty})^{2}\alpha$
for every $1\le k\le n$ (since $x_{k}\in\pa\Omega$).
This implies \eqref{eq.Da} and thus completes the proof of  Proposition~\ref{pr.upper-bound}.
\end{proof}

\subsection{Leveling results  for $x\mapsto \mathbb E_x[\tau_{\Omega^c}]$ and  \textit{commitor} functions}

The following result provides a  local leveling result for $x\mapsto \mathbb E_x[\tau_{\Omega^c}]$.

\begin{lemma}\label{le.levlocal} 
Assume  \eqref{ortho} and \eqref{well}. Let  $\delta_1>0$ and  $r_h=e^{- \delta_1/ h}$. 
Then,   there exist $h_0>0$ and $c>0$ such that, for all $h\in (0,h_0]$, $\sup_{x\in \bar{  B}(x_0,r_h)}\vert \mathbb E_{x}[\tau_{\Omega^c}]-\mathbb E_{x_0}[\tau_{\Omega^c}]\vert\le e^{-\frac ch} \mathbb E_{x_0}[\tau_{\Omega^c}]$. 
\end{lemma}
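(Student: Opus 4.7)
The plan is to prove Lemma~\ref{le.levlocal} by combining a synchronous coupling, the strong Markov property at a short fixed time, a Freidlin--Wentzell short-time exit estimate, and an interior elliptic gradient bound for $u_h(y):=\mathbb E_y[\tau_{\Omega^c}]$, which solves $\mathcal G u_h=-1$ on $\Omega$ with Dirichlet conditions on $\pa \Omega$, where $\mathcal G:=\frac h2\Delta+\boldsymbol{b}\cdot\nabla$. Below, write $\Delta f:=\min_{\pa\Omega}f-f(x_0)$ and $\tau^y:=\tau_{\Omega^c}$ for the process started at $y$.

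I would first construct $(X_t^x,X_t^{x_0})_{t\ge 0}$ by driving both SDEs with the same Brownian motion. Since $\boldsymbol{b}$ is Lipschitz on the compact manifold $M$ (say, with constant $L$), Grönwall's lemma yields the pointwise deterministic bound $|X_t^x-X_t^{x_0}|\le r_h\,e^{Lt}$. Then, for $T>0$ to be tuned, the strong Markov property at time $T$ gives
$$u_h(x)-u_h(x_0)=\mathbb E[\tau^x\wedge T-\tau^{x_0}\wedge T]+\mathbb E[\mathbf 1_{\tau^x>T}u_h(X_T^x)-\mathbf 1_{\tau^{x_0}>T}u_h(X_T^{x_0})].$$
The first term is bounded by $T$ in absolute value. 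To handle the second, fix a compact $K\subset\subset\Omega$ containing $x_0$ in its interior, and introduce the event $E:=\{X_t^x,X_t^{x_0}\in K,\ \forall t\in[0,T]\}$. The Freidlin--Wentzell LDP for sample paths of \eqref{eq.langevin} ensures that, for $T$ small enough, $\mathbb P[E^c]\le e^{-A/h}$ with $A=A(T,K)>0$ arbitrarily large when $T\to 0$.

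On $E^c$, the contribution is bounded by $2\|u_h\|_\infty e^{-A/h}\le 2\,e^{(2\Delta f+\beta-A)/h}$ for any $\beta>0$ by Proposition~\ref{pr.upper-bound}. On $E$, both $X_T^x,X_T^{x_0}$ lie in $K$ and satisfy $|X_T^x-X_T^{x_0}|\le r_he^{LT}$; the classical interior gradient estimate for the elliptic equation $\mathcal Gu_h=-1$, in the form $\|\nabla u_h\|_{L^\infty(K)}\le C_K\,h^{-N}\|u_h\|_{L^\infty(\Omega)}$ for some integer $N$ uniform in $h$, then gives $|u_h(X_T^x)-u_h(X_T^{x_0})|\lesssim h^{-N}e^{(2\Delta f+\beta-\delta_1+LT)/h}$. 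Combining all of this with the matching Freidlin--Wentzell lower bound $u_h(x_0)\ge e^{(2\Delta f-\beta)/h}$ (valid for any $\beta>0$ and $h$ small, from standard LDP on $\mathbb P_{x_0}[\tau^{x_0}\le e^{(2\Delta f-\beta)/h}]\to 0$), we obtain
$$\frac{|u_h(x)-u_h(x_0)|}{u_h(x_0)}\lesssim T\,e^{-(2\Delta f-\beta)/h}+e^{(2\beta-A)/h}+h^{-N}e^{(2\beta-\delta_1+LT)/h}.$$
Choosing first $\beta\in(0,\delta_1/4)$ and then $T>0$ small enough that simultaneously $LT<\delta_1/2$ and $A(T)>3\beta$, each of the three summands is bounded by $e^{-c/h}$ for some $c>0$ and $h$ small, uniformly in $x\in\bar B(x_0,r_h)$.

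The main technical obstacle I anticipate is establishing the $h$-uniform interior gradient estimate $\|\nabla u_h\|_{L^\infty(K)}\le C_Kh^{-N}\|u_h\|_{L^\infty(\Omega)}$. This follows from classical Schauder or Calder\'on--Zygmund theory applied to $\mathcal Gu_h=-1$ on a slightly larger interior compact, but requires care because the ellipticity constant $h/2$ degenerates as $h\to 0$ and $u_h$ itself is exponentially large in $1/h$, so one must ensure the polynomial factor $h^{-N}$ is indeed absorbed against the exponential gap coming from $r_h=e^{-\delta_1/h}$.
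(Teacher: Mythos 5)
Your proposal is correct in substance, and it takes a genuinely different route from the one the paper delegates to (the argument of Lemma~3 in \cite{NectouxCPDE}, which is a probabilistic coupling of hitting distributions on a fixed small sphere around $x_0$ in the spirit of Day's estimate, as in the proof of Proposition~\ref{pr.levP}). You instead combine a synchronous coupling with a strong Markov split at a fixed short time $T$ and an $h$-dependent interior elliptic gradient bound on $u_{h}:=\mathbb E_{\cdot}[\tau_{\Omega^{c}}]$. This avoids the sphere--coupling machinery at the cost of the elliptic estimate, and either route is adequate. Three points deserve attention.

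First, the technical obstacle you flag---the $h$-dependence of the interior gradient estimate---is resolved by the \emph{right} rescaling: set $z=(y-y_{0})/h$ for $y_{0}\in K\subset\subset\Omega$, so that $v(z):=u_{h}(y_{0}+hz)$ solves
$\Delta_{z}v + 2\,\boldsymbol{b}(y_{0}+hz)\cdot\nabla_{z}v = -2h$ on $B_{2}\subset\mathbb R^{d}$
with coefficients uniformly bounded in $C^{0,\alpha}(B_{2})$ as $h\to 0$. Interior Schauder (or $W^{2,p}$, $p>d$) then gives $\|\nabla_{z}v\|_{L^{\infty}(B_{1})}\le C\big(\|v\|_{L^{\infty}(B_{2})}+h\big)$ with $C$ independent of $h$, hence $\|\nabla u_{h}\|_{L^{\infty}(K)}\le C_{K}\,h^{-1}\|u_{h}\|_{L^{\infty}(\Omega)}$, i.e.\ $N=1$. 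Rescaling by $\sqrt h$ (the natural parabolic scale) would instead produce an unbounded drift $\sim h^{-1/2}$, and the resulting Schauder constant would be exponentially large, so that scaling is the wrong one. With the $h$-rescaling the claim you use holds as stated.

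Second, your lower bound $u_{h}(x_{0})\ge e^{(2\Delta f-\beta)/h}$ cannot be obtained directly from Freidlin--Wentzell on $\Omega$, because $\pa\Omega$ may carry equilibrium points of $\dot X=\boldsymbol{b}(X)$; one must pass through an intermediate subdomain such as $D'=\mathbf{C}_{\text{min}}(\eta_{0})$, on which $\boldsymbol{b}\cdot n_{D'}>0$ is outward-pointing so that the results of Chapter~4 of \cite{FrWe} apply (cf.\ \eqref{eq.WFE}), and then use $\mathbb E_{x_{0}}[\tau_{\Omega^{c}}]\ge\mathbb E_{x_{0}}[\tau_{D'^{c}}]$. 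Also note that $\beta$ must additionally satisfy $\beta<2(\min_{\pa\Omega}f-f(x_{0}))$, not just $\beta<\delta_{1}/4$.

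Third, a small but worth-noting slip: the Gr\"onwall factor $e^{LT}$ is a constant, not $e^{LT/h}$, so it does not enter the exponent $(\cdots)/h$ and the constraint $LT<\delta_{1}/2$ is in fact unnecessary; only $2\beta<\delta_{1}$ and $A(T)>2\beta$ are needed to make the three terms exponentially small. As written, your choice of constants still works, so this is cosmetic.
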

  \begin{proof}
Since \eqref{ortho} holds, $\boldsymbol{b}(x_0)=0$ (see \eqref{eq.incluw}). In addition, 
according to Lemma~\ref{le.lep-michel},
the eigenvalues of the matrix $\text{Jac } \boldsymbol{b}(x_0)=-(\Hess f(x_0)+ \mathsf L(x_0))$ all belong to $\{\mathsf z\in \mathbb C, \Re \mathsf z<0\}$ (in particular, $x_0$ is an  asymptotically stable equilibrium point of the dynamical system  \eqref{eq.flow}). The proof then follows the same lines as the one of~\cite[Lemma 3]{NectouxCPDE}.  
  \end{proof}

Denote by $\tau_{\bar B(x_0,r_h)}$ the first time the process~\eqref{eq.langevin} hits  the closed ball $\bar B(x_0,r_h)$, where     we recall that $r_h=e^{- \delta_1/ h}$, $\delta_1>0$. The constant $\delta_1>0$ will be fixed in \eqref{eq.Pcle0-} below. 
We assume that  $h$ is small enough so that $\bar B(x_0,r_h)\subset \mathbf{C}_{\text{min}}$.  The function $$x\mapsto \mathbb P_{x}[\tau_{\bar B(x_0,r_h)}< \tau_{\Omega^c}]$$ is called the  {commitor  function} (or the \textit{equilibrium potential})  between  $\Omega$ and $\bar B(x_0,r_h)$.  The following result provides a (global) leveling result for $x\mapsto \mathbb E_x[\tau_{\Omega^c}]$ in $\mathcal A(\{x_0\})$.

\begin{proposition}\label{pr.levP}
Assume \eqref{ortho} and  \eqref{well}. Then, there exists $\delta_1>0$ such that, for all
    compact subset  $ K$ of $\mathcal A( \{x_0\})$ (see \eqref{eq.A} and \eqref{eq.Ax0}),  there exist $h_0>0$ and $c>0$ such that for all $h\in (0,h_0]$,
$$\sup_{x\in K}\big\vert \mathbb P_{x}[\tau_{\bar B(x_0,r_h)}< \tau_{\Omega^c}]-1\big\vert\le  e^{-\frac ch}.$$
\end{proposition}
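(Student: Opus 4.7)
The plan is to decompose the argument into a global approach phase that drives the process from $K$ into a fixed-size neighborhood $\bar B(x_{0},\rho_{0})$ of $x_{0}$, and a local hitting phase that then reaches $\bar B(x_{0},r_{h})$. These two phases will be glued together by the strong Markov property. For the global phase, since $K\subset\mathcal A(\{x_{0}\})$ is compact and, by \eqref{eq.incluw} combined with the definition \eqref{eq.A} of $\mathcal A(\{x_{0}\})$, $\varphi_{t}(x)\to x_{0}$ uniformly in $x\in K$, there exist $\rho_{0}>0$ with $\bar B(x_{0},2\rho_{0})\subset\Omega$ and $T_{K}>0$ such that $\varphi_{t}(x)\in\Omega$ for every $(t,x)\in[0,T_{K}]\times K$ and $\varphi_{T_{K}}(x)\in B(x_{0},\rho_{0}/2)$. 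The standard uniform Freidlin--Wentzell large deviation principle on the bounded interval $[0,T_{K}]$ then yields $c_{1}>0$ and $h_{0}>0$ such that, for every $h\in(0,h_{0}]$,
$$
\sup_{x\in K}\mathbb P_{x}\Big[\sup_{t\in[0,T_{K}]}|X_{t}-\varphi_{t}(x)|\geq\rho_{0}/2\Big]\leq e^{-c_{1}/h}.
$$
On the complementary event, $X_{t}\in\Omega$ throughout $[0,T_{K}]$ and $X_{T_{K}}\in B(x_{0},\rho_{0})$, so it remains to show, for some $\delta_{1},c_{2}>0$,
$$
\sup_{y\in\bar B(x_{0},\rho_{0})}\mathbb P_{y}\bigl[\tau_{\bar B(x_{0},r_{h})}>\tau_{\Omega^{c}}\bigr]\leq e^{-c_{2}/h}.
$$

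For the local step, I will exploit the asymptotic stability of $x_{0}$: since $x_{0}$ is a local minimum of $f$, Lemma~\ref{le.lep-michel} (with index $p=0$) implies that the spectrum of $\Hess f(x_{0})+{}^{t}\mathsf L(x_{0})$, hence of $-\jac\boldsymbol{b}(x_{0})$, is contained in $\{\Re \mathsf z>0\}$. Choosing $\rho_{0}$ small enough, the quasi-potential $V_{0}:=V_{B(x_{0},2\rho_{0})}(x_{0},\pa B(x_{0},2\rho_{0}))$ is strictly positive by Lemma~\ref{le.ff}. Introducing the time scale $T_{h}:=e^{V_{0}/(2h)}$, the standard Freidlin--Wentzell exit estimate gives $\mathbb P_{y}[\tau_{B(x_{0},2\rho_{0})^{c}}\leq T_{h}]\leq e^{-c_{3}/h}$ uniformly in $y\in\bar B(x_{0},\rho_{0})$, so the process remains in $B(x_{0},2\rho_{0})\subset\Omega$ on $[0,T_{h}]$ with overwhelming probability. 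In parallel, the local linearization $\boldsymbol{b}(x)\simeq-(\Hess f(x_{0})+\mathsf L(x_{0}))(x-x_{0})$ yields an Ornstein--Uhlenbeck approximation for $X$ whose quasi-invariant density near $x_{0}$ is of order $h^{-d/2}e^{-2(f(x)-f(x_{0}))/h}$; the resulting ergodic mean hitting time of $\bar B(x_{0},r_{h})$ from $\bar B(x_{0},\rho_{0})$ is of order $h^{d/2}e^{d\delta_{1}/h}$. Picking $\delta_{1}>0$ once and for all with $d\delta_{1}<V_{0}/2$, Markov's inequality combined with the exit estimate delivers the required bound, which is then combined with the global phase and the strong Markov property to conclude.

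The hard part is the local hitting estimate. Because $r_{h}=e^{-\delta_{1}/h}$ is exponentially small compared to the natural noise amplitude $\sqrt h$, no direct pathwise large deviation argument can work: the hitting probability per unit time is itself exponentially small, so the gain must come entirely from the exponentially large time budget available before exit from $B(x_{0},2\rho_{0})$. The delicate technical point is to make the Ornstein--Uhlenbeck approximation rigorous on the long horizon $T_{h}$, to control the cumulative effect of the nonlinear drift remainder (as well as the non-reversible component $\boldsymbol{\ell}$, since \eqref{div} is \emph{not} assumed here), and to keep all estimates uniform in the starting point within $\bar B(x_{0},\rho_{0})$. The condition $\delta_{1}<V_{0}/(2d)$ encodes the required balance and is precisely the reason why $\delta_{1}$ must be fixed first in the statement, independently of $K$.
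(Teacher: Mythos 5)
Your decomposition into a finite-time approach phase plus a local hitting phase, glued by the strong Markov property, matches the structure of the paper's proof (which first treats $K=\overline{\mathbf C}_{\text{min}}(\eta_*)$ and then pushes forward an arbitrary compact $K\subset\mathcal A(\{x_0\})$ by the deterministic flow for a finite time $T_K$, using a uniform large-deviation track estimate). You also correctly identify why $\delta_1$ can and must be chosen before $K$ is introduced. The global phase is essentially correct.

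The gap is in the local phase, which is precisely the mathematical content of the proposition. The paper does not prove the estimate
\[
\sup_{y\in\overline{\mathbf C}_{\text{min}}(\eta_*)}\mathbb P_y\bigl[\tau_{\mathbf C_{\text{min}}(\eta_0)^c}\le\tau_{\bar B(x_0,r_h)}\bigr]\le e^{-c/h}
\]
from scratch: it quotes it verbatim from Day's \cite[Theorem~2]{Day4}. You instead sketch a would-be proof based on an Ornstein--Uhlenbeck linearization at $x_0$, an ``ergodic mean hitting time'' heuristic, and Markov's inequality. As you yourself note, making the OU approximation rigorous over the exponentially long horizon $T_h=e^{V_0/(2h)}$, controlling the nonlinear drift remainder and the non-reversible component $\boldsymbol\ell$, and keeping everything uniform in the starting point, are exactly the ``delicate technical points.'' Acknowledging them is not proving them; what you have written amounts to re-deriving Day's theorem without supplying the proof of the step that matters. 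Moreover, the heuristic itself is off: the claimed mean hitting time $h^{d/2}e^{d\delta_1/h}$ is not right. The invariant mass of $\bar B(x_0,r_h)$ is indeed of order $r_h^d h^{-d/2}$, but for a diffusion the mean hitting time of a ball of radius $r_h\ll\sqrt h$ from the bulk of the quasi-equilibrium is governed by the Green function, giving an exponent of order $(d-2)\delta_1$ in dimension $d\ge3$ (and much smaller in $d\le2$), not $d\delta_1$. The error is conservative so the final choice $d\delta_1<V_0/2$ would still suffice, but the intermediate claim as stated is incorrect, and the overall argument is not a proof until the local hitting estimate is actually established (or a reference such as Day's theorem is invoked, which the paper does).
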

 
\begin{remark}
 Applying \cite[Theorem 2]{Day4} with $\Omega=\mathcal A( \{x_0\})$ leads to a slightly weaker version  
of Proposition \ref{pr.levP}, where  $\delta_1>0$ depends on $K$. 
 \end{remark}

\begin{proof}
For $\eta \in (0,\min_{\pa \Omega}f-f(x_0))$,  set 
\begin{equation}\label{eq.Ceta}
\mathbf{C}_{\text{min}}(\eta) := \mathbf{C}_{\text{min}}\cap \{f<\min_{\pa \Omega}f-\eta\}=\{x\in \Omega, f(x)<\min_{\pa \Omega}f-\eta\}.
\end{equation} 
The set   $\mathbf{C}_{\text{min}}(\eta)$  is open,  smooth  (since $\nabla f\neq 0$  on $\pa \mathbf{C}_{\text{min}}(\eta)$), and is the connected component of $\{f<\min_{\pa \Omega}f-\eta\}$ containing $x_0$ (see for instance~\cite[Proposition 18]{DLLN-saddle1}).  Recall also that $x_0$ is an  asymptotically stable equilibrium point of the dynamical system  \eqref{eq.flow}.
Moreover, \eqref{eq.flow-e} implies that $\varphi_t(x)\in\overline{\mathbf{C}}_{\text{min}}(\eta) $
for all $x\in \overline{\mathbf{C}}_{\text{min}}(\eta)$ and $t\in\R^{+}$, and thus
that $\lim_{t\to +\infty}\varphi_t(x)= x_0$ since $x_{0}$ is the  unique critical point of $f$  in $\overline{\mathbf{C}}_{\text{min} }(\eta)$
 (see indeed \eqref{eq.convphi}).
    
Fix  now
  \begin{equation}\label{eq.eta0}
  \eta_0\in (0,\min_{\pa \Omega}f-f(x_0)) \text{ and   }   \eta_*\in (\eta_0,\min_{\pa \Omega}f-f(x_0)).
     \end{equation}
  It holds  $\overline{\mathbf{C}}_{\text{min}}(\eta_*) \subset \mathbf{C}_{\text{min}}(\eta_0)$. In the following $h>0$ is small enough so that   $\bar B(x_0,r_h) \subset {\mathbf{C}_{\text{min}}(\eta_*)}$, where we recall that $r_h=e^{-\delta_1/h}$. 
According to \cite[Theorem 2]{Day4},  there exist $\delta_1>0$ (which is now kept fixed), $h_0>0$,  and $c>0$ such that for all $h \in (0,h_0]$:
  \begin{equation}\label{eq.Pcle0-}
  \sup_{y\in \overline{\mathbf{C}}_{\text{min}}(\eta_*)}\mathbb P_y[\tau_{\mathbf{C}^c_{\text{min}}(\eta_0)}\le  \tau_{ \bar B(x_0,r_h) } ]\le   e^{-\frac ch}.
   \end{equation}
    Since the trajectories of the process~\eqref{eq.langevin} are continuous, one has
  $\{\tau_{\Omega^c}<  \tau_{ \bar B(x_0,r_h)}\}\subset \{\tau_{\mathbf{C}^c_{\text{min}}(\eta_0)}< \tau_{ \bar B(x_0,r_h)}\}$  
     for all $y\in   \overline{\mathbf{C}}_{\text{min}}(\eta_*)$ when $X_0=y$, 
  so that (using also $\{\tau_{\Omega^c}=  \tau_{ \bar B(x_0,r_h) }\}=\emptyset$):
  \begin{equation}\label{eq.Pcle}
 \sup_{y\in \overline{\mathbf{C}}_{\text{min}}(\eta_*)}\mathbb P_y[\tau_{\Omega^c}\le  \tau_{ \bar B(x_0,r_h) }]\le   e^{-\frac ch},
 \end{equation}
   which proves the proposition when $K=\overline{\mathbf{C}}_{\text{min} }(\eta_*)$. Let us now consider the case when $K\subset \mathcal A( \{x_0\})$. In view of \eqref{eq.Pcle}, it is enough to treat the case when $K\subset \Omega\setminus \mathbf{C}_{\text{min}}(\eta_*)$.  Pick $K\subset \Omega\setminus \mathbf{C}_{\text{min}}(\eta_*)$ with $K\subset \mathcal A( \{x_0\})$. Recall that this implies that for all $x\in K$, $\varphi_{t}(x)\in \Omega$ for all $t\ge 0$ and $\lim_{t\to+\infty}\varphi_t(x)= x_0$. 
Then, there exists $T_K> 0$ such that $\varphi_{T_K}(x)\in {\mathbf{C}}_{\text{min}}(\eta_*)$ for all $x\in K$. The set $\{\varphi_{T_K}(x), x\in K\}$ is a compact subset of  the open set ${\mathbf{C}}_{\text{min}}(\eta_*)$ and the compact subset $ \{\varphi_{t}(x), (x,t)\in K\times [0,T_K]\}$  of $\Omega$ does not contain 
$x_0 \notin K$.  
We can thus consider $\delta>0$ small enough such that:
   \begin{enumerate}
   \item[C1.]  $\{\varphi_{T_K}(x)+z, x\in K \text{ and } |z|\le \delta\}\subset   {\mathbf{C}}_{\text{min}}(\eta_*)$,
   \item[C2.] $x_0\notin K_{T_{K},\delta}:=\{\varphi_{t}(x)+z, (x,t)\in K\times [0,T_K] \text{ and } |z|\le \delta\}$.  
   \end{enumerate}
By item C2  above, for any   $h$ small enough, 
$\bar B(x_0,r_h) \cap  K_{T_{K},\delta}=\emptyset$.
Then,  for all $x\in K$, if $X_0=x$ and $\sup_{t\in [0,T_K]}|X_t-\varphi_t(x)|\le \delta$:
\begin{equation}
\label{eq.condM}
 T_K<\tau_{ \bar B(x_0,r_h) }.
\end{equation} 
Moreover, according to~\cite[Lemma 1]{day-83} and its note,  since $M$ is compact, there exists $c'>0$ such that for all $h$ small enough:
\begin{equation}
\label{eq.condMVD}
 \sup_{x\in M}\mathbb P_x\Big [\sup_{t\in [0,T_K]}|X_t-\varphi_t(x)|> \delta\Big ]\le   e^{-\frac{c'}h}.
 \end{equation} 
 On the other, by item C1 above, if $X_0=x\in K$ and  $  \sup_{t\in [0,T_K]}|X_t-\varphi_t(x)|\le  \delta$, it holds $X_{T_K}\in    {\mathbf{C}}_{\text{min}}(\eta_*)$. 
Then, for all $x\in K$, using  the   Markov property and \eqref{eq.condM}, we have  
\begin{align*}
 \mathbb P_x\Big [ \tau_{ \bar B(x_0,r_h) }<\tau_{\Omega^c}, \, \sup_{t\in [0,T_K]}|X_t-\varphi_t(x)|\le  \delta\Big ]&=  \mathbb E_x\Big[  \mathbb E_{X_{T_K}} \big [\textbf{1}_{\tau_{ \bar B(x_0,r_h) }<\tau_{\Omega^c}}\big] \mathbf{1}_{ \sup_{t\in [0,T_K]}|X_t-\varphi_t(x)|\le  \delta}\Big ]\\
 &\geq (1-e^{-\frac ch})\mathbb P_x\Big[ \,  { \sup_{t\in [0,T_K]}|X_t-\varphi_t(x)|\le  \delta}\Big ]\\
 &\geq(1-e^{-\frac ch})(1- e^{-\frac{c'}h}),
 \end{align*}
 where we used  respectively  \eqref{eq.Pcle} and \eqref{eq.condMVD}  at the second and third equalities.  In conclusion, we  have proved that for some $c>0$ and every $h$ small enough, 
   $ \sup_{x\in K}|\mathbb P_x[  \tau_{ \bar B(x_0,r_h) }<\tau_{\Omega^c}]-1|\le   e^{-\frac ch}$,
   which completes  the proof of Proposition~\ref{pr.levP}. 
\end{proof}

\begin{proposition}\label{pr.Ewedge}
Assume \eqref{ortho} and \eqref{well}.
Then, for every $\eta_*\in (0,\min_{\pa \Omega}f-f(x_0))$,
there exist $h_0>0$ and $c>0$ such that, for all $h\in (0,h_0]$,
$$
\sup_{x\in \overline{\mathbf{C}}_{ {\rm min}}(\eta_*) } \,  \mathbb E_x[\tau_{\bar B(x_0,r_h)} \wedge \tau_{\Omega^c}] \le  e^{\frac 2h (\min_{\pa \Omega}f-f(x_0))}e^{-\frac ch},
$$
where $\mathbf{C}_{ {\rm min}}(\eta_*)$ is defined in \eqref{eq.Ceta}.
\end{proposition}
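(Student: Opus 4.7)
The plan is to fix some $\eta_0\in(0,\eta_*)$, set $\tau_0:=\tau_{\mathbf{C}_{\rm min}^c(\eta_0)}$ and $T:=\tau_{\bar B(x_0,r_h)}\wedge\tau_{\Omega^c}$, and bound $\mathbb E_x[T]$ uniformly in $x\in\overline{\mathbf{C}}_{\rm min}(\eta_*)$ via a strong Markov decomposition at $\tau_0$. Since $\eta_*>\eta_0$, one has $\overline{\mathbf{C}}_{\rm min}(\eta_*)\subset \mathbf{C}_{\rm min}(\eta_0)$, so that $\tau_0$ is the first time the process crosses the intermediate level $\{f=\min_{\pa\Omega}f-\eta_0\}$. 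The key point is that exiting $\mathbf{C}_{\rm min}(\eta_0)$ before hitting $\bar B(x_0,r_h)$ has exponentially small probability by \eqref{eq.Pcle0-}, while the expected exit time from $\mathbf{C}_{\rm min}(\eta_0)$ carries a barrier strictly smaller than $\min_{\pa\Omega}f-f(x_0)$.

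The first task is to bound $\sup_{x\in\overline{\mathbf{C}}_{\rm min}(\eta_*)}\mathbb E_x[\tau_0]$. The domain $D:=\mathbf{C}_{\rm min}(\eta_0)$ is a smooth subdomain of $\Omega$ satisfying $\nabla f\neq 0$ on $\pa D$, so that \eqref{eq.cond-nabla} holds, and $x_0$ is the unique local minimum (and unique critical point) of $f$ in $D$. By an argument analogous to the one in the proof of Proposition~\ref{pr.upper-bound}, combining Corollary~\ref{co.WF} with the second item of Lemma~\ref{le.Vcontinuous}, Remark~\ref{re.Vd=} and the continuity of $V_D$, one obtains $V_D(x_0,\pa D)\leq 2(\min_{\pa\Omega}f-\eta_0-f(x_0))$, and hence, for every $\beta>0$ and $h$ small enough,
\[
\sup_{x\in\overline{\mathbf{C}}_{\rm min}(\eta_*)}\mathbb E_x[\tau_0]\,\leq\, e^{\frac{2}{h}(\min_{\pa\Omega}f-\eta_0-f(x_0))+\frac{\beta}{h}}.
\]

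Splitting $\mathbb E_x[T]=\mathbb E_x[T\,\mathbf 1_{T\leq \tau_0}]+\mathbb E_x[T\,\mathbf 1_{T>\tau_0}]$ and applying the strong Markov property at~$\tau_0$ (on $\{T>\tau_0\}$, the process has continuous paths so $X_{\tau_0}\in\pa\mathbf{C}_{\rm min}(\eta_0)\subset\Omega$) yields
\[
\mathbb E_x[T]\,\leq\, 2\,\mathbb E_x[\tau_0]+\mathbb P_x[T>\tau_0]\cdot\sup_{y\in\pa \mathbf{C}_{\rm min}(\eta_0)}\mathbb E_y[T].
\]
By \eqref{eq.Pcle0-} (applied with the same $\eta_0$ and $\eta_*$ as here), $\mathbb P_x[T>\tau_0]\leq \mathbb P_x[\tau_0\leq \tau_{\bar B(x_0,r_h)}]\leq e^{-c_0/h}$ for some $c_0>0$, uniformly in $x\in\overline{\mathbf{C}}_{\rm min}(\eta_*)$, while the bound $T\leq\tau_{\Omega^c}$ and Proposition~\ref{pr.upper-bound} give
\[
\sup_{y\in\pa\mathbf{C}_{\rm min}(\eta_0)}\mathbb E_y[T]\,\leq\, \sup_{y\in\overline\Omega}\mathbb E_y[\tau_{\Omega^c}]\,\leq\, e^{\frac{2}{h}(\min_{\pa\Omega}f-f(x_0))+\frac{\beta}{h}}.
\]
Combining these estimates, uniformly in $x\in\overline{\mathbf{C}}_{\rm min}(\eta_*)$,
\[
\mathbb E_x[T]\,\leq\, e^{\frac{2}{h}(\min_{\pa\Omega}f-f(x_0))}\Big(2\,e^{\frac{-2\eta_0+\beta}{h}}+e^{\frac{-c_0+\beta}{h}}\Big).
\]
Choosing $\beta\in\big(0,\min(2\eta_0,c_0)\big)$ and shrinking $h_0$ accordingly, both exponential factors are bounded by $e^{-c/h}$ for some $c>0$, which yields the claim.

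The main obstacle is the quasi-potential estimate $V_D(x_0,\pa D)\leq 2(\min_{\pa\Omega}f-\eta_0-f(x_0))$, since $x_0$ itself is a fixed point of $\dot\psi=\nabla f-\boldsymbol\ell$ and Remark~\ref{re.Vd=} cannot be applied directly. One bypasses this as follows: by Lemma~\ref{le.lep-michel}, $x_0$ is a source of the reverse dynamics $\dot\psi=\nabla f-\boldsymbol\ell$, so for every boundary point $z\in\pa D$ (where $f(z)=\min_{\pa\Omega}f-\eta_0$) one can construct $\psi$-trajectories $t\mapsto\psi_t(x_0^\epsilon)$ starting from $x_0^\epsilon$ arbitrarily close to $x_0$ and reaching a point $z^\epsilon$ close to $z$ while remaining inside $\overline D$; Remark~\ref{re.Vd=} then gives $V_D(x_0^\epsilon,z^\epsilon)=2(f(z^\epsilon)-f(x_0^\epsilon))$, and the continuity of $V_D$ (first item of Lemma~\ref{le.Vcontinuous}) enables passing to the limit $\epsilon\to 0$, producing the desired bound.
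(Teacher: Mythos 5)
Your proof is correct, and it follows a genuinely different route than the paper's. The paper starts from the strong Markov identity
$\mathbb E_x[\tau_{\Omega^c}] = \mathbb E_x[\tau_{\bar B(x_0,r_h)}\wedge\tau_{\Omega^c}]+\mathbb E_x\big[\mathbf 1_{\tau_{\Omega^c}>\tau_{\bar B(x_0,r_h)}}\mathbb E_{X_{\tau_{\bar B(x_0,r_h)}}}[\tau_{\Omega^c}]\big]$ (equation~\eqref{eq.Exw1}), then lower-bounds the second term via a second Markov step at $\tau_{D'^c}$ and the hitting-distribution comparison $\Vert\mu^h_{x}-\mu^h_{y}\Vert\le e^{-c/h}$ from Day's Theorem~1, before isolating $\mathbb E_x[\tau_{\bar B}\wedge\tau_{\Omega^c}]$ and invoking Proposition~\ref{pr.levP}. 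You instead condition directly at the exit time $\tau_0=\tau_{D'^c}$ from the intermediate sub-level set, obtaining $\mathbb E_x[T]\le 2\,\mathbb E_x[\tau_0]+\mathbb P_x[T>\tau_0]\sup_{y\in\overline\Omega}\mathbb E_y[\tau_{\Omega^c}]$, and the bound follows from \eqref{eq.Pcle0-} and Proposition~\ref{pr.upper-bound} only. This sidesteps the hitting-distribution comparison entirely and is arguably cleaner. For the quasi-potential input you use Corollary~\ref{co.WF} with the bound $V_{D'}(x_0,\pa D')\le 2(\min_{\pa\Omega}f-\eta_0-f(x_0))$, whereas the paper cites Freidlin--Wentzell Theorems~3.1 and 4.1 of Chapter~4 to get the exact log-asymptotic~\eqref{eq.WFE}; only the upper bound is used in either case, so these are interchangeable. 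One small inaccuracy in your last paragraph: Remark~\ref{re.Vd=} already covers the case $t_-=-\infty$ (with $\psi_{t_-}(x):=\lim_{t\to-\infty}\psi_t(x)$), so the $\epsilon$-approximation is not needed. For any $z\in\pa D'$, the backward $\psi$-trajectory stays in $\overline{D'}$ (because $f$ is nondecreasing along $\psi$-trajectories, so $\psi_t(z)\in D'$ for $t<0$) and converges to $x_0$ (since $x_0$ is the unique critical point of $f$ in $\overline{D'}$, cf.\ \eqref{eq.convphi}); applying Remark~\ref{re.Vd=} directly with $t_-=-\infty$, $t_+=0$ gives $V_{D'}(x_0,z)=2(f(z)-f(x_0))$. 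Your fallback argument with $x_0^\epsilon$ is not wrong, but the appeal to ``$x_0$ is a source'' is not the operative fact: what matters is the monotonicity of $f$ along $\psi$-trajectories and the uniqueness of the critical point, not the local repulsivity at $x_0$.
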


\begin{proof}
  The proof of Proposition \ref{pr.Ewedge} is inspired by the one of~\cite[Lemma 6]{day-83}. 
Take $\eta_{0}\in(0,\eta^{*})$.
For ease of notation, we set 
$$K:=\overline{\mathbf{C}}_{\text{min}}(\eta_*)\ \ \text{and}\  \ D':=\mathbf{C}_{\text{min}}(\eta_0).$$
Recall that
$K\subset D'\subset \mathcal A(\{x_0\})$  and
assume that $h>0$ is small enough so that   $\bar B(x_0,r_h) \subset \text{int}\,  K$ (see \eqref{eq.eta0} and the lines below). According 
to~\cite[Theorems 3.1 and 4.1 in Chapter 4]{FrWe} (note that  $n_{ \mathbf{C}_{\text{min}}(\eta)}=\nabla f /|\nabla f |$ and then, using \eqref{ortho},  
$\boldsymbol{b}\cdot n_{ \mathbf{C}_{\text{min}}(\eta)}>0 \text{ on } \pa \mathbf{C}_{\text{min}}(\eta)$),
 we have  uniformly in $y$ in the compacts of $D'$:
\begin{equation}\label{eq.WFE}
\lim_{h\to 0} h \ln   \mathbb  E_{y}[\tau_{D'^{c}}]=   2(\min_{\pa \Omega}f-f(x_0)-\eta_0).
\end{equation}
In particular, for every $\beta >0$ and every $h$ small enough,
\begin{equation}\label{eq.Ah'}
  A_h^{D'}:=\sup_{y\in K} \mathbb E_y[\tau_{D'^{c}}]\le e^{\frac 2h(\min_{\pa \Omega}f-f(x_0)-\eta_0) }e^{\frac \beta h}.
\end{equation}
Similarly,
according to Proposition~\ref{pr.upper-bound}, it holds for every $\beta>0$ and every $h$ small enough, 
\begin{equation}\label{eq.Ah}
A_h^\Omega:=\sup_{x\in \overline{\Omega}} \mathbb E_x[ \tau_{\Omega^c}]   \le e^{\frac 2h(\min_{\pa \Omega}f-f(x_0)) }e^{\frac \beta h}.
\end{equation}
Besides, using the strong Markov property, we have  for all $x\in K$:
\begin{equation}\label{eq.Exw1}
\mathbb E_x[ \tau_{\Omega^c}]= \mathbb E_x[\tau_{\bar B(x_0,r_h)} \wedge \tau_{\Omega^c}] +  \mathbb E_{x}  \big [  \mathbf 1_{\tau_{\Omega^c}>\tau_{\bar B(x_0,r_h)}}\mathbb E_{X_{\tau_{\bar B(x_0,r_h)} }}[\tau_{\Omega^c}]\big].
\end{equation}
In addition, by continuity of the trajectories of the process \eqref{eq.langevin}, we have $\tau_{D'^{c}}< \tau_{\Omega^c}$ when $X_0=y \in \bar B(x_0,r_h)$. Thus, using the strong Markov property,
\begin{equation}\label{eq.Exw2}
\mathbb E_{X_{\tau_{\bar B(x_0,r_h)} }}[\tau_{\Omega^c}]\ge \mathbb E_{X_{\tau_{\bar B(x_0,r_h)} }}[\tau_{\Omega^c} -\tau_{D'^{c}}]= \mathbb E_{X_{\tau_{\bar B(x_0,r_h)} }}\big [ \mathbb E_{X_{\tau_{D'^{c}}}}[\tau_{\Omega^c}] \big ].
\end{equation}
For $x\in  D'$, let $\mu^h_x$ be the hitting distribution on $\pa D'$ for the process \eqref{eq.langevin} when $X_0=x$, i.e.: 
  \begin{equation}\label{eq.Law1}
  \mu^h_x(B)=\mathbb P_x[ X_{\tau_{ D'^c}} \in B], \text{ for every  Borel subset $B$ of $\pa D'$}.
  \end{equation}
The properties of $D'$ listed just after \eqref{eq.Ceta} allow us to use~\cite[Theorem 1]{Day4} (see also Eq.~(5.1) there), leading to  $\Vert \mu^h_x-\mu^h_y\Vert\le e^{-\frac ch}$ uniformly in $x,y\in K$ (where $\Vert\cdot\Vert$ is the  total variation distance).  Using this and \eqref{eq.Exw2} with $y=X_{\tau_{\bar B(x_0,r_h)} }$, we deduce from \eqref{eq.Exw1} that  for all $x\in K$:
\begin{align*}
\mathbb E_x[ \tau_{\Omega^c}]
&\ge
\mathbb E_x[\tau_{\bar B(x_0,r_h)} \wedge \tau_{\Omega^c}] +  \mathbb E_{x}  \Big [  \mathbf 1_{\tau_{\Omega^c}>\tau_{\bar B(x_0,r_h)}}\mathbb E_{X_{\tau_{\bar B(x_0,r_h)} }}\big [ \mathbb E_{X_{\tau_{D'^{c}}}}[\tau_{\Omega^c}] \big ]\Big ]\\
&\ge \mathbb E_x[\tau_{\bar B(x_0,r_h)} \wedge \tau_{\Omega^c}] + \mathbb E_{x}  \Big [  \mathbf 1_{\tau_{\Omega^c}>\tau_{\bar B(x_0,r_h)}}\Big( \mathbb E_x\big [ \mathbb E_{X_{\tau_{D'^{c}}}}[\tau_{\Omega^c}] \big ] 
-A_h^\Omega e^{-\frac ch}\Big)\Big]\\
&\ge \mathbb E_x[\tau_{\bar B(x_0,r_h)} \wedge \tau_{\Omega^c}] + \mathbb P_{x}   [  \tau_{\Omega^c}>\tau_{\bar B(x_0,r_h)}] \, \mathbb E_x\big [ \mathbb E_{X_{\tau_{D'^{c}}}}[\tau_{\Omega^c}] \big ] 
-A_h^\Omega e^{-\frac ch}.
\end{align*}
On the other hand, according to the strong Markov property,
$\mathbb E_x[ \tau_{\Omega^c}]= \mathbb E_x[\tau_{ D'^c}] +  \mathbb E_{x}    [\,    \mathbb E_{X_{\tau_{ D'^c}}}[\tau_{\Omega^c}] \,  ]$
 for all $x\in K$.  
It follows that for all $x\in K$,
\begin{align*}
 \mathbb E_x[\tau_{\bar B(x_0,r_h)} \wedge \tau_{\Omega^c}] &\le   (1-\mathbb P_{x}   [  \tau_{\Omega^c}>\tau_{\bar B(x_0,r_h)}]) \, \mathbb E_{x}    [\,    \mathbb E_{X_{\tau_{ D'^c}}}[\tau_{\Omega^c}] \,  ] 
 +A_h^\Omega e^{-\frac ch}
 + \mathbb E_x[\tau_{ D'^c}]\\
 &\le  (1-\mathbb P_{x}   [  \tau_{\Omega^c}>\tau_{\bar B(x_0,r_h)}]) \, A_h^\Omega 
 +A_h^\Omega e^{-\frac ch}+ A_h^{D'},
 \end{align*}
which implies Proposition~\ref{pr.Ewedge}, using  \eqref{eq.Ah'}, \eqref{eq.Ah}, and Proposition \ref{pr.levP} (with $K=\overline{\mathbf{C}}_{\text{min}}(\eta_*)$). \end{proof}

\begin{theorem}\label{th:leveling}
Assume  \eqref{ortho} and \eqref{well}. Let $ K$ a compact subset of $\mathcal A( \{x_0\})$ (see \eqref{eq.A} and \eqref{eq.Ax0}).
Then, there exist $h_0>0$ and $c>0$ such that, for all $h\in (0,h_0]$ and   uniformly in $x\in K$: 
 $$\mathbb  E_x[\tau_{\Omega^c}]= \mathbb  E_{x_0}[\tau_{\Omega^c}](1+O(e^{-\frac ch})) \text{ and } \lim_{h\to 0} h \ln   \mathbb  E_{x}[\tau_{\Omega^c}]=  2(\min_{\pa \Omega}f-f(x_0)).  $$
\end{theorem}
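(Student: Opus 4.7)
The plan is to compare $\mathbb E_{x}[\tau_{\Omega^{c}}]$ with $\mathbb E_{x_{0}}[\tau_{\Omega^{c}}]$ through a strong Markov decomposition at $\tau_{\bar B(x_{0},r_{h})}\wedge \tau_{\Omega^{c}}$, controlling the ``short'' part via Proposition~\ref{pr.Ewedge} and the ``long'' part via Lemma~\ref{le.levlocal} and Proposition~\ref{pr.levP}, and then to extract the Arrhenius asymptotic by combining the resulting leveling identity with Proposition~\ref{pr.upper-bound} and a classical Freidlin--Wentzell lower bound. For every $x\in K$, the strong Markov property yields
$$
\mathbb E_{x}[\tau_{\Omega^{c}}]=A_{x}+B_{x},
$$
where $A_{x} := \mathbb E_{x}[\tau_{\bar B(x_{0},r_{h})}\wedge \tau_{\Omega^{c}}]$ and $B_{x} := \mathbb E_{x}[\mathbf{1}_{\tau_{\bar B(x_{0},r_{h})}<\tau_{\Omega^{c}}}\,\mathbb E_{X_{\tau_{\bar B(x_{0},r_{h})}}}[\tau_{\Omega^{c}}]]$. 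Applying Lemma~\ref{le.levlocal} to the endpoint $X_{\tau_{\bar B(x_{0},r_{h})}}\in \bar B(x_{0},r_{h})$ together with the commitor estimate of Proposition~\ref{pr.levP} immediately yields $B_{x}=\mathbb E_{x_{0}}[\tau_{\Omega^{c}}]\bigl(1+O(e^{-c/h})\bigr)$ uniformly in $x\in K$.

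The core of the argument is the uniform bound $A_{x}\le e^{\frac{2}{h}(\min_{\partial\Omega}f-f(x_{0}))}e^{-c/h}$ on $K$. Fix $\eta_{*}$ as in \eqref{eq.eta0}. For $x\in \overline{\mathbf{C}}_{\rm min}(\eta_{*})$, Proposition~\ref{pr.Ewedge} gives this bound directly. For $x\in K\setminus \overline{\mathbf{C}}_{\rm min}(\eta_{*})$, introduce the hitting time $\sigma$ of $\overline{\mathbf{C}}_{\rm min}(\eta_{*})$; since $\bar B(x_{0},r_{h})\subset \overline{\mathbf{C}}_{\rm min}(\eta_{*})$ for small $h$, we have $\sigma \wedge \tau_{\Omega^{c}} \le \tau_{\bar B(x_{0},r_{h})}\wedge \tau_{\Omega^{c}}$, and the strong Markov property at $\sigma\wedge\tau_{\Omega^{c}}$ yields
$$
A_{x} \le \mathbb E_{x}[\sigma \wedge \tau_{\Omega^{c}}] + \sup_{y\in \overline{\mathbf{C}}_{\rm min}(\eta_{*})} A_{y}.
$$
Since $K\subset \mathcal A(\{x_{0}\})$ is compact, there exists $T_{K}>0$ such that $\varphi_{T_{K}}(x)\in \mathbf{C}_{\rm min}(\eta_{*})$ for every $x\in K$; the tubular deviation estimate~\eqref{eq.condMVD} (based on~\cite[Lemma~1]{day-83}) then yields $\mathbb P_{x}[\sigma > T_{K}+1]\le e^{-c/h}$ uniformly on $K$, which combined with the bound $\sup_{y\in \overline\Omega} \mathbb E_{y}[\tau_{\Omega^{c}}] \le e^{\frac{2}{h}(\min_{\partial\Omega}f-f(x_{0}))+\beta/h}$ from Proposition~\ref{pr.upper-bound} gives $\mathbb E_{x}[\sigma\wedge\tau_{\Omega^{c}}]\le T_{K}+1 + e^{-c/h}\cdot e^{\frac{2}{h}(\min_{\partial\Omega}f-f(x_{0}))+\beta/h}$ for any $\beta>0$. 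Choosing $\beta<c$ gives the desired bound on $A_{x}$ uniformly on $K$.

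It remains to absorb $A_{x}$ into $\mathbb E_{x_{0}}[\tau_{\Omega^{c}}]$. This is the main technical obstacle and requires the Arrhenius lower bound $\liminf_{h\to 0} h\ln \mathbb E_{x_{0}}[\tau_{\Omega^{c}}]\ge 2V$, where $V:=\min_{\partial\Omega}f-f(x_{0})$. This lower bound follows from the classical Freidlin--Wentzell large deviations lower bound, which is unaffected by the possible presence of equilibrium points of $\boldsymbol{b}$ on $\partial\Omega$ since $x_{0}$ is an interior asymptotically stable equilibrium: fixing $T_{h}:=e^{2(V-\eta)/h}$ for any small $\eta>0$, the LDP yields $\mathbb P_{x_{0}}[\tau_{\Omega^{c}}\ge T_{h}]\to 1$ as $h\to 0$, so $\mathbb E_{x_{0}}[\tau_{\Omega^{c}}]\ge \tfrac{1}{2}T_{h}$ for $h$ small. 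Choosing $\eta$ smaller than the exponent $c$ from the previous step then gives $A_{x}=O(e^{-c'/h})\mathbb E_{x_{0}}[\tau_{\Omega^{c}}]$, whence the leveling identity $\mathbb E_{x}[\tau_{\Omega^{c}}]=\mathbb E_{x_{0}}[\tau_{\Omega^{c}}](1+O(e^{-c/h}))$ uniformly in $x \in K$. The Arrhenius asymptotic $\lim_{h\to 0} h\ln \mathbb E_{x}[\tau_{\Omega^{c}}]=2V$ follows immediately by combining this leveling with Proposition~\ref{pr.upper-bound} and the Freidlin--Wentzell lower bound just established.
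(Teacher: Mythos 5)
Your decomposition $\mathbb E_{x}[\tau_{\Omega^{c}}]=A_{x}+B_{x}$ at $\tau_{\bar B(x_{0},r_{h})}\wedge\tau_{\Omega^{c}}$ is exactly~\eqref{eq.Exw1}, and the treatment of $B_{x}$ via Lemma~\ref{le.levlocal} and Proposition~\ref{pr.levP}, as well as the estimation of $A_{x}$ via Proposition~\ref{pr.Ewedge} together with a strong-Markov stitching argument through $\overline{\mathbf{C}}_{\rm min}(\eta_{*})$ (using~\eqref{eq.condMVD}), matches the structure of the paper's proof. Your version handles a general compact $K\subset\mathcal A(\{x_{0}\})$ in one pass via the hitting time $\sigma$ of $\overline{\mathbf{C}}_{\rm min}(\eta_{*})$, whereas the paper first proves the leveling on $K=\overline{\mathbf{C}}_{\rm min}(\eta_{*})$ and then pushes a general $K$ forward under the flow; the two are essentially equivalent.

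There is, however, a genuine gap in your justification of the Arrhenius lower bound $\liminf_{h\to 0}h\ln\mathbb E_{x_{0}}[\tau_{\Omega^{c}}]\ge 2V$. You assert that this ``follows from the classical Freidlin--Wentzell large deviations lower bound, which is unaffected by the possible presence of equilibrium points of $\boldsymbol{b}$ on $\partial\Omega$,'' but this is exactly the delicate point the whole section is built around. The classical results you want to invoke (e.g.\ \cite[Theorems 3.1 and 4.1 in Chapter 4]{FrWe}) assume that every trajectory starting in $\overline\Omega$ converges to $x_{0}$ and that $\boldsymbol{b}$ points strictly inward on $\partial\Omega$; both hypotheses can fail here, since $\partial\Omega$ may contain equilibrium points. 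In particular, the standard iteration that underlies $\mathbb P_{x_{0}}[\tau_{\Omega^{c}}\ge T_{h}]\to 1$ relies on the process returning quickly to a small ball around $x_{0}$ from anywhere in $\overline\Omega$, which need not hold near a boundary saddle. The paper circumvents this precisely by applying the Freidlin--Wentzell theorem to the auxiliary smooth sublevel domain $D'=\mathbf{C}_{\rm min}(\eta_{0})$, whose boundary $\{f=\min_{\partial\Omega}f-\eta_{0}\}$ carries no critical points and where $\boldsymbol{b}\cdot n_{D'}>0$, to get~\eqref{eq.WFE}, and then uses the monotonicity $\tau_{D'^{c}}\le\tau_{\Omega^{c}}$ (letting $\eta_{0}\downarrow 0$) to deduce~\eqref{eq.WFE2}. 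Your conclusion is correct, but the step needs this subdomain argument rather than a bare appeal to the classical theorem on $\Omega$; as written, it glosses over the very obstruction that distinguishes this theorem from prior results such as \cite{day-83}.
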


  \begin{proof}
  First of all, according to Proposition \ref{pr.upper-bound}, \eqref{eq.WFE}, and to the fact that $ \mathbb  E_{y}[\tau_{D'^{c}}]\le  \mathbb  E_{y}[\tau_{\Omega^c}]$ for all $y\in D'$, we have,  uniformly in $y$ in the compacts of $D'$:
  \begin{equation}\label{eq.WFE2}
\lim_{h\to 0} h \ln   \mathbb  E_{y}[\tau_{\Omega^c}]=   2(\min_{\pa \Omega}f-f(x_0)).
\end{equation}
 Let $K$ be a compact subset of $\mathcal A( \{x_0\})$. Assume first that $K=\overline{\mathbf{C}}_{\text{min}}(\eta_*)$  (see \eqref{eq.Ceta} and \eqref{eq.eta0}). 
Using \eqref{eq.Exw1}, Lemma \ref{le.levlocal}, and Propositions~\ref{pr.Ewedge} and~\ref{pr.levP},
we have uniformly in  $x\in  K$:
 $$\mathbb E_x[ \tau_{\Omega^c}]= \mathbb E_{x_0}[ \tau_{\Omega^c}](1+O(e^{-\frac ch}))+ O( e^{\frac 2h (\min_{\pa \Omega}f-f(x_0))}e^{-\frac ch}).$$
Using in addition  \eqref{eq.WFE2} with $y=x_0\in D'$, we deduce that  for some $c>0$ and    uniformly in $x\in K=\overline{\mathbf{C}}_{\text{min}}(\eta_*)$, it holds for every $h$ small enough:
\begin{equation}\label{eq.Lev}
\mathbb E_x[ \tau_{\Omega^c}]= \mathbb E_{x_0}[ \tau_{\Omega^c}](1+O(e^{-\frac ch})).
\end{equation}
 This proves Theorem \ref{th:leveling} when $K=\overline{\mathbf{C}}_{\text{min}}(\eta_*)$. Let us now consider the general case $K\subset \mathcal A( \{x_0\})$. 
Let  $T_K\ge  0$ be such that $\varphi_{T_K}(x)\in {\mathbf{C}}_{\text{min}}(\eta_*)$ for all $x\in K$, and
take $\delta>0$ small enough so that:
   \begin{itemize}
  \item[$\bullet$] $\{\varphi_{t}(x)+z, (x,t)\in K\times [0,T_K] \text{ and } |z|\le \delta\}
  \subset \Omega$,
   \item[$\bullet$]  $\{\varphi_{T_K}(x)+z, x\in K \text{ and } |z|\le \delta\}\subset   {\mathbf{C}}_{\text{min}}(\eta_*)$. 
   \end{itemize}
These two conditions imply that for all $x\in K$, if $X_0=x$ and $\sup_{t\in [0,T_K]}|X_t-\varphi_t(x)|\le \delta$:
\begin{equation}
\label{eq.condOmega}
 T_K<\tau_{\Omega^c} \text{ and } X_{T_K}\in {\mathbf{C}}_{\text{min}}(\eta_*). 
\end{equation}
From the Markov property, \eqref{eq.condOmega},~\eqref{eq.condMVD}, and~\eqref{eq.Lev}, we
have  uniformly in $x\in K$: 
\begin{align*}
\mathbb E_x[ \tau_{\Omega^c}  \, \mathbf 1_{\sup_{t\in [0,T_K]}|X_t-\varphi_t(x)|\le \delta}]&=T_K \mathbb P_x\big [  \sup_{t\in [0,T_K]}|X_t-\varphi_t(x)|\le \delta\big ]\\
&\quad  + \mathbb E_x\big [ \mathbb E_{ X_{T_K}} [\tau_{\Omega^c}]  \, \mathbf 1_{\sup_{t\in [0,T_K]}|X_t-\varphi_t(x)|\le \delta}\big ]\\
&= T_K(1+O(e^{-\frac ch})) + \mathbb E_{x_0}[ \tau_{\Omega^c}](1+O(e^{-\frac ch}))
\end{align*}
and
\begin{align*}
\mathbb E_x[ \tau_{\Omega^c}  \, \mathbf 1_{\sup_{t\in [0,T_K]}|X_t-\varphi_t(x)|> \delta} \, \mathbf 1_{T_K<\tau_{\Omega^c}}]&=T_K \mathbb P_x[  \sup_{t\in [0,T_K]}|X_t-\varphi_t(x)|> \delta,T_K<\tau_{\Omega^c}]\\
&\quad  + \mathbb E_x\big [ \mathbb E_{ X_{T_K}} [\tau_{\Omega^c}]  \, \mathbf 1_{\sup_{t\in [0,T_K]}|X_t-\varphi_t(x)|> \delta} \, \mathbf 1_{T_K<\tau_{\Omega^c}}\big ]\\
&= T_K O(e^{-\frac ch}) + \mathbb E_{x_0}[ \tau_{\Omega^c}] \, O(e^{-\frac ch}).  
\end{align*}
On the other hand,   using \eqref{eq.condMVD}, it holds for every $x\in K$:
\begin{align*}
\mathbb E_x[ \tau_{\Omega^c} \,  \mathbf 1_{\sup_{t\in [0,T_K]}|X_t-\varphi_t(x)|> \delta} \, \mathbf 1_{\tau_{\Omega^c}\le T_K}]\le T_Ke^{-\frac ch}.
 \end{align*}
Combining the three previous estimates leads to $\mathbb E_x[ \tau_{\Omega^c} ]=\mathbb E_{x_0}[ \tau_{\Omega^c}](1+O(e^{-\frac ch}))$ for all $h$ small enough, uniformly in $x\in K$. This ends the proof of Theorem~\ref{th:leveling}. 
  \end{proof}

\section{Spectral analysis of $\Re(P_{h})$ and of $P_{h}$}
\label{se-Re-Ph}

Recall that we assume   \eqref{ortho}  throughout this work.

\subsection{Analysis of the real part of $P_{h}$}
\label{sub.RePh}

This section is devoted to a preliminary spectral analysis of the operator (see Proposition~\ref{pr.spectre})
$$\Re(P_{h}):=\frac12(P_{h}+P^{*}_{h})= \Delta_{f,h} + 2 \,\boldsymbol{\ell}\cdot\nabla f -h \div \boldsymbol{\ell} = \Delta_{f,h} -h \div \boldsymbol{\ell}$$ 
with domain $D(\Re(P_{h}))=H^2(\Omega)\cap H^1_0(\Omega)=D(P_{h})=D(P_{h}^{*})$.
This operator is self-adjoint with a compact resolvent and
 is the Friedrichs extension of  the closed quadratic form
\begin{equation}
\label{eq.fried}
u\in H^1_0(\Omega)\mapsto \int_{\Omega} \vert \nabla_{f,h} u\vert^2 -h \int_{\Omega}(\div \boldsymbol{\ell})\,|u|^{2}\,.
\end{equation}
It is consequently bounded from below
by $-h \|\div \boldsymbol{\ell}\|_{L^{\infty}(\Omega)}$, and hence
$$\sigma(\Re(P_{h}))\subset [-h \|\div \boldsymbol{\ell}\|_{L^{\infty}(\Omega)},+\infty).$$

When $\div\boldsymbol{\ell}=0$, the operator $\Re(P_{h})$ is nothing  but  
the Witten Laplacian  $\Delta_{f,h}$ (see \eqref{eq.Witten}) with domain $D(\Delta_{f,h})=H^2(\Omega)\cap H^1_0(\Omega)$ and
 is in particular  positive.
Let us now  define  
\begin{equation}\label{eq.m0}
\mathsf U_0 =   \{x\in\Omega,\  x \text{ is a  local minimum of $f$ }\} \ \text{ and }\  \mathsf m_0:= {\rm Card }  ( \mathsf U_0  )<+\infty \footnote{We recall that $f$ has a finite number of critical point in $M$ by \eqref{ortho}.}. 
\end{equation} 
Then, according to~\cite[Theorem 1]{DoNe2}, there exist $c_0>0$ and $h_0>0$ such that for all $h\in (0,h_0]$: 
\begin{equation}\label{eq.energy2}
\dim \Ran \, \pi_{[0,c_0h]}\big (\Delta_{f,h}\big ) =\ \mathsf  m_0, 
\end{equation}
where, 
for a Borel set $I\subset \mathbb R$, $\pi_{I}(\Delta_{f,h}\big )$
denotes the spectral projector associated with 
$\Delta_{f,h}$
and~$ I$. For ease of notation, we  set
\begin{equation}\label{eq.Pidelta}
\pi_h^\Delta :=    \pi_{[0,c_0h]}\big (\Delta_{f,h}\big ).
 \end{equation}
Moreover, the $\mathsf m_0$ eigenvalues of $\Delta_{f,h}$
in $[0,c_0h]$  are exponentially small in the limit $h\to 0$,
i.e. there exists $c>0$ such that for every $h>0$ small enough, 
\begin{equation}
\label{eq.Witt-expo}
\sigma(\Delta_{f,h})\cap [0,c_0h]\subset [0,e^{-\frac ch}].
\end{equation}

Additionally, we can apply \cite[Lemma 3.1]{LePMi20} since  \eqref{ortho} holds:
for every critical point $\u\in M$ of $f$, there exists a smooth map
$J$ defined around $\u$ and with values in $\mathcal M_{d}(\mathbb R)$
such that $J(\u)$ is antisymmetric and $\boldsymbol{\ell}(x)=J(x)\nabla f(x)$
around $\u$. It follows that 
\begin{align*}\div\boldsymbol{\ell}(\u)=\tr\big(J(\u)\Hess f(\u)\big)&=\tr\big(\Hess f(\u)J(\u)\big)\\
&= \tr\big({}^t\big(\Hess f(\u)J(\u)\big)\big)=-\tr\big(J(\u)\Hess f(\u)\big),
\end{align*}
and hence:
\begin{equation}\label{eq.div0}
\text{for every critical point $\u\in M$ of $f$\,,}\ \ \div\boldsymbol{\ell}(\u)\ =\ 0.
\end{equation}

The above analysis together with standard tools of 
spectral theory and
semiclassical analysis for Schr\"odinger operators (see e.g. \cite{CFKS,DiSj})
lead  to the following proposition.
The proof basically relies on the fact that \eqref{eq.div0}
implies that $\Re(P_{h})$
is a perturbation of $\Delta_{f,h}$ of order $O(h^{\frac32})$.

\begin{proposition}\label{pr.Real} 
Let us assume that  \eqref{ortho} holds.
Then, there exist $C,c>0$ and $h_{0}>0$
 such that, for all $h\in (0,h_0]$, one has, counting the eigenvalues with multiplicity,
 $$\sigma(\Re(P_{h}))\cap (-\infty, ch ]\subset [-Ch^{\frac32},e^{-\frac{c}{h}}]
 \quad\text{and}\quad{\rm Card }\big(\sigma(\Re(P_{h}))\cap (-\infty, ch ]\big)=\mathsf m_0, $$
 where $\mathsf m_0$ is defined in~\eqref{eq.m0}.
 
 Moreover, there exists  $c_1>0 $  and
 $h_0>0$ such that, for all $h\in (0,h_0]$:
$$
\forall u\in H^2(\Omega)\cap H^1_0(\Omega)\,,\ \ \  \langle  \Re (P_h)(1-\pi_h^\Delta )u,  (1-\pi_h^\Delta )u\rangle_{L^2}
\geq    c_1h \Vert (1-\pi_h^\Delta )u\Vert_{L^2}^2,
$$
where $\pi_h^\Delta$ is the spectral projector associated with $\Delta_{f,h}$
and the interval $[0,c_0h]$ (see \eqref{eq.Pidelta}).
\end{proposition}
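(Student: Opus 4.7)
The plan is to view $\Re(P_h) = \Delta_{f,h} - h\,\div\boldsymbol{\ell}$ as a small perturbation of the Witten Laplacian $\Delta_{f,h}$, whose spectral structure is known from \eqref{eq.energy2}--\eqref{eq.Witt-expo}. The crucial input is the vanishing \eqref{eq.div0} of $\div\boldsymbol{\ell}$ at every critical point of~$f$: combined with the semiclassical concentration at scale $\sqrt h$ of $\Ran\pi_h^\Delta$ near the local minima $\mathsf U_0$, this will reduce the effective size of the perturbation on the small spectral subspace from the naive $O(h)$ down to $O(h^{3/2})$.

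The first step I would carry out is to establish the concentration estimate
\[
\int d(x,\mathsf U_0)^2\,|u(x)|^2\,dx \ \leq\ Ch\,\Vert u\Vert_{L^2}^2 \qquad \text{for every }u\in\Ran\pi_h^\Delta.
\]
Testing the eigenequation $\Delta_{f,h}u=\lambda u$ against $u$ with $\lambda\leq e^{-c/h}$ (by \eqref{eq.Witt-expo}), and using the explicit form \eqref{eq.Witten}, yields $\int|\nabla f|^2|u|^2\leq Ch\,\Vert u\Vert_{L^2}^2$ after integrating by parts. Standard Agmon estimates with the Agmon distance to $\mathsf U_0$ show that $u$ is exponentially small off any neighborhood of $\mathsf U_0$, and the Morse lower bound $|\nabla f(x)|^2\gtrsim d(x,\mathsf U_0)^2$ near each local minimum then converts the previous estimate into the announced concentration. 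Taylor expanding $\div\boldsymbol{\ell}$ at each $\u\in\mathsf U_0$ and using \eqref{eq.div0} to get $|\div\boldsymbol{\ell}(x)|\leq C\,d(x,\u)$ locally, followed by Cauchy--Schwarz, then produces the key perturbative bound
\[
\Bigl|\,h\int(\div\boldsymbol{\ell})\,|u|^2\,\Bigr|\ \leq\ Ch^{3/2}\,\Vert u\Vert_{L^2}^2 \qquad \text{for every }u\in\Ran\pi_h^\Delta.
\]

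From here I would conclude by min-max. For the upper part of the spectrum, the Rayleigh quotient of $\Re(P_h)$ on the $\mathsf m_0$-dimensional space $\Ran\pi_h^\Delta$ is at most $\lambda_{\mathsf m_0}(\Delta_{f,h})+Ch^{3/2}=O(h^{3/2})$, producing at least $\mathsf m_0$ eigenvalues of $\Re(P_h)$ in $[-Ch^{3/2},Ch^{3/2}]$; the sharper upper bound $e^{-c/h}$ claimed in the statement should follow by exploiting the Gaussian symmetry of the WKB quasi-modes of $\Delta_{f,h}$ at each local minimum, where the odd moments of $|u|^2$ vanish at leading order, yielding extra cancellations in $\int(\div\boldsymbol{\ell})|u|^2$ beyond Cauchy--Schwarz. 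For the coercivity statement, I would combine the Witten gap $\langle\Delta_{f,h}v,v\rangle\geq c_0h\Vert v\Vert_{L^2}^2$ on $\Ran(1-\pi_h^\Delta)$ with the perturbative bound $|h\int(\div\boldsymbol{\ell})|v|^2|\leq h\Vert\div\boldsymbol{\ell}\Vert_{L^\infty}\Vert v\Vert_{L^2}^2$, taking $c_0$ above $\Vert\div\boldsymbol{\ell}\Vert_{L^\infty}$ when allowed by the Witten spectral gap, and otherwise invoking IMS-type localization around $\mathsf U_0$ to close the argument. A codimension count in min-max then pins the total cardinality at $\mathsf m_0$. The main technical obstacle will be to realize the Agmon localization cleanly and uniformly in $h$, and in particular to upgrade the crude $O(h^{3/2})$ upper bound on the positive eigenvalues to the exponential $e^{-c/h}$, which requires the delicate cancellations afforded by the local Gaussian structure of the quasi-modes of $\Delta_{f,h}$ around each $\u\in\mathsf U_0$.
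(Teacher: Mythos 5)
Your overall strategy coincides with the paper's: view $\Re(P_h)=\Delta_{f,h}-h\div\boldsymbol{\ell}$ as a perturbation of the Witten Laplacian, exploit the crucial fact \eqref{eq.div0} that $\div\boldsymbol{\ell}$ vanishes at every critical point of $f$, and conclude via Min-Max plus a coercivity estimate on the range of $1-\pi_h^\Delta$. Your concentration bound $\int d(x,\mathsf U_0)^2|u|^2\leq Ch\Vert u\Vert^2$ for $u\in\Ran\pi_h^\Delta$, combined with $|\div\boldsymbol{\ell}(x)|\lesssim d(x,\mathsf U_0)$ and Cauchy--Schwarz, correctly delivers the $O(h^{3/2})$ size of the perturbation on the small spectral subspace, and hence the lower bound $-Ch^{3/2}$; the paper achieves the same $O(h^{3/2})$ via Laplace asymptotics for $\Vert\Re(P_h)\psi_j\Vert_{L^2}$ and a Riesz projector argument, which is essentially equivalent. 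Your coercivity discussion is also on the right track: the naive comparison ``take $c_0>\Vert\div\boldsymbol{\ell}\Vert_\infty$'' is never guaranteed, so the IMS localization you invoke as a fallback is not optional but mandatory, and it has to be carried out at a mesoscopic scale $h^\varepsilon$ with $\varepsilon\in(0,\tfrac12)$ so that $\Vert\div\boldsymbol{\ell}\Vert_{L^\infty}=O(h^\varepsilon)$ near the critical points while $|\nabla f|^2\gtrsim h^{2\varepsilon}$ away from them; this is exactly Step 2 of the paper's proof.

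The genuine gap is the claimed upper bound $e^{-c/h}$ on the $\mathsf m_0$ small eigenvalues. Your Gaussian-symmetry heuristic (odd moments of $|u|^2$ vanish) can only yield polynomial improvements ($O(h^2)$, then $O(h^{5/2})$, \dots) by pushing the Taylor expansion of $\div\boldsymbol{\ell}$ further, and it can never produce exponential smallness, since there is no reason for all the Taylor coefficients of $\div\boldsymbol{\ell}$ at a local minimum $x_j$ to vanish. The mechanism in the paper is different and exact: testing with the explicit quasi-mode $\psi_j=\chi_j e^{-f/h}/\Vert\chi_j e^{-f/h}\Vert$ and integrating by parts,
\begin{equation*}
\int_\Omega (\div\boldsymbol{\ell})\,\chi_j^2 e^{-\frac{2f}{h}}
\ =\ -\int_\Omega \boldsymbol{\ell}\cdot\nabla\big(\chi_j^2 e^{-\frac{2f}{h}}\big)
\ =\ -2\int_\Omega \chi_j\,(\boldsymbol{\ell}\cdot\nabla\chi_j)\,e^{-\frac{2f}{h}},
\end{equation*}
where the term involving $\nabla e^{-2f/h}$ drops out because $\boldsymbol{\ell}\cdot\nabla f\equiv 0$ pointwise. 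The surviving integral is supported on $\supp\nabla\chi_j$, where $f-f(x_j)$ is uniformly bounded below by a positive constant, so it is exponentially small — regardless of any Taylor structure of $\div\boldsymbol{\ell}$. This identity, not a Gaussian cancellation, is what yields $\langle\Re(P_h)\psi_j,\psi_j\rangle\leq e^{-c_j/h}$ and hence the $e^{-c/h}$ upper edge of the spectral window. Without it your argument stalls at $O(h^{3/2})$ (or, optimistically, $O(h^2)$), which does not prove the stated inclusion.
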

Note that the spectrum of the operator  $\Re(P_{h})$ is a priori not included in $[0,+\infty)$.

\begin{proof}
Let us define $\mathsf m:={\rm Card }(\{x\in\overline\Omega\,,\ \nabla f(x)=0\})$
and, when $\mathsf m_0>0$,  let us order the elements $x_{1},\dots,x_{\mathsf m}$
of $\{x\in\overline\Omega\,,\ \nabla f(x)=0\}$ so that (see \eqref{eq.m0})
$$
 \big \{x_1,\ldots,x_{\mathsf m_{0}} \big \}\ =\ \mathsf U_0.
$$
We consider, for every $x_{j}\in\Omega$,
a smooth open connected neighborhood $O_{j}$ of $x_{j}$
in $\Omega$
such that  $\overline{  O_{j}}\subset \Omega$.
When moreover $j\in\{1,\dots,\mathsf m_{0}\}$,
we also assume that $x_{j}$
is the only point where 
$f$ attains its minimal value  in $\overline{  O_{j}}$.
Similarly,  when $x_{j}\in \pa\Omega$,
we consider
a smooth open set $  O_{j}\subset \Omega$ 
such that $ \overline{O_{j}}$ is a neighborhood of $x_{j}$ in $\overline\Omega$.
In addition, we assume that $ \overline{O_{i}}\cap  \overline{O_{j}}=\emptyset$
when $i\neq j$, so that each $ \overline{O_{i}}$ contains precisely
one critical point of $f$, $x_{i}$, which is in its interior.\medskip

\noindent
\textbf{Step 1.} Let us first prove that there exists $c>0$ such that, for every $h>0$ small enough,
\begin{equation}
\label{eq.geq}
\dim \Ran \, \pi_{(-\infty,e^{-\frac ch}]}\big (\Re(P_{h})\big )\  \geq\ \mathsf  m_0.
\end{equation}
This  is obvious when $\mathsf  m_0=0$.
When $\mathsf m_{0} >0$, let us introduce, for every $j\in\{1,\dots,\mathsf m_{0}\}$,
a cut-off function $\chi_{j}\in \mathcal C_{c}^\infty(O_{j}) $
such that $\chi_{j}=1$ in a neighborhood of $x_{j}$
and
\begin{equation}
\label{de.psi-j}
\psi_{j}:= \frac{\chi_{j} e^{-\frac fh}}{\|\chi_{j} e^{-\frac fh}\|_{L^{2}}}.
\end{equation}
Since $x_{j}$
is the only point where 
$f$ attains its minimal value  on $\supp \chi_{j}\subset O_{j}$,
standard Laplace asymptotics give, in the limit $h\to0$,
$$
\Vert\chi_{j} e^{-\frac fh}\Vert^{2}_{L^{2}}= \frac{(\pi h)^{\frac d2}}{(\det\Hess f(x_{j}))^{\frac12}}e^{-2\frac{f(x_{j})}h}\big(1+O(h)\big).
$$
Using in addition the fact that $\chi_{j}=1$ near $x_{j}$ and thus that $f-f(x_{j})>2 c_{j}$ on $\supp \nabla \chi_{j}$
for some $c_{j}>0$, we have when $h\to0$:
\begin{equation}
\label{eq.Delta-expo}
\|\Delta_{f,h}\psi_{j}\|_{L^{2}}
=\frac{h}{\|\chi_{j} e^{-\frac fh}\|_{L^{2}}}\Vert (-h\div +\nabla f \cdot) (e^{-\frac fh}\nabla\chi_{j})\Vert_{L^{2}}
\leq e^{-\frac{c_{j}}h}.
\end{equation}
Since moreover 
 $\div \boldsymbol{\ell}(x_{j})=0$ according to
\eqref{eq.div0}, Laplace asymptotics give, when $h\to 0$,
\begin{equation}
\label{eq.div-h}
\| (\div \boldsymbol{\ell})\psi_{j}\|_{L^{2}}=O(h^{\frac12}).
\end{equation}
The two above relations imply the following one which will be useful in the sequel: 
\begin{equation}
\label{eq.RePh<}
\|\Re(P_{h})\psi_{j}\|_{L^{2}}
= 
\|(\Delta_{f,h} -h \div \boldsymbol{\ell})\psi_{j}\|_{L^{2}}
= O(h^{\frac32}).
\end{equation}

Besides, using \eqref{eq.fried}, an integration by parts, \eqref{ortho},
and  $f-f(x_{j})> 2c_{j}$ on $\supp \nabla \chi_{j}$,
it holds when $h\to 0$:
\begin{align*}
\langle \Re(P_{h})\psi_{j}, \psi_{j}   \rangle_{L^{2}} &= 
\frac{h^{2}}{\|\chi_{j} e^{-\frac fh}\|^{2}_{L^{2}}}\int_{O_{j}} |\nabla \chi_{j}|^{2}e^{-2\frac fh}
-\frac{h}{\|\chi_{j} e^{-\frac fh}\|^{2}_{L^{2}}} \int_{O_{j}}\div\boldsymbol{\ell } \,\chi_{j}^{2}\,e^{-2\frac fh}\\
&= \frac{h^{2}}{\|\chi_{j} e^{-\frac fh}\|^{2}_{L^{2}}}\int_{O_{j}} |\nabla \chi_{j}|^{2}e^{-2\frac fh}  +2\frac{h}{\|\chi_{j} e^{-\frac fh}\|^{2}_{L^{2}}} \int_{O_{j}} \chi_{j }\ \boldsymbol{\ell}\cdot \nabla\chi_{j}\ e^{-2\frac fh}
 \ \leq\  e^{-\frac{c_{j}}h}\,.
\end{align*}
Since the $\psi_{j}$, $j\in\{1,\dots,\mathsf m_{0}\}$, 
are normalized in $L^{2}(\Omega)$ with disjoint supports, 
it follows from the Min-Max principle that $\Re(P_{h})$ admits, for $c:=\min(c_{1},\dots,c_{\mathsf m_{0}})$,  at least $\mathsf m_0$ eigenvalues
less that $e^{-\frac ch}$ when $h\to 0$, which proves \eqref{eq.geq}.
\medskip

\noindent
\textbf{Step 2.} Let us now prove
that there exists $c_1>0 $  such that, for every $h>0$ small enough:
\begin{equation}
\label{eq.real-min}
\forall u\in H^2(\Omega)\cap H^1_0(\Omega)\,,\ \ \  \langle  \Re (P_h)(1-\pi_h^\Delta )u,  (1-\pi_h^\Delta )u\rangle_{L^2}
\geq    c_1h \Vert (1-\pi_h^\Delta )u\Vert_{L^2}^2.
\end{equation}

To this end,
we first define
a cut-off function $\chi\in \mathcal C_{c}^{\infty}(\mathbb R^{d},[0,1]) $
such that $\chi=1$ in $\{|x|\leq 1\}$, $\chi=0$ in $\{|x|\geq 2\}$,
and $\sqrt{1-\chi^{2}}\in \mathcal C^{\infty}(\mathbb R^{d})$.
Then, for every $j\in\{1,\dots,\mathsf m\} $,
we define the following smooth function on $\overline\Omega$:
$$\chi_{j,h}:x\in\overline\Omega\longmapsto\chi(h^{-\varepsilon}(x-x_{j}))\in\mathbb R^{+},$$
where $\varepsilon\in(0,\frac12)$ is arbitrary but fixed.
In particular,  for every $h>0$ small enough,
$\supp\chi_{j,h}\subset O_{j}$ 
when $x_{j}\in\Omega $
and, when $x_{j}\in\pa\Omega $,  $\overline{O_{j}}$ is a neighborhood of  
$\supp\chi_{j,h}$ in $\overline\Omega$.
Lastly, we define the  smooth function
$$\chi_{0,h}:x\in\overline\Omega\longmapsto\big(1-\sum_{j=1}^{\mathsf m}\chi^{2}_{j,h}\big)^{\frac12}\,, $$
so that $\sum_{j=0}^{\mathsf m}\chi^{2}_{j,h}=1$ on $\overline\Omega$.\medskip

\noindent
\textbf{Step 2a. Analysis on $\supp \chi_{0,h}$.}
Since $\supp \chi_{0,h}$ is at a distance greater than $h^{\varepsilon}$
from the set of the critical points of the Morse function $f$ in $\overline\Omega$,
there exists $c>0$ such that, for every $h>0$ small enough, $|\nabla f(x)|^{2}\geq 3ch^{2\varepsilon}$
on $\supp \chi_{0,h}$. Since $2\varepsilon<1$,
it follows that for every $h>0$ small enough and every $u\in H^2(\Omega)\cap H^1_0(\Omega)$:
\begin{align}
\nonumber
 \langle  \Re (P_h) \chi_{0,h}u,  \chi_{0,h}u\rangle_{L^2}
 &\ =\ \langle  (-h^{2}\Delta + |\nabla f|^{2}-h\Delta f -h \div \boldsymbol{\ell} ) \chi_{0,h}u, \chi_{0,h}u\rangle_{L^2}\\
 \nonumber
& \ \geq \ \langle  (|\nabla f|^{2}-h\Delta f -h \div \boldsymbol{\ell} ) \chi_{0,h}u, \chi_{0,h}u\rangle_{L^2}\\
\label{eq.low1}
&\ \geq \ 2ch^{2\varepsilon}\|\chi_{0,h}u\|^{2}_{L^{2}}\,.
\end{align}

\noindent
\textbf{Step 2b. Analysis on $\supp \chi_{j,h}$ when $x_{j}\notin \mathsf U_{0}$.}
In this case, it holds 
$O_{j}\cap \mathsf U_{0}=\emptyset$.
Applying 
\cite[Theorem 1]{DoNe2} to the Witten Laplacian
$\Delta_{f,h}^{O_{j}}$
with domain $D(\Delta_{f,h}^{O_{j}})=H^2(O_{j})\cap H^1_0(O_{j})$
then implies the existence of $c>0$ such that, for every $h>0$ small enough,
$$
\dim \Ran \, \pi_{[0,3c h]}\big (\Delta_{f,h}^{O_{j}}\big )=0.
$$
It follows that for every $h>0$ small enough and every $u\in H^2(\Omega)\cap H^1_0(\Omega)$:
\begin{align}
\nonumber
 \langle  \Re (P_h) \chi_{j,h}u,  \chi_{j,h}u\rangle_{L^2}
 &\ =\ \langle  (\Delta^{O_{j}}_{f,h} -h \div \boldsymbol{\ell} ) \chi_{j,h}u, \chi_{j,h}u\rangle_{L^2}\\
 \nonumber
& \ \geq \ \langle  (3ch -h \div \boldsymbol{\ell} ) \chi_{j,h}u, \chi_{j,h}u\rangle_{L^2}\\
\label{eq.low2}
&\ =\ \big(3ch + O(h^{1+\varepsilon})\big) \|\chi_{j,h}u\|^{2}_{L^{2}}
\ \geq \ 2ch\|\chi_{j,h}u\|^{2}_{L^{2}}\,,
\end{align}
where, to obtain the last inequality, we have used that $\div \boldsymbol{\ell}(x_{j})=0$ (see \eqref{eq.div0})
and  $\supp \chi_{j,h}\subset \{|x-x_{j}|\leq 2h^{\varepsilon}\}$
imply that, for every $h>0$ small enough, $\|\div \boldsymbol{\ell}\|_{L^{\infty}}=O(h^{\varepsilon})$ on $\supp \chi_{j,h}$.\medskip

\noindent
\textbf{Step 2c. Analysis on $\supp \chi_{j,h}$ when $x_{j}\in \mathsf U_{0}$.}
In this case, it holds 
$O_{j}\cap \mathsf U_{0}=\{x_{j}\}$ and,
applying again
\cite[Theorem 1]{DoNe2} to the Witten Laplacian
$\Delta_{f,h}^{O_{j}}$
with domain $D(\Delta_{f,h}^{O_{j}})=H^2(O_{j})\cap H^1_0(O_{j})$
then implies the existence of $c>0$ such that, for every $h>0$ small enough,
\begin{equation}
\label{eq.dim1}
\dim \Ran \, \pi_{[0,3c h]}\big (\Delta_{f,h}^{O_{j}}\big )=1.
\end{equation}

Let us  define
$$\psi_{j,h}:= \frac{\chi_{j,h} e^{-\frac fh}}{\|\chi_{j,h} e^{-\frac fh}\|_{L^{2}}} \in\mathcal C_{c}^{\infty}(O_{j},\mathbb R^{+})$$
and note that $\psi_{j,h}$ both  belongs to $D(\Delta_{f,h}^{O_{j}})$
and to $D(\Delta_{f,h})$.
Moreover, using
$\supp \chi_{j,h}\subset \{|x-x_{j}|\leq 2h^{ \varepsilon}\}$,
tail estimates and Laplace aymptotics, 
there exists $c'>0$ such that, for every $h>0$ small enough,
$$
\langle \Delta_{f,h}\psi_{j,h}, \psi_{j,h}   \rangle_{L^{2}} = 
\frac{h^{2}}{\|\chi_{j,h} e^{-\frac fh}\|^{2}_{L^{2}}}\int_{O_{j}} |\nabla \chi_{j,h}|^{2}e^{-2\frac fh}
\leq e^{-2c' \frac{h^{2\varepsilon}}{h}}\,.
$$ 
Hence,
 using
the spectral estimate
\begin{equation}
\label{eq.quadra}
\forall b >0\,, \ \forall u\in Q\left (T\right )\,,\ 
\left\Vert \pi_{[b,+\infty)} (T) \, u\right\Vert^2 \leq \frac{q_T(u)}{b}\,,
\end{equation}
with $b=3ch$ and $T=\Delta_{f,h}^{O_{j}}$,
valid 
for any 
 nonnegative self-adjoint operator $(T,D(T))$  on a Hilbert space $\left(\mc H, \Vert\cdot\Vert\right)$
 with  associated quadratic form  $(q_T,Q(T))$,
we obtain (since $2\varepsilon<1$)
\begin{equation}
\label{eq.compar-Oj}
\pi_h^{\Delta,O_{j}}
\psi_{j,h} = \psi_{j,h} + O(e^{-c' \frac{h^{2\varepsilon}}{h}})\ \ \text{in $L^{2}(O_{j})$},
\end{equation}
where for conciseness we have set $\pi_h^{\Delta,O_{j}}:=\pi_{[0,3ch]}\big (\Delta_{f,h}^{O_{j}}\big )$.
In particular, according to \eqref{eq.dim1},
$\pi_h^{\Delta,O_{j}}$ is the orthogonal projector on ${\rm Span }(\Psi_{j})$, where, using also 
\eqref{eq.compar-Oj},
\begin{equation}
\label{eq.compar-Oj'}
\Psi_{j}\ :=\ \frac{\pi_h^{\Delta,O_{j}}\psi_{j,h}}{\|\pi_h^{\Delta,O_{j}}\psi_{j,h}\|}=\psi_{j,h} + O(e^{-c' \frac{h^{2\varepsilon}}{h}})
\ \ \text{in $L^{2}(O_{j})$}.
\end{equation}
Note lastly that the same  analysis with $\chi_{j,h}\psi_{j,h}$, $b=c_{0}h$, and $T=\Delta_{f,h}$,  shows that
\begin{equation}
\label{eq.compar-Oj''}
\pi_h^\Delta (\chi_{j,h}\psi_{j,h}) = \chi_{j,h}\psi_{j,h} + O(e^{-c' \frac{h^{2\varepsilon}}{h}})\ \ \text{in $L^{2}(\Omega)$}.
\end{equation}

We can now finish this step.
Let us recall that
$\div \boldsymbol{\ell}(x_{j})=0$
and  $\supp \chi_{j,h}\subset  \{|x-x_{j}|\leq 2h^{ \varepsilon}\}$
imply that, for every $h>0$ small enough, $\|\div \boldsymbol{\ell}\|_{L^{\infty}}=O(h^{ \varepsilon})$ on $\supp \chi_{j,h}$.
Thus, for every $h>0$ small enough and every $u\in H^2(\Omega)\cap H^1_0(\Omega)$,
setting $w:=(1-\pi_h^\Delta)u\in H^2(\Omega)\cap H^1_0(\Omega)$, we have
$$
\langle  \Re (P_h) \chi_{j,h}w,  \chi_{j,h}w\rangle_{L^2}
  = \langle  \Delta^{O_{j}}_{f,h} \chi_{j,h}w, \chi_{j,h}w\rangle_{L^2}
  + O(h^{1+\varepsilon})\|\chi_{j,h}w\|_{L^{2}}^{2}.
$$
Therefore, using in addition \eqref{eq.dim1},
\begin{align}
\label{eq.lowDelta}
 \langle  \Re (P_h) \chi_{j,h}w,  \chi_{j,h}w\rangle_{L^2}
  \ \geq\ 3ch \|(1-\pi_h^{\Delta,O_{j}})\chi_{j,h}w\|^{2}
 +O(h^{1+\varepsilon})\|\chi_{j,h}w\|^{2}.
\end{align}
Besides, using \eqref{eq.compar-Oj'} and then \eqref{eq.compar-Oj''}
together with $w\in \big(\Ran \pi_h^\Delta\big)^{\perp}$,
\begin{align}
 \nonumber
\chi_{j,h}w
&=(1-\pi_h^{\Delta,O_{j}})\chi_{j,h}w+ 
\langle\chi_{j,h}w,\Psi_{j}\rangle_{L^{2}(O_{j})} \Psi_{j}\\
 \nonumber
&=(1-\pi_h^{\Delta,O_{j}})\chi_{j,h}w+ 
\langle\chi_{j,h}w,\psi_{j,h}\rangle_{L^{2}(O_{j})} \Psi_{j}
+ O(e^{-c' \frac{h^{2\varepsilon}}{h}})\|\chi_{j,h}w\|\\
 \nonumber
&=(1-\pi_h^{\Delta,O_{j}})\chi_{j,h}w+ 
 O(e^{-c' \frac{h^{2\varepsilon}}{h}})\|w\|_{L^{2}}.
\end{align}
Injecting this estimate in \eqref{eq.lowDelta}, we obtain that for
every $h>0$ small enough and every 
$u\in H^2(\Omega)\cap H^1_0(\Omega)$,
setting $w:=(1-\pi_h^\Delta)u$,
\begin{align}
\label{eq.low3}
 \langle  \Re (P_h) \chi_{j,h}w,  \chi_{j,h}w\rangle_{L^2}
 \geq  2ch\|\chi_{j,h}w\|^{2}_{L^{2}}+O(e^{-c' \frac{h^{2\varepsilon}}{h}})\|w\|_{L^{2}}^{2}.
\end{align}

\noindent
\textbf{Step 2d. Proof of \eqref{eq.real-min}.}
Let us recall  the so-called IMS localization formula (see for example~\cite{CFKS}):
  \begin{align*}
\forall w\in H^{2}(\Omega)\cap H^{1}_{0}(\Omega)\,,\ \ 
\langle  \Re (P_h) w,w \rangle &=\sum_{j=0}^{\mathsf m} \langle  \Re (P_h) \chi_{j,h}w ,
\chi_{j,h}w\rangle    -\sum_{j=0}^{\mathsf m}  h^2\,\big \Vert  | \nabla \chi_{j,h}|\,w   \big \Vert_{L^2(\Omega)}^2\\
& =
\sum_{j=0}^{n} \langle  \Re (P_h) \chi_{j,h}w ,
\chi_{j,h}w\rangle
+ O(h^{2-2\varepsilon})\|w\|^{2}_{L^2(\Omega)}.
\end{align*}
Using in addition the estimates \eqref{eq.low1}, \eqref{eq.low2},  and \eqref{eq.low3},
we obtain the existence of $c>0$ such that, for every $h>0$ small enough and
every $u\in H^2(\Omega)\cap H^1_0(\Omega)$, setting 
$w:=(1-\pi_h^\Delta)u \in H^2(\Omega)\cap H^1_0(\Omega)$,
\begin{align*}
\langle  \Re (P_h) 
w,w \rangle
&\geq   2ch\sum_{j=0}^{\mathsf m}\|\chi_{j,h}w\|^{2}_{L^{2}}
+
O(h^{2-2\varepsilon}+ e^{-c \frac{h^{2\varepsilon}}{h}})\|w\|^{2}_{L^2(\Omega)}
\geq c h\|w\|^{2}_{L^2(\Omega)}\,.
\end{align*}
This proves \eqref{eq.real-min}.\medskip

\noindent
\textbf{Step 3. End of the proof of Proposition~\ref{pr.Real}.}
Let us first recall from  \eqref{eq.geq}
the existence of $c>0$ such that, for every $h>0$ small enough, the dimension of 
$\Ran \, \pi_{(-\infty,e^{-\frac ch}]}\big (\Re(P_{h})\big )$
is at least $ \mathsf  m_0$.
Moreover, since $\dim \Ran \pi_h^\Delta =\mathsf m_0$ (see \eqref{eq.energy2}), it follows from \eqref{eq.real-min} and from the 
Min-Max principle that the
$(\mathsf m_0+1)$-th eigenvalue of 
$\Re(P_{h})$ is bounded from below by $c_{1}h$ when $h\to0$. 
The dimension of $\Ran \, \pi_{(-\infty,e^{-\frac ch}]}\big (\Re(P_{h})\big )$
is thus precisely $ \mathsf  m_0$ for every $h>0$ small enough.
To conclude,
it just remains to show that the~$\mathsf m_{0}$ eigenvalues of 
$\Re(P_{h})$ in $(-\infty,e^{-\frac ch}]$  are of the order $O(h^{\frac32})$ in the limit $h\to 0$.

To this end, note that it is possible to construct, for every $h>0$ sufficiently small, 
a
simple closed loop
$\gamma\subset\{\mathsf z\in\mathbb C, \Re \mathsf z\leq \frac{c_{1}}2h \}$ such that:
\begin{itemize}
\item $\gamma$ contains
$[-h\|\div \boldsymbol{\ell}\|_{L^{\infty}}, \frac{c_{1}}2h]$, and thus
 $\sigma(\Re(P_{h}))\cap(-\infty, \frac{c_{1}}2h]$, 
in its interior,
\item for some $c,c'>0$ independent of $h$,
$|\gamma|\leq ch$
and $\dist(\gamma,\sigma(\Re(P_{h})))\geq c'h$.
\end{itemize}
The rank-$\mathsf m_{0}$ orthogonal spectral projector $\pi_{h}$ associated with $\Re(P_{h})$
and $\sigma(\Re(P_{h}))\cap]-\infty, e^{-\frac ch}]$ then satisfies, for every $h>0$ small enough,
$$
\pi_{h}= \frac{1}{2i\pi}\int_{\gamma} (\mathsf z - \Re(P_h))^{-1} d\mathsf z.
$$
For $j\in\{1,\dots,\mathsf m_{0}\}$, let $\psi_{j}$
be the function defined in \eqref{de.psi-j}
and  recall the relation \eqref{eq.RePh<} which has not yet been used in this proof:
\begin{equation*}
\|\Re(P_{h})\psi_{j}\|_{L^{2}} = O(h^{\frac32}).
\end{equation*}
Using $\|( \mathsf z-\Re(P_h))^{-1}\|
\leq \frac{1}{c'h}$ for every $\mathsf z\in\gamma$,
it follows that
for every $h>0$ small enough, 
\begin{align}
\nonumber
(1-\pi_h)\psi_{j}
&=\frac{1}{2\pi i}\int_{ \gamma } \big ( \mathsf z^{-1}-  ( \mathsf z-\Re(P_h))^{-1}\big)\psi_{j} \, d \mathsf z \\
\label{res<=}
 &=\frac{-1}{2\pi i}\int_{ \gamma} \mathsf z^{-1}\big    (\mathsf z-\Re(P_h))^{-1} \Re(P_h)\psi_{j}\, d \mathsf z
 =O(h^{\frac12}).
\end{align}
Since the family $\big(\psi_{j}\big)_{j\in\{1,\dots,\mathsf m_{0}\}}$ is orthonormal,
the family $\big(\pi_h\psi_{j}=\psi_{j}+O(h^{\frac12})\big)_{j\in\{1,\dots,\mathsf m_{0}\}}$
is linearly independent,  and hence a basis of $\Ran \pi_{h}$, when $h\to 0$.
In addition, any normalized vector $\Psi\in \Ran \pi_{h}$
writes $\Psi\ =\ \sum_{k=1}^{\mathsf m_{0}}\mu_{k} \pi_h\psi_{j}$,
where the complex numbers $\mu_{1},\dots,\mu_{k} $ satisfy $\sum_{k=1}^{\mathsf m_{0}}|\mu_{k}|^{2}=1+O(h^{\frac12})$.
It thus follows from 
\eqref{eq.RePh<} that, when $h\to 0$:
$$
\|\Re(P_{h})\Psi\|_{L^{2}(\Omega)}
=\|\sum_{k=1}^{\mathsf m_{0}}\mu_{k} \pi_h\Re(P_{h})\psi_{j}\|_{L^{2}(\Omega)}
\leq\sum_{k=1}^{\mathsf m_{0}}|\mu_{k}| \|\Re(P_{h})\psi_{j}\|_{L^{2}(\Omega)}=O(h^{\frac32}),
$$
which implies that the $\mathsf m_{0}$ eigenvalues of 
$\Re(P_{h})$ in $(-\infty,e^{-\frac ch}]$  are of the order $O(h^{\frac32})$.

\end{proof}

\subsection{Small eigenvalues of $P_h$ and resolvent estimates.}

The aim of this section is to prove Theorem~\ref{th:2} on the number of small eigenvalues of $P_h$ (or equivalently of $L_h$, see \eqref{eq.unitary}).

\begin{theorem}\label{th:2} 
Let us assume that   \eqref{ortho} holds.
Then, there exists  $c_2>0 $  such that, for all $c_3\in (0,c_2)$, there exist $h_0>0$ and $C>0$ such that, for all 
$\mathsf  z\in \{\mathsf z\in \mathbb C, \Re \mathsf z \le c_2h , \vert \mathsf z\vert \ge c_3h\}$ and $h\in (0,h_0]$, 
 $$ P_{h}-\mathsf  z     \text{  is invertible and } \ \  \Vert (  P_{h}-\mathsf z)^{-1}\Vert \le Ch^{-1}.$$
 In addition,   there exists $h_0>0$ such that for all $h\in (0,h_0]$, $\sigma(P_{h})\cap \{\mathsf z\in \mathbb C, \Re \mathsf z\le c_2h\}$ is composed of exactly  $\mathsf m_0$ eigenvalues $ \lambda_{1,h}, \lambda_{2,h},\ldots,\lambda_{\mathsf m_0,h}$ (counted with algebraic multiplicity),
 where $\mathsf m_{0}$ is defined in \eqref{eq.m0}. Finally, there exists $c>0$ such that  for all $j\in \{1,\ldots,\mathsf m_0\}$ and $h$ small enough, $\vert \lambda_{j,h}\vert\le e^{-\frac ch}$. All these  results also hold for $P_h^*$. 
\end{theorem}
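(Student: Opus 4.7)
The plan is to combine the spectral information on the self-adjoint operator $\Re(P_{h})$ given by Proposition~\ref{pr.Real} with the exponentially accurate quasi-modes for $P_{h}$ obtained by localizing $e^{-f/h}$ near the $\mathsf m_{0}$ local minima of $f$ in $\Omega$. I would first exploit the identity $\Re\langle (P_{h}-z)u,u\rangle = \langle \Re(P_{h})u,u\rangle-\Re(z)\|u\|_{L^{2}}^{2}$ together with Proposition~\ref{pr.Real}, the upper bound $Ch^{\frac{3}{2}}$ on the small eigenvalues of $\Re(P_{h})$, and $\|h\div\boldsymbol{\ell}\|_{\infty}=O(h)$ which absorbs the cross terms arising from the fact that $\pi_{h}^{\Delta}$ does not quite commute with $\Re(P_{h})=\Delta_{f,h}-h\div \boldsymbol{\ell}$. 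This would give, for every $c_{2}\in(0,c_{1})$, every $z$ with $\Re z \le c_{2}h$, and every $u\in D(P_{h})$,
\[
(c_{1}-c_{2})h\,\|(1-\pi_{h}^{\Delta})u\|_{L^{2}}^{2}\ \le\ \|(P_{h}-z)u\|_{L^{2}}\|u\|_{L^{2}}+Ch\,\|\pi_{h}^{\Delta} u\|_{L^{2}}^{2},
\]
which is an a priori control on $u$ modulo the $\mathsf m_{0}$-dimensional space $\Ran \pi_{h}^{\Delta}$.

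Next, I would complete this into a resolvent estimate by introducing the quasi-modes $\psi_{j}:=\chi_{j}e^{-f/h}/\|\chi_{j}e^{-f/h}\|_{L^{2}}$ attached to each local minimum $x_{j}\in\Omega$ of $f$ ($1\le j\le \mathsf m_{0}$), with $\chi_{j}\in\mathcal C^{\infty}_{c}(O_{j})$ equal to $1$ near $x_{j}$, as in the proof of Proposition~\ref{pr.Real}. The key observation is that $\boldsymbol{\ell}\cdot\nabla f=0$ implies $\boldsymbol{\ell}\cdot \nabla \psi_{j}=(\boldsymbol{\ell}\cdot \nabla\chi_{j})e^{-f/h}/\|\chi_{j}e^{-f/h}\|_{L^{2}}$, which is supported in $\{f\ge f(x_{j})+2c\}$ for some $c>0$ and is therefore exponentially small in $L^{2}$; combined with $\|\Delta_{f,h}\psi_{j}\|_{L^{2}}=O(e^{-c/h})$, this gives $\|P_{h}\psi_{j}\|_{L^{2}}=O(e^{-c/h})$. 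Moreover, \eqref{eq.quadra} applied to $\Delta_{f,h}$ gives $\|(1-\pi_{h}^{\Delta})\psi_{j}\|_{L^{2}}=O(e^{-c/h})$, so that $\Ran\pi_{h}^{\Delta}$ is spanned modulo $O(e^{-c/h})$ by the orthonormal family $(\psi_{j})_{j}$. A brief use of elliptic regularity for $\Delta_{f,h}$ would then lead to $\|P_{h}\pi_{h}^{\Delta}\|_{L^{2}\to L^{2}}=O(e^{-c/h})$, and hence to
\[
\pi_{h}^{\Delta}(P_{h}-z)\pi_{h}^{\Delta}\ =\ -z\,\pi_{h}^{\Delta}+O(e^{-c/h})\quad\text{on}\ \Ran\pi_{h}^{\Delta},
\]
which is invertible with inverse of norm at most $2(c_{3}h)^{-1}$ as soon as $|z|\ge c_{3}h$.

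The resolvent bound $\|(P_{h}-z)^{-1}\|\le Ch^{-1}$ on the set $\{c_{3}h\le|z|,\ \Re z\le c_{2}h\}$ would then follow from the two previous steps by a Schur complement decomposition of $P_{h}-z$ along $\Ran\pi_{h}^{\Delta}\oplus(\Ran\pi_{h}^{\Delta})^{\perp}$. A standard contour integration along a curve separating this annulus from the disk $\{|z|\le e^{-c/h}\}$ inside $\{\Re z \le c_{2}h\}$ would then show that the total algebraic multiplicity of the eigenvalues of $P_{h}$ in $\{\Re z\le c_{2}h\}$ equals the rank of $\pi_{h}^{\Delta}$, namely $\mathsf m_{0}$; the estimate $\|\pi_{h}^{\Delta}P_{h}\pi_{h}^{\Delta}\|=O(e^{-c/h})$ would further force each of these $\mathsf m_{0}$ eigenvalues to be exponentially small. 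The statements for $P_{h}^{*}$ would follow immediately from $\sigma(P_{h}^{*})=\overline{\sigma(P_{h})}$ and the invariance of the considered regions under complex conjugation. I expect the most delicate step to be the control of the off-diagonal blocks in the Schur complement, such as $\pi_{h}^{\Delta}P_{h}(1-\pi_{h}^{\Delta})$: since $P_{h}$ is a second-order operator and neither projector commutes with $P_{h}$, these cross terms would be handled via the dual estimate $\|P_{h}^{*}\pi_{h}^{\Delta}\|_{L^{2}\to L^{2}}=O(h^{\frac{3}{2}})$, itself obtained from the same quasi-mode analysis applied to $P_{h}^{*}$ together with $\div\boldsymbol{\ell}(x_{j})=0$ (see \eqref{eq.div0}).
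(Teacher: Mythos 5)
Your proposal is correct and follows essentially the same route as the paper: coercivity of $\Re(P_{h})$ on $\Ran(1-\pi_{h}^{\Delta})$ from Proposition~\ref{pr.Real}, exponential smallness of $P_{h}$ and the $O(h^{3/2})$ bound on $P_{h}^{*}$ restricted to $\Ran\pi_{h}^{\Delta}$, and then a block-decomposition argument plus a Riesz projector to pin down the $\mathsf m_{0}$ exponentially small eigenvalues. The paper packages the block decomposition as a Grushin problem $\mathcal P_{h}(\mathsf z)$ (rather than a Schur complement of $P_{h}-\mathsf z$ directly), but these are the same thing.

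The one place where your route and the paper's genuinely diverge is the choice of basis for $\Ran\pi_{h}^{\Delta}$. You build explicit quasi-modes $\psi_{j}=\chi_{j}e^{-f/h}/\|\chi_{j}e^{-f/h}\|$ and then want to pass to $\pi_{h}^{\Delta}$ through ``a brief use of elliptic regularity.'' That step is more delicate than it sounds: $\|(1-\pi_{h}^{\Delta})\psi_{j}\|_{L^{2}}=O(e^{-c/h})$ alone does not control $P_{h}(1-\pi_{h}^{\Delta})\psi_{j}$ (the operator is unbounded), so you would have to argue via the commutation $\Delta_{f,h}(1-\pi_{h}^{\Delta})=(1-\pi_{h}^{\Delta})\Delta_{f,h}$ and then elliptic regularity to upgrade the $L^{2}$-smallness to $H^{2}$-smallness before applying $P_{h}$. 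The paper bypasses all this by working directly with the true Witten eigenfunctions $u_{j,h}^{\Delta}$: the bound $\|P_{h}u_{j,h}^{\Delta}\|\le e^{-c/h}$ is immediate since $\Delta_{f,h}u_{j,h}^{\Delta}=\lambda^{\Delta}_{j,h}u_{j,h}^{\Delta}$ with $\lambda^{\Delta}_{j,h}\le e^{-c/h}$ and $\|\nabla_{f,h}u_{j,h}^{\Delta}\|^{2}=\lambda^{\Delta}_{j,h}$, and the $O(h^{3/2})$ estimate for $P_{h}^{*}u_{j,h}^{\Delta}$ then comes from writing $u_{j,h}^{\Delta}$ as a combination of the $\pi_{h}^{\Delta}\psi_{j}$ and invoking $\div\boldsymbol{\ell}(x_{j})=0$, exactly as you propose. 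So your approach works, but the paper's choice of working with $u_{j,h}^{\Delta}$ avoids the regularity detour and is a bit cleaner. The remaining steps (invertibility of the $\mathsf m_{0}\times\mathsf m_{0}$ block $-\mathsf z\,\mathrm{Id}+O(e^{-c/h})$ for $|\mathsf z|\ge c_{3}h$, the resolvent bound $O(h^{-1})$, the Riesz projector count, and the deduction for $P_{h}^{*}$) match the paper's.
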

\begin{proof}
 Note first that the last sentence in the statement of Theorem~\ref{th:2} 
 concerning $P_h^*$ is an immediate consequence of the part 
 concerning $P_{h}$ since $\sigma(P^{*}_{h})=\overline{\sigma(P_{h})}$ (with multiplicity)
 and, for all $\mathsf  z\in\C\setminus\sigma(P_{h})$,
 $\|(P_{h}-\mathsf  z)^{-1}\|=\|( P^{*}_{h}-\bar{\mathsf  z })^{-1}\|$
 (see indeed \cite[Section 6.6 in Chapter 3]{Kato}).

 Let us also recall the relations \eqref{eq.energy2},
 \eqref{eq.Pidelta}, and \eqref{eq.Witt-expo} stated in
 the beginning of Section~\ref{sub.RePh}.
Let us consider, for $j\in \{1,\ldots,\mathsf m_0\}$, a 
$L^2(\Omega)$-normalized eigenfunction
$u^\Delta_{j,h}$ of $\Delta_{f,h}$
associated with its $j$-th eigenvalue.
Since $P_{h}=\Delta_{f,h}+  2\boldsymbol{\ell}\cdot \nabla_{f,h}$
has domain $D(P_{h}) = D(\Delta_{f,h})$
and the quadratic form associated with $\Delta_{f,h}$
is given by \eqref{eq.fried} with $\boldsymbol{\ell}=0$:
\begin{equation}\label{eq.PP0}
\text{$\exists c>0$ such that, when $h\to0$,}\ \ \ 
 \Vert P_hu^\Delta_{j,h} \Vert_{L^2(\Omega)}\le e^{-\frac ch}.
 \end{equation}
Similarly, since $P^{*}_{h}=\Delta_{f,h}-  2\boldsymbol{\ell}\cdot \nabla_{f,h}-2h\div\boldsymbol{\ell} $ has domain $D(P^{*}_{h}) = D(\Delta_{f,h})$,
there exists $c>0 $ such that, for every $h>0$ small enough,
$$
\Vert P^{*}_hu^\Delta_{j,h} \Vert_{L^2(\Omega)}\le e^{-\frac ch} + 2h\Vert (\div\boldsymbol{\ell}) u^\Delta_{j,h}\Vert_{L^2(\Omega)}.
$$
Considering now the orthonormal family $(\psi_{j})_{j\in\{1,\dots,\mathsf m_{0}\}}$
defined in the previous section in \eqref{de.psi-j} 
and using  the spectral estimate \eqref{eq.quadra} with $b=c_{0}h$, $T=\Delta_{f,h}$, and \eqref{eq.Delta-expo},
there exists $c'>0$ such that, for every $j\in\{1,\dots,\mathsf m_{0}\}$ and $h>0$ small enough,
\begin{equation}
\label{eq.quasi-ortho}
\pi_h^{\Delta}\psi_{j} = \psi_{j} + O(e^{- \frac{c'}{h}})\ \ \text{in $L^{2}(\Omega)$}.
\end{equation}
Using in addition \eqref{eq.div-h}, it thus follows that, for every $h>0$ small enough, 
$$
\Vert (\div\boldsymbol{\ell}) \pi_h^{\Delta}\psi_{j}\Vert_{L^2(\Omega)}
= O(h^{\frac12}).
$$
Hence, since \eqref{eq.quasi-ortho} implies that
each $u^\Delta_{j,h}$
writes
$u^\Delta_{j,h} = \sum_{k=1}^{\mathsf m_{0}}\mu_{k} \pi_h^{\Delta}\psi_{j}$
for some complex numbers $\mu_{1},\dots,\mu_{k} $ satisfying $\sum_{k=1}^{\mathsf m_{0}}|\mu_{k}|^{2}=1+O(e^{- \frac{c'}{h}})$,
we obtain that for every $h>0$ small enough,
\begin{equation}\label{eq.PP1}
 \Vert P^{*}_hu^\Delta_{j,h} \Vert_{L^2(\Omega)} =  O(h^{\frac32}).
 \end{equation}

Let us now define the operator $\hat{ P}_{h}$ by  
$$\hat{ P}_{h}:=(1-\pi_h^\Delta ) P_h (1-\pi_h^\Delta ) \text{ with domain } (1-\pi_h^\Delta )D(P_h) \, \text{ on } \, \hat{\mathsf E}:=(1-\pi_h^\Delta )L^2(\Omega),$$
where we recall that $D(P_h)= D(\Delta_{f,h}) =H^2(\Omega)\cap H^1_0(\Omega)$. 
Note that the  space $\hat{\mathsf E}$
(equipped with the restricted  $L^2(\Omega)$-Hermitian inner product)
 is a Hilbert space and that the operator~$\hat{ P}_{h}:D(\hat{ P}_{h})\to\hat{\mathsf E}$ is well defined, since $(1-\pi_h^\Delta )D(P_h)=\hat{\mathsf E}\cap D(P_h) \subset D(P_h)$, with dense domain in~$\hat{\mathsf E}$.

The rest of the proof is reminiscent of the analysis led in \cite[Section 2B.]{LePMi20}
and
 is divided into two steps. 
 \medskip
 
 \noindent
 \textbf{Step 1. Resolvent estimates for $\hat{ P}_{h}:D(\hat{ P}_{h})\to\hat{\mathsf E}$.}  
First, the operator $\hat{ P}_{h}$   is closed. 
This  follows from  the fact that $P_h:D(P_{h})\to L^2(\Omega)$ is closed
and from the relation $\hat{ P}_{h}= P_h + \pi_h^\Delta P_h\pi_h^\Delta - \pi_h^\Delta  P_h-P_h\pi_h^\Delta$
on $D(\hat{ P}_{h})$, since $\pi_h^\Delta P_h\pi_h^\Delta - \pi_h^\Delta  P_h-P_h\pi_h^\Delta $
extends into a bounded operator 
$T_{h}$ on~$L^2(\Omega)$.
Indeed, $P_h\pi_h^\Delta$ and then $\pi_h^\Delta P_h\pi_h^\Delta$
extend into bounded operators on~$L^2(\Omega)$ since 
$\pi_h^\Delta$ is continuous with finite rank,
and it is also the case
for $\pi_h^\Delta  P_h$ 
since for all $u \in D(P_h)=D(P^*_{h})$, 
$$\pi_h^\Delta  P_hu=\sum_{j=1}^{\mathsf m_0} \langle u^\Delta_{j,h}, P_hu  \rangle_{L^2(\Omega)}u^\Delta_{j,h}= \sum_{j=1}^{\mathsf m_0} \langle P_h^*u^\Delta_{j,h}, u  \rangle_{L^2(\Omega)}u^\Delta_{j,h}.$$
 The above considerations also imply that
the adjoint of $\hat{ P}_{h}$ is the operator   
$$\hat{ P}_{h}^*= (1-\pi_h^\Delta ) P_h^* (1-\pi_h^\Delta ) \text{ with domain } (1-\pi_h^\Delta )D(P_h).
$$

Let us now prove the following  resolvent estimates for $\hat{ P}_{h}$: there exist $C>0$ and $c_2 >0$ such that, for all $h>0$ small enough and  $\mathsf  z\in \mathbb C$ such that $\Re \mathsf  z\le c_2 h$, 
\begin{equation}\label{eq.hat-reso}
 \hat{ P}_{h}-\mathsf  z     \text{  is invertible and } \Vert (\hat{ P}_{h}-\mathsf  z)^{-1}\Vert \le Ch^{-1}.
 \end{equation}
To prove this claim, let us consider $w\in D(\hat{ P}_{h})=(1-\pi_h^\Delta )D(P_h)$ and $\mathsf  z\in \mathbb C$. Then,
according to Proposition~\ref{pr.Real},
 it holds, for every $h>0$ small enough,
 \begin{align*}
\Re  \langle (\hat{ P}_{h}-\mathsf  z)w,  w\rangle_{L^2(\Omega)}&= \Re \langle  P_h(1-\pi_h^\Delta )w,  (1-\pi_h^\Delta )w\rangle_{L^2(\Omega)}-(\Re\mathsf   z)  \,  \Vert (1-\pi_h^\Delta )w\Vert_{L^2(\Omega)}^2\\
 &= \langle  \Re (P_{h})(1-\pi_h^\Delta )w,  (1-\pi_h^\Delta )w\rangle_{L^2(\Omega)}-(\Re \mathsf  z)  \Vert (1-\pi_h^\Delta )w\Vert_{L^2(\Omega)}^2\\
 &\ge  [c_1h-\Re\mathsf   z ] \Vert (1-\pi_h^\Delta )w\Vert_{L^2(\Omega)}^2=   [c_1h-\Re\mathsf   z ]  \Vert  w\Vert_{L^2(\Omega)}^2.
 \end{align*}
 The same inequality also holds for  $\hat{ P}_{h}^*-\mathsf  z$ since $\Re (P_{h})=\Re (P^{*}_{h})$. 
 Let us now fix $c_2 \in (0, c_1)$. When   $\Re \mathsf  z\le c_2  h$ and $h>0$ is small enough, the previous inequality implies
\begin{equation}\label{eq.hat-coer}
\Vert (\hat{ P}_{h}-\mathsf  z)w\Vert_{L^2(\Omega)}\ge (c_1-c_2 ) h  \Vert  w\Vert_{L^2(\Omega)}.
 \end{equation}
Consequently, when   $\Re \mathsf  z\le c_2  h$ and $h>0$ is small enough, $\hat{ P}_{h}-\mathsf  z$ is injective and its range is closed. Since the same inequality also holds for its adjoint $\hat{ P}_{h}^*-\bar{\mathsf  z}$, the range of $\hat{ P}_{h}-\mathsf z$ is dense in $\hat{\mathsf E}$. Thus, $\hat{ P}_{h}-\mathsf  z:D(\hat{ P}_{h})\to\hat{\mathsf E}$ is invertible  and the relation~\eqref{eq.hat-reso} follows from~\eqref{eq.hat-coer}.  
  \medskip
 
 \noindent
 \textbf{Step 2. Grushin problem and end of the proof of Theorem~\ref{th:2}.}  Define the operators:
 $$R_-: \mathbb C^{\mathsf m_0} \to L^2(\Omega) , \ (\mu_k)_{j=1}^{\mathsf m_0}\mapsto  
  \sum_{j=1}^{\mathsf m_0} \mu_j \, u^\Delta_{j,h},  \text{ and } R_+:  L^2(\Omega)\to   \mathbb C^{\mathsf m_0}  , \, \ u\mapsto (\langle  u, u^\Delta_{j,h} \rangle_{L^2(\Omega)})_{j=1}^{\mathsf m_0}.$$
   We equip $\mathbb C^{\mathsf m_0}$ with the $\ell^2$ norm. 
   Note the relations
   \begin{equation}\label{eq.r--}
  R_+^*=R_-\,,\ \   R_-R_+= \pi_h^\Delta\,,   \text{ and }  R_+R_-=\mathsf I_{ \mathbb C^{\mathsf m_0} },
   \end{equation} 
and that,  for all $h>0$,
      \begin{equation}\label{eq.phr}
       \Vert R_+ \Vert \le1  \ \text{ and }\ \Vert R_- \Vert \le 1.      \end{equation} 
Moreover, according to~\eqref{eq.PP0} and \eqref{eq.PP1}, there exists $c>0$ such that for every $h>0$ small enough, it holds: 
        \begin{equation}\label{eq.phr2}
       \Vert R_+ P_h\Vert  = O(h^{\frac32})  \ \text{ and }\  \Vert   P_h R_-\Vert  \le e^{-\frac ch}.
      \end{equation} 
For $\mathsf z\in \mathbb C$, let us denote by $\mathcal  P_{h} (\mathsf z)$ the linear operator defined by  
 $$(u,u_-)\in D(P_h)\times \mathbb C^{\mathsf m_0} \mapsto  \begin{pmatrix} (P_{h}-\mathsf z)u+  R_- u_-    \\  R_+ u \end{pmatrix}\in L^{2}(\Omega)\times \mathbb C^{\mathsf m_0}.$$
Using \eqref{eq.hat-reso} and the same analysis as the one made to prove~\cite[Lemma  2.2]{LePMi20}, we deduce that, when   $\Re \mathsf z\le c_2  h$ and $h>0$ is small enough, $\mathcal  P_{h} (\mathsf z)$ is invertible (i.e. the Grushin problem $\mathcal  P_{h} (\mathsf z)$ is well posed) and its inverse writes
 $$   (f,g)\in L^2(\Omega) \times \mathbb C^{\mathsf m_0} \mapsto  \begin{pmatrix} \mathcal  E(\mathsf z) & \mathcal E_+(\mathsf z)  \\ \mathcal E_-(\mathsf z) & \mathcal E_{-+}(\mathsf z)      \end{pmatrix} \begin{pmatrix}  f  \\ g   \end{pmatrix} \in D(P_h)\times \mathbb C^{\mathsf m_0},$$
where the operators $\mathcal E$, $\mathcal E_+  $, $\mathcal E_-$, and $ \mathcal E_{-+}  $ 
are holomorphic on $\{\Re \mathsf z\le c_2 h\}$
and satisfy:
\begin{enumerate}
\item $\mathcal E(\mathsf z)=(\hat{ P}_{h}-\mathsf z)^{-1}(1-\pi_h^\Delta )$ and thus, according to \eqref{eq.hat-reso}:   
 \begin{equation}\label{eq.EEz}
\text{for every $\mathsf z\in \{\Re \mathsf z\le c_2 h\}$},\ \  \Vert \mathcal E(\mathsf z)\Vert \le  Ch^{-1},
 \end{equation}
\item$\mathcal E_{-+}(\mathsf z) = -R_+(P_h-\mathsf z)R_-+R_+P_h (\hat{ P}_{h}-\mathsf z)^{-1}(1-\pi_h^\Delta )P_hR_-$,
\item$\mathcal E_{+}(\mathsf z) = R_-  -(\hat{ P}_{h}-\mathsf z)^{-1}(1-\pi_h^\Delta )P_hR_-$,
\item $\mathcal E_-(\mathsf z)=R_+ -R_+P_h(\hat{ P}_{h}-\mathsf z)^{-1}(1-\pi_h^\Delta )$.
\end{enumerate}
Moreover, $P_h-\mathsf z$ is invertible if and only if $\mathcal  E_{-+}(\mathsf z)$ is invertible, and in this case, 
\begin{equation}\label{eq.p-z}
(P_h-\mathsf z)^{-1}=\mathcal E(\mathsf z)- \mathcal E_+(\mathsf z)\mathcal E_{-+}(\mathsf z)^{-1}\mathcal E_-(\mathsf z).
\end{equation}
We refer to  \cite{sjostrand2007elementary} for more details on so-called Grushin problems.

Using~\eqref{eq.hat-reso},~\eqref{eq.r--},~\eqref{eq.phr}, and~\eqref{eq.phr2}, 
one deduces that there exists $c>0$ such that, for every 
$h>0$ small enough and uniformly with respect to  $\mathsf z\in\{\Re \mathsf z\le c_2 h\}$,
$$  \mathcal E_-(\mathsf z)=R_+ +O(h^{\frac12}), \  \mathcal E_+(\mathsf z)=R_-+O( e^{-\frac ch}), \text{ and } \mathcal E_{-+}(\mathsf z)= \mathsf z \mathsf I_{\mathbb C^{\mathsf m_0}} + O( e^{-\frac ch}).$$
In particular, when in addition $\vert \mathsf z\vert \ge e^{-\frac c{2h}}$,  $\mathcal E_{-+}(\mathsf z)$ is invertible and thus so is $P_h-\mathsf z$ (see the line above~\eqref{eq.p-z}). Therefore, for every $h>0$ small enough:
\begin{equation}
\label{eq.spec-expo}
\sigma(P_h)\cap \big \{\mathsf z\in \mathbb C, \Re  \mathsf z\le  c_2h\big \}\subset\big  \{\vert\mathsf z\vert \le  e^{-\frac c {2h}} \big \}.
\end{equation}

Let us now fix $c_3\in (0,c_2)$. The operator $\mathcal E_{-+}(\mathsf z)$ is then invertible for every  $h>0$ small enough and 
every $\mathsf  z\in \{\Re \mathsf z \le c_2h , \vert \mathsf z\vert \ge c_3h\}$, and satisfies  $\mathcal E_{-+}(\mathsf z)^{-1}= \mathsf z^{-1}(\mathsf I_{\mathbb C^{\mathsf m_0}} + O( e^{-\frac c{2h}}))$. Hence, according to~\eqref{eq.p-z}, since $R_-R_+= \pi_h^\Delta $,  $\Vert \pi_h^\Delta \Vert  \le1$, $ \Vert R_+ \Vert \le 1$,  and $\Vert R_- \Vert \le1$, 
 the previous estimates on $\mathcal E_+(\mathsf z)$, $\mathcal E_-(\mathsf z)$, and $\mathcal E_{-+}(\mathsf z)$ imply  
 that for all $h$ small enough 
 and  uniformly with respect to $\mathsf  z\in \{ \Re \mathsf z \le c_2h , \vert \mathsf z\vert \ge c_3h\}$:
 \begin{equation}\label{eq.PH-1}
 (P_h-\mathsf z)^{-1}= \mathcal E(\mathsf z)- \mathsf  z^{-1} (\pi_h^\Delta + O( h^{\frac12}))
  = \mathcal E(\mathsf z)- \mathsf  z^{-1} \pi_h^\Delta + O( h^{-\frac12}).
  \end{equation}
Using in addition \eqref{eq.EEz}, there exists $K>0$ such that  for all  for $h$ small enough and 
 $\mathsf  z\in \{ \Re \mathsf z \le c_2h , \vert \mathsf z\vert \ge c_3h\}$:
$$\Vert (P_h-\mathsf z)^{-1}\Vert \le  Ch^{-1} + |\mathsf z|^{-1} + O( h^{-\frac12})  \le Kh^{-1}.$$

Lastly, take  $\beta \in (c_3,c_2)$. According to \eqref{eq.spec-expo},    the spectral Riesz projector
 \begin{equation}\label{pih}
 \pi_h^P:= \frac{1}{2i\pi}\int_{\{\vert \mathsf z\vert =\beta h\}} ( \mathsf z -P_h)^{-1} d\mathsf z
 \end{equation}
  is well defined for every $h>0$ small enough and its rank is the number of eigenvalues
  of $P_{h}$ in $\{\Re \mathsf z\le  c_2h\big \}$, counted with algebraic multiplicity.
 Moreover,  
   Equation~\eqref{eq.PH-1} implies that for every $h>0$ small enough, 
   \begin{equation}\label{eq.pih==}
   \pi_h^P= \pi_h^\Delta+  O( h^{\frac12})
   \end{equation}
   and thus,  
  $\dim \Ran (\pi^{P}_h)=\dim \Ran (\pi_h^\Delta)=\mathsf m_0$ (see~\eqref{eq.energy2}).
  Therefore, for every $h$ small enough, $\sigma(P_h)\cap\big \{\Re \mathsf z\le  c_2h\big \}$ is composed of $\mathsf m_0$ eigenvalues, counted with algebraic multiplicity, which are exponentially small. 
   This concludes the proof of Theorem~\ref{th:2}.
\end{proof}

\section{Proof  of Theorem~\ref{th:main}}

\subsection{Rough asymptotic estimates on $u_{1,h}^P$ and on $u_{1,h}^{P^{*}}$}
\label{sec.SE-P}

We assume from now on, without loss of generality, that
 the principal eigenmodes $u_{1,h}^P$ of $P_{h}$ and $u_{1,h}^{P^{*}}$ of $P^{*}_{h}$
 defined in Proposition~\ref{pr.spectre}
 are normalized in $L^2(\Omega)$. 
We derive 
 in the following proposition a priori estimates on these eigenmodes which will be used in Section~\ref{eq.secEqx} to prove Theorem~\ref{th:main}. 
  
\begin{proposition}\label{pr.approximation0} 
Assume   \eqref{ortho} and   \eqref{well}. 
 For any $\eta\in (0,\min_{\pa \Omega}f-f(x_0))$, let  $\chi_\eta:\overline \Omega\to [0,1]$ be a  smooth function   such that $\chi_\eta =1$  on $\mathbf{C}_{\text{min}}(\eta)  $ (see \eqref{eq.Ceta}) and  $\chi_\eta =0$  on $\overline \Omega\setminus \mathbf{C}_{\text{min}}(\eta/2)$. Set 
$$\mathsf u_\eta=\frac{\chi_\eta e^{-\frac fh} }{\Vert \chi_\eta  e^{-\frac fh}\Vert_{L^2(\Omega)}}.$$ 
Then,     there exists $c>0$ such that  for all $h$ small enough, $u_{1,h}^P$ and $u_{1,h}^{P^{*}}$
satisfy
\begin{equation}\label{eq.appro}
u_{1,h}^P=\mathsf u_\eta +O(  e^{-\frac ch})\  \text{ and } \ u_{1,h}^{P^*}=\mathsf u_\eta +O(  h^{\frac12})
\ \ \ \text{in $L^2(\Omega)$},
\end{equation} 
as well as
\begin{equation}\label{eq.exLL}
\frac{\int_{\Omega\setminus  \overline{\mathbf{C}}_{\text{min}}(\eta)}    u_{1,h}^{P}  \, e^{-\frac fh }}{\int_{\Omega} u_{1,h}^{P}e^{-\frac fh }}  =O(  e^{-\frac ch })
\  \text{ and } \ 
 \frac{\int_{\Omega\setminus  \overline{\mathbf{C}}_{\text{min}}(\eta)}    u_{1,h}^{P^*}  \, e^{-\frac fh }}{\int_{\Omega} u_{1,h}^{P^*}e^{-\frac fh }}  =O(  e^{-\frac ch }).
 \end{equation}
\end{proposition}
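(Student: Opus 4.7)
My plan is to prove that $\mathsf u_\eta$ is a good quasi-mode for both $P_h$ (with exponentially small error) and $P_h^*$ (with error $O(h^{3/2})$), and then to transfer these quasi-mode estimates to the actual principal eigenfunctions via the rank-one Riesz projector, whose existence and control are provided by Theorem~\ref{th:2}. Under \eqref{well}, $x_0\in\Omega$ is the only critical point of $f$ in $\Omega$ and, by \eqref{eq.global}, is a local minimum of $f$, so $\mathsf m_0=1$; Theorem~\ref{th:2} then yields resolvent bounds $\|(z-P_h)^{-1}\|=O(h^{-1})$ along a contour around $\lambda_{1,h}^P$ that encloses no other spectrum, and similarly for $P_h^*$.

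\emph{Step 1 (quasi-mode estimates).} Using $\nabla_{f,h}(e^{-f/h})=0$, $\Delta_{f,h}(e^{-f/h})=0$, and $\boldsymbol{\ell}\cdot\nabla f=0$ from \eqref{ortho}, a direct expansion gives
\[P_h(\chi_\eta e^{-f/h})=\bigl(-h^2\Delta\chi_\eta+2h\,\nabla\chi_\eta\cdot\nabla f+2h\,\boldsymbol{\ell}\cdot\nabla\chi_\eta\bigr)e^{-f/h},\]
and the analogous identity for $P_h^*$ produces the same boundary-supported terms (with a sign flip on the $\boldsymbol{\ell}$-term) plus an extra bulk term $-2h(\div\boldsymbol{\ell})\chi_\eta e^{-f/h}$. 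Since $\supp\nabla\chi_\eta\subset\{f\ge\min_{\partial\Omega}f-\eta\}$ and $\|\chi_\eta e^{-f/h}\|_{L^2}\sim C h^{d/4}e^{-f(x_0)/h}$ by Laplace's method (the integral concentrates at the unique nondegenerate minimum $x_0$, where $\chi_\eta\equiv 1$), normalizing immediately yields $\|P_h\mathsf u_\eta\|_{L^2}=O(e^{-c/h})$ for any $c<\min_{\partial\Omega}f-\eta-f(x_0)$, which is positive by the hypothesis on $\eta$. For $P_h^*$ the extra bulk term contributes $2h\|(\div\boldsymbol{\ell})\mathsf u_\eta\|_{L^2}$, which I estimate using $(\div\boldsymbol{\ell})(x_0)=0$ from \eqref{eq.div0} and Laplace's method to obtain $\|(\div\boldsymbol{\ell})\mathsf u_\eta\|_{L^2}=O(h^{1/2})$, and therefore $\|P_h^*\mathsf u_\eta\|_{L^2}=O(h^{3/2})$.

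\emph{Step 2 (from quasi-mode to eigenfunction).} Pick $\beta\in(c_3,c_2)$ and set $\gamma=\{|z|=\beta h\}$, so that $\pi_h^P=\frac{1}{2i\pi}\oint_\gamma (z-P_h)^{-1}\,dz$ is the rank-one Riesz projector onto $\mathbb C u_{1,h}^P$, and the resolvent is $O(h^{-1})$ along $\gamma$. Subtracting $\frac{1}{2i\pi}\oint_\gamma z^{-1}dz\cdot\mathsf u_\eta=\mathsf u_\eta$ and using the resolvent identity yield
\[(I-\pi_h^P)\mathsf u_\eta=-\frac{1}{2i\pi}\oint_\gamma z^{-1}(z-P_h)^{-1}P_h\mathsf u_\eta\,dz,\]
whence $\|(I-\pi_h^P)\mathsf u_\eta\|_{L^2}\le Ch^{-1}\|P_h\mathsf u_\eta\|_{L^2}=O(e^{-c/h})$, and analogously $\|(I-\pi_h^{P^*})\mathsf u_\eta\|_{L^2}=O(h^{1/2})$. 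Since $\lambda_{1,h}^P$ is real and simple, the projector admits the formula $\pi_h^P v=\frac{\langle v,u_{1,h}^{P^*}\rangle}{\langle u_{1,h}^P,u_{1,h}^{P^*}\rangle}u_{1,h}^P$, and the coefficient at $v=\mathsf u_\eta$ is a positive real number because $u_{1,h}^P$ and $u_{1,h}^{P^*}$ are positive on $\Omega$ by Proposition~\ref{pr.spectre} while $\mathsf u_\eta\ge 0$. Writing $\alpha_h u_{1,h}^P=\mathsf u_\eta+O_{L^2}(e^{-c/h})$ with $\alpha_h>0$ and taking $L^2$-norms forces $\alpha_h=1+O(e^{-c/h})$, which gives the first estimate in \eqref{eq.appro}; the analogous argument for $u_{1,h}^{P^*}$ yields the second, weaker $O(h^{1/2})$ estimate.

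\emph{Step 3 (ratio estimates \eqref{eq.exLL}).} Finally, using \eqref{eq.appro} I reduce \eqref{eq.exLL} to estimates on $\mathsf u_\eta$ itself. Because $\supp\chi_\eta\cap(\Omega\setminus\overline{\mathbf{C}}_{\min}(\eta))\subset\{f\ge\min_{\partial\Omega}f-\eta\}$, a direct integration gives $\int_{\Omega\setminus\overline{\mathbf{C}}_{\min}(\eta)}\mathsf u_\eta e^{-f/h}\,dx=O(h^{-d/4}e^{-(2\min_{\partial\Omega}f-2\eta-f(x_0))/h})$, while Laplace's method gives $\int_\Omega\mathsf u_\eta e^{-f/h}\,dx\sim Ch^{d/4}e^{-f(x_0)/h}$, so the corresponding ratio is $O(e^{-c/h})$ with $c=2(\min_{\partial\Omega}f-\eta-f(x_0))>0$. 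The $u_{1,h}^P-\mathsf u_\eta$ and $u_{1,h}^{P^*}-\mathsf u_\eta$ remainders are handled by Cauchy--Schwarz with $\|e^{-f/h}\mathbf 1_{\Omega\setminus\overline{\mathbf{C}}_{\min}(\eta)}\|_{L^2}=O(e^{-(\min_{\partial\Omega}f-\eta)/h})$; crucially, even the weaker $O(h^{1/2})$ bound on $u_{1,h}^{P^*}-\mathsf u_\eta$ still produces exponential decay in the ratio because $\min_{\partial\Omega}f-\eta-f(x_0)>0$. The main technical obstacle is exactly this weaker control on $P_h^*$: the $\div\boldsymbol{\ell}$ bulk term is unavoidable, and the key gain enabling the whole argument is the improvement from $O(1)$ to $O(h^{1/2})$ for $\|(\div\boldsymbol{\ell})\mathsf u_\eta\|_{L^2}$ provided by the identity \eqref{eq.div0}.
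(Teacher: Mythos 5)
Your proof is correct and follows essentially the same route as the paper's: you build the cut-off quasi-mode $\mathsf u_\eta$, show $\|P_h\mathsf u_\eta\|=O(e^{-c/h})$ and $\|P_h^*\mathsf u_\eta\|=O(h^{3/2})$ (the latter isolating the $\div\boldsymbol{\ell}$ term and using $\div\boldsymbol{\ell}(x_0)=0$ from \eqref{eq.div0}, which is also the mechanism behind the paper's bound \eqref{eq.RePh<}), transfer to the principal eigenfunctions via the rank-one Riesz projector and the resolvent bound $O(h^{-1})$ from Theorem~\ref{th:2}, and then deduce \eqref{eq.exLL} by Laplace estimates and Cauchy--Schwarz on the remainders. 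The only cosmetic difference is that you invoke the explicit formula $\pi_h^Pv=\frac{\langle v,u_{1,h}^{P^*}\rangle}{\langle u_{1,h}^P,u_{1,h}^{P^*}\rangle}u_{1,h}^P$ to fix the sign, whereas the paper normalizes $\pi_h^P\mathsf u_\eta$ directly and uses positivity; both arguments are equivalent.
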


\begin{proof}
 Assume  \eqref{ortho}. According to Theorem~\ref{th:2}, $P_h$ admits precisely $\mathsf m_0$
 eigenvalues    in $\{\Re \mathsf z\le c_2h\}$, where we recall that $\mathsf m_0$ is the number of local minima of $f$ in $\Omega$ (see \eqref{eq.m0}), and these $\mathsf m_0$ eigenvalues  are exponentially small. When in addition \eqref{well} holds,
$\mathsf U_0=\{x_0\}$ and then
  $\mathsf m_0=1$. Thus, $\lambda_{1,h}^P$ is the unique eigenvalue of $P_h$  in $\{\Re \mathsf z\le c_2h\}$ and $\pi_h^P$ (see \eqref{pih}) has rank~$1$. Notice that the same holds for $\pi_h^{P^*}$.

 In what follows, we assume \eqref{ortho} and   \eqref{well}.  
 \medskip
 
 \noindent
 \textbf{Step 1.  Proof of  \eqref{eq.appro}.} Laplace's method provides  (since $\chi_\eta=1$ in a neighborhood of $x_0$ which is, according to \eqref{well}, the unique   global minimum point of $f$ in $\overline \Omega$):
 \begin{equation}\label{eq.Nchi}
 \Vert \chi_\eta  e^{-\frac fh}\Vert_{L^2(\Omega)}=(\pi\, h)^{\frac d4} \, \big(  \det\Hess  f(x_0)\big )^{-\frac 14}          \ e^{-\frac {f(x_{0})}h }  (1+O(h) ).
  \end{equation}
 Since $P_{h}=\Delta_{f,h}+2\boldsymbol{\ell}\cdot \nabla_{f,h}=(-h\div +\nabla f \cdot)\nabla_{f,h}+2\boldsymbol{\ell}\cdot \nabla_{f,h}$ with $\nabla_{f,h}= h\, e^{-\frac f h}\nabla e^{\frac f h}$, the function $P_{h}\mathsf u_\eta$ is supported in $\supp\nabla \mathsf \chi_\eta$, where 
 $f-f(x_{0})$ is larger than $\min_{\pa \Omega }f-f(x_{0})-\eta>0$. Hence, following the reasoning used to prove~\eqref{eq.Delta-expo},
there exists $c>0$ such that, for every $h>0$ small enough:
 \begin{equation}\label{eq.Est-Ph}  
 \Vert P_h \mathsf u_\eta  \Vert_{L^2(\Omega)}\le e^{-\frac ch}.
\end{equation} 
Since moreover  $P^{*}_{h}= 2 \Re(P_{h}) -P_h$, 
\eqref{eq.Est-Ph} and \eqref{eq.RePh<}
imply that, in the limit $h\to0$:
\begin{equation}\label{eq.Est-Ph'}  
 \Vert P^{*}_h \mathsf u_\eta  \Vert_{L^2(\Omega)}= O(h^{\frac32}).
\end{equation}

On the other hand, since $\mathsf u_\eta \in D(P_h)$,
following the argument leading to \eqref{res<=},
 the relation
 \eqref{pih} and the resolvent estimate of Theorem~\ref{th:2} imply
 the existence of $C>0$ 
such that, when $h\to0$,
\begin{equation}
\label{eq.1-Pi}
 \Vert  (1-\pi_h^P) \mathsf u_\eta \Vert_{L^2(\Omega)}\le  C h^{-1}  \Vert P_h\mathsf u_\eta   \Vert_{L^2(\Omega)}.
 \end{equation}
Consequently,  using also \eqref{eq.Est-Ph}, 
there exists $c>0$ such that, for every $h>0$ small enough:
$$
 \pi_h^P\mathsf u_\eta= \mathsf u_\eta +O(  e^{-\frac ch}) \text{ in }  L^2(\Omega). 
$$
In particular,  
 $\Vert \pi_h^P\mathsf u_\eta\Vert _{L^2(\Omega)}= 1+O(e^{-\frac ch})$ for all $h$ small enough and, since  $\pi_h^P$  has rank $1$,  $\mathsf u_\eta\geq 0$, and $u_{1,h}^P,u_{1,h}^{P^*}>0$ in $\Omega$, it  holds:
\begin{equation}\label{eq.Pr1} 
u_{1,h}^P=+\frac{  \pi_h^P\mathsf u_\eta}{\Vert \pi_h^P\mathsf u_\eta\Vert _{L^2(\Omega)}}=\frac{  \pi_h^P\mathsf u_\eta}{1+O(e^{-\frac ch})}=\mathsf u_\eta +O(  e^{-\frac ch}) \ \ \text{ in }  L^2(\Omega).
\end{equation}
Similarly, using the resolvent estimate of Theorem~\ref{th:2} for $P_h^*$ 
together with \eqref{eq.Est-Ph'}, we deduce that,
when $h\to0$, 
 $\pi_h^{P^*}\mathsf u_\eta= \mathsf u_\eta +O(  h^{\frac12})$
 and
\begin{equation}\label{eq.Pr2} 
u_{1,h}^{P^*}=+\frac{  \pi_h^{P^*}\mathsf u_\eta}{\Vert \pi_h^{P^*}\mathsf u_\eta\Vert _{L^2(\Omega)}}=\frac{  \pi_h^{P^*}\mathsf u_\eta}{1+O(h^{\frac12})}= \mathsf u_\eta +O(h^{\frac12}) \ \ \text{ in }  L^2(\Omega). 
\end{equation}
 This ends the proof of \eqref{eq.appro}.  
\medskip

 \noindent
 \textbf{Step 2. Proof of \eqref{eq.exLL}.}  
According to \eqref{eq.appro},  we have:
\begin{align*}
\int_{\Omega} u_{1,h}^{P^*}e^{-\frac f h}= \int_{\Omega} \mathsf u_\eta \, e^{- \frac  fh } + O(h^{\frac12})\|e^{-\frac fh}\|_{L^{2}(\Omega)}  &=  \frac{\int_{\Omega}\chi_\eta\, e^{- \frac 2h f }}{\Vert \chi_\eta  e^{-\frac fh}\Vert_{L^2(\Omega)} } +  O(h^{\frac12})\|e^{-\frac fh}\|_{L^{2}(\Omega)}.
\end{align*}
Hence, using  Laplace's method as we did to get \eqref{eq.Nchi}, we have when $h\to 0$:
$$
\int_{\Omega} u_{1,h}^{P^*}e^{-\frac f h}
 = (\pi\, h)^{\frac d4}    \big(  \det\Hess  f(x_0)\big )^{-\frac 14}          \ e^{-\frac {f(x_0)} h} \big(1+ O(h^{\frac12})\big).
$$
Thus, since $f-f(x_{0})\ge \min_{  \pa \Omega}f -f(x_{0})   -\eta>0$ on $ \Omega\setminus  \overline{\mathbf{C}}_{\text{min}}(\eta)$, there exists $c>0$ such that, for every $h$ small enough: 
\begin{align*}
  \frac{\int_{\Omega\setminus  \overline{\mathbf{C}}_{\text{min}}(\eta)}    u_{1,h}^{P^*}  \, e^{-\frac f h}}{\int_{\Omega} u_{1,h}^{P^*}e^{-\frac f h}}  &=  \frac{\int_{\Omega\setminus  \overline{\mathbf{C}}_{\text{min}}(\eta)}    \chi_{\eta} \, e^{-\frac 2h f}}{\Vert \chi_{\eta}  e^{-\frac fh}\Vert_{L^2(\Omega)}\int_{\Omega} u_{1,h}^{P^*}e^{-\frac f h}} +O(  h^{\frac12}) \frac{\Vert e^{-\frac  fh}\Vert_{L^{2}(\Omega\setminus  \overline{\mathbf{C}}_{\text{min}}(\eta))}}{\int_{\Omega} u_{1,h}^{P^*}e^{-\frac f h}}   
  =O  ( e^{-\frac ch} ),
\end{align*}
which proves  \eqref{eq.exLL} for $u_{1,h}^{P^*}$. The proof for  $u_{1,h}^{P}$ is analogous.
\end{proof}

\subsection{Proof of Theorem~\ref{th:main}}
 \label{eq.secEqx}

 Assume \eqref{ortho} and   \eqref{well}.    
We recall that a quasi-stationary distribution   
for the  process~\eqref{eq.langevin} 
 in $\Omega$ is a probability measure  $\mu_h$ on $\Omega$ such that,  for any time $t\ge0$ and any Borel set $A\subset \Omega$,   $\mathbb P_{\mu_h}(X_t\in A\,  | \,   t<\tau_{\Omega^c}) =\mu_h(A)$. 
Let us now introduce the following probability distribution on $\Omega$ (see Proposition~\ref{pr.spectre}):
$$
 \nu_h(dx)=\frac{u_{1,h}^{P^*}e^{-\frac f h}}{\int_{\Omega}u_{1,h}^{P^*}e^{-\frac f h}}\ dx.
$$
Using the   smoothness of the killed semigroup $P_tf(x)=\mathbb E_x[f(X_t)1_{t<\tau_{\Omega^c}}]$ (summarized e.g. in~\cite[Section 2.1]{ramilarxiv1}) and similar computations as those used in the proof of \cite[Proposition 2.2]{le2012mathematical}, one deduces that   $\nu_h$ is a quasi-stationary distribution\footnote{Even if the uniqueness of $\nu_h$ is not required here, we mention that for elliptic processes with smooth coefficients and when   $\Omega$ is a smooth bounded domain, it is well-known  that the quasi-stationary distribution in $\Omega$ is unique, see e.g.~\cite{gong1988killed,champagnat2018criteria,pinsky-85,guillinqsd}. } 
 for the process \eqref{eq.langevin} in $\Omega$ and that, when $X_0$ is initially distributed according to the  measure $\nu_h$, it holds:
 \begin{equation}\label{eq.expo-law}
 \tau_{\Omega^c}\sim \mathcal E(\lambda_{1,h}^L), \text{ where we recall that } \lambda_{1,h}^L=\frac{ \lambda_{1,h}^{P}}{2h},
  \end{equation}
  and where $\mathcal E(\lambda_{1,h}^L)$ stands for the exponential law of parameter $\lambda_{1,h}^L$. 
  \medskip
  
  \noindent
  \textbf{Step 1.  Proof of  \eqref{eq.info}.}
  Note that the first statement of \eqref{eq.info} has already been proved at the very beginning of the proof of 
  Proposition~\ref{pr.approximation0}.
Moreover, according to~\eqref{eq.expo-law}, it holds,
 \begin{equation}\label{eq.Starting-Point}
 \frac{1}{\lambda_{1,h}^L}=\mathbb E_{\nu_h}[\tau_{\Omega^c}]=\int_{\Omega} \mathbb E_{x}[\tau_{\Omega^c}]\, \nu_h(dx)=\frac{\int_\Omega  \mathbb E_{x}[\tau_{\Omega^c}] u_{1,h}^{P^*}  \, e^{-\frac f h}}{\int_{\Omega} u_{1,h}^{P^*} e^{-\frac f h}}.
 \end{equation}  
Take now $\eta_0\in (0,\min_{\pa \Omega}f-f(x_0))$ and recall that
$\mathbf{C}_{\text{min}}(\eta_0) = \mathbf{C}_{\text{min}}\cap \{f<\min_{\pa \Omega}f-\eta_0\}$
(see~\eqref{eq.Ceta}).
 One then has: 
 \begin{align}\label{eq.Starting-Point2}
 \frac{1}{\lambda_{1,h}^L}&=\frac{\int_{\Omega\setminus \overline{\mathbf{C}}_{\text{min}}(\eta_0)}   \mathbb E_{x}[\tau_{\Omega^c}]  u_{1,h}^{P^*}(x)  \, e^{-\frac {f(x)}h}dx}{\int_{\Omega} u_{1,h}^{P^*}e^{-\frac f h }}+\frac{\int_{ \overline{\mathbf{C}}_{\text{min}}(\eta_0)}  \mathbb E_{x}[\tau_{\Omega^c}]  u_{1,h}^{P^*}(x) \, e^{-\frac {f(x)} h}dx}{\int_{\Omega}u_{1,h}^{P^*}e^{-\frac f h }}\\
 \nonumber
 &\geq \frac{\int_{ \overline{\mathbf{C}}_{\text{min}}(\eta_0)}   \mathbb E_{x}[\tau_{\Omega^c}]  u_{1,h}^{P^*}  \, e^{-\frac f h}}{\int_{\Omega} u_{1,h}^{P^*}e^{-\frac f h }}.
 \end{align}
Moreover, Theorem~\ref{th:leveling} with $K=\overline{\mathbf{C}}_{\text{min}}(\eta_0)$ ($\subset \mathcal A(\{x_0\})$) implies that
for some $c>0$ and every $h>0$ small enough: 
  $$\frac{\int_{ \overline{\mathbf{C}}_{\text{min}}(\eta_0)}   \mathbb E_{x}[\tau_{\Omega^c}]  u_{1,h}^{P^*}  \, e^{-\frac f h}}{\int_{\Omega} u_{1,h}^{P^*}e^{-\frac f h }}= \frac{\int_{ \overline{\mathbf{C}}_{\text{min}}(\eta_0)}      u_{1,h}^{P^*}  \, e^{-\frac f h}}{\int_{\Omega} u_{1,h}^{P^*}e^{-\frac f h }} \times  \mathbb E_{x_0}[\tau_{\Omega^c}](1+O(e^{-\frac ch})).$$ 
 Then, using in addition \eqref{eq.exLL} and taking $c>0$ smaller if necessary, we have when $h\to0$:
\begin{equation}
\label{eq.Starting-Point3}
 \frac{1}{\lambda_{1,h}^L}
\geq \frac{\int_{ \overline{\mathbf{C}}_{\text{min}}(\eta_0)}   \mathbb E_{x}[\tau_{\Omega^c}]  u_{1,h}^{P^*}  \, e^{-\frac f h}}{\int_{\Omega} u_{1,h}^{P^*}e^{-\frac f h }}
=
\mathbb E_{x_0}[\tau_{\Omega^c}](1+O(e^{-\frac ch})),
\end{equation}
which leads, applying  again Theorem~\ref{th:leveling}, to
$$\limsup_{h\to0} \,h \ln \lambda_{1,h}^L\  \leq\   -\lim_{h\to0} \,h \ln \mathbb E_{x_0}[\tau_{\Omega^c}] \ =\  -2(\min_{\pa \Omega}f-f(x_0)).$$

Finally, the fact that 
$$\liminf_{h\to0} \,h \ln \lambda_{1,h}^L\  \geq\  -2(\min_{\pa \Omega}f-f(x_0))$$
is a direct consequence
Proposition~\ref{pr.upper-bound} together with 
 the inequality $\lambda^L_{1,h}\sup_{x\in \overline \Omega} \mathbb E_{x}[\tau_{\Omega^c}]\ge 1$. This standard inequality can be derived  as follows. Define the smooth function $g:x\in \overline \Omega \mapsto v_h-\lambda^L_{1,h}  \mathbb E_{x}[\tau_{\Omega^c}]$, where $v_h$ is the principal eigenvalue of $L_h$ satisfying $v_h>0$ in $\Omega$ and  $\sup_{\overline \Omega} v_h=1$. It then holds $L_hg= \lambda^L_{1,h}(v_h-1)\le 0$. Hence, according to the weak maximum principle~\cite[Theorem 1 in Section 6.4.1]{Eva}, we have $g\le 0$ on $\overline \Omega$ and thus the announced inequality.
\medskip
  
\noindent
\textbf{Step 2. Proof of  \eqref{eq.ex}.}  
Injecting the equality in \eqref{eq.Starting-Point3} into the relation \eqref{eq.Starting-Point2} leads to
the existence of $c>0$ such that, for every $h>0$ small enough,
\begin{equation}
\label{eq.Starting-Point4}
\frac{1}{\lambda_{1,h}^L}=\frac{\int_{\Omega\setminus \overline{\mathbf{C}}_{\text{min}}(\eta_0)}   \mathbb E_{x}[\tau_{\Omega^c}]  u_{1,h}^{P^*}(x)  \, e^{-\frac {f(x)}h}dx}{\int_{\Omega} u_{1,h}^{P^*}e^{-\frac f h }}+\mathbb E_{x_0}[\tau_{\Omega^c}](1+O(e^{-\frac ch})).
\end{equation}
Moreover, it follows from \eqref{eq.info}, Proposition~\ref{pr.upper-bound}, and \eqref{eq.exLL}    that for some $c>0$
and every $h>0$ small enough,
\begin{equation*}
\label{eq.LL-1}
\lambda_{1,h}^L \, \frac{\int_{\Omega\setminus  \overline{\mathbf{C}}_{\text{min}}(\eta_0)}    \mathbb E_{x}[\tau_{\Omega^c}] \,  u_{1,h}^{P^*}(x)  \, e^{-\frac {f(x)} h}dx}{\int_{\Omega} u_{1,h}^{P^*}e^{-\frac f h}} \le   e^{-\frac ch},
\end{equation*}
  Plugging this estimate  into \eqref{eq.Starting-Point4} leads to 
  $ 1+O(e^{-\frac ch})=\lambda_{1,h}^L\mathbb E_{x_0}[\tau_{\Omega^c}](1+O(e^{-\frac ch}))$ when $h\to 0$.
   Together with Theorem~\ref{th:leveling}, this proves  \eqref{eq.ex}.  
  \medskip

  \noindent
  \textbf{Step 3. Proof of  \eqref{eq.loiP}.}   Set  $m_h= e^{\frac 2h(  \min_{\partial \Omega}f -f(x_0) - \frac{\eta_0} 2)}$.    Consider    a compact subset
 $ K$  
   of $\mathcal A(\{x_0\})$
  and  $\eta_*\in (\eta_0,\min_{\pa \Omega}f-f(x_0))$, so that 
  $\overline{\mathbf{C}}_{\text{min}}(\eta_*) \subset \mathbf{C}_{\text{min}}(\eta_0)$ and $ \overline{\mathbf{C}}_{\text{min}}(\eta_*)\subset \mathcal A(\{x_0\})$ (see \eqref{eq.eta0}).
 We claim  that,  for all $x\in  K$, $y\in \overline{\mathbf{C}}_{\text{min}}(\eta_*)$, and  all $u>0$:
  \begin{equation}
 \label{eq.L3-galves}
 \mathbb P_x[\tau_{\Omega^c}>u]\le \mathbb P_{y}[\tau_{\Omega^c}>u -2m_h] +  R_1  \text{ and } \mathbb P_x[\tau_{\Omega^c}>u]\ge \mathbb P_{y}[\tau_{\Omega^c}>u +m_h] + R_2,  
 \end{equation}
 where,  for $j\in \{1,2\}$, $R_j$ is independent of  $u> 0$ and of $x,y$,  and  satisfies, for some $c>0$ and  every $h$ small enough: $\vert   R_j\vert \le   e^{-\frac ch}$.

  To prove~\eqref{eq.L3-galves}, we first consider the case  when $ K=  \overline{\mathbf{C}}_{\text{min}}(\eta_*)$. 
  Using \eqref{eq.WFE} and the Markov inequality, there exists $c>0$ such that for every $h$ small enough:
 \begin{equation}\label{eq.CHE}
  \sup_{x\in   \overline{\mathbf{C}}_{\text{min}}(\eta_*)  } \,    \mathbb P_x[\tau_{\mathbf{C}^c_{\text{min}}(\eta_0)}> m_h ]    \le e^{-\frac{c}{  h}}.
  \end{equation}
  Recall   that for $x'\in   {\mathbf{C}}_{\text{min}}(\eta_0)$,  $\mu^h_{x'}$ denotes the hitting distribution on $\pa     {\mathbf{C}}_{\text{min}}(\eta_0)$ for the process \eqref{eq.langevin} when $X_0=x'$  (see \eqref{eq.Law1}) and   $\Vert \mu^h_{x'}-\mu^h_{y'}\Vert\le e^{-\frac ch}$ uniformly in $x',y'\in  \overline{\mathbf{C}}_{\text{min}}(\eta_*)$.  
We then have for all $u'>0$, $v>0$, and $x',y'\in  \overline{\mathbf{C}}_{\text{min}}(\eta_*)$, using the strong Markov property, 
\begin{align}
\nonumber
\mathbb P_{x'}[\tau_{\Omega^c}>u']\ge  \mathbb P_{x'}[\tau_{\Omega^c}>u'+ \tau_{  {\mathbf{C}}^c_{\text{min}}(\eta_0)} ] &= \int \mathbb P_z[\tau_{\Omega^c}>u'] \mu^h_{x'}(dz) \\
\nonumber
&\ge \int \mathbb P_z[\tau_{\Omega^c}>u'] \mu^h_{y'}(dz)- \Vert \mu^h_{x'}-\mu^h_{y'}\Vert\\
\nonumber
&= \mathbb P_{y'}[\tau_{\Omega^c}>u'+ \tau_{ {\mathbf{C}}^c_{\text{min}}(\eta_0)} ]- \Vert\mu^h_{x'}-\mu^h_{y'}\Vert\\
\label{eq.CHE2}
&\ge \mathbb P_{y'}[\tau_{\Omega^c}>u'+v] - \mathbb P_{y'}[ \tau_{  {\mathbf{C}}^c_{\text{min}}(\eta_0) }>v] - \Vert \mu^h_{x'}-\mu^h_{y'}\Vert.
\end{align}
Let $u,v>0$ and $x,y \in \overline{\mathbf{C}}_{\text{min}}(\eta_*)$. If $u-m_h\le  0$, $ \mathbb P_x[\tau_{\Omega^c}>u]\le 1=\mathbb P_{y}[\tau_{\Omega^c}>u -m_h]$. In addition, using \eqref{eq.CHE} and \eqref{eq.CHE2} with $(x',y',u',v)=(x,y,u,m_h)$ and  also with $(x',y',u',v)=(y,x,u-m_h,m_h)$ (when $u-m_h> 0$), we deduce that for all $x,y\in   \overline{\mathbf{C}}_{\text{min}}(\eta_*)$ and  all $u>0$:
 \begin{align}
 \label{eq.L3-galves-2}
 \mathbb P_x[\tau_{\Omega^c}>u]\le \mathbb P_{y}[\tau_{\Omega^c}>u -m_h] +  r_1  \text{ and } \mathbb P_x[\tau_{\Omega^c}>u]\ge \mathbb P_{y}[\tau_{\Omega^c}>u +m_h] + r_2,  
 \end{align}
 where,  for $j\in \{1,2\}$, $r_j$ is independent of  $u>0$ and $x,y\in  \overline{\mathbf{C}}_{\text{min}}(\eta_*)$,  and satisfies $\vert   r_j\vert \le   e^{-\frac ch}$ for some  $c>0$ independent of $h$. Notice that \eqref{eq.L3-galves-2} implies~\eqref{eq.L3-galves}  when $ K=  \overline{\mathbf{C}}_{\text{min}}(\eta_*)$. 
 Let us mention that the proof of \eqref{eq.L3-galves-2} is inspired by the one of~\cite[Lemma 3]{galves-olivieri-vares-87}. 
 
  Let us now prove~\eqref{eq.L3-galves} for an arbitrary $K\subset   \mathcal A(\{x_0\})$. Take such a $K$ and consider $T_K \ge 0$ as in the proof of Theorem \ref{th:leveling}. We have for every $x\in K$ and $y\in \overline{\mathbf{C}}_{\text{min}}(\eta_*)$, using the Markov property,~\eqref{eq.condMVD},  \eqref{eq.condOmega}, and the second inequality in \eqref{eq.L3-galves-2},
  \begin{align*}
  \mathbb P_x[\tau_{\Omega^c}>u] &\ge \mathbb P_x[\tau_{\Omega^c}>u+T_K] \\
  &\ge \mathbb P_x\Big[\tau_{\Omega^c}>u+T_K, \, \sup_{t\in [0,T_K]}|X_t-\varphi_t(x)|\le \delta\Big ]\\
  &=  \mathbb E_x\big [ \mathbb P_{X_{T_K}}[\tau_{\Omega^c}>u]  \, \mathbf 1_{\sup_{t\in [0,T_K]}|X_t-\varphi_t(x)|\le \delta}  \big]\\
  &\ge \big(\mathbb P_{y}[\tau_{\Omega^c}>u+m_h] +r_2\big)(1+O(e^{-\frac ch})).
  \end{align*}
  This proves the second inequality in \eqref{eq.L3-galves}. 
  Now let $h>0$ be small enough so that $m_h>T_K$. Then, using the Markov property,~\eqref{eq.condMVD},  \eqref{eq.condOmega}, and the first inequality in \eqref{eq.L3-galves-2},  it holds for all $u'>0$,  $x\in K$, and $y\in \overline{\mathbf{C}}_{\text{min}}(\eta_*)$:
  \begin{align*}
  \mathbb P_x[\tau_{\Omega^c}>u'+m_h]&\le   \mathbb P_x[\tau_{\Omega^c}>u'+T_K]\\
  &=  \mathbb P_x\Big[\tau_{\Omega^c}>u'+T_K,\sup_{t\in [0,T_K]}|X_t-\varphi_t(x)|\le \delta\Big]\\
  &\quad +\mathbb P_x\Big[\tau_{\Omega^c}>u'+T_K, \sup_{t\in [0,T_K]}|X_t-\varphi_t(x)|> \delta\Big]    \\
  &=\mathbb P_x\Big[\tau_{\Omega^c}>u'+T_K,\sup_{t\in [0,T_K]}|X_t-\varphi_t(x)|\le \delta\Big] +O(e^{-\frac ch})\\
  &=    \mathbb E_x\big [ \mathbb P_{X_{T_K}}[\tau_{\Omega^c}>u']\, \mathbf 1_{\sup_{t\in [0,T_K]}|X_t-\varphi_t(x)|\le \delta}    \big] +O(e^{-\frac ch}) \\
  &\le \big(\mathbb P_{y}[\tau_{\Omega^c}>u'-m_h] +r_1\big)(1+O(e^{-\frac ch}))+ O(e^{-\frac ch}) . 
  \end{align*}
Pick $u>0$. Then, the first inequality in~\eqref{eq.L3-galves} is a consequence of the previous inequality when $u-2m_h>0$ (use it with $u'=u-m_h>0$) and of the fact that when   $u-2m_h\le  0$, $ \mathbb P_x[\tau_{\Omega^c}>u]\le 1=\mathbb P_{y}[\tau_{\Omega^c}>u -2m_h]$. 
 This concludes the proof of~\eqref{eq.L3-galves}.   
 
 We are now in position to prove Equation~\eqref{eq.loiP}. 
  According to~\eqref{eq.expo-law}, it holds for all $s\in\mathbb R$, 
\begin{equation}
\label{eq.GT}
 \mathbb P_{\nu_h}[\tau_{\Omega^c}>s]=e^{-\lambda_{1,h}^L\max(s,0)},
   \end{equation}
and, according to~\eqref{eq.exLL}, there exists $c>0$ such that  for all $h$ small enough and for all $s\in \mathbb R$:
\begin{align}
\nonumber
\mathbb P_{\nu_h}[\tau_{\Omega^c}>s]   &=\frac{\int_{ \Omega}  \mathbb P_{y}[\tau_{\Omega^c}>s]\, u_{1,h}^{P^*}(y) \, e^{-\frac{1}{h}f(y)} dy }{\int_{\Omega}u_{1,h}^{P^*}e^{-\frac f h }} \\
 \label{eq.Loi-o}
 &=    \frac{\int_{  \overline{\mathbf{C}}_{\text{min}}(\eta_*)} \!\!\!  \mathbb P_{y}[\tau_{\Omega^c}>s]\, u_{1,h}^{P^*}(y)\, e^{-\frac{1}{h}f(y)} dy }{\int_{\Omega}u_{1,h}^{P^*}e^{-\frac f h }}+     O(e^{-\frac ch}).
\end{align}
Moreover, from~\eqref{eq.info}, there exists $c>0$ such that  for every $h$ small enough:
$$\lambda_{1,h}^L m_h\le e^{-\frac ch}.$$
Consider $t> 0$ and $x\in K\subset \mathcal A(\{x_0\})$.
Taking  $s= {t}/{\lambda_{1,h}^L}  + m_h>0$ in~\eqref{eq.GT} and using~\eqref{eq.L3-galves}  and \eqref{eq.Loi-o},
there exists $h_{0}>0$ which does not depend on $t> 0$ and on $x\in K$
such that, taking $c>0$ smaller if necessary (but not depending on $t> 0$ and on $x\in K$),  it holds for every $h\in(0,h_{0}]$:
\begin{align*}
  \mathbb P_{x}[\tau_{\Omega^c}> {t}/{\lambda_{1,h}^L}]  \ge e^{-\lambda_{1,h}^L  ({t}/{\lambda_{1,h}^L}+ m_h )} - e^{-\frac ch}
   \,\ \text{and then}\,\  \mathbb P_{x}[\tau_{\Omega^c}> {t}/{\lambda_{1,h}^L}] - e^{-t} \ge -\lambda_{1,h}^L m_h -e^{-\frac ch}
  \ge -2e^{-\frac ch}.
\end{align*}
Similarly, taking now
$s= {t}/{\lambda_{1,h}^L}  - 2m_h$ and $h_{0}>0$ smaller if necessary (but not depending on $t> 0$ and on $x\in K$), it holds for every $h\in(0,h_{0}]$:
\begin{align*}
  \mathbb P_{x}[\tau_{\Omega^c}> {t}/{\lambda_{1,h}^L}]  - e^{-t}\le e^{-\lambda_{1,h}^L \max({t}/\lambda_{1,h}^L- 2m_h,0  )}
  + e^{-\frac ch} - e^{-t}
& \leq \begin{cases} 3 \lambda_{1,h}^L m_h +e^{-\frac ch}  & \text{if $ {t}>2{\lambda_{1,h}^L}m_h$}\\
 t+ e^{-\frac ch}  & \text{if $ {t}\leq 2{\lambda_{1,h}^L}m_h$}\end{cases}  \\
 & \leq 4 e^{-\frac ch} \,.
\end{align*}
Hence, for every compact $K\subset \mathcal A(\{x_0\})$, there exists $c>0$ and  $h_0>0$ such that for all $h\in (0,h_0]$:
$$
\sup_{x\in K,t\in\mathbb R^{+}} \vert \mathbb P_{x}[\tau_{\Omega^c}> {t}/{\lambda_{1,h}^L}] - e^{-t}\vert \le  e^{-\frac ch}, 
$$
which completes the proof of \eqref{eq.loiP}.

\section{Proof of Theorem~\ref{th:main2}}
\label{sec.VP}

In this last section, we prove Theorem~\ref{th:main2}. More precisely, we prove the following equivalent result 
on the principal eigenvalue $\lambda_{1,h}^P$  of  $P_h$ (see \eqref{eq.unitary} and the lines below, and~Proposition~\ref{pr.spectre}). 

\begin{theorem}\label{th:VP}
Assume  \eqref{ortho}, \eqref{well},  \eqref{div}, and \eqref{normal}. 
Then, the principal eigenvalue $\lambda_{1,h}^P$  of  $P_h$  satisfies, when  $h\to 0$:
$$
\lambda_{1,h}^P=\big(\kappa^P_1 \,h^{\frac12}  +  \kappa^P_2\, h + O(h^{\frac54})\big)   \, e^{-\frac 2h (\min_{\pa \Omega}f -f(x_0))},
$$ 
 where   $\kappa^P_1=2\kappa^L_1$ and $\kappa^P_2=2\kappa^L_2$ (see \eqref{eq.Pre1}),
and the error term $O(h^{\frac54})$ is actually of order $O(h^{\frac32})$ when $\kappa^P_1=0$ or $\kappa^P_2=0$, i.e. when $\nabla f(z)= 0$ for every $z\in \pa \mathbf{C}_{{\rm min}} \cap \pa \Omega$
or $\nabla f(z)\neq 0$ for every $z\in \pa \mathbf{C}_{{\rm min}} \cap \pa \Omega$.
   \end{theorem}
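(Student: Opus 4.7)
The proof constructs very precise quasi-modes for $P_h$ and its adjoint $P_h^*$ and extracts $\lambda_{1,h}^P$ by Laplace asymptotics from a suitable sesquilinear pairing. Concretely, I would build $\widetilde u_h\in D(P_h)$ approximating the principal eigenfunction $u_{1,h}^P$ and a dual quasi-mode $\widetilde v_h\in D(P_h^*)$ approximating $u_{1,h}^{P^*}$, with $\langle \widetilde u_h,\widetilde v_h\rangle$ non-degenerate by Proposition~\ref{pr.approximation0}. The key identity is then
\[
\lambda_{1,h}^P \ =\ \frac{\langle P_h \widetilde u_h,\widetilde v_h\rangle_{L^2(\Omega)}}{\langle \widetilde u_h,\widetilde v_h\rangle_{L^2(\Omega)}} \ +\ (\text{smaller correction}),
\]
the correction being made quantitative via the residual $P_h\widetilde u_h -\widetilde\lambda_h\widetilde u_h$ together with the resolvent estimate of Theorem~\ref{th:2} and a standard projection argument based on the rank-one character of the spectral projector of $P_h$ associated with $\lambda_{1,h}^P$.

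The construction is $\widetilde u_h(x)=\Phi_h(x)\,e^{-f(x)/h}$, where $\Phi_h$ equals a normalizing constant in a slight shrinking of $\mathbf{C}_{{\rm min}}$, vanishes on $\pa\Omega$, and interpolates smoothly in a thin boundary layer near each $z\in\pa \mathbf{C}_{{\rm min}}\cap\pa\Omega$. The geometry dictated by \eqref{normal} governs how each layer is built. At a regular boundary point ($\nabla f(z)\neq 0$), $\Phi_h$ is a one-dimensional cut-off in the outward normal variable; its Laplace evaluation over the $(d-1)$-dimensional surface $\pa\Omega$ will produce the surface-type term responsible for $\kappa_1^P$, with prefactor $\pa_{n_\Omega}f(z)/\sqrt{\det\Hess(f|_{\pa\Omega})(z)}$. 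At a boundary saddle point ($\nabla f(z)=0$), \eqref{normal} ensures $\xi(z)\in{\rm Span}(n_\Omega(z))$, so that $\pa\Omega$ is tangent to $\xi(z)^\perp$ at $z$; in local coordinates adapted to this structure I would take $\Phi_h$ to involve a rescaled error function of the signed distance to $\pa\Omega$, measured as $\xi(z)\cdot(x-z)$, with transition scale $\sqrt{h/|\mu(z)|}$. The specific scaling $\sqrt{h/|\mu(z)|}$ is precisely the one that makes the leading-order contributions of $\Delta_{f,h}\widetilde u_h$ and of $2h\,\boldsymbol{\ell}\cdot\nabla \widetilde u_h$ cancel in $P_h\widetilde u_h$ near $z$; this is the mechanism by which the non-symmetric drift $\boldsymbol{\ell}$ encodes itself into the final prefactor through $\mu(z)$ rather than through the negative Hessian eigenvalue $\lambda(z)$. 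The dual quasi-mode $\widetilde v_h$ is built via the same recipe with $\boldsymbol{\ell}$ replaced by $-\boldsymbol{\ell}$, as befits $P_h^*=\Delta_{f,h}-2\boldsymbol{\ell}\cdot\nabla_{f,h}$ under~\eqref{div}.

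The Laplace evaluation then proceeds by localization. The denominator $\langle\widetilde u_h,\widetilde v_h\rangle$ concentrates at the unique global minimum $x_0$ of $f$ on $\overline\Omega$ (see \eqref{eq.global}), yielding the normalizing factor $(\pi h)^{d/2}(\det\Hess f(x_0))^{-1/2}e^{-2f(x_0)/h}\bigl(1+O(h)\bigr)$ as in \eqref{eq.Nchi}. The numerator $\langle P_h\widetilde u_h,\widetilde v_h\rangle$ localises at $\pa\mathbf{C}_{{\rm min}}\cap\pa\Omega$, which is finite by \eqref{eq.Ccons}, and decomposes as a sum, one summand per $z$, of regular-type contributions of order $h^{1/2}e^{-2\min_{\pa\Omega}f/h}$ (yielding $\kappa_1^P\,h^{1/2}e^{-2(\min_{\pa\Omega}f-f(x_0))/h}$ after division) and of saddle-type contributions of order $h\,e^{-2\min_{\pa\Omega}f/h}$ (yielding $\kappa_2^P\,h\,e^{-2(\min_{\pa\Omega}f-f(x_0))/h}$).

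The main obstacle is the Laplace computation at boundary saddle points. There, the integral to evaluate is essentially a Gaussian over a half-space in coordinates in which $\Hess f(z)$ is \emph{not} diagonal with respect to the direction $\xi(z)$, since $\xi(z)$ is the negative eigendirection of the non-symmetric matrix $\Hess f(z)+{}^t\mathsf L(z)$ and generically fails to be an eigenvector of $\Hess f(z)$ itself. The device that closes the computation is precisely the algebraic identity of Lemma~\ref{le.lep-michel}(2): the matrix $\Hess f(z)+2|\mu(z)|\,\xi(z)\xi(z)^t$ is positive definite with determinant $-\det\Hess f(z)$. Rewriting the half-space Gaussian integral as a full-space Gaussian against the correctly rescaled profile produces exactly the factor $2|\mu(z)|/\sqrt{|\det\Hess f(z)|}$ appearing in $\kappa_2^P$. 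Finally, the improvement of the error from $O(h^{5/4})$ to $O(h^{3/2})$ when $\kappa_1^P=0$ or $\kappa_2^P=0$ reduces to a bookkeeping argument: the intermediate $O(h^{5/4})$ term arises only from cross contributions between regular and saddle-type summands at the next order of the Laplace expansion, and therefore disappears as soon as one of the two types of points is absent.
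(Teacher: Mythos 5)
You take a genuinely different route from the paper, which uses a \emph{single} quasi-mode $\mathsf f_{1,h}$ tested against itself rather than a dual pair $(\widetilde u_h,\widetilde v_h)$: the paper establishes the three estimates (E1)--(E3) of Proposition~\ref{pr.strategy} for the one function $\mathsf f_{1,h}$, and extracts $\lambda_{1,h}^P$ from the Rayleigh quotient $\langle P_h\mathsf f_{1,h},\mathsf f_{1,h}\rangle$ via the rank-one spectral projector and the resolvent bound of Theorem~\ref{th:2}. The Laplace asymptotics you sketch for the numerator (normal cut-off at non-critical boundary points producing $\kappa_1^P h^{1/2}$; error-function profile in the direction $\xi(z)=\pm n_\Omega(z)$ at boundary saddles producing $\kappa_2^P h$ via Lemma~\ref{le.lep-michel}(2)) do match the paper's construction.

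There is, however, a genuine gap in the dual quasi-mode step. You assert that $\widetilde v_h$ is ``built via the same recipe with $\boldsymbol{\ell}$ replaced by $-\boldsymbol{\ell}$''. At a saddle $z\in\pa\mathbf{C}_{\rm min}\cap\pa\Omega$, that recipe would have the error-function profile run along the negative-eigenvalue eigenvector $\xi^*(z)$ of $\Hess f(z)-{}^t\mathsf L(z)$, since this is the matrix governing the quadratic cancellation in $P_h^*\widetilde v_h=(\Delta_{f,h}-2\boldsymbol{\ell}\cdot\nabla_{f,h})\widetilde v_h$. But writing $\mathsf L(z)=J(z)\Hess f(z)$ with $J(z)$ antisymmetric (see \cite[Lemma 3.1]{LePMi20}), one has $\Hess f(z)\pm{}^t\mathsf L(z)=\Hess f(z)(I\mp J(z))$, so $\xi^*(z)\in{\rm Span}(\xi(z))={\rm Span}(n_\Omega(z))$ if and only if $J(z)\xi(z)=0$, i.e., ${}^t\mathsf L(z)\xi(z)=0$, i.e., $|\mu(z)|=|\lambda(z)|$ by Lemma~\ref{le.lep-michel}(3). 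Generically this fails, and \eqref{normal} gives no control on $\xi^*(z)$. On the other hand, any quasi-mode for $P_h^*$ must lie in $D(P_h^*)=H^2(\Omega)\cap H^1_0(\Omega)$, hence vanish on $\pa\Omega$, which forces its boundary-layer direction to be $n_\Omega(z)$. You therefore cannot simultaneously satisfy the boundary condition and obtain the claimed cancellation in $P_h^*\widetilde v_h$ near $z$. Had the dual construction gone through, the bilinear quotient $\langle P_h\widetilde u_h,\widetilde v_h\rangle/\langle\widetilde u_h,\widetilde v_h\rangle$ would have produced a uniform error $O(h^{3/2})$, strictly better than the $O(h^{5/4})$ of the theorem when $\kappa_1^P\kappa_2^P\neq0$ --- a signal that something is missing. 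The paper circumvents this obstruction by keeping the \emph{same} $\mathsf f_{1,h}$ for the adjoint and accepting the weaker bound (E3) on $\Vert P_h^*\mathsf f_{1,h}\Vert$: in the boundary layer at a saddle, $P_h^*\mathsf f_{1,h}$ is only $O(|v|)$ rather than $O(|v|^2)$, and this asymmetry between (E2) and (E3) is precisely what produces the $O(h^{5/4})$ term through the cross estimate $\Vert P_h\mathsf f_{1,h}\Vert\,\Vert P_h^*\mathsf f_{1,h}\Vert\,O(h^{-1})$ when both types of boundary points are present. Your closing remark attributing the $O(h^{5/4})\to O(h^{3/2})$ improvement to ``cross contributions at the next order of the Laplace expansion'' is therefore also off: the relevant cross term lives in the residual product above, not in a subleading Laplace expansion.
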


\subsection{General strategy}

\noindent
In order to prove Theorem~\ref{th:VP}, we want to construct, for every $h$ small enough, a 
very accurate approximation $\mathsf f_{1,h}$ of the eigenmode $u_{1,h}^{P}$ of  $P_h$.
The next proposition gives conditions ensuring that such an approximation  is sufficiently accurate.

    \begin{proposition}
    \label{pr.strategy}
  Assume   \eqref{ortho} and   \eqref{well}. Assume moreover that, for all $h$ small enough, there exists a $L^2(\Omega)$-normalized function $\mathsf f_{1,h} \in D(P_h)$ such that the following properties hold:
\begin{align}
\label{E1}\tag{E1} &\langle P_h \mathsf f_{1,h},\mathsf f_{1,h} \rangle_{L^2(\Omega)}  =  \big(\kappa^P_1h^{\frac12} \, +  \kappa^P_2 h + O(h^{\frac32})\big)   \, e^{-\frac 2h (\min_{\pa \Omega}f -f(x_0))},\\
\label{E2}\tag{E2}  &\Vert  P_{h} \mathsf f_{1,h}  \Vert _{L^2(\Omega)} ^2= O(h^2)\,   \langle  P_{h} \mathsf f_{1,h},\mathsf f_{1,h} \rangle_{L^2(\Omega)} , \\
\label{E3}\tag{E3}  & \Vert  P_{h}^* \mathsf f_{1,h}  \Vert _{L^2(\Omega)} ^2= 
\big(\kappa^P_1h^{\frac12} \,O(h^{2}) +  \kappa^P_2 h\,O(h) \big)   \, e^{-\frac 2h (\min_{\pa \Omega}f -f(x_0))}.
\end{align}
   Then, the asymptotic equivalent of Theorem~\ref{th:VP} holds, i.e. $$
\lambda_{1,h}^P=\big(\kappa^P_1 \,h^{\frac12} +  \kappa^P_2 \,h + O(h^{\frac54})\big)   \, e^{-\frac 2h (\min_{\pa \Omega}f -f(x_0))}\ \ \ \text{when $h\to 0$,}$$
where the error term $O(h^{\frac54})$ is actually of order $O(h^{\frac32})$ when $\kappa^P_1=0$ or $\kappa^P_2=0$.
   \end{proposition}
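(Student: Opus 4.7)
The plan is to use the rank-one Riesz projector $\pi_{h}^{P}$ defined in \eqref{pih} together with the resolvent bound of Theorem~\ref{th:2} to read $\lambda_{1,h}^P$ off the quadratic quantity furnished by \eqref{E1}. Under \eqref{well} one has $\mathsf m_0=1$, so Theorem~\ref{th:2} ensures that $\pi_{h}^{P}$ is a rank-one projector with range $\mathbb C u_{1,h}^P$ and that $P_h\pi_h^P=\lambda_{1,h}^P\pi_h^P$ on $D(P_h)$. Following the computation leading to \eqref{res<=}, I would write
\begin{equation*}
(1-\pi_h^P)\mathsf f_{1,h}\ =\ -\frac{1}{2i\pi}\int_{|z|=\beta h}z^{-1}(z-P_h)^{-1}P_h\mathsf f_{1,h}\,dz
\end{equation*}
and use $\|(z-P_h)^{-1}\|\le Ch^{-1}$ on this contour to deduce
\begin{equation*}
\|(1-\pi_h^P)\mathsf f_{1,h}\|_{L^2(\Omega)}\ \le\ C'h^{-1}\|P_h\mathsf f_{1,h}\|_{L^2(\Omega)}.
\end{equation*}

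The key step is then the decomposition
\begin{equation*}
\langle P_h\mathsf f_{1,h},\mathsf f_{1,h}\rangle_{L^2(\Omega)}\ =\ \lambda_{1,h}^P\,\langle \pi_h^P\mathsf f_{1,h},\mathsf f_{1,h}\rangle_{L^2(\Omega)}+\langle (1-\pi_h^P)\mathsf f_{1,h},P_h^*\mathsf f_{1,h}\rangle_{L^2(\Omega)},
\end{equation*}
the cross term being controlled by Cauchy--Schwarz. Injecting the leading order $O(h^{1/2})\,e^{-\frac{2}{h}(\min_{\pa\Omega}f-f(x_0))}$ of \eqref{E1} into \eqref{E2} produces $\|P_h\mathsf f_{1,h}\|_{L^2(\Omega)}=O(h^{5/4})\,e^{-\frac{1}{h}(\min_{\pa\Omega}f-f(x_0))}$, hence $\|(1-\pi_h^P)\mathsf f_{1,h}\|_{L^2(\Omega)}=O(h^{1/4})\,e^{-\frac{1}{h}(\min_{\pa\Omega}f-f(x_0))}$ from the resolvent bound. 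Combined with the estimate $\|P_h^*\mathsf f_{1,h}\|_{L^2(\Omega)}=O(h)\,e^{-\frac{1}{h}(\min_{\pa\Omega}f-f(x_0))}$ coming from \eqref{E3}, the cross term is bounded by $O(h^{5/4})\,e^{-\frac{2}{h}(\min_{\pa\Omega}f-f(x_0))}$, while Cauchy--Schwarz gives $\langle\pi_h^P\mathsf f_{1,h},\mathsf f_{1,h}\rangle_{L^2(\Omega)}=1+O(h^{1/4})\,e^{-\frac{1}{h}(\min_{\pa\Omega}f-f(x_0))}$. Solving for $\lambda_{1,h}^P$ yields the announced expansion with remainder $O(h^{5/4})$.

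The degenerate improvement to $O(h^{3/2})$ when $\kappa^{P}_{1}=0$ or $\kappa^{P}_{2}=0$ is obtained by rerunning the same bookkeeping with the sharper leading order of \eqref{E1}. If $\kappa^{P}_{1}=0$, then \eqref{E2} yields $\|(1-\pi_h^P)\mathsf f_{1,h}\|_{L^2(\Omega)}=O(h^{1/2})\,e^{-\frac{1}{h}(\min_{\pa\Omega}f-f(x_0))}$ while \eqref{E3} gives $\|P_h^*\mathsf f_{1,h}\|_{L^2(\Omega)}=O(h)\,e^{-\frac{1}{h}(\min_{\pa\Omega}f-f(x_0))}$; if $\kappa^{P}_{2}=0$, the same two quantities become $O(h^{1/4})\,e^{-\frac{1}{h}(\min_{\pa\Omega}f-f(x_0))}$ and $O(h^{5/4})\,e^{-\frac{1}{h}(\min_{\pa\Omega}f-f(x_0))}$ respectively. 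In both degenerate cases Cauchy--Schwarz delivers a cross term of size $O(h^{3/2})\,e^{-\frac{2}{h}(\min_{\pa\Omega}f-f(x_0))}$. I expect the main obstacle to be nothing more than this bookkeeping---in particular, checking that the vanishing of either prefactor transfers cleanly through both factors of the Cauchy--Schwarz product---while the remainder of the argument is a direct consequence of the resolvent bound of Theorem~\ref{th:2} and of the hypotheses \eqref{E1}--\eqref{E3}.
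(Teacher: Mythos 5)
Your proof is correct and uses the same essential ingredients as the paper: the rank-one Riesz projector $\pi_h^P$, the contour-integral argument giving $\|(1-\pi_h^P)\mathsf f_{1,h}\|\le Ch^{-1}\|P_h\mathsf f_{1,h}\|$, and Cauchy--Schwarz together with \eqref{E1}--\eqref{E3}. The only genuine difference is in how the inner product is split: you expand only the first slot of $\langle P_h\mathsf f_{1,h},\mathsf f_{1,h}\rangle$ as $\pi_h^P\mathsf f_{1,h}+(1-\pi_h^P)\mathsf f_{1,h}$, producing a single cross term $\langle(1-\pi_h^P)\mathsf f_{1,h},P_h^*\mathsf f_{1,h}\rangle$ and the coefficient $\langle\pi_h^P\mathsf f_{1,h},\mathsf f_{1,h}\rangle=1+O(\text{exp.\ small})$, whereas the paper expands the full Rayleigh quotient $\langle P_h\pi_h^P\mathsf f_{1,h},\pi_h^P\mathsf f_{1,h}\rangle/\|\pi_h^P\mathsf f_{1,h}\|^2$ and bounds two cross terms (one controlled by $\|P_h\mathsf f_{1,h}\|^2$, the other by $\|P_h\mathsf f_{1,h}\|\cdot\|P_h^*\mathsf f_{1,h}\|$, each weighted by $h^{-1}$). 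Your single-cross-term version is marginally cleaner algebraically, and your bookkeeping of the exponents in the three regimes ($\kappa_1^P\kappa_2^P\neq 0$, $\kappa_1^P=0$, $\kappa_2^P=0$) reproduces exactly the paper's $O(h^{5/4})$ and $O(h^{3/2})$ error sizes; but the strategy is the same, and there is no gap.
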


   \begin{proof} 
According to the argument leading to \eqref{eq.1-Pi} and to \eqref{E1}, \eqref{E2}, we have, for some $C,c>0$ 
and every  $h>0$ small enough:
\begin{equation}
\label{eq.1-Pi'}
 \Vert  (1-\pi_h^P) \mathsf f_{1,h} \Vert_{L^2(\Omega)}\le  C h^{-1}  \Vert P_h \mathsf f_{1,h}   \Vert_{L^2(\Omega)}
 \ \ \ \text{and thus}\ \ \ \pi_h^P \mathsf f_{1,h} = \mathsf f_{1,h} + O(e^{-\frac ch})\ \ \text{in $L^{2}(\Omega)$}.
\end{equation}
Since $P_{h}  \pi_h^P \mathsf f_{1,h}=\lambda_{1,h}^P \pi_h^P\mathsf f_{1,h}$, it follows from the second estimate of \eqref{eq.1-Pi'} that
\begin{align*}
\lambda_{1,h}^P &= \frac{\langle P_{h}  \pi_h^P \mathsf f_{1,h},\pi_h^P \mathsf f_{1,h}\rangle_{L^2(\Omega)}}{\|\pi_h^P 
\mathsf f_{1,h}\|^{2}_{L^{2}(\Omega)}}\\
&=    (1+O(e^{-\frac ch}))\Big[\langle P_{h}   \mathsf f_{1,h},\mathsf f_{1,h} \rangle_{L^2(\Omega)}+ \langle P_{h} ( \pi_h^{P}-1)  \mathsf f_{1,h},\mathsf f_{1,h} \rangle_{L^2(\Omega)} +  \langle P_{h}  \pi_h^{P}  \mathsf f_{1,h},( \pi_h^{P}-1)\mathsf f_{1,h} \rangle_{L^2(\Omega)}\Big].
\end{align*}
Moreover \eqref{eq.1-Pi'}, $\pi_h^{P}=O(1)$ (see \eqref{eq.pih==}), and the Cauchy-Schwarz inequality imply:
$$| \langle P_{h}  \pi_h^{P}  \mathsf f_{1,h},( \pi_h^{P}-1)\mathsf f_{1,h} \rangle_{L^2(\Omega)}|=
| \langle \pi_h^{P} P_{h}   \mathsf f_{1,h},( \pi_h^{P}-1)\mathsf f_{1,h} \rangle_{L^2(\Omega)}|
=\Vert P_h\mathsf f_{1,h}   \Vert_{L^2(\Omega)}^2 \, O(h^{-1}  ) $$
and
$$|\langle P_{h} ( \pi_h^{P}-1) \mathsf f_{1,h},\mathsf f_{1,h} \rangle_{L^2(\Omega)}|= |\langle  ( \pi_h^{P}-1)  \mathsf f_{1,h},P_{h} ^*\mathsf f_{1,h} \rangle_{L^2(\Omega)}|= \Vert P_h\mathsf f_{1,h}   \Vert_{L^2(\Omega)}   \Vert P_h^*\mathsf f_{1,h}   \Vert_{L^2(\Omega)}\, O(h^{-1}  ).$$   
Using in addition \eqref{E1}, \eqref{E2}, and \eqref{E3}, it follows that
$$
\lambda_{1,h}^P=   (1+O(e^{-\frac ch}))\langle P_{h}   \mathsf f_{1,h},\mathsf f_{1,h} \rangle_{L^2(\Omega)}(1+O(h^{\ell})) = \langle P_{h}   \mathsf f_{1,h},\mathsf f_{1,h} \rangle_{L^2(\Omega)}(1+O(h^{\ell})),
$$
where $\ell=\frac12$ when $\kappa^P_1=0$ (and thus $\kappa^P_2\neq0$),
$\ell=1$ when $\kappa^P_2=0$ (and thus $\kappa^P_1\neq0$),
and  $\ell=\frac34$ when $\kappa^P_1\kappa^P_2\neq0$.
This leads to the statement of Proposition~\ref{pr.strategy}.
\end{proof}

\subsection{Proof of Theorem~\ref{th:VP}}
From now on, we assume  \eqref{ortho}, \eqref{well},  \eqref{div}, and \eqref{normal}. 
According to Proposition \ref{pr.strategy}, 
it is sufficient 
to construct a quasi-mode $\mathsf f_{1,h}$ satisfying \eqref{E1}, \eqref{E2}, and \eqref{E3}
(see Proposition~\ref{pr.QM-1} below).
The construction below is strongly inspired by to the ones made in \cite{DoNe2,LePMi20}.

\subsubsection{System of coordinates near the points of $\pa \mathbf{C}_{{\rm min}}\cap \pa \Omega$}
\label{sec:coord}

Recall that $\pa \mathbf{C}_{{\rm min}}\cap \pa \Omega\neq \emptyset$ (see \eqref{well}) and that   $\pa \mathbf{C}_{{\rm min}}\cap \pa \Omega$ has a finite cardinality (see \eqref{eq.Ccons}). 
Take  $z\in \pa \mathbf{C}_{{\rm min}}\cap \pa \Omega$.  
There exists  a neighborhood~$\mathsf V_z$   of $z$ in~$\overline \Omega$ and  a   coordinate system 
\begin{equation}\label{eq.cv-pa-omega-nablafnon0}
p\in \mathsf V_z \mapsto v=(v',v_d)=(v_1,\ldots,v_{d-1},v_{d})\in \mathbb R^{d-1}\times \mathbb R_-
\end{equation}
 such that 
\begin{equation}\label{eq.cv-pa-omega-nablafnon02}
v(z)=0, \ \ \{p\in \mathsf V_z, \, v_d(p)<0\}=  \Omega\cap  \mathsf V_z, \ \{p\in \mathsf V_z,\,  v_d(p)=0\}=\pa \Omega \cap \mathsf V_z,
\end{equation}
 and
\begin{equation*}
\forall i,j\in\{1,\dots,d\},\ \ \ g_z\Big (\frac{\pa}{\pa v_i}(z),\frac{\pa}{\pa v_j}(z)\Big )=\delta_{ij}
\quad\text{and}
\quad
\frac{\pa}{\pa v_d}(z) = n_\Omega(z),
 \end{equation*}
where $ g_z$ is the metric tensor in the new coordinates.
We denote by $G=(G_{ij})_{1\leq i,j\leq d}$ its matrix, by $G^{-1}=(G^{ij})_{1\leq i,j\leq d}$ its inverse,
and by $(\mathsf e_1,\dots, \mathsf e_d)= ({}^t(1,0,\ldots,0),\dots,{}^t(0,\ldots,0,1)) $ the canonical basis of $\mathbb R^{d}$
so that, defining $J:=\jac\, v^{-1}$, we have  
\begin{equation}
\label{eq.G1}
G=\, ^t J J\,,\ \ G(0) =(\delta_{ij})\ \ \text{i.e.}\ \ {}^t J(0)=J^{-1}(0)\,,\ \ \text{and}\ \ n_\Omega(z) = J(0)\mathsf e_d\,.
 \end{equation}

In addition,  
defining $\hat  f:=f\circ v^{-1}$ the function $f$ in the new coordinates:
\begin{itemize}
\item[] \textbf{Case 1, when $\nabla f(z)\neq 0$:} 
According for example to~\cite[Section 3.4]{HeNi1}, 
the $v$-coordinates can be chosen such that
\begin{equation}\label{eq.cv-pa-omega-nablafnon03}
\hat f(v',v_d)=f(z)+\mu(z) v_d+ \frac 12 \,v' \Hess \hat f|_{\{v_d=0\}}(0)\,  {}^tv',
\end{equation} 
where we recall  that  $\mu(z):=\pa_{n_{\Omega}}f(z)>0$ and that, thanks to~\eqref{normal},    $0$ is   a non degenerate  (global) minimum of $\hat f|_{\{v_d=0\}}$.\medskip

\item[] \textbf{Case 2, when $\nabla f(z)= 0$:} 
We have $\nabla (\hat f+ \vert \mu(z)\vert v_d^2)(0)=0$ and, according to \eqref{eq.G1}:
\begin{equation}\label{eq.det--}
\Hess(\hat f+ \vert \mu(z)\vert v_d^2)(0)= {}^t J(0)\big(\Hess f(z)+ 2 \vert \mu(z)\vert n_\Omega(z)n_\Omega(z)^*\big)J(0),
\end{equation}
where we recall that,
from  \eqref{normal},   $n_\Omega(z)$ is an eigenvector associated with the negative eigenvalue~$\mu(z)$ of $\Hess f(z)+\, ^t\mathsf L(z)$.
Note also that the matrix in \eqref{eq.det--}
 is positive definite according to  Lemma~\ref{le.lep-michel}.
 \end{itemize}

\noindent 
In particular, up to choosing   $\mathsf V_z$   smaller, one can assume
that when $\nabla f(z)\neq 0$,
\begin{equation}\label{eq.min_coo1}
  \argmin_{  v(\overline{ \mathsf V_z })} \big( \hat f(v) -2\mu(z)v_d\big)=\{ 0\},
\end{equation} 
and when $\nabla f(z)=0$,
\begin{equation}\label{eq.min_coo2}
 \argmin_{  v(\overline{ \mathsf V_z })} \big( \hat f(v) +\vert \mu(z)\vert v_d^2\big)=\{ 0\} .
\end{equation} 
\noindent
 For $\delta_1>0$ and $\delta_2>0$ small enough, one finally defines  the following neighborhood of $z$ in~$\pa \Omega$,
\begin{equation*}\label{eq.set11}
\mathsf V_{\pa \Omega}^{\delta_2}(z):=\big \{p\in \mathsf V_z, \, v_d(p)=0\text{ and }  \vert v'(p)\vert \le \delta_2\big \} \, \text{ (see~\eqref{eq.cv-pa-omega-nablafnon0}-\eqref{eq.cv-pa-omega-nablafnon02})},
 \end{equation*}
 and  the following neighborhood of $z$  in $\overline \Omega$,
\begin{equation}\label{eq.vois-11-pc}
\mathsf V^{\delta_1,\delta_2}_{{\,  \overline \Omega  }}( z)=\big \{p\in \mathsf V_z,   \vert v'(p)\vert \le \delta_2  \text{ and } v_d(p)\in  [-2\delta_1, 0]\big  \}.
\end{equation}

The set defined in  \eqref{eq.vois-11-pc}   is a cylinder centered at~$z$ in the $v$-coordinates.
 Up to choosing $\delta_1>0$ and $\delta_2>0$ smaller, we can assume the cylinders
 $\mathsf V^{\delta_1,\delta_2}_{{\,  \overline \Omega  }}( z)$, $z\in \pa \mathbf{C}_{{\rm min}}\cap \pa \Omega$,
   pairwise disjoint. Since $f(z)=\min_{\pa \Omega}f>f(x_0)$,    we can also assume that    
   \begin{equation}\label{eq.inclur3}
  \min_{\mathsf V^{\delta_1,\delta_2}_{\overline \Omega}(z)}f>f(x_0)\ \text{ (so in particular    $x_0\notin \mathsf V^{\delta_1,\delta_2}_{\overline \Omega}(z)$)},  
  \end{equation}
  and, in view of \eqref{eq.OpaOmega}, 
   \begin{equation}\label{eq.min-v}
   \argmin_{\, \mathsf V^{\delta_2}_{\pa \Omega}(z)}f= \{z\}.
  \end{equation}
 The parameter $\delta_2>0$ is  now kept fixed.
Finally, according to \eqref{eq.min-v}
and up to choosing $\delta_1>0$ smaller, there exists $r>0$  such that: 
\begin{equation}\label{eq.inclur1}
 \big \{p\in \mathsf V_z,   \vert v'(p)\vert =\delta_2  \text{ and } v_d(p)\in  [-2\delta_1, 0]\big  \} \subset\{f\ge f(z)+r\}.   
\end{equation}

 We end  this section by defining 
 locally near each $z\in \pa   \mathbf{C}_{{\rm min}}\cap \pa \Omega$
 a function $\varphi_z$  in 
  the above $v$-coordinates,
  and used in the next section to define the quasi-mode $\mathsf f_{1,h}$ near $z$. 
 Let $\chi\in C^\infty({\mathbb R}^{-},[0,1])$ be  a cut-off  function    such that 
\begin{equation}\label{eq.chi-cut}
\text{supp } \chi\subset  [- \delta_1 ,  0] \, \text{ and } \,   \chi=1 \text{ on } \Big [-\frac{\delta_1}{2}, 0\Big].
  \end{equation}
For every $z\in \pa   \mathbf{C}_{{\rm min}}\cap \pa \Omega$,  the  function $\varphi_z$ 
is defined as follows
 (see \eqref{eq.cv-pa-omega-nablafnon0},  \eqref{eq.cv-pa-omega-nablafnon02}, and \eqref{eq.vois-11-pc}): 
\begin{itemize}
\item[] \textbf{Case 1, when $\nabla f(z)\neq 0$:}
 \begin{equation}\label{eq.qm-local11}
\forall v=(v',v_d)  \in  v\big (\mathsf V^{\delta_1,\delta_2}_{\,\overline \Omega}(z)\big), \  \  \varphi_{z} (v',v_d):=\frac{  \int_{v_d}^0\chi(t)e^{\frac 2 h \mu(z)\, t}  dt}{\int_{-2\delta_1}^0\chi(t)\, e^{\frac 2 h \mu(z)\, t}  dt},
  \end{equation}
  where we recall that  $\mu(z)=\partial_{n_\Omega}f(z)>0$, see \eqref{eq.cv-pa-omega-nablafnon03}.\medskip

\item[] \textbf{Case 2, when $\nabla f(z)= 0$:}
   \begin{equation}\label{eq.qm-local12}
\forall v=(v',v_d)  \in  v\big (\mathsf V^{\delta_1,\delta_2}_{\,\overline \Omega}(z)\big), \  \  \varphi_{z} (v',v_d):=\frac{  \int_{v_d}^0\chi(t)e^{-\frac 1h\vert \mu(z)\vert  \, t^2}  dt}{\int_{-2\delta_1}^0\chi(t)\, e^{-\frac 1h\vert \mu(z)\vert  \, t^2}  dt},
  \end{equation}
  where we recall that~$\mu(z)$ is the   negative eigenvalue of $\Hess f(z)+ {}^t\mathsf L(z)$, see \eqref{eq.det--}.
\end{itemize}  
  
In both cases: 
      \begin{equation}\label{eq.qm-local1-property12}
  \left\{
    \begin{array}{ll}
        \vspace{0.2cm}
    \varphi_{z}\in   C^\infty\big (v\big (\mathsf V^{\delta_1,\delta_2}_{\,\overline \Omega}(z)\big),[0,1]\big )\ \ \text{only depends on $v_{d}$, $\varphi_z(v',0)=0$, and }\\
\forall (v',v_d)\in v\big (\mathsf V^{\delta_1,\delta_2}_{\,\overline \Omega}(z)\big),\, \varphi_{z}(v',v_d)=1 \text{  when  } v_d \in [-2\delta_1,-\delta_1].
 \end{array}
\right.
\end{equation}

 \subsubsection{Definition of the quasi-mode $\mathsf f_{1,h}$}

We now define $\mathsf f_{1,h}$, using the  $v$-coordinates and the above $\varphi_z$, $z\in \pa   \mathbf{C}_{{\rm min}}\cap\pa \Omega$. 
Before, we recall that we defined in
\eqref{eq.vois-11-pc} pairwise disjoint   cylinders around the  $z\in \pa   \mathbf{C}_{{\rm min}}\cap\pa \Omega$ which satisfy
\eqref{eq.inclur3}, \eqref{eq.min-v}, and~\eqref{eq.inclur1}. 
On the other hand,  for every
$p\in \pa   \mathbf{C}_{{\rm min}}\setminus\pa \Omega$:
$p\in \Omega$ and thus $\nabla f(p)\neq 0$, which implies 
that $\{f<f(p)\}\cap   B(p,r)$ is connected for every~$r>0$ small enough and thus included in $ \mathbf{C}_{{\rm min}}$.

These considerations imply the existence of the following subsets 
$\mathbf{C}_{{\rm low}}$ and~$\mathbf{C}_{{\rm up}}$ of $\Omega$.

\begin{proposition}\label{pr.omega1}  Assume  \eqref{ortho}, \eqref{well},  \eqref{div}, and \eqref{normal}.  Then,  there exist two $\mathcal C^\infty$  connected open sets~$\mathbf{C}_{{\rm low}}$ and~$\mathbf{C}_{{\rm up}}$ of $\Omega$  satisfying the following properties:
 \begin{enumerate}
 \item  It holds
  $\overline{\mathbf{C}}_{{\rm min}} \subset \mathbf{C}_{{\rm up}} \cup \pa \Omega\  \text{ and }\ \argmin_{\,  \overline{\mathbf{C}}_{{\rm up}}}f =\{x_0\}.$
  \item The set $\overline{\mathbf{C}}_{{\rm up}}$ is a neighborhood  in $\overline\Omega$ of each $ \mathsf V^{\delta_1,\delta_2}_{\,\overline \Omega}(z)$, $z\in \pa   \mathbf{C}_{{\rm min}}\cap\pa \Omega$.
 \item  It holds  $\overline{\mathbf{C}}_{{\rm low}}\subset \mathbf{C}_{{\rm up}}$ and the strip $\overline{\mathbf{C}}_{{\rm up}}\setminus \mathbf{C}_{{\rm low}}$ satisfies
\begin{equation}\label{stripS}
\exists c>0\,,\ f\geq f(x_{0})+c \ \text{on}\ \overline{\mathbf{C}}_{{\rm up}}\setminus \mathbf{C}_{{\rm low}}
\quad\text{and}\quad
 \overline{\mathbf{C}}_{{\rm up}}\setminus \mathbf{C}_{{\rm low}}= 
 \bigcup \limits_{z\in \pa   \mathbf{C}_{{\rm min}}\cap\pa \Omega}  \mathsf V^{\delta_1,\delta_2}_{\,\overline \Omega}(z)   \  \bigcup \   \mathsf O,
\end{equation}
where the subset $\mathsf O$ of $\overline\Omega	$  is such that:
\begin{equation*}\label{eq.SS}
\exists c>0\,, \  f\ge \min_{\pa \Omega}f + c  \ \text{on}\ \mathsf O.
\end{equation*}
\end{enumerate}
\end{proposition}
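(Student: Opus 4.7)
The plan is to construct both sets by smoothly modifying $\mathbf{C}_{{\rm min}}$, attaching the boundary cylinders $\mathsf V^{\delta_1,\delta_2}_{\,\overline \Omega}(z)$ to form $\mathbf{C}_{{\rm up}}$ and removing slightly wider ones to form $\mathbf{C}_{{\rm low}}$. The construction then reduces to a careful choice of smoothing scales and to the local bounds on $f$ furnished in Section~\ref{sec:coord}.

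More concretely, for each $z\in\pa\mathbf{C}_{{\rm min}}\cap\pa\Omega$, I would fix a slightly wider cylinder $\widetilde{\mathsf V}(z)$ obtained from $\mathsf V^{\delta_1,\delta_2}_{\,\overline \Omega}(z)$ by keeping $\delta_1$ unchanged and replacing $\delta_2$ by some $\delta_2' > \delta_2$ close enough to $\delta_2$ that, by \eqref{eq.inclur1} and continuity of $f$, one has $f \geq f(z) + r/2 = \min_{\pa\Omega}f + r/2$ on the lateral collar $\widetilde{\mathsf V}(z) \setminus \mathsf V^{\delta_1,\delta_2}_{\,\overline \Omega}(z)$. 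I would then define $\mathbf{C}_{{\rm up}}$ as the smooth open set of $\Omega$ obtained by mollifying, in a thin tubular neighborhood of its piecewise-smooth boundary, the set $\mathbf{C}_{{\rm min}} \cup \bigcup_z \mathrm{int}_{\overline\Omega}\widetilde{\mathsf V}(z)$, and $\mathbf{C}_{{\rm low}}$ as the analogous mollification of $\mathbf{C}_{{\rm min}} \setminus \bigcup_z \widetilde{\mathsf V}(z)$. Connectedness of $\mathbf{C}_{{\rm up}}$ follows from connectedness of $\mathbf{C}_{{\rm min}}$ (ensured by \eqref{well}) and the fact that each cylinder meets $\mathbf{C}_{{\rm min}}$: for instance in Case 1, the axis $\{v' = 0,\, v_d<0\}$ satisfies $\hat f = f(z)+\mu(z)v_d < \min_{\pa\Omega}f$ by \eqref{eq.cv-pa-omega-nablafnon03}; in Case 2, \eqref{eq.min_coo2} gives a similar conclusion along directions where $\hat f$ decreases. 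Connectedness of $\mathbf{C}_{{\rm low}}$ follows since removing from $\mathbf{C}_{{\rm min}}$ finitely many small cylinders each touching $\pa\Omega$ leaves a connected set for $\delta_1,\delta_2'$ small.

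Items 1 and 2 of the statement then hold: $\mathbf{C}_{{\rm min}}\subset \mathbf{C}_{{\rm up}}$ yields $\overline{\mathbf{C}}_{{\rm min}} \subset \mathbf{C}_{{\rm up}} \cup \pa\Omega$; the unique-minimum property follows from \eqref{well} together with the uniform bound $f \geq \min_{\pa\Omega}f - O(\delta_1)$ on each $\widetilde{\mathsf V}(z)$ supplied by \eqref{eq.min_coo1}--\eqref{eq.min_coo2}, which for $\delta_1$ small forces $f > f(x_0)$ there; and $\overline{\mathbf{C}}_{{\rm up}}$ is a neighborhood of each $\mathsf V^{\delta_1,\delta_2}_{\,\overline \Omega}(z)$ in $\overline\Omega$ since it contains the strictly larger cylinder $\widetilde{\mathsf V}(z)$. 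For item 3, $\overline{\mathbf{C}}_{{\rm low}} \subset \mathbf{C}_{{\rm up}}$ is clear since $\overline{\mathbf{C}}_{{\rm low}} \subset \overline{\mathbf{C}}_{{\rm min}} \setminus \bigcup_z \widetilde{\mathsf V}(z) \subset \Omega$, and by construction the strip $\overline{\mathbf{C}}_{{\rm up}} \setminus \mathbf{C}_{{\rm low}}$ equals $\bigcup_z \widetilde{\mathsf V}(z)$. Writing this as $\bigcup_z \mathsf V^{\delta_1,\delta_2}_{\,\overline \Omega}(z) \cup \mathsf O$ with $\mathsf O := \bigcup_z \bigl(\widetilde{\mathsf V}(z)\setminus \mathsf V^{\delta_1,\delta_2}_{\,\overline \Omega}(z)\bigr)$, the lower bound $f \geq \min_{\pa\Omega}f + r/2$ on $\mathsf O$ is exactly the one secured in the previous paragraph, while on the full strip, $\bigcup_z \widetilde{\mathsf V}(z) \subset \{f \geq \min_{\pa\Omega}f - O(\delta_1)\} \subset \{f \geq f(x_0) + c\}$ for $\delta_1$ small enough, proving the remaining bound.

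The main obstacle is keeping $\overline{\mathbf{C}}_{{\rm up}}\setminus\mathbf{C}_{{\rm low}} = \bigcup_z \widetilde{\mathsf V}(z)$ exact once smoothing has been carried out. The two mollifications must be done at a scale smaller than the minimum of $\delta_1$, $\delta_2'-\delta_2$, and the distance from $\bigcup_z\widetilde{\mathsf V}(z)$ to the complement of a slightly larger enlargement, so that the mollification zone stays well inside $\bigcup_z \widetilde{\mathsf V}(z)$ and does not spill out into regions of low $f$. This is the technical point of the proof, but it is routine given the explicit local coordinate models from Section~\ref{sec:coord} and the finiteness of $\pa \mathbf{C}_{{\rm min}}\cap\pa\Omega$ guaranteed by \eqref{eq.Ccons}.
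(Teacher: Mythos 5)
There is a genuine structural gap in the handling of item~3, and it is not merely a matter of choosing mollification scales carefully. The requirement $\overline{\mathbf{C}}_{{\rm low}}\subset\mathbf{C}_{{\rm up}}$ forces $\mathbf{C}_{{\rm up}}$ to contain a neighbourhood of $\pa\mathbf{C}_{{\rm low}}$, so the strip $\overline{\mathbf{C}}_{{\rm up}}\setminus\mathbf{C}_{{\rm low}}$ necessarily has a nontrivial piece along all of $\pa\mathbf{C}_{{\rm low}}$, in particular far from the cylinders. Since you build $\mathbf{C}_{{\rm low}}$ by mollifying $\mathbf{C}_{{\rm min}}\setminus\bigcup_z\widetilde{\mathsf V}(z)$ (and you explicitly place $\overline{\mathbf{C}}_{{\rm low}}\subset\overline{\mathbf{C}}_{{\rm min}}\setminus\bigcup_z\widetilde{\mathsf V}(z)$), away from the cylinders $\pa\mathbf{C}_{{\rm low}}$ sits on or just inside $\pa\mathbf{C}_{{\rm min}}\subset\{f=\min_{\pa\Omega}f\}$. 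This portion of the strip is not contained in any $\mathsf V^{\delta_1,\delta_2}_{\,\overline\Omega}(z)$, so in the decomposition it must be part of $\mathsf O$, where the proposition demands $f\geq\min_{\pa\Omega}f+c$ for a fixed $c>0$; but a thin shell around $\pa\mathbf{C}_{{\rm min}}$ contains points with $f<\min_{\pa\Omega}f$ at any mollification scale. The claim ``by construction the strip $\overline{\mathbf{C}}_{{\rm up}}\setminus\mathbf{C}_{{\rm low}}$ equals $\bigcup_z\widetilde{\mathsf V}(z)$'' is therefore false: writing $U=\mathbf{C}_{{\rm min}}$ and $W=\bigcup_z\widetilde{\mathsf V}(z)$, one has even before smoothing $(\overline U\cup W)\setminus(U\setminus W)=\pa U\cup W$, and $\pa U$ away from the $z$'s lies on the critical level $\min_{\pa\Omega}f$. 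As the paper's Figure~\ref{fig:S} indicates, $\mathbf{C}_{{\rm low}}$ must \emph{strictly contain} a neighbourhood of $\overline{\mathbf{C}}_{{\rm min}}$ away from the cylinders, so that $\pa\mathbf{C}_{{\rm low}}$ there already lies in $\{f\geq\min_{\pa\Omega}f+c'\}$ for some fixed $c'>0$; for instance, away from the cylinders one should base both $\mathbf{C}_{{\rm low}}$ and $\mathbf{C}_{{\rm up}}$ on a slightly super-level enlargement of $\mathbf{C}_{{\rm min}}$ (the connected component of $\{f<\min_{\pa\Omega}f+c'\}$ containing $x_0$, say), rather than on $\mathbf{C}_{{\rm min}}$ itself.

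There is also a secondary issue in item~2. The widened cylinder $\widetilde{\mathsf V}(z)$ enlarges $\mathsf V^{\delta_1,\delta_2}_{\,\overline\Omega}(z)$ only in the $v'$-directions, not in $v_d$ (both run over $v_d\in[-2\delta_1,0]$), so $\overline{\mathbf{C}}_{{\rm up}}\supset\widetilde{\mathsf V}(z)$ does not by itself make $\overline{\mathbf{C}}_{{\rm up}}$ a neighbourhood of the bottom cap $\{v_d=-2\delta_1\}$ of $\mathsf V^{\delta_1,\delta_2}_{\,\overline\Omega}(z)$ in $\overline\Omega$. Near the axis this follows because the cap sits well inside $\mathbf{C}_{{\rm min}}$, but near the rim $|v'|=\delta_2$ (where $f\geq\min_{\pa\Omega}f+r$ by~\eqref{eq.inclur1}) it does not, and one has to argue that $\mathbf{C}_{{\rm up}}$ spills slightly below $v_d=-2\delta_1$ there; again this comes for free once $\mathbf{C}_{{\rm up}}$ is built from a $c'$-overshoot of $\mathbf{C}_{{\rm min}}$ rather than from $\mathbf{C}_{{\rm min}}$ itself.
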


\begin{figure}[h!]
\begin{center}
\begin{tikzpicture}[scale=0.7]
\tikzstyle{vertex}=[draw,circle,fill=black,minimum size=6pt,inner sep=0pt]
\draw (-0.2,-2.8)--(-0.8,2.8);
\draw[->,thick] (-0.5,0)--(-2,-0.2);
 \draw (-3.3,-0.2) node[]{\tiny{$ n_\Omega(z_1)$}}; 
 \draw[ultra thick]  (-0.63,1.3)--(0.3,1.4);
  \draw[ultra thick]  (-0.4,-1)--(0.5,-0.9); 
\draw  (0.3,1.4)--(0.5,-0.9);
 \draw (0,4.6) node[]{$\pa \Omega$}; 
    \draw (-0.5,0)   node {\Large{$\bullet$}} ; 
   \draw (5,0)   node {\Large{$\bullet$}} ;
  \draw (5.3,-0.4) node[]{$ x_0$};
    \draw (2,-5.8) node[]{$\mathbf{C}_{{\rm up}}$};
    \draw [->] (2,-5.2) to (2,-3) ;
    \draw [->] (2,-5.2) to (2.5,-2.3) ;  
  \draw [dotted, very thick]   (10.98 ,3.2) -- (11.7 ,3.2);
 \draw  (14.6 ,3.2) node[]{{\small $\pa \mathbf{C}_{{\rm min}}\subset \{f= \min_{\pa \Omega}f\}$}}; 
     \draw (7,2.4) node[]{$\mathbf{C}_{{\rm low}}$};
     \draw (7.3,-0.2) node[]{$\mathbf{C}_{{\rm low}}$};
      \draw (7,-2.4) node[]{$\mathbf{C}_{{\rm low}}$};
      \draw (5,0.8) node[]{{\tiny $\mathbf{C}_{{\rm min}}= \overline \Omega\cap\big\{ f<\min_{\pa \Omega}f \big\}$}}; 
  \draw[very thick, dotted]  (-0.5,0) to (2,-1) ;
   \draw[very thick, dotted]  (-0.6,0) to (2,1) ;
  \draw (10,-2.8)--(10,2.8);
\draw[ultra thick]  (10,1.5)--(9,1.5);
\draw[ultra thick]  (10,-1.5)--(9,-1.5);
\draw (9,1.5)--(9,-1.5);  
  \draw (10,0)  node[vertex,label=east: {$z_2$}](v){};
  \draw[very thick, dotted]  (8,1.4) ..controls (10.6,0.5) and   (10.6,-0.5)  .. (8,-1.4) ; 
   \draw[very thick, dotted]  (2,1) ..controls (4,2) and   (6,2)  .. (8,1.4) ;
\draw[very thick, dotted]  (2,-1) ..controls (4,-2) and   (6,-2)  .. (8,-1.4) ;
\draw[thick]  (0.3,1.4) ..controls (0.4,3.8) and   (9,3.9)  .. (9,1.5);
\draw[thick]  (0.5,-0.9) ..controls (0.4,-3.8) and   (9,-3.9)  .. (9,-1.5);
\draw[thick]  (-0.68,1.7) ..controls (-0.4,4.5) and   (10,5.4)  .. (9.99,1.9);
\draw[thick]  (-0.32,-1.7) ..controls (0,-4.5) and   (10,-5.9)  .. (9.99,-2.22);
\draw[thick]  (-0.32,-1.7)--(-0.68,1.7);
\draw[thick]  (9.99,-2.22)--(9.99,1.9);
\draw (1,-2.6) node[]{\tiny{$\mathsf O $}};
\draw (6,3.5) node[]{\tiny{$\mathsf O $}};
\draw (12.9,0.8) node[]{$\mathsf V^{\delta_1,\delta_2}_{\overline \Omega}(z_2)$};
\draw [->] (11.7,0.7) to (9.7,0.99) ;
\draw (-4.3,-3.2) node[]{$\mathsf V^{\delta_1,\delta_2}_{\overline \Omega}(z_1) $};
\draw [->] (-3.5,-2.7) to (0,-0.6) ;
 \draw (-0.3,0.6) node[]{${\small z_1}$};
\draw  (-0.8,2.8) ..controls (-0.9,5.5) and   (10,4.8)  .. (10,2.8);
\draw  (-0.2,-2.8) ..controls (0,-5.5) and   (10,-7)  .. (10,-2.8);
\end{tikzpicture}
\caption{Schematic representation of $\mathbf{C}_{{\rm low}}$, $\mathbf{C}_{{\rm up}}$, and $\mathsf O$ (see Proposition~\ref{pr.omega1}). On the figure, $\pa \mathbf{C}_{{\rm min}}\cap \pa \Omega=\{z_1,z_2\}$ with $\nabla f(z_1)=0 $ and  $\vert \nabla f(z_2)\vert\neq 0$. }
 \label{fig:S}
 \end{center}
\end{figure}

We refer to Figure~\ref{fig:S} for a schematic representation of  $\mathbf{C}_{{\rm low}}$, $\mathbf{C}_{{\rm up}}$, and $\mathsf O$. 
Notice that  Proposition~\ref{pr.omega1}
implies
\begin{equation}\label{eq.minC}
\argmin \limits_{\,  \overline{\mathbf{C}}_{{\rm up}}}f=\argmin\limits_{\,  \overline{\mathbf{C}}_{{\rm low}}}f=\{x_0\}.
\end{equation}

Using the above sets $\mathbf{C}_{ {\rm up} }$ and $ \mathbf{C}_{{\rm low}}$, we define  a
function $\phi_{1,h}:\overline \Omega \to [0,1]$  as follows.

\begin{enumerate}
\item[(i)] For every $z\in \pa   \mathbf{C}_{{\rm min}}\cap\pa \Omega$, $\phi_{1,h}$  is defined on the cylinder $\mathsf V^{\delta_1,\delta_2}_{\,\overline \Omega}(z)$ 
(see \eqref{eq.vois-11-pc})
by
 \begin{equation}\label{eq.qm-local1x}
\forall p \in   \mathsf V^{\delta_1,\delta_2}_{\,\overline \Omega}(z), \  \phi_{1,h}(p):= \varphi_{z} (v(p)),  \  
\text{ see~\eqref{eq.qm-local11} and  \eqref{eq.qm-local12}}.
  \end{equation}
\item[(ii)] From~\eqref{eq.qm-local1-property12}, \eqref{stripS},  and  the fact that $ \overline{\mathbf{C}}_{ {\rm low} }\subset  \mathbf{C}_{ {\rm up} }$  (see Proposition~\ref{pr.omega1}),    
the above function $\phi_{1,h}$ satisfying \eqref{eq.qm-local1x}   can be extended to $\overline \Omega$  so  that
\begin{equation}\label{eq.psix=10}
 \phi_{1,h}=0 \text{ on } \overline \Omega\setminus\mathbf{C}_{ {\rm up} }, \ \ \ \phi_{1,h}=1 \text{ on }  {\mathbf{C}_{ {\rm low} }}, \text{ and } \phi_{1,h}\in C^\infty(\overline \Omega,[0,1]). 
\end{equation}
Moreover, in view of \eqref{eq.qm-local11},~\eqref{eq.qm-local12}, and~\eqref{stripS},
 $\phi_{1,h}$ can be chosen on~$\mathsf O$ such that, for some $C>0$ and   for  every $h$ small enough,  
\begin{equation} 
\label{eq.nablapsix=0}
\forall \alpha \in \mathbb N^d, \, \vert \alpha \vert \in \{1,2\}, \,    \Vert \pa^\alpha  \phi_{1,h} \Vert_{L^\infty(\mathsf O )} \le Ch^{-2}. 
\end{equation}
\end{enumerate}
Notice that~\eqref{eq.psix=10} implies
\begin{equation}\label{eq.psix=nabla}
\supp \nabla  \phi_{1,h} \subset \overline{\mathbf{C}}_{ {\rm up} }\setminus \mathbf{C}_{ {\rm low} }.
\end{equation}

We are now in position to define the quasi-mode $\mathsf f_{1,h}$ for $P_h$. 

\begin{definition}\label{de.QM}
Assume \eqref{ortho}, \eqref{well},  \eqref{div}, and \eqref{normal}. Let $\phi_{1,h}$ be the above function
satisfying \eqref{eq.qm-local1x}--\eqref{eq.nablapsix=0}. We define: 
$$\mathsf f_{1,h}:= \frac{\phi_{1,h} \, e^{-\frac fh}}{ Z_{1,h} } \,, \  \text{where}\ \ Z_{1,h}:= \Vert \phi_{1,h} \, e^{-\frac fh}     \Vert_{L^2(\Omega)}.$$ 
\end{definition}

\subsubsection{Quasi-modal estimates}

\begin{proposition}\label{pr.QM-1}
Assume \eqref{ortho}, \eqref{well},  \eqref{div}, and \eqref{normal}. Let $\mathsf f_{1,h}$ be as introduced in Definition~\ref{de.QM}.  Then,  $\mathsf f_{1,h}$ belongs to $D(P_h)$ and satisfies 
\eqref{E1}, \eqref{E2}, and \eqref{E3} of Proposition \ref{pr.strategy}.
In particular, Theorem~\ref{th:VP} holds true.
\end{proposition}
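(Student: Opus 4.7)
The plan is to verify the three estimates in a coherent sequence, exploiting the special structure of the quasi-mode near each $z\in \pa\mathbf{C}_{\mathrm{min}}\cap \pa\Omega$. First, the membership $\mathsf f_{1,h}\in D(P_h)$ is immediate: $\phi_{1,h}\in C^\infty(\overline\Omega)$ by construction (piecewise definition with smooth matching), it vanishes on $\pa\Omega$ because $\varphi_z(v',0)=0$ by \eqref{eq.qm-local1-property12} and $\phi_{1,h}=0$ outside $\mathbf{C}_{\mathrm{up}}\subset\Omega$; hence $\mathsf f_{1,h}\in C^\infty(\overline\Omega)\cap H^1_0(\Omega)\subset D(P_h)$. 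Standard Laplace asymptotics centered at $x_0$ (unique global minimum of $f$ on $\overline\Omega$, interior to $\mathbf{C}_{\mathrm{low}}$ where $\phi_{1,h}=1$) give the normalization $Z_{1,h}^2 = (\pi h)^{d/2}(\det\Hess f(x_0))^{-1/2}e^{-2f(x_0)/h}(1+O(h))$.

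For \eqref{E1}, I would use the key algebraic identity $\nabla_{f,h}(\phi_{1,h} e^{-f/h})=h(\nabla\phi_{1,h})e^{-f/h}$, which yields
$$
\langle P_h \mathsf f_{1,h},\mathsf f_{1,h}\rangle = \frac{h^2}{Z_{1,h}^2}\int_\Omega|\nabla\phi_{1,h}|^2 e^{-\frac{2f}{h}} + \frac{h}{Z_{1,h}^2}\int_\Omega \boldsymbol{\ell}\cdot\nabla(\phi_{1,h}^2)\,e^{-\frac{2f}{h}}.
$$
The second integral vanishes after integration by parts using $\phi_{1,h}|_{\pa\Omega}=0$ and $\div(\boldsymbol{\ell} e^{-2f/h}) = (\div\boldsymbol{\ell})e^{-2f/h}-\frac{2}{h}(\boldsymbol{\ell}\cdot\nabla f)e^{-2f/h}=0$ by \eqref{ortho} and \eqref{div}. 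Since $\supp\nabla\phi_{1,h}\subset \overline{\mathbf{C}}_{\mathrm{up}}\setminus\mathbf{C}_{\mathrm{low}}$ and $f\geq\min_{\pa\Omega}f+c$ on $\mathsf O$, the contribution from $\mathsf O$ is exponentially smaller than the target and can be absorbed into the remainder. On each cylinder $\mathsf V^{\delta_1,\delta_2}_{\overline\Omega}(z)$, change coordinates to $v$ and exploit the explicit expression of $\varphi_z$. In Case 1 ($\nabla f(z)\neq 0$), the integrand involves $\chi^2(v_d)e^{2\mu(z)v_d/h}/D^2$ times the Gaussian in $v'$ from $\hat f$; a boundary Laplace in $v_d$ (with $D\sim h/(2\mu(z))$) and a full Gaussian in $v'$ give a contribution of size $\kappa^P_1 h^{1/2}e^{-2(\min_{\pa\Omega}f-f(x_0))/h}$, matching the definition of $\kappa^P_1=2\kappa^L_1$. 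In Case 2 ($\nabla f(z)=0$), the exponentials $e^{-2|\mu|v_d^2/h}$ from $|\partial_{v_d}\varphi_z|^2$ and $e^{-v\cdot\Hess\hat f(0)\cdot{}^tv/h}$ from $\hat f$ combine into a centered Gaussian with quadratic form $v\mapsto v\cdot M v$ where $M$ corresponds (via $J(0)$) to $\Hess f(z)+2|\mu(z)|\xi(z)\xi(z)^T$; integrating this over the half-space $\{v_d<0\}$ produces exactly half the full Gaussian, and invoking Lemma~\ref{le.lep-michel}(2) ($\det M=|\det\Hess f(z)|$) together with $D^2\sim\pi h/(4|\mu(z)|)$ yields a contribution of size $\kappa^P_2 h\,e^{-2(\min_{\pa\Omega}f-f(x_0))/h}$. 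Summing the two types of contributions gives \eqref{E1} with an error of order $h^{3/2}$ from the subleading terms in the Laplace expansion.

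For \eqref{E2} and \eqref{E3}, I would start from the identity
$$
P_h\mathsf f_{1,h} = \frac{h}{Z_{1,h}}\bigl(-h\Delta\phi_{1,h}+2(\nabla f+\boldsymbol{\ell})\cdot\nabla\phi_{1,h}\bigr)e^{-f/h},
$$
with the analogous formula for $P_h^*$ (replace $\boldsymbol{\ell}$ by $-\boldsymbol{\ell}$ under \eqref{div}). The support is contained in $\supp\nabla\phi_{1,h}$; the contribution from $\mathsf O$ is exponentially negligible thanks to $f\geq\min_{\pa\Omega}f+c$ there and to the polynomial bound \eqref{eq.nablapsix=0} on the derivatives of $\phi_{1,h}$. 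On each cylinder, I would expand the operator in the $v$-coordinates, observing that by construction $\varphi_z$ annihilates an explicit model operator (the one-dimensional transport operator $-h\partial_{v_d}^2+2\mu(z)\partial_{v_d}$ in Case~1 and the anti-dissipative OU operator $-h\partial_{v_d}^2-2|\mu(z)|v_d\partial_{v_d}$ in Case~2) on the region $\chi=1$. Consequently the difference $-h\Delta\phi_{1,h}+2(\nabla f+\boldsymbol{\ell})\cdot\nabla\phi_{1,h}$ reduces on this region to corrections involving (i) the metric factors $G^{ij}(v)-\delta^{ij}=O(|v|)$, (ii) the higher-order Taylor remainders of $\hat f$ beyond the model, (iii) the $\boldsymbol{\ell}$-terms (which vanish to first order at $z$, since $\boldsymbol{\ell}(z)=0$ either by \eqref{eq.div0} in Case~2 or by the second part of \eqref{normal} in Case~1). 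Each of these contributes a factor of $|v|$ or $h$ to the integrand, which, after squaring and integrating against the same Gaussian weight as in \eqref{E1}, gives an extra factor of $h$ relative to the leading quadratic-form estimate, yielding \eqref{E2} and \eqref{E3}.

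The main obstacle will be keeping track, in Case~2, of the precise power of $h$ gained in the error estimates for $\|P_h\mathsf f_{1,h}\|^2$ and $\|P_h^*\mathsf f_{1,h}\|^2$, because the half-space Gaussian integration at the boundary saddle point allows for odd-power corrections in $v_d$ that are naturally of size $\sqrt{h}$ rather than $h$. This is precisely the reason why \eqref{E3} is stated with the weaker error $O(h)$ (versus $O(h^2)$ in \eqref{E2}) in front of the $\kappa_2^P h$ term: the mechanism that cancels the model transport in Case~2 is symmetric under $\boldsymbol{\ell}\mapsto -\boldsymbol{\ell}$ at the level of the quadratic form, but not at the level of the pointwise error. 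Once \eqref{E1}, \eqref{E2}, and \eqref{E3} are established, Proposition~\ref{pr.strategy} directly yields Theorem~\ref{th:VP}.
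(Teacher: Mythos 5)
Your proposal follows essentially the same route as the paper: Laplace expansion for $Z_{1,h}$, the reduction $\langle P_h\mathsf f_{1,h},\mathsf f_{1,h}\rangle=\|\nabla_{f,h}\mathsf f_{1,h}\|^2$ (which you obtain by direct integration by parts using $\div\boldsymbol{\ell}=0$, the paper via \eqref{eq.fried}), local Laplace expansions on each cylinder for \eqref{E1}, and the observation that $\varphi_z$ kills a model transport/anti-dissipative operator so that the pointwise error in $P_h\mathsf f_{1,h}$ is $O(h)+O(|v|^2)$, giving \eqref{E2}. Your explanation of why \eqref{E3} carries a weaker error is correct in spirit but could be pinned down: the asymmetry is not really a half-space phenomenon, it is that the first-order Taylor cancellation in Case~2 rests on the eigenvector identity from \eqref{normal}, $[\Hess f(z)+{}^t\mathsf L(z)]\,n_\Omega(z)=\mu(z)\,n_\Omega(z)$, which appears in $(\nabla f+\boldsymbol{\ell})\cdot\nabla\phi_{1,h}$ but not in $(\nabla f-\boldsymbol{\ell})\cdot\nabla\phi_{1,h}$; the sign flip $\boldsymbol{\ell}\mapsto-\boldsymbol{\ell}$ would require $n_\Omega(z)$ to be an eigenvector of $\Hess f(z)-{}^t\mathsf L(z)$, which is not assumed, leaving an $O(|v|)$ pointwise remainder for $P_h^*\mathsf f_{1,h}$ and hence only $O(h)$ after squaring and integrating against the Gaussian weight. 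In Case~1 the cancellation of the first-order $\boldsymbol{\ell}$-term uses only $\boldsymbol{\ell}(z)=0$ together with $\boldsymbol{\ell}\cdot\nabla f=0$, which is symmetric under the sign flip, so both \eqref{E2} and \eqref{E3} get $O(|v|^2)$ there — consistent with the paper's final tally in Step~3.
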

\begin{proof}
First of all, the relation \eqref{eq.psix=10} implies
$\mathsf f_{1,h} \in C^\infty(\overline \Omega,\mathbb R^{+})$ and $\mathsf f_{1,h}=0$ on $\pa \Omega$, and thus,
since $\mathbf{C}_{ {\rm up} }\subset \Omega$ (see Proposition~\ref{pr.omega1}),
\begin{equation} 
\label{eq.nablapsix-domaine}
\mathsf f_{1,h}\in D(P_h)=H^2(\Omega)\cap H^1_0(\Omega),
\end{equation}

In the following, $c>0$ is a constant independent of $h>0$ which can change from one occurrence to another.
The proof is divided into three steps. 
\medskip

\noindent
\textbf{Step 1. The function  $\mathsf f_{1,h}$ satisfies 
\eqref{E1}.}

\noindent 
\textbf{Asymptotic equivalent of $Z_{1,h}$.} 
From  Definition~\ref{de.QM} and \eqref{eq.psix=10}, we have
\begin{align*}Z_{1,h}^2&=\int_\Omega \phi_{1,h}^2  \,e^{-\frac 2h f}
=\int_{\mathbf{C}_{ {\rm low} }} \phi_{1,h}^2  \,e^{-\frac 2hf}+\int_{\mathbf{C}_{ {\rm up} }\setminus\mathbf{C}_{ {\rm low} }} \phi_{1,h}^2  \,e^{-\frac 2h f} = \int_{\mathbf{C}_{ {\rm low} }} \phi_{1,h}^2  \,e^{-\frac 2hf} +O(e^{-\frac 2h (f(x_0)+c)}),
\end{align*}
 where we used $\Ran\phi_{1,h}  \subset [0,1]$  and $f\ge f(x_0)+c$ on $\mathbf{C}_{ {\rm up} }\setminus\mathbf{C}_{ {\rm low} }$ (see~\eqref{stripS}). 
 Moreover, using $\phi_{1,h} = 1$ on $\mathbf{C}_{ {\rm low} }$
 and   \eqref{eq.minC},  the standard Laplace method implies that when $h\to 0$, 
\begin{equation}\label{Zx}
Z_{1,h}^2=(\pi h)^{\frac d2} \, \big(  \det\Hess  f(x_0)\big )^{-\frac 12}          \ e^{-\frac 2h f(x_0)} \big(1+O(h)\big).
\end{equation}
\textbf{Asymptotic equivalent of $\langle P_{h} \mathsf f_{1,h},\mathsf f_{1,h} \rangle_{L^2(\Omega)}$.} 
First, using~\eqref{eq.nablapsix-domaine} and~\eqref{eq.fried},
$$\langle P_{h} \mathsf f_{1,h},\mathsf f_{1,h} \rangle_{L^2(\Omega)} = \int_{\Omega} \vert \nabla_{f,h} \, \mathsf f_{1,h}\vert^2.$$ 
In addition, from  Definition~\ref{de.QM} and~\eqref{eq.psix=nabla},  $
\nabla_{f,h} \mathsf f_{1,h}= Z_{1,h}^{-1} \, h e^{-\frac fh} \nabla\phi_{1,h} $  is supported in $\overline{\mathbf{C}}_{ {\rm up} }\setminus \mathbf{C}_{ {\rm low} }$. 
Hence, from (3) in Proposition~\ref{pr.omega1},~\eqref{eq.nablapsix=0},
and~\eqref{Zx},
we have for every $h$ small enough: 
\begin{align}
\nonumber
\langle P_{h} \mathsf f_{1,h},\mathsf f_{1,h} \rangle_{L^2(\Omega)}  &=
\sum_{z\in  \pa  \mathbf{C}_{ {\rm min} }\cap \pa \Omega } \,  \,  \int_{ \mathsf V_{\,\overline \Omega}^{\delta_1,\delta_2}(z)} \vert \nabla_{f,h}   \,  \mathsf f_{1,h}\vert^2 +   O\big (  e^{-\frac 2h (\min_{\pa \Omega}f-f(x_0) +c)}\big )\\
\label{eq.chiUU0}
&= \sum_{z\in  \pa  \mathbf{C}_{ {\rm min} }\cap \pa \Omega } \,  \,Z_{1,h}^{-2}  \,h^2\int_{\mathsf V^{\delta_1,\delta_2}_{\,\overline \Omega}(z) }  \big \vert  \nabla \phi_{1,h}\big \vert ^2\, e^{-\frac 2h f} +   O\big (  e^{-\frac 2h (\min_{\pa \Omega}f-f(x_0) +c)}\big ).
\end{align} 

Let now $z$ belong to $\pa \mathbf{C}_{ {\rm min} }\cap \pa \Omega$ and recall the coordinates $p\mapsto v(p)$ defined in Section~\ref{sec:coord},
see \eqref{eq.cv-pa-omega-nablafnon0}--\eqref{eq.G1}. We also define $  \boldsymbol{\hat \ell} :=\boldsymbol{\ell}\circ v^{-1}$.
With these coordinates, we have on  $\mathsf V_{\,\overline \Omega}^{\delta_1,\delta_2}(z)$:   
 \begin{equation}\label{eq:gradvarV}
 (\nabla  f)(v^{-1}) =  {}^tJ^{-1}    \nabla\hat  f ,\ (\nabla\phi_{1,h})(v^{-1})  =  {}^tJ^{-1}  \, \nabla  \varphi_z , \text{ and } (\jac \,\boldsymbol{\ell}) (v^{-1}) =\jac \,  \boldsymbol{\hat \ell}  \,   J^{-1}   . 
 \end{equation}

\textbf{Case 1, when $\nabla f(z)\neq 0$:} Using \eqref{eq.G1}, \eqref{eq:gradvarV}, and \eqref{eq.qm-local11}, we have
\begin{equation}\label{eq.chiUU02}
\int_{\mathsf V^{\delta_1,\delta_2}_{\,\overline \Omega}(z) }  \big \vert  \nabla \phi_{1,h}\big \vert ^2\, e^{-\frac 2h f}   =
\frac{ \int_{\vert v'\vert \le \delta_2}\int_{-2\delta_1}^0 G^{dd}(v)  \, \chi^2(v_d) \, \sqrt{|G|}(v) \, e^{-\frac 2h (\hat f (v)-2 \mu(z)  v_d)}   dv  }
{ \Big(\int_{-2\delta_1}^0\chi(t)\, e^{\frac 2h \mu(z)\, t}  dt\Big)^2  }
\end{equation}
and a straightforward computation shows that, when $h\rightarrow 0$ (see~\eqref{eq.chi-cut}), 
\begin{equation}\label{eq:normal1}
N_z:=\int_{-2\delta_1}^0\chi(t)\, e^{\frac 2h \mu(z)\, t}  dt=\frac h{2\mu(z)}\big(1+O(e^{-\frac ch})\big).
\end{equation}
On the other hand, using $G(0)=(\delta_{ij})$, \eqref{eq.chi-cut}, \eqref{eq.cv-pa-omega-nablafnon03}, and \eqref{eq.min_coo1}, the Laplace method leads to
\begin{equation*}
\begin{split}
\int_{\vert v'\vert \le \delta_2}\int_{-2\delta_1}^0 G^{dd}  \, \chi^2\, \sqrt{|G|} \, e^{-\frac 2h (\hat f -2 \mu(z)  v_d)}dv&=
\big(1+O(h)\big)\int_{\mathbb R^{d-1}}\int_{-\infty}^0e^{-\frac 2h (\hat f -2 \mu(z)  v_d)}dv\\
&=\big(1+O(h)\big)\frac{h}{2\mu(z)} \frac{ (\pi h)^{\frac{d-1}{2} }   e^{-\frac 2h  \hat f(0)}}{\big(\det \Hess \hat f|_{\{v_d=0\}}(0)\big)^{\frac12}}.
\end{split}
\end{equation*}
 Combining this equation with \eqref{Zx}, \eqref{eq.chiUU02}, and \eqref{eq:normal1} (recall that $f(z)=\min_{\pa \Omega}f$), we get
\begin{equation}\label{eq:dirichQM1}
  \frac{h^2}{Z_{1,h}^{2}}\int_{\mathsf V^{\delta_1,\delta_2}_{\,\overline \Omega}(z) }   \vert  \nabla \phi_{1,h} \vert^2\, e^{-\frac 2h f}   =
\frac{2\partial_{ n_\Omega}f(z)}{\sqrt{\pi}} \frac {\sqrt{ \det\Hess f(x_0)}}{\sqrt {\det  \Hess f|_{\partial\Omega}(z)}}
\sqrt h  \, e^{-\frac 2h (\min_{\pa \Omega}f-f(x_0))}
\big(1+O(h)\big).
\end{equation}

\textbf{Case 2, when $\nabla f(z)= 0$:}
Thanks to  \eqref{eq.G1}, \eqref{eq:gradvarV}, and \eqref{eq.qm-local12}, we have
\begin{equation*}\label{eq.chiUU1}
\int_{\mathsf V^{\delta_1,\delta_2}_{\,\overline \Omega}(z) }  \vert  \nabla \phi_{1,h} \vert^2\, e^{-\frac 2h f}   =\frac{ \int_{\vert v'\vert \le \delta_2}\int_{-2\delta_1}^0 G^{dd}(v)  \, \chi^2(v_{d}) \, \sqrt{|G|} (v) \, e^{-\frac 2h (\hat f (v) +\vert \mu(z)\vert  v_d^2)}  dv   }{ \Big(\int_{-2\delta_1}^0\chi(t)\, e^{-\frac 1h\vert \mu(z)\vert  \, t^2}  dt\Big)^2  },
\end{equation*}
where the denominator of the r.h.s. satisfies  in  the limit $h\to 0$  (see~\eqref{eq.chi-cut}),
\begin{align}\label{eq.chiUU2}
 N_z:=\int_{-2\delta_1}^0\chi(t)\, e^{-\frac 1h\vert \mu(z)\vert  \, t^2}  dt=\frac{\sqrt{\pi h} }{2\sqrt{\vert \mu (z)\vert}} \big(1+O(e^{-\frac ch})\big).
\end{align}
Furthermore, using $G(0)=(\delta_{ij})$, \eqref{eq.chi-cut}, \eqref{eq.det--}, and \eqref{eq.min_coo2}, the Laplace method gives,
when $h\to0$,
\begin{equation*}\label{eq.chiUU3}
 \int_{\vert v'\vert \le \delta_2}\int_{-2\delta_1}^0 G^{dd}  \, \chi^2 \, \sqrt{|G| } \, e^{-\frac 2h (\hat  f +\vert \mu(z)\vert  v_d^2)} dv = \frac{ (\pi h)^{\frac{d}{2} }  e^{-\frac 2h  \hat f(0)}}{\sqrt{   \det \Hess (  \hat  f +\vert \mu(z)\vert  v_d^2  )(0) }}
 \big(\frac12+ O ( \sqrt h) \big),
 \end{equation*}
 where, from the second item in Lemma~\ref{le.lep-michel} and \eqref{eq.det--},   $ \det \Hess (  \hat  f +\vert \mu(z)\vert  v_d^2  )(0)=-\det \Hess f(z)$. We refer to~\cite[Remark 25]{DoNe2}  for an explanation on the optimality of the remainder term $O(\sqrt h)$ in the previous equality. Using in addition
 \eqref{eq.chiUU2}, we obtain
 \begin{equation}\label{eq:dirichQM2}
\frac{h^2}{Z_{1,h}^{2}}\int_{\mathsf V^{\delta_1,\delta_2}_{\,\overline \Omega}(z) }  \vert  \nabla \phi_{1,h} \vert^2\, e^{-\frac 2h f}=   
\frac {2|\mu(z)| }{  \pi  } \,   \frac{\sqrt{ \det\Hess f(x_0)}}{\sqrt{  |\det\Hess f (z) | } } \,h\, e^{-\frac 2h (\min_{\pa \Omega}f-f(x_0))}  \big(1+ O(\sqrt h)\big).
 \end{equation}
Finally, \eqref{eq.chiUU0}, \eqref{eq:dirichQM1}, and \eqref{eq:dirichQM2} imply that $\mathsf f_{1,h}$ satisfies 
\eqref{E1}. 
\medskip

\noindent
\textbf{Step 2. The function  $\mathsf f_{1,h}$ satisfies 
\eqref{E2}.}

\noindent
Recall that $\nabla_{f,h} \mathsf f_{1,h}= Z_{1,h}^{-1} \, h e^{-\frac fh} \nabla\phi_{1,h} $
 is supported in $\overline{\mathbf{C}}_{ {\rm up} }\setminus \mathbf{C}_{ {\rm low} }$, 
 so the same holds for
$P_h\mathsf f_{1,h}=(\nabla_{f,h}^{*}+2\boldsymbol{\ell} \cdot )\nabla_{f,h}\mathsf f_{1,h}$. 
Thus, Proposition~\ref{pr.omega1},~\eqref{eq.nablapsix=0}, and~\eqref{Zx} imply that for $h$ small enough, 
 \begin{align}\label{eq.decoPhf}
\int_{\Omega}    \vert P_h   \,  \mathsf f_{1,h}\vert^2&=
\sum_{z\in \pa \mathbf{C}_{ {\rm min} }\cap \pa \Omega} \,  \,  \int_{ \mathsf V_{\,\overline \Omega}^{\delta_1,\delta_2}(z)} \vert  P_h   \,  \mathsf f_{1,h}\vert^2 +O\big (  e^{-\frac 2h (\min_{\pa \Omega}f-f(x_0) +c)}\big ).
\end{align} 
Since $\div\boldsymbol{\ell}=0$,  the same relation holds when replacing $ P_h   \,  \mathsf f_{1,h}$ by $P_h^* \mathsf f_{1,h}=(\Delta_{f,h}-2\boldsymbol{\ell} \cdot \nabla_{f,h}) \mathsf f_{1,h}$.

Let now $z$ belong to $\pa \mathbf{C}_{ {\rm min} }\cap \pa \Omega$.
Using the relations  $ \Delta_{f,h}=2h e^{-\frac fh}(-\frac h2 \Delta+\nabla f\cdot \nabla)e^{\frac fh}$
and~\eqref{eq:gradvarV}  
with $\varphi_z$ only  depending  on the variable $v_d$, 
we get  in the $v$-coordinates on $v( \mathsf V_{\,\overline \Omega}^{\delta_1,\delta_2}(z))$:
\begin{align*}
(\Delta_{f,h} \mathsf f_{1,h})\circ v^{-1}
&= \frac {2h\, e^{-\hat f/h}}{Z_{1,h}}\Big[\frac {-h}{2\sqrt {\vert G\vert}}\div\big(\sqrt{\vert G\vert}\,G^{-1}\nabla\varphi_z\big)+  \sum_{i,j}G^{ij}\partial_{v_j}\varphi_z\partial_{v_i} \hat f\Big]\\
 &=	\frac {h\, e^{-\hat f/h}}{Z_{1,h}}\Big[\frac {-h}{\sqrt {\vert G\vert}}\sum_{i}\partial_{v_i}(\sqrt{\vert G\vert}G^{id}\partial_{v_d}\varphi_z)+  2\partial_{v_d}\varphi_z\sum_{i}G^{id} \partial_{v_i} \hat f\Big].
\end{align*}
Moreover, recall that $\varphi_z(v)= \int_{v_d}^0\chi(t)e^{-\frac 1h\theta (t)}dt/N_z$,
where $\theta(t)=-2\mu(z) t$ when $\nabla f(z)\neq 0$ and $\theta(t)=\vert\mu(z)\vert t^2$ when $\nabla f(z)=0$ (see~\eqref{eq.qm-local11} and  \eqref{eq.qm-local12}), so that 
\begin{equation}
\label{eq.varphi'}
\pa_{v_d}\varphi_z(v)=-\frac{1}{N_z}\chi(v_{d}) e^{-\frac{\theta(v_{d})} h}\ \ \text{and} \ \ 
\pa^2_{v_d}\varphi_z(v)=-\frac{1}{N_z}\chi'(v_{d}) e^{-\frac {\theta(v_{d})} h} +\frac{1}{h\,N_z}\chi(v_{d}) \theta'(v_{d}) e^{-\frac{\theta(v_{d})} h}.
\end{equation} Hence, 
we have on $v(\mathsf V^{\delta_1,\delta_2}_{\,\overline \Omega}(z))$:
\begin{equation}\label{eq:DfhQM2}
\begin{split}
(\Delta_{f,h} \mathsf f_{1,h})\circ v^{-1}&= \frac {he^{-\frac 1h(\hat f+\theta)}}{N_z Z_{1,h}}\Big[\frac {h}{{\sqrt {\vert G\vert}}}\sum_i\partial_{v_i}(\sqrt{\vert G\vert}G^{id})\chi(v_d)\\
&\qquad \quad-\chi(v_d)\big(G^{dd}\theta'(v_d)+2\sum_jG^{jd}\partial_{v_j} \hat f\big)+h G^{dd}\chi'(v_d)\Big].
\end{split}
\end{equation}
Besides, 
we deduce from $\boldsymbol{\ell}\cdot\nabla_{f,h}\mathsf f_{1,h}=\frac {h\, e^{-\frac{ f} h}}{Z_{1,h}}\boldsymbol{\ell}\cdot \nabla \phi_{1,h}$,~\eqref{eq:gradvarV}, \eqref{eq.varphi'}, and \eqref{eq.G1}
 that   on $v(\mathsf V^{\delta_1,\delta_2}_{\,\overline \Omega}(z))$:
\begin{align}
\nonumber
(2\boldsymbol{\ell}\cdot\nabla_{f,h}\mathsf f_{1,h})\circ v^{-1}&=-\frac {he^{-\frac 1h(\hat f+\theta)} }{N_z Z_{1,h} }\ \chi(v_{d})  \big (\, \big [2 \boldsymbol{ \hat \ell} (0)+2 \jac\,  \boldsymbol{ \hat \ell} (0)   v\big ]\cdot  \, ^tJ^{-1} \,   \mathsf e_d\, +O(|v|^2)\, \big)\\
\label{eq.hnablafh}
&=-\frac {he^{-\frac 1h(\hat f+\theta)} }{N_z Z_{1,h} }\ \chi(v_{d})  \big (   2\boldsymbol{\hat  \ell}(0)\cdot  \, ^tJ^{-1} \,   \mathsf e_d + 2\jac\,\boldsymbol{\hat  \ell}(0)    v\cdot  J(0)\,   \mathsf e_d\, +O(|v|^2)\, \big).
\end{align} 
To go further in the computation of $P_h\mathsf f_{1,h}$ on $v(\mathsf V^{\delta_1,\delta_2}_{\,\overline \Omega}(z))$, let us   consider the two cases $\nabla f(z)\neq 0$ and $\nabla f(z)=0$ separately.

\textbf{Case 1, when $\nabla f(z)\neq 0$:} Since $G=(\delta_{ij})+O(|v|)$ (see \eqref{eq.G1}), $\partial_{v_j} \hat f=O(|v|)$ when $1\leq j \leq d-1$, and $\partial_{v_d} \hat f=\mu(z)$ (see \eqref{eq.cv-pa-omega-nablafnon03}),   we have 
  $$\sum_{j=1}^dG^{jd}\partial_{v_j} \hat f = G^{dd}\mu(z)+O(|v|^2).$$ 
Since moreover $\theta'(v_d)=-2\mu(z)$,  we deduce from \eqref{eq:DfhQM2} that
  \begin{align*} 
(\Delta_{f,h}\mathsf f_{1,h})\circ v^{-1}= \frac {h\chi(v_d)\, e^{-\frac 1h(\hat f+\theta)}}{N_z Z_{1,h}}\big [O(h)+  O(|v|^2)  \big]+ \frac {h^2e^{-\frac 1h(\hat f+\theta)} }{N_z Z_{1,h} } G^{dd}\chi'(v_d).
\end{align*}
Recall that  \eqref{normal} implies $  \boldsymbol{\hat \ell}(0)=0$. Hence, 
a Taylor expansion around $v=0$ of 
the relation~$\boldsymbol{\hat\ell}\cdot {}^tJ^{-1}    \nabla\hat  f =(\boldsymbol{\ell} \cdot \nabla f) \circ v^{-1}=0$ 
(see
\eqref{eq:gradvarV}) 
shows that, for all $v\in\mathbb R^{d}$,  $\jac\,   \boldsymbol{\hat \ell}(0)  \,   v\cdot   {}^tJ^{-1}(0)  \nabla\hat  f(0)=0$,
and then, using \eqref{eq.G1} and \eqref{eq.cv-pa-omega-nablafnon03}, $\jac\,   \boldsymbol{\hat \ell}(0)  \,   v\cdot   J(0)  \mathsf e_d=0$.
Thus, using \eqref{eq.hnablafh},
  $$(2\boldsymbol{\ell}\cdot\nabla_{f,h}\mathsf f_{1,h})\circ v^{-1} 
 =-\frac {he^{-\frac 1h(\hat f+\theta)} }{N_z Z_{1,h} }\ \chi(v_{d})  \times  O(|v|^2). $$
 Consequently, 
  \begin{align*} 
  (P_h\mathsf f_{1,h})\circ v^{-1}= \frac {h\chi(v_d)\, e^{-\frac 1h(\hat f+\theta)}}{N_z Z_{1,h}}\big[O(h)+  O(|v|^2)  \big] + \frac {h^2e^{-\frac 1h(\hat f+\theta)} }{N_z Z_{1,h} } G^{dd}\chi'(v_d).
\end{align*} 
Since $\chi'=0$ in a neighborhood of $0$ in $\mathbb R^{-}$ (see \eqref{eq.chi-cut}), we obtain from \eqref{eq.min_coo1},~\eqref{Zx},~\eqref{eq:normal1}, and   the Laplace method that  when $h\to 0$:
\begin{align}
\nonumber
 \int_{ \mathsf V_{\,\overline \Omega}^{\delta_1,\delta_2}(z)}  \vert P_h\mathsf f_{1,h}\vert^2 &=\frac 1{N_z^2 Z_{1,h}^2}\int_{ v\big (\mathsf V^{\delta_1,\delta_2}_{\,\overline \Omega}(z)\big)} O(h^4+h^2|v|^4)e^{-\frac 2 h(\hat f-2\mu(z)v_d)}dv +    O  (e^{-\frac ch} )\, e^{-\frac 2h (\min_{\pa \Omega}f-f(x_0))}\\
 \label{eq.E22-neq0}
 &= O( h^\frac52)\,  e^{-\frac 2h (\min_{\pa \Omega}f-f(x_0))} =O(h^2)
\int_{\mathsf V^{\delta_1,\delta_2}_{\,\overline \Omega}(z) }   \vert  \nabla \mathsf f_{1,h}\vert ^2\, e^{-\frac 2h f},
\end{align}
where we used \eqref{eq:dirichQM1} to get the last equality. 

  \textbf{Case 2, when $\nabla f(z)= 0$:} From \eqref{eq.hnablafh} and $  \boldsymbol{\hat \ell}(0)=0$ (see \eqref{eq.incluw}), we have 
 $$
 ( 2\boldsymbol{\ell}\cdot\nabla_{f,h}\mathsf f_{1,h})\circ v^{-1} =-\frac {he^{-\frac 1h(\hat f+\theta)} }{N_z Z_{1,h} }\ \chi(v_{d})  \big ( 2 v\cdot    {}^t\jac\,   \boldsymbol{\hat \ell} (0)   J(0)  \mathsf e_d +O(|v|^2)\, \big).
$$
Therefore, using \eqref{eq:DfhQM2}, $G=(\delta_{ij})+O(|v|)$ (see \eqref{eq.G1}) and $\partial_{v_j} \hat f=O(|v|)$ for all  $j=\{1,\ldots,d\}$:
  \begin{align*}
  (P_h\mathsf f_{1,h})\circ v^{-1}&=\frac {he^{-\frac 1h(\hat f+\theta)} }{N_z Z_{1,h} } \chi(v_d)\Big[ O(h)-2G^{dd}|\mu(z)|v_d-2\sum_jG^{jd}\partial_{v_j} \hat f \\
  &\quad -2v\cdot    {}^t\jac\,   \boldsymbol{\hat \ell}(0) J(0)  \mathsf e_d +O(|v|^2)\Big] + \frac {h^2e^{-\frac 1h(\hat f+\theta)} }{N_z Z_{1,h} } G^{dd}\chi'(v_d)\\
  &=\frac {2he^{-\frac 1h(\hat f+\theta)} }{N_z Z_{1,h} } \chi(v_d)\Big[ O(h)- |\mu(z)|v_d- \partial_{v_d} \hat f \\
  &\quad -v\cdot    {}^t\jac\,   \boldsymbol{\hat \ell}(0)     J(0)  \mathsf e_d +O(|v|^2)\Big] + \frac {h^2e^{-\frac 1h(\hat f+\theta)} }{N_z Z_{1,h} } G^{dd}\chi'(v_d).
  \end{align*}
We have moreover $ \partial_{v_d} \hat f=  v\cdot \Hess \hat f(0)\mathsf e_d+O(|v|^2)$
 and \eqref{normal} implies  $[\Hess f(z)+ \, ^t\jac \, \boldsymbol{\ell} (z)] n_\Omega(z)= \mu(z) n_\Omega(z)$, which becomes in the $v$-coordinates, using  \eqref{eq.G1} (see also \eqref{eq.det--}):
\begin{align*}  \big(\Hess \hat f(0)+ {}^t\jac\,   \boldsymbol{\hat \ell}(0) \, J(0)\big)\mathsf e_d
&=    {}^tJ(0)\big(\Hess f(z)+ \, ^t\jac \, \boldsymbol{\ell} (z)\big)n_\Omega(z) \\
&=\mu(z) {}^tJ(0)J (0)\mathsf e_d=\mu(z)  \mathsf e_d.
\end{align*}
It follows that  $|\mu(z)|v_d+ \partial_{v_d} \hat f 
 +v\cdot    {}^t\jac\,   \boldsymbol{\hat \ell}(0)     J(0)  \mathsf e_d = O(|v|^2)$ and consequently, 
   \begin{align*} 
  (P_h\mathsf f_{1,h})\circ v^{-1}= \frac {2h\chi(v_d)\, e^{-\frac 1h(\hat f+\theta)}}{N_z Z_{1,h}} [O(h)+  O(|v|^2)]   + \frac {h^2e^{-\frac 1h(\hat f+\theta)} }{N_z Z_{1,h} } G^{dd}\chi'(v_d).
\end{align*} 
Hence, since $\chi'=0$ around $0$, it follows from \eqref{eq.min_coo2},~\eqref{Zx},~\eqref{eq.chiUU2}, \eqref{eq:dirichQM2}, and  from the  Laplace method that when $h\to 0$,
\begin{align}
\nonumber
 \int_{ \mathsf V_{\,\overline \Omega}^{\delta_1,\delta_2}(z)}  \vert P_h\mathsf f_{1,h}\vert^2 &=\frac{   h^{\frac{d}{2} }  O(h^4)   }{  h\,  h^{\frac{d}{2} }  }      \,    e^{-\frac 2h (\min_{\pa \Omega}f-f(x_0))}  +    O  (e^{-\frac ch} )\, e^{-\frac 2h (\min_{\pa \Omega}f-f(x_0))}\\
 \label{eq.E22}
 &= O( h^3) e^{-\frac 2h (\min_{\pa \Omega}f-f(x_0))} =O(h^2)
\int_{\mathsf V^{\delta_1,\delta_2}_{\,\overline \Omega}(z) }   \vert  \nabla \mathsf f_{1,h} \vert ^2\, e^{-\frac 2h f}.
\end{align}
Plugging \eqref{eq.E22-neq0} and \eqref{eq.E22}  into \eqref{eq.decoPhf}, and  using \eqref{eq.chiUU0} and \eqref{E1}, then leads to:
$$
\int_{\Omega}    \vert P_h   \,  \mathsf f_{1,h}\vert^2=O(h^2)\langle P_h\mathsf f_{1,h},\mathsf f_{1,h}\rangle.
$$ 
Therefore $\mathsf f_{1,h}$ satisfies 
\eqref{E2}. 
\medskip

\noindent
\textbf{Step 3. The function $\mathsf f_{1,h}$ satisfies 
\eqref{E3}.}

\noindent
Recall that $P_{h}^*= \Delta_{f,h}-2\boldsymbol{\ell}\cdot \nabla_{f,h}$ according to  Proposition~\ref{pr.spectre} and to \eqref{div}. 
Therefore, the  computations of the previous step show that, on any $v(\mathsf V_{\,\overline \Omega}^{\delta_1,\delta_2}(z))$, $z\in\pa \mathbf{C}_{ {\rm min} }\cap \pa \Omega$:  
$$ (P_{h}^* \mathsf f_{1,h})\circ v^{-1}= \begin{cases}\frac {h\chi(v_d)\, e^{-\frac 1h(\hat f+\theta)}}{N_z Z_{1,h}} [O(h)+  O(|v|^{2})]  + \frac {h^2e^{-\frac 1h(\hat f+\theta)} }{N_z Z_{1,h} } G^{dd}\chi'(v_d)\ \ &\text{when $\nabla f(z)\neq 0$,}\\
\frac {h\chi(v_d)\, e^{-\frac 1h(\hat f+\theta)}}{N_z Z_{1,h}} [O(h)+  O(|v|)]  + \frac {h^2e^{-\frac 1h(\hat f+\theta)} }{N_z Z_{1,h} } G^{dd}\chi'(v_d)\ \ &\text{when $\nabla f(z)= 0$.}
\end{cases}$$
It follows that, when $h\to0$,
$$\int_{ \mathsf V_{\,\overline \Omega}^{\delta_1,\delta_2}(z)} \vert  P_h^*   \,  \mathsf f_{1,h}\vert^2= \begin{cases} O(h^2)
\int_{\mathsf V^{\delta_1,\delta_2}_{\,\overline \Omega}(z) }   \vert  \nabla \mathsf f_{1,h}\vert ^2\, e^{-\frac 2h f}  \ \ &\text{when $\nabla f(z)\neq 0$,}\\
  O(h) \int_{\mathsf V^{\delta_1,\delta_2}_{\,\overline \Omega}(z) }   \vert  \nabla \mathsf f_{1,h}\vert ^2\, e^{-\frac 2h f}   \ \ &\text{when $\nabla f(z)= 0$,}\end{cases}$$
  and hence, according to \eqref{eq.decoPhf} (with $P_h$ replaced by $P_h^*$), \eqref{eq:dirichQM1}, and \eqref{eq:dirichQM2}:
$$  
  \int_{ \Omega} \vert  P_h^*   \,  \mathsf f_{1,h}\vert^2=
  \begin{cases} O(h)\,h\,e^{-\frac 2h (\min_{\pa \Omega}f -f(x_0))}  \ \ &\text{when $\kappa^P_1= 0$,}\\
  O(h^{2})\,h^{\frac12} \,e^{-\frac 2h (\min_{\pa \Omega}f -f(x_0))}   \ \ &\text{when $\kappa^P_2= 0$,}\\
  O(h^{\frac32})\,h^{\frac12} \,e^{-\frac 2h (\min_{\pa \Omega}f -f(x_0))}   \ \ &\text{when $\kappa^P_1\neq 0$ and $\kappa^P_2\neq 0$.}
  \end{cases}
 $$
This proves that $\mathsf f_{1,h}$ satisfies 
\eqref{E3} and completes the proof of Proposition~\ref{pr.QM-1}. 
\end{proof}

\bigskip

\noindent
\section*{Appendix}

 In this appendix, we prove Proposition~\ref{pr.spectre}.

\begin{proof}[Proof of Proposition~\ref{pr.spectre}]
 Let $h>0$ be fixed. Let us first prove the first item in Proposition~\ref{pr.spectre} and take $u\in D(P_{h})= H^{2}(\Omega)\cap H^1_0(\Omega)$. Since $\boldsymbol{\ell}\cdot\nabla f=0$ 
 and then $\boldsymbol{\ell}\cdot\nabla_{f,h}=h\,\boldsymbol{\ell}\cdot\nabla$ according to \eqref{ortho} and to the relation $\nabla_{f,h}:= h\, e^{-\frac f h}\nabla e^{\frac f h}=h\nabla + \nabla f$,   it holds
\begin{align*}
\int_\Omega  (\boldsymbol{\ell} \cdot \nabla_{f,h}  u )\,   \overline u =-\int_\Omega   u \,(\boldsymbol{\ell} \cdot \nabla_{f,h}  \overline  u) - h\int_\Omega(\div \boldsymbol{\ell})|u|^{2}.
\end{align*}
Therefore,  one has $2\Re \langle  \boldsymbol{\ell}  \cdot \nabla_{f,h} u , u \rangle_{L^2(\Omega)}=- h\int_\Omega(\div \boldsymbol{\ell})|u|^{2}$, and thus, using \eqref{eq.Witten}
and \eqref{eq.unitary}:
\begin{equation}\label{eq.energy}
\forall u \in H^{2}(\Omega)\cap H^1_0(\Omega)\,,\ \ \  \Re \langle  P_hu , u \rangle_{L^2(\Omega)}=\int_{\Omega} \vert \nabla_{f,h} u\vert^2- h\int_\Omega(\div \boldsymbol{\ell})|u|^{2}.
\end{equation}
This implies that $ P_h+h\|\div\boldsymbol{\ell}\|_{\infty}:H^{2}(\Omega)\cap H^1_0(\Omega)\subset L^2(\Omega)\to L^2(\Omega)$ is accretive.  
Using moreover the Lax-Milgram Theorem 
and the elliptic regularity of $P_{h}$, the operator $P_h+\lambda$ is invertible for $\lambda>0$ large enough. Thus, $P_h$ is maximal quasi-accretive and is in particular closed.
In addition, from  the compact injection $H^1_0(\Omega)\subset L^2(\Omega)$, $P_h$ has  a  compact resolvent.

Let us now prove that $P_h$ is sectorial. For all $u\in H^{2}(\Omega)\cap H^1_0(\Omega)$, it holds 
$$ \Im \langle P_hu , u \rangle_{L^2(\Omega)} = \Im \int_{\Omega}  (2\boldsymbol{\ell} \cdot \nabla_{f,h}  u )\,  \overline u.$$
Consequently, there exists $C>0$ such that for all $u\in H^{2}(\Omega)\cap H^1_0(\Omega)$ and all $\ve>0$, one has 
$$\vert  \Im \langle P_hu , u \rangle_{L^2(\Omega)} \vert \le C \Vert \nabla_{f,h}u\Vert_{L^2}\Vert u\Vert_{L^2(\Omega)}
\le C\Big[ \frac{\ve}{2} \Vert \nabla_{f,h}u\Vert_{L^2(\Omega)}^2+\frac{1}{2\ve}\Vert u\Vert^2_{L^2(\Omega)}\Big].$$
Taking $\lambda >0$ and choosing $\ve >0$ such that $1-\ve \frac{\lambda C}{2}\geq \frac 12$, one has, using \eqref{eq.energy},
\begin{align*}
\Re  \langle P_hu , u \rangle_{L^2(\Omega)}  -\lambda \vert \Im \langle  P_hu , u \rangle_{L^2(\Omega)} \vert \ge \frac 12 \Vert \nabla_{f,h}u\Vert_{L^2(\Omega)}^2- \big(\frac{\lambda C}{2\ve} +h\|\div\boldsymbol{\ell}\|_{\infty}\big)\Vert  u\Vert_{L^2(\Omega)}^2.\end{align*}
Therefore,  for some $ a_{h}\in \mathbb R$, $\Re  \langle (P_h+a_{h})u , u \rangle_{L^2(\Omega)}  \ge\lambda \vert \Im \langle P_h u , u \rangle_{L^2(\Omega)} \vert$. The  numerical range of $P_h$ 
is then included in  the sector $\{\mathsf z\in \mathbb C,  \vert \Im \mathsf z\vert \leq \lambda^{-1} \Re (\mathsf z+a_{h}) \}$, so $P_h$ is sectorial. 

Let us now prove the second item in Proposition~\ref{pr.spectre}.
   With the previous arguments, the formal adjoint 
 $$P_h^\dag = \Delta_{f,h} - 2h\,\boldsymbol{\ell} \cdot \nabla - 2h\div\boldsymbol{\ell}   =\Delta_{f,h}- 2\boldsymbol{\ell} \cdot \nabla_{f,h}- 2h\div\boldsymbol{\ell}$$  
   of $P_{h}$ endowed with
the domain $D(P_h)=H^{2}(\Omega)\cap H^1_0(\Omega)$ is also maximal quasi-accretive, with a compact resolvent, and sectorial.
To conclude, it thus just remains to show that
$(P_h^\dag,D(P_h))=(P_{h}^{*},D(P_{h}^{*}))$, where
$P_{h}^{*}:D(P_{h}^{*})\to L^{2}(\Omega)$ is the adjoint of $P_h$.
But, for any $u,v\in D(P_{h})=H^{2}(\Omega)\cap H^1_0(\Omega)$,
we have by integration by parts
$$
\langle P_hu , v \rangle_{L^2(\Omega)}=\langle u, P_h^\dag v  \rangle_{L^2(\Omega)}\,,
$$
which implies, by definition of $P_{h}^{*}:D(P_{h}^{*})\to L^{2}(\Omega)$,  that 
$$(P_h^\dag,D(P_h))\subset (P_{h}^{*},D(P_{h}^{*})).$$
Since moreover $(P_{h}^{*},D(P_{h}^{*}))$ is
maximal quasi-accretive (since $P_{h}$ is)
 as well as $(P_h^\dag,D(P_h))$,
 it  necessarily holds  
$(P_h^\dag,D(P_h))=(P_{h}^{*},D(P_{h}^{*}))$.

Let us lastly prove the third item in Proposition~\ref{pr.spectre}.  First, by standard results on elliptic regularity (see e.g. \cite[Section 6.3]{Eva}), any eigenfunction  $u\in H^2(\Omega)\cap H^1_0(\Omega)$
of $P_{h}$ (resp. of $P_{h}^{*}$) belongs  to~$\mathcal C^\infty (\overline \Omega)$. Moreover, according to~\cite[Theorems 1.3, 1.4, and 2.7]{du2006order} (see also the slightly weaker result stated in \cite[Theorem 3 in Section 6.5.2]{Eva}), $P_h$ 
(resp. $P_{h}^{*}$)
admits a real eigenvalue $\lambda_{1,h}^P $ (resp. $\lambda_{1,h}^{P^{*}} $) with algebraic multiplicity one such that:
\begin{itemize}
\item there exists an associated eigenfunction $u_{1,h}^P$ (resp. $u_{1,h}^{P^{*}}$)
which is positive within $\Omega$,
\item any other eigenvalue $\lambda$ of $P_h$ (resp. of $P_{h}^{*}$) satisfies 
 $\Re \lambda >  \lambda^{P}_{1,h}$ (resp. $\Re \lambda >  \lambda^{P^{*}}_{1,h}$). 
 \end{itemize}
 
Since in addition $\sigma(P^{*}_{h})=\overline{\sigma(P_{h})}$
 (see e.g. \cite[Section 6.6 in Chapter 3]{Kato}), we have $\lambda^{P}_{1,h}=\lambda^{P^{*}}_{1,h}$
 and
it thus only remains to show that $\lambda_{1,h}^P>0$, which
is a consequence of  the weak maximum principle~\cite[Theorem 1 in Section 6.4.1]{Eva}. Indeed,
according to \eqref{eq.unitary},
if it was not the case, 
the second-order elliptic operator 
without zeroth-order term 
$L_{h}=-\frac h2 \Delta +(\nabla f + \div\boldsymbol{\ell})\cdot\nabla $ would satisfy
$$
L_{h}\big(e^{\frac fh} u_{1,h}^{P}\big)=\frac{\lambda_{1,h}^P}{2h}e^{\frac fh} u_{1,h}^{P}\leq 0\ \ \text{in}\ \ \Omega,
$$
which would imply by the weak maximum principle that $\max_{\overline\Omega} \big(e^{\frac fh} u_{1,h}^{P}\big)=\max_{\pa\Omega} \big(e^{\frac fh} u_{1,h}^{P}\big)=0$, contradicting 
$u_{1,h}^{P}> 0$
 in $\Omega$.  
\end{proof}

 \medskip

\begin{sloppypar}
\noindent
\textbf{Acknowledgement}\\
This work was  supported by the ANR-19-CE40-0010, Analyse Quantitative de Processus  Métastables (QuAMProcs).
B.N. is supported by the grant IA20Nectoux from the Projet I-SITE Clermont CAP 20-25. 
\end{sloppypar}

\bibliographystyle{amsplain}
\bibliography{nonreversible}
\end{document}